\definecolor{aqaqaq}{rgb}{0.6274509803921569,0.6274509803921569,0.6274509803921569}
\definecolor{ffffff}{rgb}{1.,1.,1.}
\definecolor{eqeqeq}{rgb}{0.8784313725490196,0.8784313725490196,0.8784313725490196}
\definecolor{uuuuuu}{rgb}{0.26666666666666666,0.26666666666666666,0.26666666666666666}
\definecolor{qqqqff}{rgb}{0.,0.,1.}
\definecolor{ffqqqq}{rgb}{1.,0.,0.}
\theoremstyle{plain}
\newtheorem{thm}{Theorem}[section]
\newtheorem{lem}[thm]{Lemma}
\newtheorem{prop}[thm]{Proposition}
\newtheorem{cor}[thm]{Corollary}
\theoremstyle{definition}
\newtheorem{defn}[thm]{Definition}
\newtheorem{rem}[thm]{Remark}
\numberwithin{equation}{section}
\newcommand{\res}{\mathop{\hbox{\vrule height 7pt width .5pt depth 0pt
\vrule height .5pt width 6pt depth 0pt}}\nolimits}
\newcommand{\ds}{\displaystyle}
\newcommand{\N}{\mathbb N} 
\newcommand{\R}{\mathbb R}
\newcommand{\Ms}{{\mathbb M}^{n{\times}n}_{\rm sym}}
\newcommand{\F}{{\mathcal F}}
\newcommand{\G}{{\mathcal G}}
\newcommand{\wto}{\rightharpoonup}
\newcommand{\e}{\varepsilon}
\newcommand{\LL}{{\mathcal L}}
\newcommand{\HH}{{\mathcal H}}
\newcommand{\M}{{\mathcal M}}
\renewcommand{\bar}[1]{\overline{#1}}
\newcommand{\norme}[1]{\Vert #1 \Vert}
\let\O=\Omega
\begin{document}
 
\title[Discrete approximation of the Griffith functional]{Discrete approximation of the Griffith functional by adaptive finite elements}
\author[J.-F. Babadjian]{Jean-Fran\c cois Babadjian}
\author[E. Bonhomme]{\'Elise Bonhomme}

\address[J.-F. Babadjian, E. Bonhomme]{Universit\'e Paris-Saclay, CNRS,  Laboratoire de math\'ematiques d'Orsay, 91405, Orsay, France.}
\email{jean-francois.babadjian@universite-paris-saclay.fr}
\email{elise.bonhomme@universite-paris-saclay.fr}

\maketitle

\begin{abstract} 
This paper is devoted to show a discrete adaptive finite element approximation result for the isotropic two-dimensional Griffith energy arising in fracture mechanics. The problem is addressed in the geometric measure theoretic framework of generalized special functions of bounded deformation which corresponds to the natural energy space for this functional. It is proved to be approximated in the sense of $\Gamma$-convergence by a sequence of discrete integral functionals defined on continuous piecewise affine functions. The main feature of this result is that the mesh is part of the unknown of the problem, and it gives enough flexibility to recover isotropic surface energies.
\end{abstract}


\section{Introduction}

\subsection{The variational approach to fracture}

The Griffith functional has been introduced in the context of brittle fracture. It finds its roots in the seminal work of Griffith \cite{Griffith} whose main ideas have been revisited in \cite{FM} (see also the monograph \cite{BFM}) into a variational evolution formulation. The main point is that, in a quasi-static setting and in presence of irreversibility, a constrained global minimization principle together with an energy balance select equilibrium states of an elastic body experiencing brittle fracture. In a nutshell, the Griffith energy is defined by
\begin{equation}\label{eq:isoG}
\mathcal G(u,K):=\int_{\Omega \setminus K} |e(u)|^2\, dx + \mu \HH^{N-1}(K),
\end{equation}
where $\Omega \subset \R^N$, a bounded open set, stands for the reference configuration of an elastic material, $K \subset \Omega$ is a codimension-one set representing the crack, $u:\Omega \setminus K \to \R^N$ is the displacement field which might be discontinuous across $K$, and its symmetric gradient $e(u):=(\nabla u + \nabla u^T)/2$ is the linearized elastic strain. The constant $\mu>0$ is a material parameter called toughness. This energy puts in competition a bulk energy, representing the elastic energy stored in the body outside the crack, and a surface energy penalizing the presence of the crack $K$ through its $(N-1)$-dimensional Hausdorff measure, henceforth denoted by $\HH^{N-1}$.

\medskip

This problem falls within the framework of so-called free discontinuity problems (according to De Giorgi's terminology), and it presents many formal analogies with its scalar counterpart,  the Mumford-Shah functional. Although, thanks to geometric measure theory, the existence theory for the latter is by now quite well understood (see e.g. \cite{AFP} and references therein), the minimization of the Griffith functional had to face serious additional difficulties. In particular, a satisfactory existence theory has only recently been solved. As for the Mumford-Shah functional, it passes through the introduction of a ``weak formulation'' where the crack is replaced by the jump set $J_u$ of $u$. A convenient functional setting to investigate this problem is that of functions of bounded deformation, $BD(\O)$, which correspond to (integrable) vector fields $u :\O \to \R^N$ whose distributional symmetric gradient $Eu$ is a bounded Radon measure. This space has been introduced in \cite{Suquet} (see also \cite{Temam,ST1}) as a natural space to formulate problems of perfect plasticity. Brittle fracture however requires a finer understanding of this space and especially the introduction of the subspace $SBD(\O)$ of special functions of bounded deformation in \cite{ACDM,BCDM}, for which the singular part of $Eu$ with respect to the Lebesgue measure is concentrated on the jump set. Unfortunately, this step forward was still not enough because of lack of control of the values of $u$ (due to the failure of Poincar\'e-Korn and/or Korn type inequalities in that space). It is only recently that the introduction of the space $GSBD(\O)$ of generalized special functions of bounded deformation in \cite{DM2} (see Section \ref{sec:2} for the precise definition) has given a satisfactory mathematical framework to investigate a well founded existence theory for the weak formulation, as well as for the original one. Some further compactness properties of that space have been investigated in \cite{CC,CC0} which has led to prove the existence of minimizers of the Griffith functional under Dirichlet boundary conditions (formulated in a relaxed sense). 

\subsection{Approximation of the Griffith energy}

The $\Gamma$-convergence approximation of free discontinuity problems (e.g. by more tractable ones from a numerical point of view) is of fundamental importance in applications. It has been proven in \cite{BDM} that it is not possible to approximate free discontinuity functionals by means of local integral functionals. To overcome this difficulty, a first possibility is to introduce an additional variable like, e.g., in phase field approximations where the sharp discontinuity is smoothened into a diffuse discontinuity. It represents one of the most popular methods which have already proven to be successful in other contexts such as the Modica-Mortola approximation of the perimeter functional \cite{MM}, or the Ambrosio-Tortorelli approximation of the Mumford-Shah functional \cite{AT}. In the context of brittle fracture, such approximations, which have a founded mechanical interpretation as a gradient damage model, have only recently been established in full generality in  \cite{CCdensity} (see also \cite{C,I2}). The main drawback is that, an additional numerical approximation would give rise to a multiscale problem with on the one hand the parameter of approximation, and on the other hand the mesh size (see e.g. \cite{BC,BBZ,CSS}). Another possibility is to use nonlocal integral functionals as e.g. in \cite{BDM,N3,SS}.

\medskip

For what concerns the numerical treatment of free discontinuity problems, the main difficulty is related to the fact that the jump set is part of the unknowns and that standard discontinuous finite element methods do not in general apply in this context. Having this problematic in mind as well as the multiscale issues arising in phase field or nonlocal approximations, one is thus tempted to find single scale discrete approximations of free discontinuity problems. There is a huge literature on this subject and, without being exhaustive, we refer to discrete-to-continuous approximations results \cite{AFG,C1,C2,Go,N1,N,N2}, nonlocal finite elements approximations \cite{LN,N} or discrete approximations based on stochastic meshes in \cite{BCR,R}. 

Let us focus on the discrete approximation result obtained in \cite{CDM} for the Mumford-Shah functional in dimension $N=2$. In that work, the classical Mumford-Shah functional
$$F(u):=\int_\O |\nabla u|^2\, dx +\mu \HH^1(J_u)$$
is approximated in the sense of $\Gamma$-convergence by a functional of the form
$$F_\e(u):=\int_\O f_\e(\nabla u)\, dx$$
putting a restriction on the functional space on which $F_\e$ is defined. The functional $F_\e$ is discrete in the sense that $u$ is (a scalar-valued) continuous function and piecewise affine on suitable $\e$-dependent meshes (see Definition \ref{def:triangulation}). It consists in an adaptive finite element approximation because there is an implicit mesh optimization whose numerical implementation has been carried out in \cite{BoCh}. The function $f_\e(\nabla u)$ takes the form $\frac1\e f(\e|\nabla u|^2)$ where $f$ is a nondecreasing function satisfying the standard properties \eqref{eq:f}. Typical examples of functions $f$ are, on the one hand the $\arctan$ function (as e.g. in \cite{Go} following a conjecture of De Giorgi) and, on the other hand $f(t)=t\wedge \kappa$. The main feature of this result is that, allowing the mesh to move gives enough flexibility to approximate isotropic surface energies. The constant $\mu$ appearing in the functional $F$ is explicit and only depends on $\kappa$ and the geometry of the triangulation.

An analogous analysis has been carried out in \cite{N1}, where the author constraints the mesh to be made either of equilateral triangles, or of right isosceles ones. In that case, the result is that the functional $F_\e$ $\Gamma$-converges to an anisotropic version of the Mumford-Shah functional
$$\int_\O |\nabla u|^2\, dx + \int_{J_u} \phi(\nu_u)\, d\HH^1,$$
for some function $\phi:\mathbb S^1 \to \R$, which can be explicitly computed, depending on the normal $\nu_u$ to the jump set $J_u$. In \cite{N2}, the same problem is addressed in the two-dimensional vectorial setting. If $f_\e$ is as before, the following approximating energy is considered
$$\int_\O  f_\e(e(u))\, dx.$$
As in \cite{N1}, the $\e$-dependent mesh is fixed and made either of equilateral triangles, or of right isosceles triangles, and the result is that this functional $\Gamma$-converges to an anisotropic version of the Griffith functional
$$\int_\O |e(u)|^2\, dx + \int_{J_u} \phi(\nu_u)\, d\HH^1,$$
where $\phi:\mathbb S^1 \to \R$ is as in \cite{N1}. Note that if $f(t)=t\wedge \kappa$, then
\begin{equation}\label{eq;euvsgu}
f_\e(e(u)) = 
\begin{cases}
\e |e(u)|^2 & \text{ if }\e |e(u)|^2 \leq \kappa,\\
\kappa & \text{ if }\e |e(u)|^2>\kappa.
\end{cases}
\end{equation}

In order to recover the isotropic Griffith energy \eqref{eq:isoG},  a similar approximation result is considered in \cite{N} where, now, the meshes are allowed to move as in \cite{CDM}, but the function $f_\e$ now depends on the full gradient $\nabla u$ (instead of the symmetric gradient) and behaves like
\begin{equation}\label{eq:negri}
f_\e(\nabla u) \sim 
\begin{cases}
\e |e(u)|^2 & \text{ if }\e |\nabla u|^2 \leq \kappa,\\
\kappa & \text{ if }\e |\nabla u|^2>\kappa
\end{cases}
\end{equation}
(compare with \eqref{eq;euvsgu}). In that case, the analysis of \cite{CDM} can be adapted to show a $\Gamma$-convergence result towards the isotropic Griffith energy \eqref{eq:isoG} with the same geometric multiplicative constant $\mu$ (as in \cite{CDM}) in front of the surface energy.

\subsection{Our result}

The objective of the present work is to generalize the previous results in the two-dimensional vectorial case to show an analogous statement as in \cite{CDM}, namely an adaptive discrete finite element approximation of the isotropic Griffith functional. To state precisely our main result, Theorem \ref{thm:GSBD}, we need to introduce some notation (we refer to Section \ref{sec:2} regarding functional spaces).

\medskip

Let $\O$ be a bounded open set of $\R^2$ with Lipschitz boundary. As in \cite{CDM}, we introduce the following class of admissible meshes.
\begin{defn}\label{def:triangulation}
A triangulation of $\O$ is a finite family of closed triangles intersecting $\O$, whose union contains $\O$, and such that, given any two triangles of this family, their intersection, if not empty, is exactly a vertex or an edge common to both triangles. Given some angle $\theta_0$ with $0<\theta_0 \leq 45^\circ-\arctan(1/2)$, and a function $\e \mapsto \omega(\e)$ with $\omega(\e) \geq 6\e$ for any $\e>0$ and $\lim_{\e \to 0^+} \omega(\e)=0$, we define, for any $\e>0$
$$\mathcal T_\e(\O):=\mathcal T_\e(\O,\omega,\theta_0)$$
as the set of all triangulations of $\O$ made of triangles whose edges have length between $\e$ and $\omega(\e)$, and whose angles are all greater than or equal to $\theta_0$. Then we consider the finite element space $V_\e(\O)$ of all continuous functions $u : \O \to \R^2$ for which there exists $\mathbf T \in \mathcal T_\e(\O)$ such that $u$ is affine on each triangle $T \in \mathbf T$.
\end{defn}

\begin{rem}
Imposing $\theta_0 > 0$ and $\omega(\e) \geq \e$ corresponds to a non-flatness condition that ensures the existence of a radius $\varrho(\theta_0) > 0$ such that for all triangle $T \in \mathbf T $, one can find a point $x \in T$ such that
$$
\bar{ B_\varrho (x)} \subset T.
$$ 
As for the conditions $\theta_0 \leq 45^\circ-\arctan(1/2)$ and $\omega(\e) \geq 6 \e$, we will later see that they are crucial to prove the existence of recovery sequences. Indeed, we use the same optimal triangulation introduced in \cite[Appendix A]{CDM}, where the authors' explicit construction makes use of triangles with edges of length $6 \e$ and angles equal to $45^\circ-\arctan(1/2)$.
\end{rem}

Let us consider a  nondecreasing continuous function $f : [0,+\infty) \to [0,+\infty)$ satisfying
\begin{equation}\label{eq:f}
f(0)=0, \quad \lim_{t \to 0^+} \frac{f(t)}{t} = 1 \quad\text{ and } \quad \lim_{t \to \infty} f(t) = \kappa,
\end{equation}
for some constant $\kappa>0$, and a symmetric fourth order tensor $\mathbf A \in \mathscr L(\mathbb M^{2 \times 2}_{\rm sym},\mathbb M^{2 \times 2}_{\rm sym})$ such that
\begin{equation}\label{eq:A}
\alpha |\xi|^2 \leq \mathbf A \xi:\xi \leq \beta |\xi|^2 \quad \text{ for all }\xi \in \mathbb M^{2 \times 2}_{\rm sym},
\end{equation}
for some constants $\alpha$, $\beta>0$. 

\medskip

Our main result is the following $\Gamma$-convergence approximation of the Griffith functional.

\begin{thm}\label{thm:GSBD}
The functional $\F_\e:L^0(\O;\R^2)  \to [0,+\infty]$ defined by
\begin{equation}\label{eq:F_e}
\F_\e(u)=
\begin{cases}
\ds \frac{1}{\e} \int_\O  f \big(\e \mathbf A e(u):e(u) \big) \, dx& \text{ if } u \in V_{\e} (\O),\\
 + \infty & \text{ otherwise}
\end{cases}
\end{equation}
$\Gamma$-converges, with respect to the $L^0(\O;\R^2)$-topology of convergence in measure, to the Griffith functional
$\mathcal F:L^0(\O;\R^2) \to [0,+\infty]$ given by
$$\F(u)=
\begin{cases}
\ds  \int_\O \mathbf A e(u):e(u) \, dx + \kappa \sin\theta_0 \HH^1(J_u)& \text{ if } u \in GSBD^2(\O), \\
+\infty & \text{ otherwise.}
\end{cases}$$
\end{thm}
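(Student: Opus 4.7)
The plan is to establish $\Gamma$-convergence by its two halves, preceded by a compactness step. Given a sequence $u_\e \in V_\e(\O)$ with $\sup_\e \F_\e(u_\e) < +\infty$, I would split the underlying triangulation $\mathbf T_\e$ into an \emph{elastic} family $\mathbf T_\e^{\mathrm{el}}$, where $\e\,\mathbf A e(u_\e):e(u_\e)\leq \eta$ for some small fixed $\eta$, and its complement $\mathbf T_\e^{\mathrm{cr}}$ of \emph{cracked} triangles. Using $f(t)\sim t$ for $t$ small and $f(t)\to \kappa$ for $t$ large, together with \eqref{eq:A}, the energy bound yields a uniform $L^2$ control of $e(u_\e)$ on the elastic region and a uniform bound of order $\e$ for the total area of cracked triangles. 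Performing a suitable modification $\tilde u_\e$ of $u_\e$ (equal to $u_\e$ on $\cup \mathbf T_\e^{\mathrm{el}}$ and truncated on $\cup \mathbf T_\e^{\mathrm{cr}}$), the $GSBD^2$ compactness theorem of Chambolle-Crismale should then produce a subsequential limit $u \in GSBD^2(\O)$.

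For the $\Gamma$-liminf inequality, the same modification $\tilde u_\e \in SBD^2(\O)$, whose jump set lies in $\partial(\cup \mathbf T_\e^{\mathrm{cr}})$, serves as a competitor. Standard $GSBD^2$ lower semicontinuity delivers the bulk contribution $\int_\O \mathbf A e(u):e(u)\,dx$. The surface contribution requires the sharp geometric estimate: each cracked triangle, having minimum angle $\theta_0$ and edges of length between $\e$ and $\omega(\e)$, projects onto the direction $\nu_u(x_0)$ at $\HH^1$-a.e.\ jump point $x_0$ with transverse width at most $\e/\sin\theta_0$. A blow-up argument around $\HH^1$-a.e.\ point of $J_u$, combined with the Besicovitch derivation theorem, then provides the asymptotic lower bound $\kappa\sin\theta_0\,\HH^1(J_u)$.

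For the $\Gamma$-limsup inequality, I would use the density results of \cite{CCdensity,I2} to reduce to $u \in W^{1,\infty}(\O\setminus K;\R^2)\cap L^\infty$ whose crack set $K$ is a finite disjoint union of closed segments inside $\O$. For such $u$ one explicitly designs a triangulation $\mathbf T_\e\in\mathcal T_\e(\O)$: a regular mesh of size comparable to $\e$ outside a tubular neighbourhood of $K$, and, on a thin band straddling each segment $S\subset K$, a chain of isosceles triangles whose smallest angle is exactly $\theta_0$ and whose short side of length $\e$ lies along $S$. The piecewise affine interpolate $u_\e$ of $u$ on $\mathbf T_\e$ converges to $u$ in measure; away from $K$, standard finite element interpolation combined with $f(t)/t\to 1$ gives $\int_\O f(\e\mathbf A e(u_\e):e(u_\e))/\e\,dx \to \int_\O \mathbf A e(u):e(u)\,dx$, while across each band the discontinuity of $u$ forces $f\to\kappa$ on every band triangle, and the chosen geometry yields exactly $\kappa\sin\theta_0\,\HH^1(K)$ after a direct count. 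The passage from piecewise smooth $u$ with polyhedral crack to a general $u \in GSBD^2(\O)$ is then concluded by a diagonal argument using the density theorem and the lower semicontinuity of $\F$.

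The main obstacle is the lower bound. The scalar Mumford-Shah analysis of \cite{CDM} relies on cellwise truncations in $SBV^2$ using the chain rule, which in the vectorial symmetric-gradient setting is forbidden by the failure of Korn-Poincaré inequalities in $SBD$ on the natural $L^2$ scale. One must therefore rely on the piecewise rigidity and slicing tools for $GSBD^2$ of \cite{CC0,CCdensity} to replace $u_\e$ on cracked triangles by an admissible competitor without losing the bulk control, and then upgrade the elementary geometric estimate on cracked triangles to the sharp constant $\sin\theta_0$ through an anisotropic blow-up analysis at jump points, taking into account that meshes in $\mathcal T_\e$ can be arbitrarily oriented.
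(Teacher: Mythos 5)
Your compactness step, bulk lower bound, and upper bound are essentially the paper's (Propositions \ref{prop:comp}, \ref{prop:leb} and \ref{upper bound} with Lemma \ref{LemmaUpperBound}). The genuine gap is the surface lower bound. The projection estimate you invoke — after blow-up the cracked triangles must project onto the rescaled jump set, and $\LL^2(T) \geq \tfrac12\,\HH^1(p_\xi(T))\,(\e/\varrho)\sin\theta_0$ converts the contribution $\kappa\LL^2(T)/\e$ of each cracked triangle into a multiple of its projected length — yields only
$$
\frac{d\lambda}{d\HH^1 \res J_u}(x_0) \;\geq\; \frac{\kappa\sin\theta_0}{2},
$$
i.e.\ \emph{half} the required constant, the $\tfrac12$ coming unavoidably from the triangle area formula. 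Your claim that the ``transverse width'' of a cracked triangle is at most $\e/\sin\theta_0$ is neither correct (a triangle with edges up to $\omega(\e)$ projects onto a segment of length up to $\omega(\e)$) nor of the right shape to remove the halving. You acknowledge that the elementary estimate must be ``upgraded to the sharp constant,'' but never say how, and blow-up plus rectifiability alone do not supply the missing factor of $2$.

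Recovering this factor of $2$ is the entire technical core of the paper, and it is where slicing enters in an unusual way: not as dimension reduction, but as a \emph{counting} device. After a uniform one-dimensional estimate along slices in direction $\xi$ through a.e.\ point of $J_{\bar u}$ (Fubini plus Egoroff, Lemma \ref{lemma afterSections}), one proves that almost every such slice must cross at least \emph{two} disjoint cracked triangles (Lemma \ref{lemma2}). The crux (Lemma \ref{lemma3}) is that if a slice through $y$ crossed exactly one bad triangle $T$, the affine structure of $v_j$ on $T$ and the piecewise linear dependence of the slice length on $y$ would force that slice to have a prescribed length $L^{\rm ref}(T)$, pinning $y$ inside two arbitrarily thin reference stripes; such $y$ therefore form a set of vanishing $\HH^1$-measure. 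From the two-triangles-per-slice property one then extracts two \emph{disjoint} subfamilies $\mathscr F_j^1, \mathscr F_j^2$ whose projections each cover $J_{\bar u}\cap B$ up to a small set (Lemma \ref{lemma F_1 F_2}); summing the projection estimate over both families doubles the count and yields exactly $\kappa\sin\theta_0$. Without this double-covering argument the approach you describe does not reach the stated $\Gamma$-limit.
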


\begin{rem}\label{rem:f}
As explained above, a meaningful choice is the function $f (t) = t \wedge \kappa$, for which the energy reduces to
$$\int_\O \frac{\kappa}{\e} \wedge \mathbf  A e(u):e(u)  \, dx.$$
It corresponds to the brittle damage energy of a linearly elastic material composed of two phases: an undamaged one whose elasticity coefficients are represented by the Hooke tensor $\mathbf A$, and a damaged one whose elasticity coefficients are set to $0$. The constant $\kappa/\e$ stands for the toughness of the material whose diverging character as $\e \to 0$ forces the damaged zones to concentrate on vanishingly small sets (see \cite{BIR}).
\end{rem}

\subsection{Strategy of proof}

As usual in $\Gamma$-convergence, the proof is achieved by combining a compactness result, a lower bound and an upper bound inequality. In order to describe our argument, let us assume for simplicity that $f(t)=t \wedge \kappa$ and $\mathbf A={\rm id}$.

\medskip

Our compactness result, Proposition \ref{prop:Comp} rests on the general $GSBD$ compactness result of \cite{CCminimization}. Given a sequence $\{u_\e\}_{\e>0}$ with uniformly bounded energy, one can apply \cite[Theorem 1.1]{CCminimization} to the modified function $v_\e:=u_\e \mathds{1}_{\{|e(u_\e)|^2 \leq \kappa/\e\}}$ which consists in putting the value zero inside each triangle $T$ where the (symmetric) gradient of $u_\e$ is ``large''. It might thus create a jump on the boundary of $T$ whose perimeter can be estimated by $\LL^2(T)/\e$. It leads to compactness in measure for the sequence $\{u_\e\}_{\e>0}$  (up to subtracting a sequence of piecewise rigid motions, leaving the energy unchanged), which thus justifies why it is natural to consider $\Gamma$-convergence with respect to this topology. 

\medskip

The upper bound causes no particular difficulty. It consists in using known density results in $GSBD^2(\O)$ (see \cite{CCdensity,CT}) to reduce to the case where the jump set of $u$ is made of finitely many pairwise disjoint closed line segments, and $u$ is smooth outside. Then, considering a similar optimal triangulation of $\O$ as in \cite{CDM} (whose vertices do not cross the jump set) and a piecewise affine Lagrange interpolation of $u$, it leads to the desired upper bound (see Proposition \ref{upper bound}).

\medskip

The proof of the lower bound inequality is much more delicate to address and it represents, to our opinion, the main achievement of this work. 
First of all, the blow-up method allows one to identify separately the bulk part and the singular part. The bulk part can be easily recovered by modifying $u_\e$ into a new function which vanishes in all triangles where $e(u_\e)$ is ``too large'' as in the compactness argument (see Proposition \ref{prop:leb}). The main difficulty is to get a lower bound for the singular part of the energy.

Before describing our strategy of proof, let us briefly explain why the methods of \cite{CDM} (and similarly \cite{BoCh}) fail in our situation. The idea of \cite{CDM} consists in modifying every minimizing sequence $\{u_\e\}_{\e>0}$ inside each triangle $T$ of the associated triangulation $\mathbf T^\e \in \mathcal T_\e(\O)$ according to its variations along each edge of $T$. It rests on the introduction of a jump criterion which stipulates that if the variation of $u_\e$ is large enough, it is convenient to create a jump along the edge. More precisely, if $x_1$, $x_2$ and $x_3$ stand for the vertices of the triangle $T$, it will be energetically favorable to create a jump at the middle point of the segment $[x_i,x_j]$ if $$\frac{|u_\e(x_i)-u_\e(x_j)|}{|x_i-x_j|}>\frac{\sigma}{\sqrt\e},$$
for some constant $\sigma>0$, while $u_\e$ remains unchanged on $[x_i,x_j]$ otherwise. This criterion has to be defined in such a way that:
\begin{itemize}
\item[(i)] the new function, say $w_\e$, has a jump set in each triangle $T$ which satisfies $\HH^1(J_{w_\e} \cap T) \leq \LL^2(T)/(\e\sin\theta_0)$, where $\theta_0$ is as in Definition \ref{def:triangulation}, and $w_\e$ does not jump across $\partial T$;
\item[(ii)] the absolutely continuous part of the gradient, $\nabla w_\e$, is controlled in $L^2(T)$ by the energy restricted to $T$.
\end{itemize}
This construction ensures that the variation of the new discontinuous and piecewise affine function $w_\e$ is always controlled along at least two edges of each triangle $T$, and it yields a control of the full gradient $\nabla w_\e$ of $w_\e$ inside $T$. In \cite{CDM}, this is possible thanks of the scalar nature of the problem because the gradient $\nabla{w_\e}_{|T}$ is a (constant) vector in $\R^2$ (see \cite[Remark 3.5]{CDM}).

In our case, $u_\e$ is not scalar-valued anymore, but vector-valued and the energy only depends on its symmetric gradient $e(u_\e)$. If one uses the same criterion than in \cite{CDM}, then condition (i) above will be satisfied for the new function $w_\e$ on $T$. However, one will only be able to estimate the $L^2$-norm of the (symmetric) gradient of $w_\e$ by that of the full gradient of $u_\e$ which, unfortunately, is not controlled by the energy $\F_\e(u_\e)$. Note that in \cite{N}, such a control is artificially made possible thanks to the particular form of the energy (see \eqref{eq:negri} above). This is however not natural in this linearized elasticity setting where the energy should be expressed in terms of the symmetric gradient of the displacement.

As a consequence, the jump criterion has to be modified. As the energy only depends on the symmetric part of the gradient of $u_\e$, it would be natural to consider a criterion involving the longitudinal variation of $u_\e$ along the edges of the triangle instead of the full variation. In other words, one could modify the criterion by asking that 
if 
$$\frac{|(u_\e(x_i)-u_\e(x_j))\cdot (x_i-x_j)|}{|x_i-x_j|^2}>\frac{\sigma}{\sqrt\e},$$
then we create a jump at the middle point of $[x_i,x_j]$, while $u_\e$ remains unchanged on $[x_i,x_j]$ otherwise. In that case, it is again not possible to control the symmetric gradient $e(w_\e)$ of the new function $w_\e$ by that of $u_\e$. Indeed, in a similar way as in \cite{CDM}, the previous criterion ensures that the longitudinal variation of $w_\e$ along at least two edges of each triangle is controlled by the energy restricted to $T$. If we call $\xi_1$ and $\xi_2 \in \mathbb S^1$ both (linearly independent) directions associated to these ``good'' edges, it shows that $e(w_\e)_{|T}:(\xi_1 \otimes \xi_1)$ and $e(w_\e)_{|T}:(\xi_2 \otimes \xi_2)$ are controlled by $e(u_\e)_{|T}$ which is not enough to control the full $2 \times 2$ symmetric matrix $e(w_\e)_{|T}$ which has three degrees of freedom. In addition, some (uncontrolled) discontinuities can also be created at the interface $I:=\partial T \cap \partial T'$ between two adjacent triangles $T$ and $T'$ so that condition (i) fails as well.

Overcoming these difficulties seems to be a very serious issue so that we decided to attack this problem from a different angle. First of all, the use of the blow-up method allows one to reduce to the case where $\O=B$ is the unit ball, $u$ is a step function of the form
$$u(x)=
\begin{cases}
a & \text{ if }  x\cdot \nu <0,\\
b & \text{ if }  x\cdot \nu >0,
\end{cases}$$
for some $a$, $b \in \R^2$ with $a \neq b$ and $\nu \in \mathbb S^1$ (with a jump set corresponding to the diameter of $B$ orthogonal to $\nu$), and, see Lemma \ref{lemma afterSections}, such that
\begin{equation}\label{eq:convzero}
\int_{\{|e(u_\e)|^2\leq \kappa/\e\}} |e(u_\e)|^2\, dx \to 0.
\end{equation}
To make our strategy of proof more transparent, we assume that $a \cdot \nu\neq b \cdot \nu$. A standard argument based on Fubini's Theorem shows that the one-dimensional section of $u_\e$ in the direction $\nu$ passing through the point $y$, namely $t \mapsto (u_\e)_y^\nu(t):=u_\e(y+t\nu)\cdot \nu$ converges (in measure) to the step function 
$$t \mapsto u_y^\nu(t)=a\cdot \nu \mathds{1}_{\R^-}+b\cdot \nu \mathds{1}_{\R^+}.$$ 
Let us denote by $\mathbf T^\e$ the triangulation on which $u_\e$ is (continuous and) piecewise affine. We further denote by $\mathbf T^\e_b$ the familly of all triangles $T \in \mathbf T^\e$ such that  $|e(u_\e)_{|T}|^2 > \kappa/\e$. Thanks to \eqref{eq:convzero}, we show that almost every line orthogonal to $J_u \cap B$ must cross at least one triangle $T\in \mathbf T^\e_b$ (see Lemma \ref{lemma1}). The reason is that if, for some $y \in J_u \cap B$, the line $y+\R \nu$ intersects no such triangles, then $(u_\e)_y^\nu$ would be bounded in $H^1$ (because $|((u_\e)_y^\nu)'| \leq |e(u_\e)(y+t\nu)|$) and thus, it would converge weakly in that space to a constant function, contradicting that $a \cdot\nu \neq b\cdot \nu$. This information allows one to get a bad lower bound for the surface energy with $1/2$ multiplicative factor. It suggests to improve the previous argument by showing that ``many'' lines $y+\R\nu$ passing through $y \in J_u \cap B$ must actually cross at least two triangles in $\mathbf T^\e_b$, which is the object of Lemma \ref{lemma2}. To do that, we show in Lemma \ref{lemma3} that there are very few points $y$ in $J_u \cap B$ such that the line $y+\R\nu$ crosses exactly one triangle $T \in \mathbf T^\e_b$. Indeed, in that case, up to a small error, the function $(u_\e)_y^\nu$ would have to pass from the value $a \cdot \nu$ to $b\cdot \nu$ inside $T$. Due to the particular shape of a triangle and of the fact that $u_\e$ is affine inside $T$, this could only happen for at most two values of $y$. Moreover, if $y$ is far away from these two values, the variation of $(u_\e)_y^\nu$ across the triangle $T$ is not sufficient, and it becomes necessary to cross an additional triangle $T'$ in $\mathbf T^\e_b$. With this improvement, we can now construct two disjoint families of triangles with the property that both families project onto $J_u \cap B$ into two sets of almost full $\HH^1$ measure (see Lemma \ref{lemma F_1 F_2}). It enables one to compensate the bad multiplicative factor $1/2$ in the previous argument, and obtain the expected lower bound with the correct constant corresponding to $\kappa \sin\theta_0$ (see Proposition \ref{prop Jump part}). In \cite{AT}, the right factor in the lower estimate of the jump part comes from the two transitions of the phase field approximating $J_u$, one from each side of the jump set. Similarily here, the same role is played by the characteristic function $\chi_\e = \mathds{1}_{\left\{ \left\lvert e(u_\e) \right\rvert^2 > \kappa / \e \right\}}$. Indeed, having in mind the optimal triangulation computed in the upper bound (see \cite[Appendix A]{CDM}) and knowing that almost every line orthogonal to $J_u \cap B$ crosses at least two distinct triangles of $\mathbf T^\e_b$, we expect these triangles to form a neighbourhood of $J_u$ as in Figure \ref{fig:intro}. Therefore, $\LL^1 \left( \left\{ \left( \chi_\e \right)^\nu_y  = 1 \right\} \right) = \e \sin \theta_0$ which leads to the right lower bound, independently of the amplitude of the jump $(b-a) \cdot \nu$.

\begin{figure}[hbtp]
\begin{tikzpicture}[x=3.0cm,y=3.0cm]
\clip(-2.5,-1.9) rectangle (3,1.2);
\draw [line width=1.2pt] (0.,0.) circle (1.0016155954677715);
\draw [->,line width=1.2pt] (0.,0.) -- (1.0016155954677715,0.);
\draw (1.035114496479055,0.05) node[anchor=north west] {$ \nu$};
\draw (-1.138077206481706,-0.4747998746194317) node[anchor=north west] {$ \mathbf{ B}$};
\draw [color=ffqqqq](-0.4154642018026352,1.2434132698396734) node[anchor=north west] {$ \mathbf{J_u \cap B = \Pi_\nu \cap B}$};
\draw [line width=1.pt] (0.06,0.96)-- (-0.06,0.84);
\draw [line width=1.pt] (-0.06,0.84)-- (0.06,0.72);
\draw [line width=1.pt] (0.06,0.72)-- (-0.06,0.6);
\draw [line width=1.pt] (-0.06,0.6)-- (0.06,0.48);
\draw [line width=1.pt] (0.06,0.48)-- (-0.06,0.36);
\draw [line width=1.pt] (-0.06,0.36)-- (0.06,0.24);
\draw [line width=1.pt] (0.06,0.24)-- (-0.06,0.12);
\draw [line width=1.pt] (-0.06,0.12)-- (0.06,0.);
\draw [line width=1.pt] (0.06,0.)-- (-0.06,-0.12);
\draw [line width=1.pt] (-0.06,-0.12)-- (0.06,-0.24);
\draw [line width=1.pt] (0.06,-0.24)-- (-0.06,-0.36);
\draw [line width=1.pt] (-0.06,-0.36)-- (0.06,-0.48);
\draw [line width=1.pt] (0.06,-0.48)-- (-0.06,-0.6);
\draw [line width=1.pt] (-0.06,-0.6)-- (0.06,-0.72);
\draw [line width=1.pt] (0.06,-0.72)-- (-0.06,-0.84);
\draw [line width=1.pt] (-0.06,-0.84)-- (0.06,-0.96);
\draw [line width=1.pt,color=ffqqqq] (0.,1.0016155954677715)-- (0.,-1.0016155954677715);
\draw [color=qqqqff](-0.17994588916649362,-1.15) node[anchor=north west] {$ \varepsilon \sin \theta_0 $};
\draw [line width=1.pt] (-0.9085041183316239,0.42172748079623285)-- (0.9184164051054441,0.399681257901157);
\draw [line width=1.pt] (-0.9085041183316239,-1.810043134063613)-- (-0.06,-1.8202823922470703);
\draw [line width=1.pt] (0.06,-1.8217304830170136)-- (0.918416405105444,-1.8320893569586891);
\draw [line width=1.pt] (-0.06,-1.6859428023063017)-- (0.06,-1.6873908930762447);
\draw [line width=1.pt] (-0.06,-1.6859428023063017)-- (-0.06,-1.8202823922470703);
\draw [line width=1.pt] (0.06,-1.6873908930762447)-- (0.06,-1.8217304830170136);
\draw [line width=1.pt] (0.06,0.96)-- (0.06,-0.96);
\draw [line width=1.pt,color=qqqqff] (0.06,-0.96)-- (-0.06,-0.96);
\draw [line width=1.pt,color=qqqqff] (0.06,0.96)-- (-0.06,0.96);
\draw [line width=1.pt] (-0.06,-0.96)-- (-0.06,0.96);
\draw [line width=1.2pt,color=qqqqff] (-0.065,-1.393)-- (0.065,-1.393);
\draw [line width=1.pt,color=qqqqff] (-0.062,-1.393)-- (-0.045,-1.365);
\draw [line width=1.pt,color=qqqqff] (-0.062,-1.393)-- (-0.045,-1.421);
\draw [line width=1.pt,color=qqqqff] (0.062,-1.393)-- (0.045,-1.365);
\draw [line width=1.pt,color=qqqqff] (0.062,-1.393)-- (0.045,-1.421);
\draw (-0.9,-1.6) node[anchor=north west] {$\left( \chi_\varepsilon \right)^\nu_y = 0$};
\draw (0.4,-1.63) node[anchor=north west] {$0$};
\draw (-0.08,-1.53) node[anchor=north west] {$ 1$};
\draw (0.954824162625825,0.5315056430077078) node[anchor=north west] {$\mathbf{ B}^\nu_y$};
\draw (-0.222767400554883,0.40839379776608964) node[anchor=north west] {$ y$};
\begin{scriptsize}
\draw [fill=uuuuuu] (0.,0.) circle (1.2pt);
\draw [fill=uuuuuu] (0.,0.41) circle (1.2pt);
\end{scriptsize}
\end{tikzpicture}
\caption{}
\label{fig:intro}
\end{figure}

\medskip

To conclude this introduction, let us mention that the originality of this work is twofold. First of all, we are able to provide a deterministic discrete finite element approximation result of the Griffith functional with isotropic surface energies. In particular, our approach does not require any unnatural dependence of the approximating energy with respect to the skew symmetric part of the gradient (in the context of linear elasticity) nor the use of stochastic meshes. Second, our method relies on an unusual application of the slicing method, which is rather employed in $\Gamma$-convergence analysis to reduce the dimension of the problem to a one-dimension study. Here, we instead use this method as a tool to enumerate in a non trivial way the number of triangles needed to derive the correct multiplicity in the surface energy.

\subsection{Organisation of the paper} 

In Section \ref{sec:2}, we collect useful notation and preliminary results that will be useful in the subsequent sections. Section 3 is devoted to show our main result, Theorem \ref{thm:GSBD}. It is divided into three parts: a first one consisting in a compactness result, Proposition \ref{prop:comp}, a second one corresponding to the lower bound inequality, Proposition \ref{lower bound}, and a last one for the upper bound inequality, Proposition \ref{upper bound} through the construction of a recovery sequence. Eventually, in Section \ref{sec:4}, we extend the previous $\Gamma$-convergence analysis allowing for Dirichlet boundary conditions formulated in a suitable way at the discrete and continuum levels (see Theorem \ref{prop:Gamma_CV_Dirichlet}). We  then deduce the fundamental property of $\Gamma$-convergence, Corollary \ref{thm:CV_minimizers}, in our specific setting, i.e., the convergence of minimizers as well as the minimum value.

\section{Notation and preliminaries}\label{sec:2}

\noindent \textbf{Vectors.} The Euclidean scalar product between two vectors $x$ and $y \in \R^n$ is denoted by $x \cdot y$, and the associated Euclidean norm by $|x|:=\sqrt{x\cdot x}$. For $x \in \R^n$ and $\varrho>0$, we denote by $B_\varrho(x):=\{y \in \R^n : \, |x-y|<\varrho\}$ the open ball centered at $x$ with radius $\varrho$. If $x=0$, we simply write $B_\varrho$ instead of $B_\varrho(0)$. The notation $\mathbb S^{n-1}$ stands for the unit sphere $\partial B_1$.

\medskip

\noindent \textbf{Matrices.} The set of all real $m \times n$ matrices is denoted by $\mathbb M^{m \times n}$, and the subset of symmetric real $n \times n$ matrices by $\mathbb M^{n \times n}_{\rm sym}$. It will be endowed with the Froebenius scalar product $A:B:={\rm tr}(A^TB)$ and the associated Froebenius norm $|A|:=\sqrt{A:A}$. 

Given two vectors $a$ and $b \in \R^n$, the tensor product between $a$ and $b$ is defined as $a \otimes b:= a b^T \in \mathbb M^{n \times n}$ and the symmetric tensor product by $a \odot b:=(a \otimes b + b \otimes a)/2 \in \Ms$.

\medskip

\noindent \textbf{Measures.} The Lebesgue and the $k$-dimensional Hausdorff measures in $\R^n$ are  respectively denoted by $\LL^n$ and $\HH^k$. If $U$ is a bounded open set of $\R^n$ and $Y$ is an Euclidean space, we denote by $\M(U;Y)$ the space of $Y$-valued bounded Radon measures in $U$ which, according to the Riesz Representation Theorem, can be identified to the dual of $C_0(U;Y)$ (the closure of $C_c(U;Y)$ for the sup-norm in $U$). For $\mu \in \M(U;Y)$, its total variation is denoted by $|\mu|$.

\medskip

\noindent \textbf{Functional spaces.}  We use standard notation for Lebesgue and Sobolev spaces. 
If $U$ is a bounded open subset of $\R^n$, we denote by $L^0(U;\R^m)$ the set of all $\LL^n$-measurable functions from $U$ to $\R^m$. We recall that a sequence $\{g_k\}_{k \in \N}$ in $L^0(U;\R^m)$ converges in measure to $g \in L^0(U;\R^m)$ if for all $\e > 0$, 
$$\LL^n \left( \left\{ x \in U: \, \left\lvert g_k(x) - g(x) \right\rvert > \e \right\} \right)\to 0.$$
Note that, for any fixed constant $M > 0$, we can define the following mapping 
\begin{equation}\label{eq:d_M}
d_M : (g,h) \in L^0(U;\R^m) \times L^0(U;\R^m) \mapsto \int_U M \wedge \left\lvert g-h \right\rvert \, d x \, \in \R ^+
\end{equation}
which turns out to be a distance over $L^0(U;\R^m)$, with the property that $g_k$ converges in measure to $g$ if and only if $d_M(g_k,g)\to 0$. It confers to $L^0(U;\R^m)$ a metric space structure.

\medskip

\noindent\textbf{Functions of bounded variation and sets of finite perimeter.} We refer to \cite{AFP} for an exhaustive treatment on that subject and just recall few notation. Let $U \subset \R^n$ be a bounded open set. A function $u \in L^1(U;\R^m)$ is a {\it function of bounded variation} in $U$, and we write $u \in BV(U;\R^m)$, if its distributional derivative $Du$ belongs to $\M(U;\mathbb M^{m \times n})$. We use standard notation for that space, referring to \cite{AFP} for details. We just recall that a function $u$ belongs to $SBV^2(U;\R^m)$ if $u \in SBV(U;\R^m)$ (the distributional derivative $Du$ has no Cantor part), its approximate gradient $\nabla u$ belongs to $L^2(U;\mathbb M^{m\times n})$ and its jump set $J_u$ satisfies $\HH^{n-1}(J_u)<\infty$.

A Lebesgue measurable set $A \subset \R^n$ is a {\it set of finite perimeter} in $U$ if its characteristic function $\mathds{1}_A$ belongs to $BV(U;\R^n)$. The reduced boundary of $A$ is denoted by $\partial^* A$ and the essential (or measure theoretic) boundary is denoted by $\partial_* A$. For every $t \in [0,1]$, we denote by $A^{(t)}$ the set of points where $A$ has density $t$. 

We also recall that a partition $\mathcal P=\{P_i\}_{i \in \N}$ of an open set $U$ is a {\it Cacciopoli partition} if each $P_i$ have finite perimeter in $U$, and $\sum_{i \in \N} |D\mathds{1}_{P_i}|(U)<\infty$. In that case, 
$$\bigcup_{i \in \N} (P_i)^{(1)} \cup \bigcup_{i,j \in \N, \, i \neq j} \partial^* P_i \cap \partial^* P_j$$
contains $\HH^{n-1}$-almost all of $U$ (see \cite[Section 4.4]{AFP}). In the sequel (as in \cite[Theorem 2.5]{CCminimization}), we will sometimes use the following notation for Caccioppoli partitions:
$$\mathcal P ^{(1)} := \bigcup_{i \in \N} {P_i}^{(1)}, \quad \partial^* \mathcal P := \bigcup_{i \in \N} \partial^* P_i.$$

\medskip

\noindent\textbf{(Generalized) functions of bounded deformation.} A function $u \in L^1(U;\R^n)$ is a {\it function of bounded deformation}, and we write $u \in BD(U)$, if its distributional symmetric gradient $Eu:=(Du+Du^T)/2$ belongs to $\M(U;\Ms)$. We refer to \cite{Suquet,Temam,ST1,ACDM,BCDM} for the main properties and notation of that space. The space $SBD^2(U)$ is made of all functions $u \in SBD(U)$ ($Eu$ has no Cantor part) such that the approximate symmetric gradient $e(u)$ (the absolutely continuous part of $Eu$ with respect to $\LL^n$) belongs to $L^2(U;\mathbb M^{n\times n}_{\rm sym})$ and its jump set $J_u$ satisfies $\HH^{n-1}(J_u)<\infty$.

\medskip

We now recall the definition and the main properties of the space of {\it generalized functions of bounded deformation} introduced in \cite{DM2}. We first need to introduce some notation. Let $\xi \in \mathbb S^{n-1}$, we denote by $\Pi_\xi:=\{y \in \R^n : \; y \cdot \xi=0\}$ the orthogonal space to $\xi$ and by $p_\xi$ the orthogonal projection onto $\Pi_\xi$. For every set $B \subset \R^n$, we define  for $\xi \in \mathbb S^{n-1}$ and $y \in \R^n$, 
$$B_y^\xi:=\{t \in \R : \; y+t\xi \in B\}, \quad B^\xi:=p_\xi(B)$$
and, for every (vector-valued) function $u:B \to \R^n$ and (scalar-valued) function $f:B \to \R$,
$$u_y^\xi(t):=u(y+t\xi)\cdot\xi, \quad f_y^\xi(t)=f(y+t\xi)\quad \text{ for all } y \in \R^n \text{ and all } t \in B_y^\xi.$$

\begin{defn}
Let $U \subset \R^n$ be a bounded open set and $u \in L^0(U;\R^n)$. Then, $u \in GBD(U)$ if there exists a nonnegative measure $\lambda \in \M(U)$ such that one of the following equivalent conditions holds true for every $\xi \in \mathbb S^{n-1}$:
\begin{enumerate}
\item for every $\tau \in C^1(\R)$ with $-\frac12 \leq \tau \leq \frac12$ and $0 \leq \tau' \leq 1$, the partial derivative $D_\xi(\tau(u \cdot \xi))=D(\tau(u\cdot\xi))\cdot \xi$ belongs to $\M(U)$, and
$$|D_\xi(\tau(u\cdot\xi))|(B) \leq \lambda(B) \quad \text{ for every Borel set }B \subset U;$$

\item $u_y^\xi \in BV_{\rm loc}(U_y^\xi)$ for $\HH^{n-1}$-a.e. $y \in U^\xi$, and$$\int_{\Pi_\xi} \left(|Du_y^\xi|(B_y^\xi \setminus J_{u_y^\xi}^1) + \HH^0(B_y^\xi \cap J_{u_y^\xi}^1) \right)d\HH^{n-1}(y)\leq \lambda(B)\quad \text{ for every Borel set }B \subset U,$$
where $J_{u_y^\xi}^1:=\{t \in J_{u_y^\xi} : \; |[u_y^\xi](t)|\geq 1\}$.
\end{enumerate}
The function $u$ belongs to $GSBD(U)$ if $u \in GBD(U)$ and $u_y^\xi \in SBV_{\rm loc}(U_y^\xi)$ for every $\xi \in \mathbb S^{n-1}$ and for $\HH^{n-1}$-a.e. $y \in U^\xi$.
\end{defn}

Every $u \in GBD(U)$ has an approximate symmetric gradient $e(u) \in L^1(U;\Ms)$ such that for  every $\xi \in \mathbb S^{n-1}$ and for $\HH^{n-1}$-a.e. $y \in U^\xi$,
$$e(u)(y+t\xi)\xi \cdot \xi=(u_y^\xi)'(t) \quad \text{ for $\LL^1$-a.e. $t \in U_y^\xi$.}$$
Moreover, the jump set $J_u$ of $u \in GBD(U)$, defined as the set of all $x_0 \in U$  for which there exist $(u^+(x_0),u^-(x_0),\nu_u(x_0)) \in \R^n \times \R^n \times  \mathbb S^{n-1}$ with $u^+(x_0) \neq u^-(x_0)$ such that the function
$$y\in B_1  \mapsto u_{x_0,\varrho}:=u(x_0+\varrho y)$$
converges in measure in $B_1$ as $\varrho \searrow 0$ to
$$y \in B_1 \mapsto
\begin{cases}
u^+(x_0) & \text{ if }y \cdot \nu_u(x_0)>0,\\
u^-(x_0) & \text{ if }y \cdot \nu_u(x_0)\leq 0,
\end{cases}$$
is countably $(\HH^{n-1},n-1)$-rectifiable. Finally, the energy space $GSBD^2(U)$ is defined as
$$GSBD^2(U):=\{u \in GSBD(U): \; e(u) \in L^2(U;\Ms), \, \HH^{n-1}(J_u)<\infty\}.$$

\section{Proof of the main result}\label{sec:3}

Let us introduce the $\Gamma$-lower and upper limits (with respect to the topology of convergence in measure) $\F'$ and $\F'':L^0(\O;\R^2) \to [0,+\infty]$ defined by
$$\F'(u):=\inf\left\{\liminf_{\e \to 0}\F_\e(u_\e) : \; u_\e \to u \text{ in measure  in } \O \right\},$$
and
$$\F''(u):=\inf\left\{\limsup_{\e \to 0}\F_\e(u_\e) : \; u_\e \to u \text{ in measure in } \O \right\},$$
for all $u \in L^0(\O;\R^2) $. 

\subsection{Domain of the $\Gamma$-limit}
We begin our analysis by identifying the domain of finiteness of the $\Gamma$-limit.

\begin{prop}\label{prop:comp}
Let $\{\e_k\}_{k \in \N}$ satisfying $\e_k \to 0$, $u \in L^0(\O;\R^2)$ and $\{ u_k \}_{k \in \N} \subset L^0(\O;\R^2)$ be such that $M := \sup_k \F_{\e_k}(u_k) < \infty$ and $u_k \to u $ in measure in $\O$. Then, $u \in GSBD^2(\O)$.
\end{prop}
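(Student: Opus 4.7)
The plan is to follow the strategy sketched in Section~1.4: modify $u_k$ into an auxiliary $SBV^2$ function $v_k$ by zeroing it on the triangles where the elastic strain is atypically large, then invoke the general $GSBD^2$ compactness theorem of \cite{CCminimization} and identify the limit with $u$.

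First, let $\mathbf T^k \in \mathcal T_{\e_k}(\O)$ be a triangulation on which $u_k$ is piecewise affine, and fix a threshold $\tau > 0$ large enough that $f(\tau) \geq \kappa/2$ (possible thanks to the last two conditions in \eqref{eq:f}). Define
$$\mathbf T^k_b := \{ T \in \mathbf T^k : \; \e_k \, \mathbf A e(u_k)_{|T} : e(u_k)_{|T} > \tau \}, \qquad \O_k^b := \bigcup_{T \in \mathbf T^k_b} T.$$
Since $f$ is nondecreasing, every bad triangle contributes at least $\kappa/(2\e_k)$ per unit area to $\F_{\e_k}(u_k)$, whence $\LL^2(\O_k^b) \leq 2M\e_k/\kappa \to 0$. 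On the complement, the continuity of $f$ together with $f(0)=0$ and $f(t)/t \to 1$ as $t \to 0^+$ yields a constant $c_0 > 0$ such that $f(t) \geq c_0 t$ on $[0,\tau]$; combined with the coercivity \eqref{eq:A} of $\mathbf A$, this produces the bulk bound $\int_{\O \setminus \O_k^b} |e(u_k)|^2\, dx \leq M/(c_0 \alpha)$.

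Next, I would set $v_k := u_k \mathds{1}_{\O \setminus \O_k^b}$. Being piecewise affine on $\mathbf T^k$, $v_k$ belongs to $SBV^2(\O;\R^2) \subset GSBD^2(\O)$, with $e(v_k) = e(u_k)\mathds{1}_{\O \setminus \O_k^b}$ and $J_{v_k} \subset \bigcup_{T \in \mathbf T^k_b} \partial T$. The mesh regularity forces each triangle to have bounded aspect ratio and inradius of order $\e_k$, yielding the elementary estimate $\HH^1(\partial T) \leq C(\theta_0)\LL^2(T)/\e_k$; summing over bad triangles and invoking the area bound of the previous step gives $\HH^1(J_{v_k}) \leq 2C(\theta_0)M/\kappa$. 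Thus $\int_\O |e(v_k)|^2\, dx + \HH^1(J_{v_k})$ is uniformly bounded.

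Finally, applying \cite[Theorem~1.1]{CCminimization} to $\{v_k\}$ produces, along a subsequence, a Caccioppoli partition $\{A_j\}_{j \in \N}$ of $\O$, piecewise rigid motions $a_k = \sum_j a_k^j \mathds{1}_{A_j}$, and a function $v \in GSBD^2(\O)$ such that $v_k - a_k \to v$ in measure on $\O$. Since $\LL^2(\{v_k \neq u_k\}) \leq \LL^2(\O_k^b) \to 0$ and $u_k \to u$ in measure, $v_k$ also converges to $u$ in measure, so $a_k \to u - v$ in measure. The piecewise rigid structure of $a_k$ on the fixed partition $\{A_j\}$, combined with the finite dimensionality of rigid motions (on each $A_j$, convergence in measure of a sequence of affine maps forces uniform convergence on compact subsets), forces $u - v$ to be piecewise rigid on $\{A_j\}$, hence an element of $GSBD^2(\O)$. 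Therefore $u = v + (u - v) \in GSBD^2(\O)$. The main anticipated obstacle is this last identification step: reconciling the piecewise rigid motion shift intrinsic to the $GSBD^2$ compactness theorem with the prescribed convergence $u_k \to u$ in measure, whereas the truncation and perimeter estimates are essentially bookkeeping once the threshold $\tau$ and the mesh regularity are accounted for.
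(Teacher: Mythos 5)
Your overall strategy --- zero $u_k$ on the triangles where the strain is large to produce $v_k \in SBV^2(\O;\R^2)$, establish uniform bounds on $\int_\O |e(v_k)|^2\,dx$ and $\HH^1(J_{v_k})$, and invoke $GSBD^2$-compactness --- matches the paper's. There are two points of divergence: one is a genuine gap, the other a legitimate alternative route.

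The gap lies in the claimed global bound $\HH^1(J_{v_k}) \leq 2C(\theta_0)M/\kappa$. Triangles of $\mathbf T^k$ need only intersect $\O$, and a bad triangle $T$ touching $\partial\O$ may have most of its area outside $\O$; the energy $\F_{\e_k}(u_k)$ only controls $\LL^2(T\cap\O)$, not $\LL^2(T)$. Since the ratio $\LL^2(T)/\LL^2(T\cap\O)$ is unbounded over such boundary triangles, your estimate $\LL^2(\O_k^b) \leq 2M\e_k/\kappa$ and the ensuing perimeter bound on $J_{v_k}$ break down near $\partial\O$. The paper avoids this by restricting to open sets $U \subset\subset \O$: for $k \geq k_U$, every triangle meeting $U$ lies in $\O$, so $\HH^1(J_{v_k}\cap U) \leq 6M/(K\sin\theta_0)$, but only locally. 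The compactness theorem is then applied on each set $U_m$ of an exhaustion, and a nontrivial patching argument follows: the paper builds a dominating measure $\lambda\in\M(\O)$ from the local weak* limits of $\lambda_k := |e(v_k)|\,\LL^2\res\O + \HH^1\res J_{v_k}$, verifies the slicing estimate $|D_\xi(\tau(u\cdot\xi))|\leq\lambda$ via the $BV$ chain rule, and only then concludes $u\in GBD(\O)$, hence $u \in GSBD^2(\O)$. That localization and patching is essential and is entirely absent from your argument; without it, applying the compactness theorem on all of $\O$ is not justified.

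Separately, you invoke \cite[Theorem~1.1]{CCminimization}, which outputs a Caccioppoli partition and piecewise rigid corrections $a_k$, whereas the paper uses \cite[Theorem~1.1]{CC}, which takes as input exactly the information available here --- that $v_k\to u$ in measure and the divergence set $\{|u_k|\to\infty\}$ is negligible --- and yields the conclusion directly, with no rigid-motion correction to undo. Your closing identification is sound in principle: since $v_k$ and $v_k-a_k$ both converge in measure, so does $a_k$, and its limit $u-v$ is piecewise rigid on the Caccioppoli partition, hence $u = v + (u-v)\in GSBD^2(\O)$. In fact one can say more: $|a_k^i-a_k^j|\to\infty$ for $i\neq j$ while $a_k$ converges, forcing the partition to be trivial and $u-v$ to be a single rigid motion. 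This alternative route is workable, but it does not repair the boundary gap above.
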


\begin{proof}
According to the properties \eqref{eq:f} satisfied by $f$, for all $\delta >0$, there exists a constant $0<K<\kappa$ such that 
\begin{equation}\label{eq:dependance in delta}
f(t) \geq K \wedge [ (1-\delta) t ]\quad \text{ for all }t \geq 0.
\end{equation}
Indeed, since $f(t)/t \to 1$ as $t \to 0^+$, there exists $t^* >0$ such that $f(t)/t\geq 1 - \delta$ for all $t \in [0,t^*]$ and $K:=(1-\delta)t^* <\kappa$. Hence, for all $t \in [0,t^*]$, we have $f(t) \geq (1 - \delta)t $, while for all $t >t^*$, as $f$ is nondecreasing, $f(t) \geq f(t^*) \geq K$.

\medskip

By definition of $\F_{\e_k}$, there exists a triangulation $\mathbf T^k \in \mathcal T_{\e_k}(\O)$ such that $u_k \in V_{\e_k}(\O)$ is affine on each triangle $T \in \mathbf T^k$. We introduce the characteristic functions 
$$ \chi_k := \mathds{1}_{\left\{ (1-\delta) \mathbf A e(u_k):e(u_k) \, \geq \, \frac{K}{\e_k} \right\} } \in L^\infty (\O; \{ 0,1\} )$$
which are constant on each triangle $T \in \mathbf{T}^k$, so that 
$$D_k :=\{\chi_k=1\}\cap \O=\bigcup_{i=1}^{N_k} (T_i^k \cap \O)$$
 for some triangles $T_i^k \in \mathbf T^k$. Remark that this choice of $\chi_k$ implies that 
$$M\geq \F_{\e_k}(u_k) \geq  (1-\delta) \int_\O (1 - \chi_k) \mathbf A e(u_k):e(u_k) \, dx + \frac{K}{\e_k} \int_\O \chi_k \, dx,$$
forcing $\chi_k$ to converge to $0$ in $L^1(\O)$ since $ 0 \leq \int_\O \chi_k\, dx \leq  K^{-1}M\e_k \to 0$.

\medskip 

Let $v_k:= (1-\chi_k)u_k$ so that,  by \cite[Theorem 3.84]{AFP}, $v_k \in SBV^2(\O;\R^2)$ with $\nabla v_k= (1-\chi_k)\nabla u_k$ and 
$$J_{v_k}{{}}\subset \O \cap \partial D_k \subset \bigcup_{i=1}^{N_k} \partial T_i^k.$$
Note that 
\begin{equation}\label{eq:for simpler compacity}
v_k \to u \text{ in measure in } \O \text{ and } A :=\{ x \in \O : \, \lvert u_k(x) \rvert \to \infty \} \text{ is } \LL^2\text{-negligible}.
\end{equation}
Indeed, since $u_k \to u$ in measure in $\O$ and $\{u_k \neq v_k\} \subset D_k$ with $\LL^2(D_k) \to 0$, for all $\eta > 0$, we get that $\LL^2 \left( \left\{ \left\lvert v_k - u \right\rvert > \eta \right\}  \right) \leq \LL^2 \left( \left\{ \left\lvert u_k - u \right\rvert > \eta \right\} \right) + \LL^2(D_k)\to 0.$ Additionally, up to a subsequence (not relabeled), $u_k(x) \to u(x) \in \R^2$ for $\LL^2$-a.e. $x \in \O$. 

\medskip

On the one hand, using the energy bound $\F_{\e_k}(u_k) \leq M$ and the ellipticity property \eqref{eq:A} of $\mathbf A$, we infer that 
\begin{equation}\label{eq:ev_k bounded}
\int_\O |e(v_k)|^2\, dx \leq \frac{M}{(1-\delta)\alpha}.
\end{equation}
On the other hand, by definition of an admissible triangulation, the edges of each triangle $T_i^k$ have length greater than or equal to $\e_k$ and their angles are all greater than or equal to $\theta_0$, so that the heights of such triangles must be greater than or equal to $\e_k \sin \theta_0$. Therefore, for all $1 \leq i \leq N_k$,
$$ \LL^2(T_i^k) \geq \frac12 (\e_k \sin \theta_0) \frac{\HH^1(\partial T_i^k)}{3} $$
which implies that for all open subset $ U \subset \subset \O$ :
$$\HH^1(J_{v_k} \cap U ) \leq \frac{6}{\sin\theta_0}\sum_{i \in \{ 1,\ldots,N_k \} ,\, T_i^k \cap U \neq \emptyset} \frac{\LL^2(T_i^k)}{\e_k}.$$
Let $k_U \geq 1$ (depending on $U$) be such that for all $k \geq k_U$, any triangle $T \in \mathbf T^k$ intersecting $U$ is contained in $\O$, then it follows that for all $k \geq k_U$,
\begin{equation}\label{eq:Jv_k bounded}
\HH^1(J_{v_k} \cap U ) \leq \frac{6}{\e_k\sin\theta_0} \int_\O \chi_k \, dx \leq \frac{6 M}{K \sin\theta_0},
\end{equation}
where we used once more the energy bound $\F_{\e_k}(u_k) \leq M$. 

\medskip

Gathering \eqref{eq:ev_k bounded} and \eqref{eq:Jv_k bounded}, we can apply the $GSBD^2$-compactness Theorem (\cite[Theorem 1.1]{CC}). Together with \eqref{eq:for simpler compacity}, it ensures the existence of a subsequence (depending on the open subset $U$, which we do not relabel) such that $u_{|U} \in GSBD^2(U)$, $$
 e(v_k)_{|U} \wto e (u_{|U}) \, \text{ weakly in } L^2(U ;\mathbb M^{2 \times 2}_{\rm sym}) \, \text{ and } \, \HH^1(J_u \cap U ) \leq \liminf_{ k\to \infty} \, \HH^1(J_{v_k} \cap U).
$$
We then consider an exhaustion of $\O$ by a sequence of open subsets $\{U_m\}_{m \in \N}$ satisfying $U_m \subset\subset U_{m+1} \subset\subset \O$ for all $m \in \N$ and $\bigcup_m U_m = \O$. Using a diagonal extraction argument, we can find a subsequence (still denoted by $\{v_k \}_{k\in \N}$) such that for all $m \in \N$, $u_{|U_m} \in GSBD^2(U_m)$ and
\begin{equation}\label{eq:conv}
\ds e(v_k)_{|U_m} \wto e (u_{|U_m})   \text{ weakly in } L^2(U_m ;\mathbb M^{2 \times 2}_{\rm sym}) \text{ and }  \HH^1(J_u \cap U_m) \leq\liminf_{ k\to\infty} \HH^1(J_{v_k} \cap U_m).
\end{equation}

\medskip

Let us now check that $u$ belongs to $GSBD^2(\O)$. Indeed, let $\xi \in \mathbb S^1$ and $\tau \in C^1(\R)$ be such that $\left\lvert \tau \right\rvert \leq \frac12$ and $0 \leq \tau' \leq 1$. For all test function $\phi \in C^\infty_c(\O)$, there exists $m \in \N$ such that ${\rm supp} \, \phi \subset U_m$ so that, owing to the dominated convergence Theorem,
$$
\ds \langle D_\xi \left( \tau (u \cdot \xi) \right), \phi \rangle = -\int_{U_m } \tau(u \cdot \xi) D_\xi \phi \, dx = -\lim_{k\to \infty} \int_{U_m} \tau ( v_k \cdot \xi) D_\xi \phi \, dx = \lim_{k\to \infty} \langle D_\xi(\tau ( v_k \cdot \xi) ) , \phi \rangle.
$$
Since $v_k \cdot \xi \in SBV^2(\O)$, using the chain rule formula in $BV$ (\cite[Theorem 3.96]{AFP}), we get that $\tau ( v_k \cdot \xi) \in SBV^2(\O)$ with
$$
D_\xi\left( \tau ( v_k \cdot \xi) \right) = \tau'(v_k \cdot \xi) e(v_k) : (\xi \otimes \xi) \, \LL^2 \res \O + \left( \tau( v_k^+ \cdot \xi) - \tau( v_k^- \cdot \xi) \right) \nu_{v_k} \cdot \xi \, \HH^1\res J_{v_k}.
$$
Taking the variation, we infer that
$$\left|D_\xi\left( \tau ( v_k \cdot \xi) \right)\right| \leq |e(v_k)|\LL^2\res \O + \HH^1\res J_{v_k} =:\lambda_k.$$ 
As a consequence of \eqref{eq:ev_k bounded} together with \eqref{eq:Jv_k bounded}, the sequence $\{\lambda_k\}_{k \in \N}$  is bounded in $\M(U_m)$ for all $m \in \N$, with 
$$ 
\sup_{k \geq k_{U_m}} \lambda_k(U_m) \leq  \frac{M}{(1-\delta) \alpha} + \frac{6 M }{K \sin \theta_0} =: M_\delta < + \infty ,
$$
so that, up to a further diagonal extraction, $\lambda_k  \res  U_m \wto \lambda^{ (m)}$ weakly* in $\M( \O)$ for some nonnegative measure $\lambda^{(m)} \in \M( \O)$ satisfying, for all $m \in \N$,
$$ 
\lambda^{ (m)}( \O) \leq \liminf_{ k\to \infty} \lambda_k(U_m) \leq M_\delta.
$$
Therefore, we can introduce the following nonnegative measure $\lambda \in \M(\O)$ defined  by
$$ \lambda (B) := \sup_{m\in \N} \lambda^{ (m)}( B)=\lim_{m \to \infty}\lambda^{ (m)}( B) \text{ for all Borel subset } B \subset \O.$$
We thus obtain that 
$$ \lvert \langle D_\xi \left( \tau (u \cdot \xi) \right), \phi \rangle \rvert \leq \lim_{k \to \infty} \langle \lambda_k  \res U_m, \lvert \phi \rvert \rangle = \langle \lambda^{ (m)}, \lvert \phi \rvert \rangle  \leq \langle \lambda, \lvert \phi \rvert \rangle ,$$
implying both that $D_\xi \left( \tau (u \cdot \xi) \right) \in \M(\O)$ according to Riesz Representation Theorem and that
$$|D_\xi( \tau (u \cdot \xi) )| \leq \lambda \quad \text{ in }\M(\O),$$
which shows that $u \in GBD(\O)$. Using next that $u \in GSBD(U_m)$ for all $m \in \N$ and \cite[Definition 4.2]{DM2}, we deduce that $u \in GSBD(\O)$. Eventually, by locality of the definition of the approximate symmetric gradient $e(u)$ (see \cite[Formula (9.1)]{DM2}), as a consequence of \eqref{eq:ev_k bounded} together with \eqref{eq:conv}, we infer that $e(v_k) \wto e(u)$ weakly in $L^2(\O;\mathbb M^{2 \times 2}_{\rm sym})$ with $e(u) \in L^2(\O;\mathbb M^{2 \times 2}_{\rm sym})$. Passing to the limit as $m \to \infty$ in the last property of \eqref{eq:conv} and using \eqref{eq:Jv_k bounded} shows that $\HH^1(J_u)<\infty$. All of this establishes that $u \in GSBD^2(\O)$ and completes the proof of the Proposition.
\end{proof}

\begin{rem} We will later improve the previous result (see Proposition \ref{prop:Comp}) by getting rid-off the a priori knowledge that $u_k$ converges in measure in $\O$. The price to pay will be to subtract a sequence of piecewise rigid body motions. Proposition \ref{prop:Comp} will a posteriori justify why the topology of convergence in measure is the natural one to address the $\Gamma$-convergence analysis.
\end{rem}

\subsection{The lower bound}

The proof of the lower bound inequality relies on the blow up method which consists in identifying separately the Lebesgue and jump parts of the energy.

\begin{prop}\label{lower bound}
For all $ u \in L^0(\O;\R^2) $, 
$$\F(u) \leq \F'(u).$$
\end{prop}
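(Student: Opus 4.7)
The plan is to use the blow-up method of Fonseca--M\"uller applied to the nonnegative Radon measures
$$\mu_\e := \frac{1}{\e} f\bigl(\e\, \mathbf A e(u_\e) : e(u_\e)\bigr) \, \LL^2 \res \O,$$
associated to any sequence $\{u_\e\}_\e$ with $u_\e \to u$ in measure and $\liminf_\e \F_\e(u_\e) < +\infty$. By Proposition \ref{prop:comp}, $u \in GSBD^2(\O)$, and up to a subsequence $\mu_\e \wto \mu$ weakly* in $\M(\bar\O)$ with $\mu(\bar\O) \leq \liminf_\e \F_\e(u_\e)$. Writing $\mu = g \LL^2 + h \HH^1 \res J_u + \mu_\perp$ by Radon--Nikodym, the inequality $\F(u) \leq \F'(u)$ reduces by a standard differentiation argument to the two local bounds
$$g(x_0) \geq \mathbf A e(u)(x_0):e(u)(x_0) \quad \LL^2\text{-a.e.}, \qquad h(x_0) \geq \kappa \sin \theta_0 \quad \HH^1\text{-a.e.\ on } J_u.$$

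For the bulk estimate (Proposition \ref{prop:leb} below), I would use that $f(t)/t \to 1$ as $t \to 0^+$: for any $\delta \in (0,1)$ there exists $t^*>0$ with $f(t) \geq (1-\delta)t$ on $[0,t^*]$. Truncate $u_\e$ into $v_\e := (1 - \chi_\e) u_\e$, with $\chi_\e$ the indicator of the union of triangles of $\mathbf T^\e$ on which $\e\, \mathbf A e(u_\e):e(u_\e) > t^*$, exactly as in the proof of Proposition \ref{prop:comp}. Then $e(v_\e)$ is bounded in $L^2$, $v_\e \to u$ in measure, $GSBD^2$-compactness gives $e(v_\e) \wto e(u)$ weakly in $L^2(\O;\Ms)$, and the pointwise inequality $\frac{1}{\e} f(\e \mathbf A e(u_\e):e(u_\e)) \geq (1-\delta)\mathbf A e(v_\e):e(v_\e)$ holds $\LL^2$-a.e. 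At a Lebesgue point $x_0$ of $e(u)$ which is also a differentiation point of $\mu$, a blow-up combined with weak lower semicontinuity of $\xi \mapsto \int \mathbf A \xi : \xi \, dx$ yields $g(x_0) \geq (1-\delta) \mathbf A e(u)(x_0):e(u)(x_0)$, and $\delta \to 0$ concludes.

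The surface bound (Proposition \ref{prop Jump part}) is the heart of the proof. Fix $x_0 \in J_u$ at which $h(x_0)$ exists and the blow-up $u(x_0+\varrho\,\cdot)$ converges in measure on $B_1$ to the step function $\tilde u(y) = a\,\mathds{1}_{\{y\cdot\nu<0\}} + b\,\mathds{1}_{\{y\cdot\nu>0\}}$ with $a = u^-(x_0)$, $b = u^+(x_0)$, $\nu = \nu_u(x_0)$. Via a diagonal extraction (Lemma \ref{lemma afterSections}), I would produce a sequence $\{u_\e\}_\e$ on $B_1$, piecewise affine on $\mathbf T^\e \in \mathcal T_\e(B_1)$, with $u_\e \to \tilde u$ in measure and the auxiliary vanishing property
$$\int_{\{\mathbf A e(u_\e):e(u_\e)\,\leq\,\kappa/\e\}} \mathbf A e(u_\e):e(u_\e)\, dx \ \longrightarrow \ 0.$$
Denoting $\mathbf T^\e_b$ the \emph{bad} triangles on which $\mathbf A e(u_\e):e(u_\e) > \kappa/\e$, and using $\HH^1(J_{\tilde u} \cap B_1) = 2$, the target becomes $\liminf_\e \frac{1}{\e} \sum_{T \in \mathbf T^\e_b} \LL^2(T) \geq 2\sin\theta_0$. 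The key geometric ingredient, coming from the minimum-angle condition, is that in any $T \in \mathcal T_\e(B_1)$ every altitude is at least $\e\sin\theta_0$, hence $\LL^2(T) \geq \frac{\e \sin\theta_0}{2} \HH^1(p_{\nu^\perp}(T))$, where $p_{\nu^\perp}$ is the orthogonal projection onto $\nu^\perp$.

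The hard part is the counting argument. Assume $a\cdot\nu \neq b\cdot\nu$ (WLOG). From $|((u_\e)_y^\nu)'|^2 \leq \alpha^{-1} \mathbf A e(u_\e)(y+t\nu):e(u_\e)(y+t\nu)$ on segments not meeting $\mathbf T^\e_b$, Fubini and the vanishing property force, for $\HH^1$-a.e.\ $y \in J_{\tilde u}\cap B_1$, the line $y+\R\nu$ to cross at least one bad triangle (Lemma \ref{lemma1}); otherwise $(u_\e)_y^\nu$ would converge weakly in $H^1$ to a constant, contradicting the jump. Combined with the geometric inequality, this alone yields the weaker bound $\liminf_\e \frac{1}{\e}\sum_T \LL^2(T) \geq \sin\theta_0$, with the spurious factor $1/2$. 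To remove it, Lemma \ref{lemma3} exploits affineness of $u_\e$ on each triangle: if a single bad triangle $T$ were to carry the full jump along $y+\R\nu$, the linear function $u_\e|_T \cdot \nu$ would have to run from $\approx a\cdot\nu$ to $\approx b\cdot\nu$ across the short chord $T \cap (y+\R\nu)$, which can happen for at most two values of $y$. Consequently for $\HH^1$-a.e.\ $y$ the line meets at least two bad triangles (Lemma \ref{lemma2}), and one can split $\mathbf T^\e_b$ into two disjoint subfamilies $\mathcal F_1, \mathcal F_2$ whose projections onto $J_{\tilde u} \cap B_1$ each have almost full $\HH^1$-measure (Lemma \ref{lemma F_1 F_2}). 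Summing the geometric inequality over the two families compensates the factor $1/2$ and delivers the sharp constant $\kappa\sin\theta_0$. The principal obstacle is precisely this refinement from one to two bad triangles per slice: slicing is used here not to reduce the dimension but to count triangles with the correct multiplicity.
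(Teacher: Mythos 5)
Your overall architecture (blow-up measures, Radon--Nikodym decomposition, truncation for the bulk term, slicing plus a counting argument ``one bad triangle $\Rightarrow$ factor $1/2$, two bad triangles $\Rightarrow$ sharp constant'' for the surface term, and the two disjoint subfamilies $\mathcal F_1,\mathcal F_2$) matches the paper's proof closely, and the bulk estimate is fine. But there is one genuine gap, and it is not a small one.

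You write ``Assume $a\cdot\nu\neq b\cdot\nu$ (WLOG).'' This is not without loss of generality. Since the energy only controls the \emph{longitudinal} slice $(u_\e)_y^\nu(t)=u_\e(y+t\nu)\cdot\nu$, the jump is invisible along the $\nu$-direction whenever $[u](x_0)\cdot\nu=0$, i.e.\ when the jump is a pure shear across $J_u$ (e.g.\ $u=(0,\operatorname{sign}(x))$, $\nu=e_1$, $[u]\cdot\nu=0$). In that case the limit $\tilde u_y^\nu$ is a \emph{constant}, so the ``otherwise $(u_\e)_y^\nu$ would converge to a constant, contradicting the jump'' step in your Lemma~\ref{lemma1} argument produces no contradiction at all: the line $y+\R\nu$ can avoid every bad triangle, and the whole counting argument collapses. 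This is not a degenerate corner case; it occurs on an $\HH^1$-positive portion of $J_u$ for natural displacements, so it cannot be dismissed.

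The paper's proof is built precisely to handle this. Instead of slicing along $\nu$, one picks a direction $\xi\in\mathbb S^1$ with $|\nu-\xi|\leq\eta$ and $[u](x_0)\cdot\xi\neq 0$; such a $\xi$ exists since $\{\xi\in\mathbb S^1:[u](x_0)\cdot\xi=0\}$ is $\HH^1$-null (and one may take $\xi=\nu$ when $[u](x_0)\cdot\nu\neq 0$). But $\xi$ is then generally \emph{not} orthogonal to $J_{\bar u}=\Pi_\nu\cap B$, so the slices are oblique and one must introduce the linear projection $\Phi:z\mapsto z-\frac{\nu\cdot z}{\nu\cdot\xi}\xi$ onto $\Pi_\nu$ along $\xi$, control its Lipschitz constant $\sqrt{1+4\eta^2}$, and carry that factor through Lemmas~\ref{lemma afterSections}--\ref{lemma F_1 F_2} before sending $\eta\to 0$ to recover $\kappa\sin\theta_0$. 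This apparatus ($\xi$, $M_\eta$, $L_\eta$, $\Phi$ and its Lipschitz bound) is not an optional polish step: it is what makes the slicing argument applicable to a general jump. Replacing your ``WLOG'' with this construction is the essential missing ingredient; once it is in place, the remaining steps you outline go through as in the paper.
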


\begin{proof}
Without loss of generality, we can assume that $\F'(u) <\infty$. For any $\zeta > 0$, there exists a sequence $\{u_\e\}_{\e >0}$ such that $u_\e \to u $ in measure in $\O$ and 
$$ \underset{\e \to 0}{\liminf} \, \F_{\e}(u_\e) \leq \F' (u) + \zeta  < \infty .$$
Let us extract a subsequence $\{u_k\} _{k \in \N} := \{u_{\e_k}\}_{k\in \N}$ from $\{u_\e\}_{\e >0}$ such that $u_k \to u$ $\LL^2$-a.e. in $\O$ and
$$\underset{k \to\infty}{\lim} \, \F_{\e_k} (u_k) = \underset{\e \to 0}{\liminf} \, \F_\e (u_\e) < \infty .$$
This implies that, for $k$ large enough, $u_k \in V_{\e_k} (\O)$ and $\sup_k \F_{\e_k} (u_k)< \infty$. By definition of the finite element space $V_{\e_k}(\O)$, there exists a triangulation $\mathbf T^k \in \mathcal T_{\e_k}(\O)$ such that $u_k$ is affine on each $T \in \mathbf T^k$.

\medskip

We first note that, according to Proposition \ref{prop:comp}, $u \in GSBD^2(\O)$. Let us show the lower bound inequality $\F'(u) \geq \F(u)$. To this aim, we introduce the following sequence of Radon measures on $\O$
$$ \lambda_k:=\frac{1}{\e_k} f\big(\e_k  \mathbf A e(u_k):e(u_k) \big)\LL^2\res \O.$$
Since the sequence $\{\lambda_k\}_{k \in \N}$ is uniformly bounded in $\M(\O)$, up to a subsequence (not relabeled), we have $\lambda_k \overset{*}{\wto} \lambda$ weakly* in $\M(\O)$ for some nonnegative measure $\lambda \in \M(\O)$. Thanks to the lower semicontinuity of weak* convergence in $\M(\O)$ along open sets, we have that
\begin{equation}\label{eq:lambda}
\F'(u) + \zeta \geq \lim_{k \to \infty}\lambda_k(\O) \geq \lambda(\O).
\end{equation}
Using that the measures $\LL^2\res \O$ and $\HH^1 \res J_u$ are mutually singular, it is enough to show that
\begin{equation}\label{eq:Lebesgue}
\frac{d\lambda}{d\LL^2} \geq \mathbf A e(u):e(u)\quad \LL^2\text{-a.e. in }\O,
\end{equation}
and
\begin{equation}\label{eq:Hausdorff}
\frac{d\lambda}{d\HH^1\res J_u} \geq \kappa \sin\theta_0 \quad \HH^1\text{-a.e. on }J_u.
\end{equation}
Indeed, once \eqref{eq:Lebesgue} and \eqref{eq:Hausdorff} are satisfied, it follows from the Radon-Nikod\'ym decomposition and the Besicovitch differentiation Theorems that
$$\lambda = \frac{d\lambda}{d\LL^2}  \LL^2 \res \O \, + \, \frac{d\lambda}{d\HH^1 \res J_u}  \HH^1 \res J_u+\lambda^s,$$
for some nonnegative measure $\lambda^s$ which is singular with respect to both $\LL^2\res \O$ and $\HH^1\res J_u$. Thus, after integration over $\O$ and recalling \eqref{eq:lambda}, we get that
$$\F'(u)+ \zeta \geq \int_\O \mathbf A e(u):e(u)\, dx + \kappa \sin\theta_0 \HH^1(J_u)=\F(u).$$
Taking the limit as $\zeta \to 0$, we obtain the desired lower bound inequality.
\end{proof}

The rest of this section is devoted to the establishment of \eqref{eq:Lebesgue} and \eqref{eq:Hausdorff}.  We start by identifying the lower bound for the bulk energy. 
\begin{prop}[{\bf Lower bound for the Lebesgue part}] \label{prop:leb}
For $\LL^2$-a.e. $x_0 \in \O$,
$$\frac{d\lambda}{d\LL^2}(x_0) \geq \mathbf A e(u)(x_0):e(u)(x_0).$$
\end{prop}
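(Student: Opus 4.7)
The plan is to employ the blow-up method, combined with the $GSBD^2$-compactness information already obtained in the course of Proposition \ref{prop:comp}. By the Besicovitch differentiation theorem, at $\LL^2$-a.e.\ point $x_0 \in \O$ both the density $\frac{d\lambda}{d\LL^2}(x_0)=\lim_{\rho\to 0^+}\lambda(B_\rho(x_0))/(\pi\rho^2)$ exists and $x_0$ is a Lebesgue point of $\mathbf A e(u):e(u)\in L^1(\O)$. I fix such an $x_0$ and a radius $\rho>0$ small enough that $B_\rho(x_0)\subset\subset\O$ and $\lambda(\partial B_\rho(x_0))=0$; since $\lambda$ is finite, these last conditions hold for all but countably many $\rho$, and by weak-$*$ convergence $\lambda_k\overset{*}{\wto}\lambda$ we then have $\lambda(B_\rho(x_0))=\lim_{k\to\infty}\lambda_k(B_\rho(x_0))$.

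Given $\delta\in(0,1)$, I would reuse the inequality established at the beginning of the proof of Proposition \ref{prop:comp}: there exist $t^*>0$ and $K:=(1-\delta)t^*<\kappa$ such that $f(t)\geq K\wedge((1-\delta)t)$ for every $t\geq 0$. I introduce accordingly $\chi_k:=\mathds{1}_{\{(1-\delta)\mathbf A e(u_k):e(u_k)\geq K/\e_k\}}$ and the modified displacement $v_k:=(1-\chi_k)u_k\in SBV^2(\O;\R^2)$ as in Proposition \ref{prop:comp}, so that $e(v_k)=(1-\chi_k)e(u_k)$ a.e. A pointwise verification then yields
$$
\frac{1}{\e_k}f\bigl(\e_k\mathbf A e(u_k):e(u_k)\bigr)\geq (1-\delta)\,\mathbf A e(v_k):e(v_k)\quad\text{a.e.\ in }\O,
$$
because on $\{\chi_k=0\}$ one has $\e_k\mathbf A e(u_k):e(u_k)\leq t^*$ and hence $f(\e_k\mathbf A e(u_k):e(u_k))\geq(1-\delta)\e_k\mathbf A e(u_k):e(u_k)$ with $e(v_k)=e(u_k)$, while on $\{\chi_k=1\}$ the right-hand side vanishes. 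Integrating over $B_\rho(x_0)$ gives
$$
\lambda_k(B_\rho(x_0))\geq (1-\delta)\int_{B_\rho(x_0)}\mathbf A e(v_k):e(v_k)\,dx.
$$

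From the compactness argument in the proof of Proposition \ref{prop:comp}, applied on an open set containing $B_\rho(x_0)$, I extract a ($\delta$-dependent) subsequence along which $e(v_k)\wto e(u)$ weakly in $L^2(B_\rho(x_0);\Ms)$. Since $\mathbf A$ is positive definite by \eqref{eq:A}, the quadratic functional $w\mapsto\int_{B_\rho(x_0)}\mathbf A w:w\,dx$ is convex and continuous on $L^2(B_\rho(x_0);\Ms)$, hence sequentially weakly lower semicontinuous. Passing to the $\liminf$ and using $\lambda(B_\rho(x_0))=\lim_k\lambda_k(B_\rho(x_0))$ yields
$$
\lambda(B_\rho(x_0))\geq (1-\delta)\int_{B_\rho(x_0)}\mathbf A e(u):e(u)\,dx.
$$

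Dividing by $\pi\rho^2$, letting $\rho\to 0^+$ along admissible radii, and invoking the Lebesgue point property at $x_0$ gives $\frac{d\lambda}{d\LL^2}(x_0)\geq(1-\delta)\mathbf A e(u)(x_0):e(u)(x_0)$, and sending $\delta\to 0^+$ concludes. The only point requiring care, rather than a genuine obstacle, is the bookkeeping of subsequences: the construction of $v_k$ depends on $\delta$, and the weak $L^2$-convergence $e(v_k)\wto e(u)$ comes from a $\delta$-dependent diagonal extraction of the compactness argument. However, $\lambda$ and $\lambda_k(B_\rho(x_0))\to\lambda(B_\rho(x_0))$ are independent of such extractions and only a $\liminf$-inequality is needed, so further subsequencing causes no harm.
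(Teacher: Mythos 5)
Your proposal is correct and follows essentially the same route as the paper: same $\delta$-dependent lower bound $f(t)\geq K\wedge[(1-\delta)t]$, same modified field $v_k=(1-\chi_k)u_k$ inherited from the compactness step, same passage through weak $L^2$-convergence of $e(v_k)$ to $e(u)$ and convexity, and the same blow-up/Besicovitch conclusion. The only cosmetic difference is that you work directly with the Lebesgue point property of $\mathbf A e(u):e(u)\in L^1$ rather than the $L^2$-Lebesgue point property of $e(u)$ used in the paper; both are adequate, and your remark on why the $\delta$-dependent subsequence extraction is harmless is exactly right.
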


\begin{proof} 
Let $x_0 \in \O$ be such that
$$\frac{d \lambda}{d \LL^2}(x_0)=\lim_{\varrho  \searrow 0} \frac{\lambda\big( B_\varrho (x_0) \big)}{\pi \varrho^2}$$
exists and is finite, and 
$$\lim_{\varrho  \searrow 0} \frac{1}{\varrho^2}\int_{B_\varrho(x_0)}|e(u) (y)- e(u)(x_0)|^2\, dy = 0.$$
According to Besicovitch and Lebesgue differentiation Theorems, $\LL^2$-almost every point $x_0$ in $\O$ satisfies these properties. We next consider a sequence of radii $\{\varrho_j\}_{j \in \N}$ such that $\varrho_j  \searrow  0$ and $\lambda(\partial B_{\varrho_j}(x_0))=0$ for all $j \in \N$. 

\medskip
 
As in the proof of Proposition \ref{prop:comp}, according to the properties \eqref{eq:f} satisfied by $f$, for all $\delta >0$, there exists a constant $0<K<\kappa$ such that $f(t) \geq K \wedge [ (1-\delta) t ]$ for all $t \geq 0$. Moreover, using the characteristic functions 
$$ \chi_k := \mathds{1}_{\left\{  (1-\delta) \mathbf A e(u_k):e(u_k) \, \geq \, \frac{K}{\e_k} \right\} } \in L^\infty (\O; \left\{ 0,1 \right\} )$$
we have for every Borel set $B \subset \O$,
$$\lambda_k(B) \geq  (1-\delta) \int_B (1-\chi_k) \mathbf A e(u_k):e(u_k) \, dx + \frac{K}{\e_k} \int_B \chi_k \, dx.$$
Note that because $u_k$ is affine on each triangle $T \in \mathbf{T}^k$, $\chi_k$ is constant on each triangle $T \in \mathbf{T}^k$. Following the proof of Proposition \ref{prop:comp}, the sequence $v_k:=(1-\chi_k)u_k \in SBV^2(\O;\R^2)$ satisfies
$v_k \to u \text{ in measure in } \O$ and $ e(v_k) \wto e (u)$ weakly in $L^2(\O;\mathbb M^{2 \times 2}_{\rm sym})$. Then, for all $j \in \N$,
\begin{eqnarray*}
\lambda(B_{\varrho_j}(x_0))= \lim_{k \to \infty} \lambda_k(B_{\varrho_j}(x_0)) & \geq & (1- \delta) \liminf_{k \to \infty}  \int_{B_{\varrho_j}(x_0)} \mathbf A e(v_k):e(v_k)\, dx  \\
 & \geq  &(1- \delta) \int_{B_{\varrho_j}(x_0)} \mathbf A e(u) : e(u) \, dx.
\end{eqnarray*}
Dividing the previous inequality by $\pi\varrho_j^2$ and passing to the limit as $j \to \infty$ implies by the choice of the point $x_0$ that
\begin{eqnarray*}
\frac{d\lambda}{d\LL^2}(x_0) = \underset{j \to \infty}{\lim} \, \frac{\lambda(B_{\varrho_j}(x_0))}{\pi \varrho_j^2} & \geq & (1- \delta)\lim_{j \to \infty}   \frac{1}{\pi \varrho_j^2} \int_{B_{\varrho_j}(x_0)} \mathbf A e(u) : e(u)\, dx\\
& = & (1- \delta) \mathbf A e(u)(x_0): e(u)(x_0).
\end{eqnarray*}
Taking the limit as $\delta \to 0^+$ completes the proof of the lower bound for the Lebesgue part.
\end{proof}

We next pass to the lower bound inequality for the jump part of the energy which represents the most difficult and original part of our result.

\begin{prop}[{\bf Lower bound for the jump part}]\label{prop Jump part}
For $\HH^1$-a.e. $x_0 \in J_u$, 
$$\frac{d\lambda}{d\HH^1 \res J_u}(x_0) \geq \kappa \sin\theta_0.$$
\end{prop}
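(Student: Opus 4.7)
My plan is to localize the inequality at a generic point $x_0\in J_u$ via the blow-up method, and then reduce the surface estimate to a counting problem on ``bad'' triangles, exploiting a one-dimensional slicing in the direction normal to $J_u$. First I would fix $x_0$ where $d\lambda/d(\HH^1\res J_u)(x_0)$ exists and is finite, where $J_u$ admits an approximate tangent of normal $\nu=\nu_u(x_0)$ with traces $a,b\in\R^2$, and select radii $\varrho_j\searrow 0$ with $\lambda(\partial B_{\varrho_j}(x_0))=0$. After rescaling $y\mapsto x_0+\varrho_j y$ and a diagonal extraction (based on a separate lemma asserting that the elastic part of the energy concentrates away from $J_u$), the claim reduces to the following local statement on the unit ball $B$: for any sequence $u_\e\in V_\e(B)$ converging in measure to the pure jump $u=a\mathds{1}_{\{y\cdot\nu<0\}}+b\mathds{1}_{\{y\cdot\nu>0\}}$, one has $\liminf_{\e\to 0}\frac{K}{\e}\LL^2(\mathbf T^\e_b\cap B)\ge 2K\sin\theta_0$, where $\mathbf T^\e_b$ collects the triangles on which $\mathbf A e(u_\e):e(u_\e)>K/(\e(1-\delta))$. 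Letting $K\uparrow\kappa$ and $\delta\searrow 0$ then yields the desired bound.

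The geometric input is the elementary estimate
\begin{equation*}
\LL^2(T)\ge \tfrac{1}{2}\,\e\sin\theta_0\,\HH^1(p_\nu(T))
\end{equation*}
valid for every admissible triangle $T$: the width of $T$ along $\nu$ is at least the minimal altitude $\e\sin\theta_0$, and the average chord length of $T$ in the direction $\nu$ is half this width. It would therefore suffice to produce two \emph{disjoint} sub-families $\mathbf T^\e_1,\mathbf T^\e_2\subset\mathbf T^\e_b$ whose $p_\nu$-projections each cover $p_\nu(J_u\cap B)$ up to an $o(1)$ $\HH^1$-set, since adding the two area contributions would give $\LL^2(\mathbf T^\e_b\cap B)\ge(2-o(1))\e\sin\theta_0$, which, combined with subadditivity of $\HH^1$ under projection, is precisely what is needed.

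To construct these families I plan to slice along $\nu$. For $y\in p_\nu(B)$ and $t$ such that $y+t\nu\in B$, set $\varphi_\e(y,t):=u_\e(y+t\nu)\cdot\nu$; the slicing identity gives $\partial_t\varphi_\e(y,t)=e(u_\e)(y+t\nu):(\nu\otimes\nu)$, which is bounded by $C/\sqrt\e$ outside $\mathbf T^\e_b$ and whose $L^2$-norm along bad triangles is controlled by the vanishing elastic part of the energy. In the generic case $a\cdot\nu\ne b\cdot\nu$, Fubini gives that $\varphi_\e(y,\cdot)\to a\cdot\nu\,\mathds{1}_{\R^-}+b\cdot\nu\,\mathds{1}_{\R^+}$ in measure for $\HH^1$-a.e.~$y\in p_\nu(J_u\cap B)$. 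A first easy step is to observe that almost every such slice must meet at least one bad triangle; otherwise $\varphi_\e(y,\cdot)$ would be uniformly bounded in $H^1$ and converge weakly to a constant, contradicting the one-dimensional jump. This already gives a non-optimal factor $\kappa\sin\theta_0/2$.

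The main obstacle, and the decisive upgrade, is to show that $\HH^1$-a.e.~slice actually meets \emph{two} distinct bad triangles. I would argue by contradiction: assume the set of slices crossing exactly one bad triangle $T=T(y)$ has non-vanishing $\HH^1$-measure in the limit. On each such slice the entire one-dimensional jump $|b\cdot\nu-a\cdot\nu|$ must be realized inside $T$, forcing the affine function $u_\e|_T$ to carry the values $a\cdot\nu$ and $b\cdot\nu$ (up to a vanishing error) at the entry and exit parameters $t_{\rm in}(y)<t_{\rm out}(y)$ of the chord $(y+\R\nu)\cap T$. Since the maps $y\mapsto\varphi_\e(y,t_{\rm in}(y))$ and $y\mapsto\varphi_\e(y,t_{\rm out}(y))$ are piecewise affine in the transverse variable $y$, with only finitely many breakpoints dictated by the edges of the single triangle $T$, this prescribed jump relation can hold only on an $\HH^1$-set of $y$'s of measure $O(\omega(\e))\to 0$, a contradiction. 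A Vitali-type selection then peels $\mathbf T^\e_b$ into the two disjoint sub-families, and the degenerate case $a\cdot\nu=b\cdot\nu$ (where one necessarily has $a\cdot\nu^\perp\ne b\cdot\nu^\perp$) is handled symmetrically by slicing along $\nu^\perp$. Combined with the geometric estimate of the previous paragraph, this yields the sharp constant $\kappa\sin\theta_0$.
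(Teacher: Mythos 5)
Your strategy closely parallels the paper's proof (Lemmas~\ref{lemma1}--\ref{lemma F_1 F_2}): slice transversally to $J_u$, show each line through the jump set crosses at least one bad triangle (yielding the non-optimal factor $1/2$), upgrade to at least two bad triangles by ruling out the ``exactly one'' case via an affine constraint that confines $y$ to thin tubes, build two disjoint families of bad triangles, and project. The geometric inequality $\LL^2(T)\geq\tfrac12\e\sin\theta_0\,\HH^1(p_\xi(T))$ and the chord-length argument are shared with the paper.

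However, your treatment of the degenerate case $a\cdot\nu=b\cdot\nu$ contains a genuine gap. Slicing along $\nu^\perp$ cannot work: the blow-up jump set $J_{\bar u}\cap B=B^\nu$ is the diameter \emph{orthogonal} to $\nu$, hence \emph{parallel} to $\nu^\perp$. For $\HH^1$-a.e.\ $y\in\Pi_{\nu^\perp}$, the line $y+\R\nu^\perp$ lies entirely in one half-ball and never crosses the discontinuity; the limit slice $t\mapsto\bar u(y+t\nu^\perp)\cdot\nu^\perp$ is constant, so no jump is detected and the counting argument of Lemma~\ref{lemma1} collapses. The slicing direction must remain transversal to $J_u$ while carrying a nonzero one-dimensional jump. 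The paper resolves this by choosing $\xi\in\mathbb S^1$ close to $\nu$ (but generally not equal to $\nu$) with $|\nu-\xi|\leq\eta$, $\nu\cdot\xi\geq\tfrac12$ \emph{and} $[u](x_0)\cdot\xi\neq0$---always possible since $[u](x_0)\cdot\xi=0$ holds only for two antipodal directions---and then transfers from the $\Pi_\xi$- to the $\Pi_\nu$-parametrization via the oblique projection $\Phi$ of \eqref{eq:Phi}, whose Lipschitz constant tends to $1$ as $\eta\to0$. Without such a perturbation your argument cannot reach the sharp constant at degenerate jump points. A secondary imprecision: the exceptional set in the ``exactly one triangle'' step is not of size $O(\omega(\e))$ as you claim; the correct bound (Lemma~\ref{lemma3}, Step~3) is $O(\gamma)$ for an arbitrarily small free parameter $\gamma$, obtained per bad triangle via a Thal\`es-type argument on the chord-length function and summed through the energy estimate, not through the mesh size.
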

The proof of Proposition \ref{prop Jump part} is quite long and involved. It necessitates the introduction of some tools in order to carry out the blow-up analysis coupled with the slicing method. 

\medskip

Let $x_0 \in J_u$ be such that 
$$ \frac{d \lambda}{d \HH^1 \res J_u}(x_0)=\lim_{\varrho \searrow 0} \, \frac{\lambda\big( B_\varrho (x_0) \big)}{\HH^1\big(J_u \cap  B_\varrho (x_0) \big)}$$
exists and is finite, and
$$\lim_{\varrho \searrow 0} \, \frac{\HH^1( J_u \cap B_\varrho (x_0))}{2 \varrho}=1.$$
According to the Besicovitch differentiation Theorem and the  countably $(\HH^1,1)$-rectifiability of $J_u$ (see \cite[Theorem 2.83]{AFP}), it follows that $\HH^1$-almost every point $x_0$ in $J_u$ fulfills these conditions. The point $x_0 \in J_u$ being fixed throughout the rest of the proof of Proposition \ref{prop Jump part}, we sometimes intentionally omit to write the dependence with respect to $x_0$.

\medskip

By definition of the jump set $J_u$, there exist $\nu :=\nu_u(x_0) \in \mathbb S^1$ and $u^\pm(x_0) \in \R^2$ with $u^+(x_0) \neq u^-(x_0)$ such that the function 
$$u_{x_0,\varrho} := u( x_0 + \varrho \, \cdot )$$
converges in measure in $B := B_1(0)$ to the jump function
$$\bar{u} : y \in B \mapsto 
\begin{cases}
u^+(x_0) & \text{ if } y \cdot \nu > 0, \\
u^-(x_0) & \text{ if } y \cdot \nu < 0,
\end{cases} $$
as $\varrho \searrow 0 $ (see \cite[Definition 2.3]{DM2}). Note that, the jump set $J_{\bar u}$ in $B$ coincides with the diameter  $B^\nu=p_\nu(B)$ orthogonal to $\nu$. Moreover, since $\HH^1 \left( \left\{ \xi \in \mathbb{S}^1: \, [u](x_0) \cdot \xi = 0 \right\} \right) = 0$, for any $\eta>0$, there exists $\xi \in \mathbb{S}^1$ such that 
\begin{equation}\label{eq:xi}
\left\lvert \nu - \xi \right\rvert \leq \eta, \quad \nu \cdot \xi \geq \frac12, \quad \left\lvert \nu \cdot \xi^\perp \right\rvert \leq \eta \quad \text{and}\quad [u](x_0) \cdot \xi \neq 0,
\end{equation}
where $[u](x_0) := u^+(x_0) - u^-(x_0)$. If $[u](x_0) \cdot \nu\neq 0$, we can simply take $\xi=\nu$. We then set 
\begin{equation}\label{eq:M}
M_\eta := | u^+(x_0) \cdot \xi | + | u^-(x_0) \cdot \xi | > 0.
\end{equation}
From now on, when working with the convergence in measure, we will use the distance $d_{M_\eta}$ defined in \eqref{eq:d_M} associated to this precise value of $M_\eta$. As before, we consider a sequence of radii $\{\varrho_j\}_{j \in \N}$ such that $\varrho_j \searrow 0$ and $\lambda(\partial B_{\varrho_j}(x_0))=0 = \HH^1(J_u \cap \partial B_{\varrho_j}(x_0))$ for all $j \in \N$. 

\medskip

By our choice of $x_0$, we have    
$$  \begin{cases}
\ds \lim_{j \to \infty}\lim_{k \to \infty} u_k (x_0 + \varrho_j \, \cdot)  =\lim_{j \to \infty} u_{x_0,\varrho_j} =   \bar{u} \quad \text{ in measure in } B, \\
\ds\lim_{j \to \infty}\lim_{k \to \infty}  \frac{\lambda_k (B_{\varrho_j}(x_0))}{2\varrho_j} =\lim_{j \to \infty}\frac{ \lambda (B_{\varrho_j}(x_0))}{2\varrho_j}  = \frac{d \lambda}{d \HH^1 \res J_u}(x_0), \\
\ds\lim_{j \to \infty}\lim_{k \to \infty} \frac{\e_k}{\varrho_j} = \lim_{j \to \infty}\lim_{k \to \infty} \frac{\omega(\e_k)}{\varrho_j}= 0. 
\end{cases}   $$
The metrizability of the convergence in measure in $B$ shows the existence of an increasing sequence $\{k_j\}_{j \in \N}$ (depending on $\eta$) such that $k_j \nearrow  \infty$ as $j \to \infty$ and
\begin{subequations}\label{eq:beforeSections}
	\begin{empheq}[left=\empheqlbrace]{align}
	& v_j := u_{k_j}(x_0 + \varrho_j \, \cdot) \to \bar{u} \quad\text{ in measure in } B, \label{seq:CVmesure} \\
	& \frac{ \lambda_{k_j}( B_{\varrho_j}(x_0))}{2\varrho_j} \to \frac{d \lambda}{d \HH^1 \res J_u}(x_0), \label{seq:2h(x_0)}\\
	& \frac{\e_{k_j}}{\varrho_j} \to 0, \quad \frac{\omega(\e_{k_j})}{\varrho_j}\to 0. \label{seq:eps'}
	\end{empheq}
\end{subequations}
In particular, using a change of variables, we get that
\begin{eqnarray*}
2\frac{d \lambda}{d \HH^1 \res J_u}(x_0) & = &\lim_{j \to \infty}\frac{1}{\varrho_j\e_{k_j} } \int_{B_{\varrho_j}(x_0)} f\left(\e_{k_j} \mathbf A e(u_{k_j}):e(u_{k_j}) \right)\, dx \\
& =& \lim_{j \to \infty}\frac{\varrho_j}{\e_{k_j} } \int_{B} f\left(\frac{\e_{k_j}}{\varrho_j^2} \mathbf A e(v_j):e(v_j) \right)\, dy\\
&\geq & \limsup_{j \to \infty}\frac{\varrho_j}{\e_{k_j} } \int_{B} f\left(\frac{\e_{k_j}}{\varrho_j^2} \alpha |e(v_j)\xi\cdot\xi|^2 \right)\, dy,
\end{eqnarray*}
where, in the last inequality, we used the ellipticity property \eqref{eq:A} of $\mathbf A$, the nondecreasing character of $f$ and that $\xi \in \mathbb S^1$.

\medskip

According to the properties \eqref{eq:f} satisfied by $f$, for all $\delta \in (0,1)$, there exists a constant $A>0$ such that
$$f(t) \geq \left( At \right) \wedge [(1- \delta) \kappa] \quad \text{ for all }t \geq 0.$$ 
Indeed, since $f(t) \to \kappa$ as $t \to \infty$, there exists $t^* \geq 0$ such that for all $t \geq t^*$, $f(t) \geq (1-\delta) \kappa$. The function $f(t)/t$ being continuous over $[0,t^*]$ (extended by the value $1$ at $t=0$), it reaches its minimum value $A>0$ over this segment so that $f(t) \geq At $ for all $t \in [0,t^*]$.

\medskip

Let us introduce the characteristic functions
$$ \chi_j := \mathds{1}_{\left\{ \frac{A\alpha \e_{k_j}}{\varrho_j^2}|e(v_j)\xi \cdot\xi|^2 \geq (1-\delta) \kappa \right\}} \in L^\infty(B; \lbrace0,1 \rbrace),$$ 
so that
\begin{equation}\label{eq:lambda_k greater than}
2\frac{d \lambda}{d \HH^1 \res J_u}(x_0) \geq \limsup_{j \to \infty}\left\{\frac{\alpha A}{\varrho_j} \int_{B}  (1-\chi_j) |e(v_j)\xi\cdot\xi|^2 \, dy + \frac{(1-\delta)\kappa\varrho_j}{\e_{k_j}} \int_{B} \chi_j \, dy\right\}.
\end{equation}
We then introduce the translated and rescaled triangulations 
\begin{equation}\label{eq:new-triang}
\mathbf{T}^{x_0,j} := \frac{1}{\varrho_j} \left( \mathbf{T}^{k_j} - x_0 \right), \quad \mathbf{T}^{x_0,j}_b := \left\{ T \in \mathbf{T}^{x_0,j}: \, \frac{\alpha A}{\varrho_j} |e(v_j)_{|T} \xi \cdot \xi |^2 \geq\frac{ (1-\delta) \kappa\varrho_j}{\e_{k_j}} \right\}.
\end{equation}
Note that $v_j$ is affine on each $T \in \mathbf{T}^{x_0,j}$. Let us point out that
\begin{equation}\label{eq:chiTb}
{\chi_j} _{|T} := 
\begin{cases}
 1 & \text{ if } T \in \mathbf{T}^{x_0,j}_b, \\
 0 & \text{ otherwise.}
\end{cases}
\end{equation}
Since $( (v_j)^\xi_z ) ' (t) = e(v_j)(z + t \xi) \xi \cdot \xi$, then for $\HH^1$-a.e. $z \in B^\xi$, 
\begin{equation}\label{eq:tilde chi}
( \chi_j) ^\xi_z(t) = 
\begin{cases}
 1 & \text{ if } \frac{\alpha A}{\varrho_j} |( (v_j)^\xi_z ) ' (t)|^2 \geq  \frac{(1-\delta)\kappa\varrho_j}{\e_{k_j}}, \\
 0 & \text{ otherwise,}
\end{cases}
\qquad \text{ for $\LL^1$-a.e. $t \in B^\xi_z$.}
\end{equation}
The triangles belonging to the collection $\mathbf{T}^{x_0,j}_b$ correspond to the sets where the longitudinal slope of $v_j$ in the direction $\xi$ is ``very large''. They, roughly speaking,  represent the places where it will be energetically convenient to introduce a jump because of the sharp transition.

\medskip

The following result, which will play a major role in the proof of Proposition \ref{prop Jump part}, shows that for many points $y \in J_{\bar u} \cap B$, the one-dimensional energy on $B_y^\xi$ is arbitrarily small uniformly with respect to $y$.

\begin{lem}\label{lemma afterSections}
For all $\eta > 0$, there exist a subset $Z \subset J_{\bar u} \cap B$ with $\HH^1(Z) \leq \eta$ and a subsequence (not relabeled, depending on $x_0$) such that the following property holds : for all $\gamma >0$, there exists $j_0 = j_0(\gamma) \in \N$ such that for all $y \in J_{\bar u} \cap B \setminus Z$ and all $j \geq j_0$, 
\begin{subequations}

	\begin{empheq}[left=\empheqlbrace]{align}
	& \int_{B^\xi_y}   (\chi_j) ^\xi_y  \, dt \leq \gamma , \label{seq:boundL1chi} \\
	& \int_{B^\xi_y}  \big( 1 - ( \chi_j ) ^\xi_y  \big)  |( (v_j)^\xi_y ) ' |^2  \, dt \leq \gamma^2, \label{seq:boundL2vPrime} \\
	& \int_{B^\xi_y} M_\eta \wedge | (v_j - \bar{u})^\xi_y| \, dt \leq \gamma. \label{seq:boundCVmeasure}	
	\end{empheq}
\end{subequations}

\end{lem}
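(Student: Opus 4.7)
The strategy will be to establish that three non-negative slicewise integrals tend to zero in $L^1(J_{\bar u}\cap B, \HH^1)$, and then to upgrade the resulting a.e.\ smallness to uniform smallness outside a small exceptional set via Egoroff's theorem.

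First, I would extract from the energy inequality \eqref{eq:lambda_k greater than} combined with \eqref{seq:2h(x_0)} that the bulk quantities
$$I_j := \int_B (1-\chi_j)|e(v_j)\xi\cdot\xi|^2\,dy, \qquad J_j := \int_B \chi_j\,dy$$
both tend to $0$: the bound $(\alpha A/\varrho_j)\, I_j + ((1-\delta)\kappa \varrho_j /\e_{k_j})\, J_j = O(1)$ forces $I_j = O(\varrho_j)$ and $J_j = O(\e_{k_j}/\varrho_j)$, and both vanish by \eqref{seq:eps'}. Moreover, by \eqref{eq:d_M} and \eqref{seq:CVmesure},
$$K_j := \int_B M_\eta \wedge |v_j - \bar u|\,dy \to 0.$$

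Next, I would apply Fubini's theorem in the direction $\xi$, together with the linear change of variables $\Pi_\nu \ni y \mapsto y - (y\cdot\xi)\xi \in \Pi_\xi$, whose scalar Jacobian equals $|\nu\cdot\xi| \geq 1/2$ by \eqref{eq:xi}. Since $J_{\bar u} \cap B = \Pi_\nu \cap B$ is contained in the corresponding preimage of $B^\xi$, and using $((v_j)^\xi_y)'(t) = e(v_j)(y+t\xi)\xi\cdot\xi$, formula \eqref{eq:tilde chi}, and the scalar inequality $|(v_j - \bar u)^\xi_y(t)| \leq |(v_j - \bar u)(y+t\xi)|$, the three bulk bounds translate into
$$\int_{J_{\bar u}\cap B} \bigl( a_j(y) + b_j(y) + c_j(y) \bigr)\, d\HH^1(y) \leq 2(I_j + J_j + K_j) \to 0,$$
where $a_j(y)$, $b_j(y)$, $c_j(y)$ denote the left-hand sides of \eqref{seq:boundL1chi}, \eqref{seq:boundL2vPrime} and \eqref{seq:boundCVmeasure} respectively.

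Finally, after passing to a subsequence (not relabeled), I would have $a_j + \sqrt{b_j} + c_j \to 0$ at $\HH^1$-a.e.\ point of $J_{\bar u}\cap B$. Since $\HH^1(J_{\bar u}\cap B) = 2$ is finite, Egoroff's theorem produces a Borel set $Z \subset J_{\bar u}\cap B$ with $\HH^1(Z) \leq \eta$ such that this convergence is uniform on its complement. Given $\gamma > 0$, choosing $j_0(\gamma)$ sufficiently large to ensure $a_j(y) + \sqrt{b_j(y)} + c_j(y) \leq \gamma$ uniformly on $(J_{\bar u}\cap B)\setminus Z$ for all $j \geq j_0$ would then deliver \eqref{seq:boundL1chi}, \eqref{seq:boundL2vPrime} and \eqref{seq:boundCVmeasure} simultaneously. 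The only delicate step is the Fubini manipulation, since $\xi$ is not perpendicular to $J_{\bar u}$; the explicit Jacobian lower bound $|\nu\cdot\xi| \geq 1/2$ from \eqref{eq:xi} renders this entirely harmless.
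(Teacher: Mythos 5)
Your proposal is correct and follows essentially the same route as the paper: bound $I_j,J_j,K_j$ from the energy estimate \eqref{eq:lambda_k greater than} and the convergence in measure, slice via Fubini in the direction $\xi$, transfer from $B^\xi$ to $J_{\bar u}\cap B=B^\nu$ by the affine bijection between the two lines (the paper uses the map $\Phi$ of \eqref{eq:Phi}, you use its inverse $p_\xi|_{\Pi_\nu}$ with Jacobian $|\nu\cdot\xi|\geq 1/2$), pass to a subsequence for $\HH^1$-a.e.\ convergence, and conclude by Egoroff. The only cosmetic difference is the direction of the hyperplane-to-hyperplane change of variables; the substance of the argument is identical.
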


\begin{proof} 
According to Fubini's Theorem, the convergence in measure \eqref{seq:CVmesure} and \eqref{eq:lambda_k greater than}, we infer that
\begin{multline*}
\int_{B^\xi} \left( \int_{B^\xi_z} M_\eta \wedge | (v_j - \bar{u})^\xi_z| \, dt +\int_{B^\xi_z} (1-(\chi_j)_z^\xi)|((v_j)_z^\xi)'|^2\, dt +\int_{B^\xi_z} (\chi_j)_z^\xi\, dt\right)  d\HH^1(z) \\
\leq \int_B M_\eta \wedge | v_j - \bar{u}| \, dx + \int_{B}  (1-\chi_j) |e(v_j)\xi\cdot\xi|^2 \, dx + \int_{B} \chi_j \, dx \to 0.
\end{multline*}
As a consequence, up to a subsequence (not relabeled), there exists an $\HH^1$-negligible set $N \subset B^\xi$ such that
$$ \int_{B^\xi_z} M_\eta \wedge | (v_j - \bar{u})^\xi_z| \, dt +\int_{B^\xi_z} (1-(\chi_j)_z^\xi)|((v_j)_z^\xi)'|^2\, dt +\int_{B^\xi_z} (\chi_j)_z^\xi\, dt\to 0 \quad \text{ for all $z \in B^\xi \setminus N$.}$$
In order to pass from arbitrary points $z \in B^\xi$ to arbitrary points $y \in J_{\bar u} \cap B=B^\nu$, let us consider the following mapping (see Figure \ref{fig:A})
\begin{equation}\label{eq:Phi}
\Phi : z \in \R^2 \longmapsto z - \frac{\nu \cdot z}{\nu \cdot \xi} \xi \in \Pi_\nu
\end{equation}
which corresponds to the linear projection onto $\Pi_\nu$ in the direction $\xi$. Thanks to \eqref{eq:xi}, we can check that the Lipschitz constant of $\Phi$ is bounded by $\sqrt{1+4\eta^2}$. Moreover, since for all $z \in B^\xi$ we have $ B^\xi_z + \frac{\nu \cdot z}{\nu \cdot \xi}  = B^\xi_{\Phi(z)} = \left\{ s \in \R \, : \, z + \big(s - \frac{\nu \cdot z}{\nu \cdot \xi}\big) \xi \in B \right\}  $, we deduce that
\begin{multline*}
\int_{B^\xi_z} M_\eta \wedge | (v_j - \bar{u})^\xi_z| \, dt +\int_{B^\xi_z} (1-(\chi_j)_z^\xi)|((v_j)_z^\xi)'|^2\, dt +\int_{B^\xi_z} (\chi_j)_z^\xi\, dt\\
=\int_{B^\xi_{\Phi(z)}} M_\eta \wedge | (v_j - \bar{u})^\xi_{\Phi(z)}| \, ds +\int_{B^\xi_{\Phi(z)}} (1-(\chi_j)_{\Phi(z)}^\xi)|((v_j)_{\Phi(z)}^\xi)'|^2\, ds +\int_{B^\xi_{\Phi(z)}} (\chi_j)_{\Phi(z)}^\xi\, ds
\end{multline*}
thanks to the change of variables $s = t + \frac{\nu \cdot z}{\nu \cdot \xi}$. Since $B^\nu \subset \Phi(B^\xi)$,  setting $N':=\Phi(N)\subset \Pi_\nu$, we get that $\HH^1(N')=0$ and 
$$ \int_{B^\xi_y} M_\eta \wedge | (v_j - \bar{u})^\xi_y| \, ds +\int_{B^\xi_y} (1-(\chi_j)_y^\xi)|((v_j)_y^\xi)'|^2\, ds +\int_{B^\xi_y} (\chi_j)_y^\xi\, ds\to 0 \quad \text{ for all $y \in B^\nu \setminus N'$.}$$
Applying Egoroff's theorem, for all $\eta > 0$, there exists a subset $Z \subset B^\nu$ such that $\HH^1(Z) \leq \eta$ and the above convergence is uniform with respect to $y \in B^\nu \setminus Z$.
\end{proof}

Let us consider the subsequence introduced in Lemma \ref{lemma afterSections}. For all $y \in \big(B_{1-\frac\eta4}\big)^\nu = J_{\bar u} \cap B_{1-\frac\eta4}$, we define the end points of the section passing through $y$ in the direction $\xi$ (see the Figure \ref{fig:A}) :
\begin{equation}\label{eq:AB(Y)}
\begin{cases}
a(y) := \min \left\{ t \in [-2,2]: \, y + t \xi \in \bar{B_{1-\frac\eta4}} \right\} \in [-2,0], \\
 b(y) := \max \left\{ t \in [-2,2]: \, y + t \xi \in \bar{B_{1-\frac\eta4}} \right\} \in [0,2],
\end{cases}
\end{equation}
so that $\big( B_{1-\frac\eta4} \big)^\xi_y = \left( a(y),b(y) \right)$. Note that, for all $y \in J_{\bar u} \cap B_{1-\frac\eta2} \subset \big(B_{1-\frac\eta4}\big)^\nu $, 
\begin{equation}\label{eq:L(eta)}
0 < L_\eta := \sqrt{ \left( 1- \frac\eta2 \right)^2 \lvert \xi \cdot \nu^\perp \rvert ^2 + \frac{\eta (8 - 3\eta)}{16} } - \left(1 - \frac\eta2 \right) \left\lvert \xi \cdot \nu^\perp \right\rvert    \leq |a(y)|,\, |b(y)| \leq 2.
\end{equation}

\begin{figure}[hbtp]
\begin{tikzpicture} [x=3.0cm,y=3.0cm] 
\draw  (0.,0.) circle (3.0048467864033146cm);
\draw  (0.,0.) circle (2.760265513864068cm);
\draw [->,style=very thick] (0.,0.) -- (0.9805597694698831,0.20429473703800505);
\draw [->,style=very thick] (0.,0.) -- (1.0016155954677715,0.);
\draw [style=very thick, color=ffqqqq] (0.,0.8365071179654948)-- (0.,-0.8396873008006259);
\draw [->, style=thick] (-0.21600831631103196,1.0367817982805898) -- (-9.85157848665541E-4,1.081580800832876);
\draw [style=thick,color=qqqqff] (0.,0.8365071179654947)-- (0.24359026090338512,0.8872579338216052);
\draw [style=thick,color=qqqqff] (0.10661745830194527,0.8835846613713779) -- (0.1165471510710217,0.8359248078004036);
\draw [style=thick,color=qqqqff] (0.12704310983236325,0.8878402439866964) -- (0.13697280260143968,0.840180390415722);
\draw [style=thick,color=qqqqff] (0.,-0.8396873008006259)-- (-0.23694926645881473,-0.8890544985888058);
\draw [style=thick,color=qqqqff] (-0.10329696107966024,-0.8860730351725439) -- (-0.11322665384873667,-0.8384131816015694);
\draw [style=thick,color=qqqqff] (-0.12372261261007822,-0.8903286177878622) -- (-0.13365230537915465,-0.8426687642168877);
\draw (-0.1903568088883866,1.2310798188162422) node[anchor=north west] {$ \Phi$};
\draw (-0.8406261548004798,1.0259146241167054) node[anchor=north west] {$\frac{\eta}{2}$};
\draw (-1.15,0.05) node[anchor=north west] {$\frac{\eta}{4}$};
\draw (0.9989058451326608,0.30957513075900156) node[anchor=north west] {$\Large  \xi$};
\draw (1.0336796069461418,0.08006830279002843) node[anchor=north west] {$\Large  \nu$};
\draw (-0.17,-1.15) node[anchor=north west] {$\large \Pi_\nu$};
\draw  (-1.2010981881473148,0.5862643014189441)-- (1.1862918283108312,1.0836651032927107);
\draw (-1.2071650114525156,0.1484931910784999)-- (1.1917884876560465,0.6483031868850213);
\draw  (-1.00792026596579,-1.0496824608122173)-- (1.1979379877961869,-0.5901028962344792);
\draw (0,0.8365071179654948)--(0,1.2556164359457356);
\draw  (0.,-1.2)--(0,-0.8396873008006259);
\draw  (-0.24986720336177864,1.1992953459243092)-- (0.24103161935051848,-1.1568869200058223);
\draw (0.24,-1.12) node[anchor=north west] {$\large \Pi_\xi$};
\draw (-1.2,-0.45) node[anchor=north west] {$ B$};
\draw (0.015,0.6) node[anchor=north west] {$ y$};
\draw [color=qqqqff](-0.92,0.22) node[anchor=north west] {{\small $y+a(y)\xi$}};
\draw [color=qqqqff](0.01,0.38) node[anchor=north west] {$0$};
\draw [color=qqqqff](0.76,0.58) node[anchor=north west] {\small $y+b(y)\xi$};
\draw [color=qqqqff](-0.3,-0.55) node[anchor=north west] {$ L_\eta$};
\draw [<->,style=thick,color=qqqqff] (-0.25858235479351643,-0.774791464899063) -- (-0.023715730545229102,-0.725858175227442);
\draw (-0.14,0.005) node[anchor=north west] {$\large 0$};
\draw [color=ffqqqq](-0.6,-0.23) node[anchor=north west] {$ J_{\bar u} \cap  B_{1- \frac\eta2} $};
\draw [dashed,domain=-0.9:0] plot(\x,{(--0.9216617708490081--3.9737386711924394E-5*\x)/1.1017978819960574});
\draw [dashed,domain=-0.9:0] plot(\x,{(--1.107409726344132--3.9737386711924394E-5*\x)/1.1017978819960574});
\draw [<->] (-0.85,0.8364726612240576) -- (-0.85,1.0050588949505228);
\draw [dashed] (0.,0.)-- (-1.001608718707807,0.003711562038167028);
\draw [<->] (-0.920082187599316,0.003409457261806758) -- (-1.001608718707807,0.003711562038167028);

\draw [fill=uuuuuu] (0.,0.4) circle (1pt);
\draw [fill=qqqqff] (-0.894966064970131,0.21353828436194397) circle (1pt);
\draw [fill=qqqqff] (0.7352240785758172,0.5531802695493612) circle (1pt);
\end{tikzpicture}
\caption{}
\label{fig:A}
\end{figure}

\noindent
We introduce the family 
$$\mathbf{T}^{x_0,j}_{b,int} := \left\{ T \in \mathbf{T}^{x_0,j}_b:\;  T \cap \bar{B_{1-\frac\eta4}} \neq \emptyset \right\}$$
 of triangles which intersect $\bar{B_{1-\frac\eta4}}$ and where $v_j$ varies enough in the direction $\xi$. Note that for $j \in \N$ large enough (depending on $\eta$), each $T \in \mathbf{T}^{x_0,j}_{b,int}$ is contained in $B$, since the lengths of all triangles's edges are controlled by $\omega(\e_{k_j})/\varrho_j\to 0$. The collection $\mathbf{T}^{x_0,j}_{b,int}$ is introduced for technical reasons to deal with triangles which intersect the boundary of the ball $B$.

\medskip 

In the following result, we show that, for some subset of $Z' \subset J_{\bar u} \cap B_{1-\frac\eta2}$ of arbitrarily small $\HH^1$ measure, and along a subsequence (only depending on $\eta$), all the sections in the direction $\xi$ passing through $J_{\bar u} \cap B_{1-\frac\eta2}\setminus Z'$ must cross at least one triangle $T \in \mathbf{T}^{x_0,j}_{b,int}$ contained in $B$, and on which the longitudinal slope of $v_j$ in the direction $\xi$ is ``large". The formal idea of the proof consists in observing that, if for some $y \in J_{\bar u} \cap B_{1-\frac\eta2}$ the one-dimensional section $B_y^\xi$ intersects no triangle in the collection $ \mathbf{T}^{x_0,j}_{b,int}$ for infinitely many $j$'s, then the function $(v_j)_y^\xi$ would be bounded in $H^1(B_y^\xi)$. Lemma \ref{lemma afterSections} would then entail that $(v_j)_y^\xi$ converges (weakly in $H^1(B_y^\xi)$ and also $\LL^1$-a.e. in $B_y^\xi$) to a constant function. This property contradicts the fact that $(v_j)_y^\xi \to \bar u_y^\xi$ $\LL^1$-a.e. in $B_y^\xi$, where $\bar u_y^\xi$ is a step function taking two different values $u^\pm(x_0)\cdot \xi$.

\begin{lem}\label{lemma1}
For all $\eta>0$, there exist a subset $ Z' \subset J_{\bar u} \cap B$ containing $Z$ with $\HH^1(Z') \leq \eta$, and a subsequence (not relabeled) such that the following property holds : for all $y \in J_{\bar u} \cap B_{1-\frac\eta2}\setminus Z'$ and all $j \in \N$, there exists a triangle $T=T(y,j) \in \mathbf{T}^{x_0,j}_{b,int}$ such that $( \mathring{T} \cap B )^\xi_y \neq \emptyset$.
\end{lem}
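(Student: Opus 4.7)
The plan is to derive a quantitative contradiction directly from Lemma~\ref{lemma afterSections}: if some $y$ in the jump set were to fail the conclusion at some $j$, then the section $(v_j)_y^\xi$ would have to be nearly constant on the long interval $(a(y),b(y))$, which is incompatible with its $L^1$-distance to the step function $\bar u_y^\xi$ whose jump size $|[u](x_0)\cdot\xi|$ is fixed and positive by \eqref{eq:xi}. A careful choice of the parameter $\gamma$ in Lemma~\ref{lemma afterSections}, depending only on $|[u](x_0)\cdot\xi|$ and $L_\eta$, will make the contradiction sharp enough that the failure set is in fact empty for every $j$ sufficiently large along the given subsequence, and one may then simply take $Z':=Z$.

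Concretely, I would first apply Lemma~\ref{lemma afterSections} with parameter $\eta$ to produce $Z$ and a subsequence along which \eqref{seq:boundL1chi}--\eqref{seq:boundCVmeasure} hold. Since $[u](x_0)\cdot\xi\neq 0$ and $L_\eta>0$, I then fix $\gamma_0 > 0$ small enough that $(\tfrac12|[u](x_0)\cdot\xi| - 2\gamma_0)L_\eta > \gamma_0$, and set $j_0 := j_0(\gamma_0)$. Next, I argue by contradiction: assume there exist $j \geq j_0$ and $y \in J_{\bar u} \cap B_{1-\eta/2} \setminus Z$ such that no $T \in \mathbf T^{x_0,j}_{b,\mathrm{int}}$ meets $\{y+t\xi: t\in(a(y),b(y))\}$ in its interior. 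Since every triangle of $\mathbf T^{x_0,j}$ crossed by this segment automatically intersects $\bar B_{1-\eta/4}$, it cannot belong to $\mathbf T^{x_0,j}_b$, so \eqref{eq:chiTb}--\eqref{eq:tilde chi} force $(\chi_j)_y^\xi = 0$ for $\LL^1$-a.e.\ $t \in (a(y),b(y))$. Then \eqref{seq:boundL2vPrime} yields $\|((v_j)_y^\xi)'\|_{L^2(a(y),b(y))} \leq \gamma_0$, and Cauchy--Schwarz combined with $b(y)-a(y) \leq 4$ gives the oscillation bound $|(v_j)_y^\xi(t)-(v_j)_y^\xi(s)| \leq 2\gamma_0$ on $(a(y),b(y))$.

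To close the contradiction, set $c_j := (v_j)_y^\xi(0)$; from $|[u](x_0)\cdot\xi| \leq |c_j - u^+(x_0)\cdot\xi| + |c_j - u^-(x_0)\cdot\xi|$, one of these two distances is at least $\tfrac12|[u](x_0)\cdot\xi|$, and up to swapping the two cases---which play symmetric roles since both sub-intervals $(a(y),0)$ and $(0,b(y))$ have length $\geq L_\eta$ by \eqref{eq:L(eta)}---I may assume the one involving $u^+(x_0)$. Since $y\cdot\nu=0$ and $\xi\cdot\nu>0$, one has $\bar u_y^\xi \equiv u^+(x_0)\cdot\xi$ on $(0,b(y))$; combined with the oscillation bound and $\tfrac12|[u](x_0)\cdot\xi|-2\gamma_0 \leq M_\eta$ (from \eqref{eq:M}), this gives $M_\eta \wedge |(v_j - \bar u)_y^\xi(t)| \geq \tfrac12|[u](x_0)\cdot\xi| - 2\gamma_0$ for all $t \in (0,b(y))$. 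Integrating contradicts \eqref{seq:boundCVmeasure} by the choice of $\gamma_0$; hence the failure set is empty and the conclusion holds with $Z':=Z$ after relabeling the subsequence to start at $j_0$. The main subtlety I anticipate is to guarantee that the estimates of Lemma~\ref{lemma afterSections} hold \emph{uniformly in $y$} along the \emph{same} subsequence---precisely the content of the Egoroff-type step performed in its proof---since this uniformity is what allows $\gamma_0$ and $j_0$ to be chosen independently of $y$.
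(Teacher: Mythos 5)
There is a genuine gap in the deduction that your contradiction hypothesis forces $(\chi_j)_y^\xi=0$ for $\LL^1$-a.e.\ $t\in(a(y),b(y))$. You assume only that \emph{no} $T\in\mathbf T^{x_0,j}_{b,int}$ has its \emph{interior} meeting the segment, and conclude that almost every $y+t\xi$ lies in a good triangle. But this fails when the line $y+\R\xi$ contains a full edge $E$ of some bad triangle $T\in\mathbf T^{x_0,j}_{b,int}$: the line grazes $T$ (and the neighbouring triangle $T'$ across $E$) without entering either interior, so your hypothesis can hold, yet on $E$ one has $((v_j)_y^\xi)'(t)=e(v_j)_{|T}\xi\cdot\xi=e(v_j)_{|T'}\xi\cdot\xi$ above threshold, hence $(\chi_j)_y^\xi=1$ on a \emph{positive-measure} interval. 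Once that happens, \eqref{seq:boundL2vPrime} no longer controls the derivative on $E$, the Cauchy--Schwarz oscillation bound $|(v_j)_y^\xi(t)-(v_j)_y^\xi(s)|\leq 2\gamma_0$ breaks down, and the contradiction you aim for evaporates. Taking $Z':=Z$ is therefore not enough.

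The paper avoids exactly this pitfall by splitting the argument: it first runs the contradiction under the stronger hypothesis that the line does not touch the \emph{closed} bad triangles at all (so $(\chi_j)_y^\xi\equiv 0$ cleanly), obtaining only $(T\cap B)^\xi_y\neq\emptyset$ for some $T$; it then upgrades from a boundary touch to an interior hit by excluding the sets $Z_j$ of those $y$ for which the line meets a bad triangle only along an edge or at a vertex. Each $Z_j$ is finite (finitely many triangles, finitely many grazing lines parallel to $\xi$), so $\bigcup_j Z_j$ is $\HH^1$-negligible and $Z':=Z\cup\bigcup_j Z_j$ still has $\HH^1(Z')\leq\eta$. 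Your estimates through the choice of $\gamma_0$ and the use of $M_\eta$ and $L_\eta$ are fine and give a correct (and somewhat more direct) variant of the paper's oscillation argument; the only missing ingredient is the explicit exclusion of these degenerate slicing lines before concluding $(\chi_j)_y^\xi=0$ a.e.
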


\begin{proof}
Let $Z$ be the exceptional set given by Lemma \ref{lemma afterSections}. We first show the weaker result that there exists an increasing mapping $\phi : \N \to \N$ with the following property: for all $y \in J_{\bar u} \cap B_{1-\frac\eta2}\setminus Z$ and all $j \in \N$, there exists a triangle $T=T(y,\phi(j)) \in \mathbf{T}^{x_0,\phi(j)}_{b,int}$ such that $(T \cap B )^\xi_y \neq \emptyset$.

\medskip

Suppose by contradiction that such is not the case, and define
$$\gamma^*_1 := L_\eta M_\eta > 0, \quad \gamma^*_2 := \frac{ L_\eta |[u](x_0)\cdot \xi | }{1 + 2 L_\eta} > 0 \quad\text{and}\quad\gamma^* = \gamma^*(\eta) := \frac{ \gamma^*_1 \wedge \gamma^*_2}{4} > 0,$$
where we recall that the constants $M_\eta$ and $L_\eta$ are defined in \eqref{eq:M} and \eqref{eq:L(eta)}, respectively. Thanks to Lemma \ref{lemma afterSections}, there exists $j^* = j^*(\gamma^*) \in \N$ such that for all $y \in J_{\bar u} \cap B \setminus Z$ and all $j \geq j^*$,
$$
\ds \int_{B^\xi_y}  ( 1 - ( \chi_j ) ^\xi_y  )  |( (v_j)^\xi_y ) ' |^2  \, dt \leq {\gamma^*}^2 \, \text{ and } \,
 \ds \int_{B^\xi_y} M_\eta \wedge | (v_j - \bar{u})^\xi_y | \, dt \leq \gamma^*. 	
$$
We then consider the extraction $ \phi : j \in \N \longmapsto j + j^* \in \N$ which only depends on $\eta$. By assumption, there exists $y = y(\phi) \in J_{\bar u} \cap B_{1-\frac\eta2}\setminus Z$ and $ j=j(\phi) \in \N$ such that $\left( T \cap B \right)^\xi_y = \emptyset$ for all $T \in \mathbf{T}^{x_0,j + j^*}_{b,int}$. Remembering \eqref{eq:chiTb}, we deduce that $(\chi_{j+j^*})_y^\xi \equiv 0$ on $(a(y),b(y))$. Moreover, since $\phi(j) = j + j^* \geq j^*$, we have 
$$ 
\int_{a(y)}^{b(y)}  |( (v_{j + j^*})^\xi_y ) '|^2  \, dt \leq {\gamma^*}^2 \, \text{ and } \, \int_{a(y)}^{b(y)} M_\eta \wedge | (v_{j+j^*} - \bar{u})^\xi_y| \, dt \leq \gamma^*. 	
$$

\begin{figure}[hbtp]
\begin{tikzpicture}[x=0.9cm,y=0.9cm] 
\fill[color=lightgray] (-1.1986128611404252,1.9036569398662626) -- (0.8783898988376524,1.4498897110066784) -- (-1.2564524394395842,-1.1939742234801571) -- cycle;
\draw (-1.1986128611404252,1.9036569398662626) -- (0.8783898988376524,1.4498897110066784) ;    
\draw (0.8783898988376524,1.4498897110066784) -- (-1.2564524394395842,-1.1939742234801571)  ;
\draw (-1.2564524394395842,-1.1939742234801571) -- (-1.1986128611404252,1.9036569398662626) ;

\draw  (0.,0.) circle (3.9969988741554587);
\draw  (0.,0.) circle (3.603700199849338);
\draw [style=very thick,color=ffqqqq] (0.,3.1966288562712433)-- (0.,-3.195491430725323);
\draw [->,style=very thick] (0.,0.) -- (3.9969988741554587,0.);
\draw [->,style=very thick] (0.,0.) -- (3.870508994373706,0.9975771270795284);
\draw [domain=-5.:5.] plot(\x,{(--3.08947320813629--0.9975771270795284*\x)/3.870508994373706});
\draw  (0.,3.1966288562712433)-- (0.,4.623094859594526);
\draw  (0.,-3.195491430725323)-- (0.,-5.);
\draw (-0.45,1.2) node[anchor=north west] {$y$};
\draw (-1.2,1.8) node[anchor=north west] {$\mathbf T$};
\draw (-0.5,-4.8) node[anchor=north west] {$\Pi_\nu$};
\draw (0.6,-1.9) node[anchor=north west] {\large $u^+(x_0)$};
\draw (-2.,-1.9) node[anchor=north west] {\large $u^-(x_0)$};
\draw [color=qqqqff] (-2.6,0.35) node[anchor=north west] {{\small $y+t^-\xi$}};
\draw [color=qqqqff] (1.6,1.4) node[anchor=north west] {\small $y+t^+\xi$};
\draw [color=qqqqff] (-3.6,-0.05) node[anchor=north west] {\small $y+a(y)\xi$};
\draw [color=qqqqff] (1.5,2.1) node[anchor=north west] {\small$y+b(y)\xi$};
\draw [color=qqqqff] (0.03,0.85) node[anchor=north west] {$0$};
\draw (4,1.4) node[anchor=north west] {\Large $\xi$};
\draw (4.1,0.2) node[anchor=north west] {\Large $\nu$};
\draw  (-1.6637965878220102,3.196628856271243)-- (1.6637965878220107,3.196628856271243);
\draw  (-1.6659800858819374,-3.195491430725323)-- (1.6659800858819374,-3.195491430725323);
\begin{scriptsize}
\draw [fill=uuuuuu] (-3.601354720251178,-0.12999734350639214) circle (1.0pt);
\draw [fill=uuuuuu] (3.2155275035308755,1.626972035540455) circle (1.0pt);
\draw [fill=uuuuuu] (-2.660930013393115,0.11238581040586926) circle (1pt);
\draw [fill=uuuuuu] (1.6697115756915342,1.228556577901848) circle (1pt);
\draw [fill=uuuuuu] (0.,0.7982085076219292) circle (1pt);
\end{scriptsize}
\end{tikzpicture}
\caption{}
\label{fig:B}
\end{figure}
\noindent
By continuity of $\left( v_{j+j^*} \right)^\xi_y$ on the compact $[a(y),b(y)]$, $(v_{j+j^*})_y^\xi $ being in $ H^1(B_y^\xi)$, there exist two points $t^\pm \in [a(y),b(y)] \cap \R^\pm$ such that 
$$ \underset{ [a(y),b(y)] \cap \R^\pm}{\min} \left( M_\eta \wedge | \left( v_{j+j^*} \right)^\xi_y - u^\pm (x_0) \cdot \xi | \right) = M_\eta \wedge | \left( v_{j+j^*} \right)^\xi_y (t^\pm) - u^\pm (x_0) \cdot \xi |.$$
Hence,
\begin{eqnarray*}
\frac{\gamma^*}{L_\eta}	& \geq & \frac{1}{L_\eta}\int_{a(y)}^0 M_\eta \wedge | \left( v_{j+j^*} \right)^\xi_y - u^-(x_0)\cdot \xi| \, dt + \frac{1}{L_\eta}\int_0^{b(y)} M_\eta \wedge | \left( v_{j+j^*} \right)^\xi_y - u^+(x_0)\cdot \xi| \, dt \\
			& \geq &   M_\eta \wedge | \left( v_{j+j^*} \right)^\xi_y(t^-) - u^-(x_0)\cdot \xi| \, + \, M_\eta \wedge | \left( v_{j+j^*} \right)^\xi_y(t^+) - u^+(x_0)\cdot \xi|  \\
			& \geq &  M_\eta \wedge \left( | \left( v_{j+j^*} \right)^\xi_y(t^-) - u^-(x_0)\cdot \xi| \, + \, | \left( v_{j+j^*} \right)^\xi_y(t^+) - u^+(x_0)\cdot \xi| \right) \\
			& \geq & M_\eta \wedge \left( | [u](x_0)\cdot \xi | \, - \, \left\lvert \int_{t^-}^{t^+} \left( (v_{j + j^*})^\xi_y \right) ' (t) \, dt \right\rvert \right) \\
			& \geq & M_\eta \wedge \left( | [u](x_0) \cdot \xi | - 2 \gamma^* \right),
\end{eqnarray*}
which is impossible thanks to of our choice of $\gamma^*$.

\medskip 

We are now in position to complete the proof of Lemma \ref{lemma1}. For all $j \in \N$, let
\begin{eqnarray*}
Z_j & := &\Big\{ y \in J_{\bar u}\cap B_{1-\frac\eta2}:\; \text{ there exists } T \in \mathbf{T}^{x_0,j}_{b,int} \text{ such that }\\
&&\qquad\qquad ( T \cap B )^\xi_y \text{ is contained in an edge or a vertex of } T \Big\},
\end{eqnarray*}
and 
$$Z' :=Z \cup \bigcup_{j \in \N} Z_j.$$
We notice that $\bigcup_j Z_j$ is $\HH^1$-negligible (each $Z_j$ being finite), hence $\HH^1(Z')\leq \eta$. Moreover, for all $ y \in J_{\bar u} \cap B_{1-\frac\eta2}\setminus Z'$ and all $ j \in \N$, there exists a triangle $ T \in \mathbf{T}^{x_0, \phi(j)}_{b,int}$ such that $ ( T \cap B )^\xi_y$ is non-empty, and it is neither reduced to a vertex of $T$ nor contained in an edge of it. It thus implies that  $( \mathring{T} \cap B )^\xi_y \neq \emptyset$.
\end{proof}

Let us consider the further subsequence introduced in Lemma \ref{lemma1}. As a consequence, for all $j \in \N$, the family of triangles 
\begin{equation}\label{eq:Fj}
\mathscr{F}_j := \left\{ T \in \mathbf{T}^{x_0, j}_{b,int}: \; \text{there exists } y \in J_{\bar u}\cap B_{1-\frac\eta2} \text{ such that } ( \mathring{T} \cap B )^\xi_y \neq \emptyset \right\}
\end{equation}
is nonempty. Thanks to Lemma \ref{lemma1}, it is possible to obtain a bad lower bound. Indeed, from that result, we infer that $J_{\bar u} \cap B_{1-\frac\eta2} \setminus Z' \subset \bigcup_{T \in \mathscr F_j} \Phi(p_\xi(T))$ with $\Phi$ the projection onto $\Pi_\nu$ in the direction $\xi$ defined in \eqref{eq:Phi}. Using next that $\LL^2(T) \geq \HH^1(p_\xi(T)) (\e_{k_j}/\varrho_j) \sin\theta_0 /2$ and that the Lipschitz constant of $\Phi$ is bounded by $\sqrt{1+4\eta^2}$, we deduce from \eqref{eq:lambda_k greater than} and our choice of $x_0$ that 
\begin{eqnarray*}
2\frac{d \lambda}{d \HH^1 \res J_u}(x_0)& \geq & \liminf_{j \to \infty} \frac{(1-\delta)\kappa\varrho_j}{\e_{k_j}} \int_{B} \chi_j \, dy\geq \liminf_{j \to \infty} \sum_{T \in \mathscr F_j} \frac{(1-\delta)\kappa\varrho_j\LL^2(T)}{\e_{k_j}}\\
& \geq &  \frac{(1-\delta)\kappa\sin\theta_0}{2\sqrt{1+4\eta^2}}\liminf_{j \to \infty} \HH^1\left( \bigcup_{T \in \mathscr F_j} \Phi(p_\xi(T))\right)\\
& \geq &  \frac{(1-\delta)\kappa\sin\theta_0}{2\sqrt{1+4\eta^2}} \HH^1\left(J_{\bar u} \cap B_{1-\frac\eta2} \setminus Z'\right)\geq \frac{(1-\delta)\kappa\sin\theta_0}{\sqrt{1+4\eta^2}} (1-\eta).
\end{eqnarray*}
Letting $\eta \to 0$ and $\delta \to 0$ leads to
$$\frac{d \lambda}{d \HH^1 \res J_u}(x_0) \geq  \frac{\kappa\sin\theta_0}{2}$$
which corresponds to a too low lower bound because of the factor $1/2$ in the right-hand side of the previous inequality. In order to improve the previous argument, we need to establish that many lines $B_y^\xi$ parallel to $\xi$ and passing through the jump set at some point $y \in J_{\bar u} \cap B$ must actually intersect at least two triangles of the collection $\mathbf{T}^{x_0,j}_{b,int}$, where the longitudinal variation of $v_j$ in the direction $\xi$ is ``large''. This idea is precisely formulated in the following result which is an improvement of Lemma \ref{lemma1}.

\begin{lem}\label{lemma2}
For all $\eta>0$, there exist $ Z'' \subset J_{\bar u} \cap B$ containing $Z'$ with $\HH^1(Z'') \leq 3 \eta$, and a (not relabeled) subsequence such that for all $j \in \N$ and for all $y \in J_{\bar u} \cap B_{1-\frac\eta2}\setminus Z''$,
$$ \# \left\{ T \in \mathbf{T}^{x_0, j}_{b,int}: \; ( \mathring{T} \cap B )^\xi_y \neq \emptyset \right\} \geq 2.$$
\end{lem}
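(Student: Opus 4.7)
For each $j$, set
$$E_j := \bigl\{y \in J_{\bar u} \cap B_{1-\frac\eta2}\setminus Z' : \#\{T \in \mathbf T^{x_0,j}_{b,int} : (\mathring T \cap B)^\xi_y \neq \emptyset\} = 1\bigr\}.$$
By Lemma~\ref{lemma1}, every $y \notin Z'$ crosses at least one bad triangle, so it suffices to extract a further subsequence (not relabeled) along which $\HH^1(E_j) \to 0$ summably, say with $\sum_j \HH^1(E_j) \leq 2\eta$, and then to set $Z'' := Z' \cup \bigcup_j E_j$ and $\HH^1(Z'') \leq 3\eta$.

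For $y \in E_j$, let $T = T(y)$ be the unique bad triangle crossed and $s_T := e(v_j)_{|T}\xi\cdot\xi$, which is constant on $T$ since $v_j$ is affine there. Applying Cauchy--Schwarz to \eqref{seq:boundL2vPrime}, the $L^1$-variation of $(v_j)^\xi_y$ on $(a(y),b(y))\setminus T^\xi_y$ is $\leq 2\gamma$; combining with \eqref{seq:boundCVmeasure} via an averaging argument as in the proof of Lemma~\ref{lemma1}, the values $(v_j)^\xi_y(a(y))$ and $(v_j)^\xi_y(b(y))$ are $C(\eta)\gamma$-close to $u^-(x_0)\cdot\xi$ and $u^+(x_0)\cdot\xi$ respectively. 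Integrating $((v_j)^\xi_y)' = s_T$ on $T^\xi_y$ and bounding the remainder then yields
$$\bigl||s_T|\,|T^\xi_y| - |[u](x_0)\cdot\xi|\bigr| \leq C(\eta)\gamma.$$
Writing $E_j = \bigsqcup_T E_j^{(T)}$ with $E_j^{(T)} := \{y \in E_j : T(y) = T\}$, every $y \in E_j^{(T)}$ thus belongs to the level set of the tent function $h_T(\sigma) := |T^\xi_\sigma|$ around $\ell^\star := |[u](x_0)\cdot\xi|/|s_T|$ with tolerance $C(\eta)\gamma/|s_T|$.

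A direct computation on triangles gives $\int_{p_\nu(T)} h_T\,d\sigma = \LL^2(T)/|\xi\cdot\nu|$ together with the tent-area identity $\tfrac12 L_T\,|p_\nu(T)| = \LL^2(T)/|\xi\cdot\nu|$, where $L_T := \max h_T$. Hence $\HH^1(\{|h_T - \ell^\star| \leq \delta\}) \leq 2\delta\,|p_\nu(T)|/L_T$ (a formula that persists even in the degenerate case where an edge of $T$ is parallel to $\xi$). The minimum-angle condition of $\mathcal T_\e(\O)$, together with the law of sines bounding edge ratios by $2\cos\theta_0$, yields the aspect-ratio estimate $|p_\nu(T)|/L_T \leq 2\cot\theta_0$. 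Consequently, $\HH^1(E_j^{(T)}) \leq C(\eta,\theta_0)\gamma/|s_T|$.

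Summing over $T \in \mathbf T^{x_0,j}_b$ and invoking $|s_T| \geq c\,\varrho_j/\sqrt{\e_{k_j}}$ (directly from \eqref{eq:new-triang}) together with $|\mathbf T^{x_0,j}_b| \leq C\varrho_j/\e_{k_j}$ (obtained from $\int_B \chi_j\,dy \leq C\e_{k_j}/\varrho_j$ via \eqref{eq:lambda_k greater than} and the lower bound $\LL^2(T) \geq c\sin\theta_0\,(\e_{k_j}/\varrho_j)^2$), one obtains
$$\HH^1(E_j) \leq \frac{C'(\eta,\theta_0)\,\gamma}{\sqrt{\e_{k_j}}}.$$
Choosing $\gamma = \gamma_j := \e_{k_j}$ and extracting a subsequence so that $j \geq j_0(\gamma_j)$ in Lemma~\ref{lemma afterSections} yields $\HH^1(E_j) \leq C'(\eta,\theta_0)\sqrt{\e_{k_j}} \to 0$; a final diagonal extraction secures $\sum_j \HH^1(E_j) \leq 2\eta$. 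The principal delicacy lies in the geometric estimate on the tent function $h_T$---especially when an edge of $T$ is nearly parallel to $\xi$, so the pointwise slope degenerates---which is circumvented by working with the integrated tent-area identity and the minimum-angle aspect-ratio bound rather than with pointwise slopes.
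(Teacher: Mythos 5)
Your plan---define $E_j$ as the set of lines crossing exactly one triangle, show $\HH^1(E_j)\to 0$ summably along a subsequence, and set $Z'':=Z'\cup\bigcup_j E_j$---is exactly the paper's plan (your $E_j$ is the paper's $Y_j$, and the statement you aim to prove en route is the paper's Lemma~\ref{lemma3}). The tent-function $h_T(\sigma):=\LL^1(T^\xi_\sigma)$ and the aspect-ratio bound $|p_\nu(T)|/L_T\leq 2\cot\theta_0$ are a legitimate geometric substitute for the Thales-type estimate~\eqref{eq:Horizontal tube's width} that the paper uses; both quantify the rate of variation of the section length across a triangle. However, the summation step has a fatal quantitative gap. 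You estimate $\HH^1(E_j^{(T)})\leq C\gamma/|s_T|$, then plug in the \emph{a priori} lower bound $|s_T|\geq c\varrho_j/\sqrt{\e_{k_j}}$ and the triangle count $\leq C\varrho_j/\e_{k_j}$, obtaining $\HH^1(E_j)\leq C'\gamma/\sqrt{\e_{k_j}}$. This bound degenerates as $j\to\infty$, and your fix---taking $\gamma=\gamma_j:=\e_{k_j}$---conflicts with the quantifier structure of Lemma~\ref{lemma afterSections}: Egoroff gives a threshold $j_0(\gamma)$ with no rate, and it is entirely possible that $j_0(\e_{k_j})>j$ for all $j$, so the hypotheses of Lemma~\ref{lemma afterSections} are never satisfied on the subsequence where you invoke them. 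No diagonal extraction can cure this; you genuinely need an estimate \emph{uniform in $j$} as the paper obtains.

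The ingredient you are missing is one your own tent function makes natural. For $E_j^{(T)}$ to be nonempty, the reference length $\ell^\star=|[u](x_0)\cdot\xi|/|s_T|$ must be attainable, i.e. $\ell^\star\leq L_T+\delta$, so $1/|s_T|\leq 2L_T/|[u](x_0)\cdot\xi|$---a \emph{triangle-geometry} bound rather than the crude $\sqrt{\e_{k_j}}/(c\varrho_j)$. Combined with the tent-area identity $L_T|p_\xi(T)|=2\LL^2(T)$ and $L_T\geq h_T\geq(\e_{k_j}/\varrho_j)\sin\theta_0$, each summand becomes $\lesssim\gamma\,\varrho_j\LL^2(T)/\e_{k_j}$, and since $\sum_{T\in\mathbf T^{x_0,j}_b}\LL^2(T)\leq\int_B\chi_j\,dy\lesssim\e_{k_j}/\varrho_j$ (by \eqref{eq:lambda_k greater than} and \eqref{seq:2h(x_0)}), the $j$-dependence cancels, giving $\HH^1(E_j)\leq C(\eta,\theta_0)\gamma$. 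This is precisely the paper's Lemma~\ref{lemma3}, and it is what makes the subsequent diagonal extraction (with $\gamma_j=\eta/(2^jC_*)$ fixed \emph{before} the rank $i(\gamma_j)$ is chosen) legitimate. A second, smaller gap: the claim that $(v_j)^\xi_y(a(y))$, $(v_j)^\xi_y(b(y))$ are $O(\gamma)$-close to $u^\mp(x_0)\cdot\xi$ by ``averaging'' implicitly requires the unique bad triangle to lie near the diameter, otherwise one of the two side intervals is too short for the Sobolev/averaging argument to control the endpoint. The paper spends Step 1 of Lemma~\ref{lemma3} (Egoroff on $|a_j(y)|+|b_j(y)|$) on exactly this point and then relies on it in Step 2; your write-up should not skip it.
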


The proof of Lemma \ref{lemma2} consists in constructing both $Z''$ and the subsequence inductively by means of the following technical result, Lemma \ref{lemma3}. It stipulates that the set of all points $y \in J_{\bar u} \cap B$ such that $B_y^\xi$ intersects exactly one triangle $T$ in the collection $\mathbf{T}^{x_0,j}_{b,int}$, has arbitrarily small $\HH^1$ measure. To establish this property, we first show that if such situation arises, then the function $(v_j)_y^\xi$ is uniformly close (with respect to $y$) to the step function $\bar u_y^\xi$ taking the values $u^\pm(x_0)\cdot\xi$. Thus, up to a small error which is uniform in $y$, the function $(v_j)_y^\xi$ must pass from the value $u^-(x_0)\cdot \xi$ to $u^+(x_0)\cdot \xi$ in an affine way inside the only triangle $T \in \mathbf{T}^{x_0,j}_{b,int}$ which is crossed by $B_y^\xi$. However, due to the shape of a triangle, this can happen for at most two different values of $y$, say $z_1$ and $z_2$. Then, if $y \in J_{\bar u} \cap B$ is far away from these two values $z_1$ and $z_2$, the variation of $(v_j)_y^\xi$ is not sufficient to connect the values $u^\pm(x_0)\cdot \xi$ in an affine way. It thus becomes necessary for $B_y^\xi$ to intersect an additional triangle $T' \in \mathbf{T}^{x_0,j}_{b,int}$, where the variation of $(v_j)_y^\xi$ is substantial, in order to recover the full jump.

\begin{lem}\label{lemma3}
For all $\eta>0$, there exist constants $C_* = C_*(\eta) > 0$, $\gamma_* = \gamma_*(\eta) > 0$ and a subset $Z_* = Z_*(\eta) \subset J_{\bar u} \cap B$ containing $Z'$ and satisfying $\HH^1(Z_*) \leq 2 \eta$ such that the following property holds: for all $0<\gamma<\gamma_*$, there exists $ j(\gamma) \in \N $ such that for all $ j \geq j(\gamma)$, the set 
\begin{equation}\label{eq:Zj}
Y_j := \left\{ y \in J_{\bar u}\cap B_{1-\frac\eta2}\setminus Z': \; \text{there exists a unique } T \in \mathbf{T}^{x_0, j}_{b,int} \text{ such that } ( \mathring{T} \cap B )^\xi_y \neq \emptyset \right\}
\end{equation}
satisfies 
$$ \HH^1(Y_j \setminus Z_*) \leq C_* \gamma. $$
\end{lem}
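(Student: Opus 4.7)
The plan is to render rigorous the heuristic given right after the statement: for $y \in Y_j$ the slice $(v_j)_y^\xi$ is forced to realize essentially its whole jump $[u](x_0)\cdot\xi$ affinely on the unique crossed triangle $T=T(y,j)$, after which a tent-function analysis confines $y$ near at most two exceptional points per triangle. The delicate point will be to sum the resulting per-triangle estimate over all bad interior triangles while retaining a bound that is proportional to $\gamma$ and uniform in $j$.

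First, I would choose $\gamma_* = \gamma_*(\eta) \in (0, \min\{L_\eta, M_\eta, |[u](x_0) \cdot \xi|/8\})$. For $\gamma<\gamma_*$, pick $j(\gamma)$ so large that the three conclusions of Lemma \ref{lemma afterSections} hold with parameter $\gamma$ and that $\omega(\e_{k_j})/\varrho_j < \eta/8$, ensuring that every triangle of $\mathbf{T}^{x_0,j}$ has diameter at most $\eta/8$. For $y \in Y_j \setminus Z'$, let $T=T(y,j)$ be the unique interior bad triangle whose interior meets the slice; on $T_y^\xi$ the function $(v_j)_y^\xi$ is affine with slope $s_T := e(v_j)_{|T}\xi\cdot\xi$. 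Applying Cauchy--Schwarz to \eqref{seq:boundL2vPrime} bounds the total variation of $(v_j)_y^\xi$ on the complement of bad triangles by $2\gamma$, and Chebyshev's inequality applied to \eqref{seq:boundCVmeasure} selects points $t^\pm(y) \in [a(y),b(y)]$ on either side of $T_y^\xi$ at which $(v_j)_y^\xi$ is $\sqrt{\gamma}$-close to $u^\pm(x_0)\cdot\xi$. Absorbing the contributions of non-interior bad triangles (which, thanks to the diameter control, are contained in the thin shell $B\setminus\bar{B_{1-\eta/4}}$) into an enlargement $Z_* \supset Z'$ with $\HH^1(Z_*\setminus Z')\leq\eta$, I obtain the affine-forcing estimate
$$\big| |s_T|\, L_T(y) - |[u](x_0)\cdot\xi| \big| \leq C_1 \gamma, \qquad L_T(y) := \HH^1((T\cap B)_y^\xi).$$

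For each fixed $T \in \mathbf{T}^{x_0,j}_{b,int}$, the section-length $y \in p_\nu(T) \mapsto L_T(y)$ is a piecewise affine tent function whose two non-trivial slopes are bounded below by a positive constant $c_0 = c_0(\theta_0)$, using the angle condition on $\mathcal{T}_\e$ together with $\nu\cdot\xi \geq 1/2$ from \eqref{eq:xi}. The forcing estimate then confines $Y_T := \{y \in Y_j \setminus Z_* : T(y,j) = T\}$ to at most two intervals of total $\HH^1$-length at most $2C_1\gamma/(c_0 |s_T|)$, clustered near two ``special'' points $z_1(T), z_2(T) \in p_\nu(T)$. To sum over $T$ I intend to exploit the constraint twice: on $Y_T$ one has $L_T(y) \geq |[u](x_0)\cdot\xi|/(2|s_T|)$, which, combined with the coarea identity $\int_{p_\nu(T)} L_T\,dy = \LL^2(T)/|\nu\cdot\xi| \leq 2\LL^2(T)$ and the energy-inherited area budget $\sum_T \LL^2(T) \leq C\e_{k_j}/\varrho_j$ (coming from the definition of $\mathbf{T}^{x_0,j}_b$ in \eqref{eq:new-triang} together with \eqref{seq:2h(x_0)}), converts the per-triangle length bound into an area form suitable for a global estimate.

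The main obstacle is precisely this last summation. A naive use of $\HH^1(Y_T) \leq C\gamma/|s_T|$ summed over the $O(\varrho_j/\e_{k_j})$ bad interior triangles combined only with the lower bound $|s_T| \geq c\varrho_j/\sqrt{\e_{k_j}}$ produces an estimate of order $\gamma/\sqrt{\e_{k_j}}$, which diverges. The affine-forcing identity $|s_T|\,L_T(y) \asymp |[u](x_0)\cdot\xi|$ on $Y_T$ must therefore be used to re-route the bound through the global area budget rather than through a purely metric count of triangles, and $Z_*$ must be further engineered, still within the $2\eta$ measure budget, to discard the tent peaks where the slope degenerates and the configurations where the two special points $z_1(T),z_2(T)$ coalesce. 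Executing this carefully should yield a constant $C_*=C_*(\eta)$ for which $\HH^1(Y_j \setminus Z_*) \leq C_*\gamma$, as desired.
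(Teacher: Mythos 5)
Your broad plan — derive an affine-forcing estimate $\big||s_T|L_T(y)-|[u](x_0)\cdot\xi|\big|\lesssim\gamma$ on the unique crossed triangle, confine $y$ near the two level sets of the tent function $L_T$, and sum via the energy-inherited area budget — is in the right spirit, and the tent-slope route is a genuine alternative to the paper's Thal\`es inequality \eqref{eq:Horizontal tube's width}. (Your slope lower bound $c_0(\theta_0)$ is in fact correct: the longest chord of $T$ in direction $\xi$ is at least twice the inradius, which for a $\theta_0$-regular triangle is comparable to its diameter, while each side of the tent has horizontal extent at most ${\rm diam}(T)$.) However, there are two genuine gaps.

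First, you omit the paper's Step 1 entirely, and that omission is not cosmetic. To run the Chebyshev selection of points $t^\pm$ ``on either side of $T_y^\xi$'', you must know that the interval $[a_j(y),b_j(y)]$ sits well inside $(a(y),b(y))$: if $T_y^\xi$ abuts an endpoint, one of the Chebyshev intervals degenerates and the forcing estimate is lost (only a one-sided bound $|s_T|L_T(y)\gtrsim|[u](x_0)\cdot\xi|$ survives, which is useless for the tent-confinement). The paper secures this by showing $f_j(y)=(|a_j(y)|+|b_j(y)|)\mathds{1}_{Y_j}(y)\to 0$ pointwise on $J_{\bar u}\cap B_{1-\eta/2}\setminus Z'$, then applying Egoroff to obtain $Z_*$ with $\HH^1(Z_*)\le 2\eta$ on which the decay is uniform --- that is precisely what the $2\eta$ budget for $Z_*$ is for. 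Your proposal instead attributes the enlargement of $Z_*$ to ``absorbing non-interior bad triangles'', which is a red herring: triangles outside $\mathbf T^{x_0,j}_{b,int}$ never meet the slices $(a(y),b(y))\subset\bar{B_{1-\eta/4}}$ and cause no trouble, and your later ``discard the tent peaks where the slope degenerates'' is also unnecessary since the slope does not degenerate. Either Egoroff on $f_j$ or a direct exclusion argument (if $b_j(y)>L_\eta/2$ then the slice from a Chebyshev point $t^-<0$ to a Chebyshev point $t^+\in(0,a_j(y))$ never meets $T$, forcing $|[u](x_0)\cdot\xi|\lesssim\gamma$, a contradiction) must be supplied.

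Second, the summation as you describe it does not close at the rate $C_*\gamma$. Combining $\HH^1(Y_T)\lesssim\gamma/|s_T|$ with the coarea bound $\HH^1(Y_T)\cdot\min_{Y_T}L_T\le\int_{p_\nu(T)}L_T\lesssim\LL^2(T)$ via a geometric mean produces $\HH^1(Y_T)\lesssim\sqrt{\gamma\LL^2(T)}$, and hence $\HH^1(Y_j\setminus Z_*)\lesssim\sqrt\gamma$, which is strictly weaker than the claimed $C_*\gamma$. The route that actually recovers the linear-in-$\gamma$ bound is the chain $1/|s_T|\lesssim L^{\rm ref}(T)\le L^{\rm max}(T)\le{\rm diam}(T)\le 2\LL^2(T)/h_T\lesssim(\varrho_j/\e_{k_j})\LL^2(T)$, after which $\sum_T 1/|s_T|\lesssim(\varrho_j/\e_{k_j})\sum_T\LL^2(T)\lesssim 1$ by the energy budget. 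Notice that once one writes this chain, the per-triangle estimate becomes $\HH^1(Y_T)\lesssim\gamma\,(\varrho_j/\e_{k_j})\,\LL^2(T)$, which is exactly what the paper obtains in one stroke from Thal\`es' theorem --- so the two arguments are ultimately the same estimate, but the paper's phrasing avoids any detour through slope constants and coarea and is the more robust formulation (in particular it transfers unchanged to Proposition~\ref{prop Jump part in essential bdry P}, where $L^{\rm ref}(T)$ has a more delicate form). Finally, a small inconsistency: Chebyshev on the $L^1$ bound \eqref{seq:boundCVmeasure} over an interval of length $\gtrsim L_\eta$ gives $\gamma/L_\eta$-closeness, not $\sqrt\gamma$-closeness, so your intermediate claim is off even though your final constant $C_1\gamma$ is the right scale.
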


\begin{proof}[Proof of Lemma \ref{lemma3}] The proof is divided into three steps.

\medskip

{\bf Step 1.} In this first step, we show that for $j$ large enough and for many points $y \in Y_j$, the set $(B \cap T)_y^\xi$ (where $T$ is the only triangle in $\mathbf{T}^{x_0,j}_{b,int}$ which crosses $B_y^\xi$) is close to $(J_{\bar u})_y^\xi$, uniformly  with respect to $y$. 

For all $j \in \N$ and all $y \in Y_j$, let $T_j(y)\in \mathbf{T}^{x_0, j}_{b,int}$ be the unique triangle such that $( \mathring{T}_j(y) \cap B)^\xi_y \neq \emptyset$. We define the end points of the section in the direction $\xi$ passing through $y$ inside $T_j(y)$ (see the Figure \ref{fig:C}) by
\begin{equation}\label{eq:aj and bj}
\begin{cases}
a_j(y) : =  \min \left\{ t \in [-2,2]:\; y + t \xi \in T_j(y) \right\} ,\\
b_j(y)  :=  \max \left\{ t \in [-2,2]:\; y + t \xi \in T_j(y) \right\},
\end{cases}
\end{equation}
so that $\left( T_j(y) \right)^\xi_y = [a_j(y),b_j(y)]$. Note that $T_j(y) \subset B$ (since $T_j(y) \cap \bar{B_{1-\frac\eta4}} \neq \emptyset$), hence $-2 \leq a_j(y) \leq b(y)$ and $2 \geq b_j(y) \geq a(y)$. Let us show that
\begin{equation}\label{eq:step1}
f_j(y) := \left( |a_j(y)| + |b_j(y)| \right) \mathds{1} _{Y_j}(y)\to  0 \quad \text{ for all }y \in J_{\bar u}\cap B_{1-\frac\eta2} \setminus Z'.
\end{equation}

\begin{figure}[hbtp]
\begin{tikzpicture}[x=3.0cm,y=3.0cm]
\fill[color=lightgray] (-0.07689447381578916,0.699360556254932) -- (0.17370711987197132,0.9511151291725417) -- (0.3767872723294621,0.33286896916692543) -- cycle;
\draw (0.,0.) circle (3.cm);
\draw  (0.,0.) circle (2.7463222859832954cm);
\draw [->,style=very thick] (0.,0.) -- (1.,0.);
\draw [->,style=very thick] (0.,0.) -- (0.9805806756909201,0.19611613513818404);
\draw [domain=-1.5:1.5] plot(\x,{(--0.48931805318080507--0.19611613513818404*\x)/0.9805806756909201});
\draw  (-0.07689447381578916,0.699360556254932)-- (0.17370711987197132,0.9511151291725417);
\draw (0.17370711987197132,0.9511151291725417)-- (0.3767872723294621,0.33286896916692543);
\draw  (0.3767872723294621,0.33286896916692543)-- (-0.07689447381578916,0.699360556254932);
\draw  (0.,1.1174245926780848)-- (0.,0.8356572845602078);
\draw  (0.,-1.123094505326141)-- (0.,-0.841560907951798);
\draw [style=very thick,color=ffqqqq] (0.,0.8356572845602078)-- (0.,-0.841560907951798);

\draw [color=qqqqff, dashed] (0.13716334022107884,0.5264411283486472)-- (0.1,0.35);
\draw [color=qqqqff, dashed] (0.3023510135206918,0.5594786630085697)-- (0.52,0.29);

\draw (-0.15,0.6310854888873771) node[anchor=north west] {$y$};
\draw (-0.035,0.78) node[anchor=north west] {\small $T_j(y)$};
\draw [color=qqqqff] (-0.9,0.35000784961062975) node[anchor=north west] {\small $y+a(y)\xi$};
\draw [color=qqqqff] (-0.2,0.4) node[anchor=north west] {\small $y+a_j(y)\xi$};
\draw [color=qqqqff] (0.48,0.35) node[anchor=north west] {\small $y+b_j(y)\xi$};
\draw [color=qqqqff] (0.64,0.66) node[anchor=north west] {\small $y+b(y)\xi$};
\draw (1.035948614194321,0.3294411930781848) node[anchor=north west] {\Large $\xi$};
\draw (1.035948614194321,0.07921353860010483) node[anchor=north west] {\Large $\nu$};
\draw (-0.1,-1.1) node[anchor=north west] {$\Pi_\nu$};
\begin{scriptsize}
\draw [fill=uuuuuu] (-0.854630792800675,0.3280823017442964) circle (1pt);
\draw [fill=uuuuuu] (0.6627044619143553,0.6315493526873025) circle (1pt);
\draw [fill=uuuuuu] (0.3023510135206918,0.5594786630085697) circle (1pt);
\draw [fill=uuuuuu] (0.13716334022107884,0.5264411283486472) circle (1pt);
\draw [fill=uuuuuu] (0.,0.49900846030443147) circle (1pt);
\end{scriptsize}
\end{tikzpicture}
\caption{}
\label{fig:C}
\end{figure}

Let $y \in J_{\bar u}\cap B_{1-\frac\eta2} \setminus Z'$ and set $\ell:=\limsup_j f_j(y) \in [0,4]$. Assume by contradiction that $\ell>0$ and extract a subsequence depending on $y$ (not relabeled) such that $f_j(y) \to \ell$. Then, there exists $j_0 \in \N$ such that $y \in Y_j$ for all $j \geq j_0$. Moreover, according to Lemma \ref{lemma afterSections} and setting $I_j(y) := \left( a(y),b(y) \right) \setminus \left( a_j(y),b_j(y) \right) \subset B^\xi_y$, we have 
\begin{equation}\label{eq:convwj}
\begin{cases}
\ds \lvert b_j(y) - a_j(y) \rvert \leq \int_{B^\xi_y} ( \chi_j )^\xi_y\, dt \to 0, \\
\ds \int_{I_j(y)} \lvert ( (v_j)^\xi_y )'  \rvert^2 \, dt \leq \int_{B^\xi_y}  \big( 1 - ( \chi_j ) ^\xi_y \big)  |( (v_j)^\xi_y ) ' |^2  \, dt \to 0, \\
\ds  \int_{a(y)}^{b(y)} M_\eta \wedge | (v_j - \bar{u})^\xi_y | \, dt \leq \int_{B^\xi_y} M_\eta \wedge | (v_j - \bar{u})^\xi_y | \, dt\to 0. 
\end{cases}
\end{equation}
Up to another subsequence (still not relabeled), the first condition in \eqref{eq:convwj} ensures that $a_j(y) \to m$ and $b_j(y) \to m$ for some $m \in [a(y),b(y)]$. Thus, for all $\tau >0$, there exists $j_1=j_1(\tau) \geq j_0$ such that for all $j \geq j_1$,
$$ I_\tau := \left( a(y), m - \tau \right) \cup \left( m + \tau, b(y) \right) \subset I_j(y), $$
with the convention that $ ( x,y ) = \emptyset$ if $y<x$. We set 
$$I^-_\tau := \left( a(y), m - \tau \right), \quad I^+_\tau := \left( m + \tau, b(y) \right),$$
so that $ {(v_j)^\xi_y}_{|I^\pm_\tau} \in H^1(I^\pm_\tau)$ and the truncated function $w_j := \left( M_\eta \wedge (v_j)^\xi_y \right) \vee (-M_\eta) \in H^1(I^\pm_\tau)$ satisfies $w_j' = ( (v_j)^\xi_y )' \mathds{1}_{\left\{ \lvert (v_j)^\xi_y \rvert \leq M_\eta \right\} }$. According to the second condition in \eqref{eq:convwj}, the sequence $\{w_j\}_{j \in \N}$ is bounded in $H^1(I_\tau^\pm)$ and $w_j' \to 0$ in $L^2(I^\pm_\tau)$. As a consequence, up to a subsequence, there exist constants $c^\pm \in \R$ such that $w_j \to c^\pm$ in $H^1(I_\tau^\pm)$ and $\LL^1$-a.e. in $I_\tau^\pm$. Yet, as $(v_j)^\xi_y$ converges in measure to ${\bar{u}}^\xi_y$ in $I^\pm_\tau$, up to another subsequence (still not relabeled), we have that $(v_j)^\xi_y$ pointwise converges to ${\bar{u}}^\xi_y$ $\LL^1$-a.e. in $I^\pm_\tau$. Hence $c^\pm =  ( M_\eta \wedge u^\pm (x_0) \cdot \xi ) \vee (-M_\eta) = u^\pm (x_0) \cdot \xi$ by our choice \eqref{eq:M} of $M_\eta$. Thus, for all $\tau > 0$,
$$u^-(x_0) \cdot \xi \, \mathds{1}_{( a(y), m - \tau ) } + u^+(x_0) \cdot \xi \, \mathds{1}_{( m + \tau , b(y) ) } = {{\bar{u}}^\xi_y} _{|I_\tau} \quad \LL^1\text{-a.e. in } I_\tau. $$
Taking the limit as $\tau \to 0^+$, we obtain that
$$u^-(x_0) \cdot \xi \, \mathds{1}_{( a(y), m  ) } + u^+(x_0) \cdot \xi \, \mathds{1}_{( m , b(y) ) } = {{\bar{u}}^\xi_y}  \quad \LL^1\text{-a.e. in } ( a(y), b(y) ), $$
leading to $m=0$ since $[u](x_0) \cdot \xi \neq 0$ by our choice \eqref{eq:xi} of $\xi$. As a consequence $f_j(y)= ( |a_j(y)| + |b_j(y)| ) \mathds{1} _{Y_j}(y)\to  0$ which is against $\ell>0$.

\medskip

Using \eqref{eq:step1}, Lemma \ref{lemma afterSections} and owing to Egoroff's Theorem, we can find a set $Z_* \subset J_{\bar u}\cap B$ containing $Z'$ with  $\HH^1(Z_*) \leq 2 \eta$ such that for all $\gamma > 0$, there exists $j_0(\gamma) \in \N$ satisfying 
\begin{equation}\label{eq:betterAfterSections}
\begin{cases}
\ds \int_{B^\xi_y}  ( 1 - ( \chi_j ) ^\xi_y  )  |( (v_j)^\xi_y ) ' |^2  \, dt \leq \gamma^2, \\
\ds \int_{B^\xi_y} M_\eta \wedge | (v_j - \bar{u})^\xi_y | \, dt \leq \gamma, \\
\ds ( |a_j(y)| + |b_j(y)| ) \, \mathds{1}_{Y_j}(y) \leq \gamma
\end{cases}  \text{ for all } y \in J_{\bar u}\cap B_{1-\frac\eta2} \setminus Z_* \text{ and all } j \geq j_0(\gamma).
\end{equation}

\medskip

 \textbf{Step 2.} In this step, we show that for many points $y \in Y_j$, the variation of $(v_j)_y^\xi$ inside the only triangle $T$ in $\mathbf{T}^{x_0,j}_{b,int}$ which is crossed by $B_y^\xi$, is uniformly close with respect to $y$ to the jump of $\bar u_y^\xi$. More precisely, let 
 \begin{equation}\label{eq;choiceCgamma}
 C_\eta := 8 \left( 1  + \frac{1}{L_\eta} \right) > 0, \quad   \gamma_*=\gamma_*(\eta) := \frac12 \min \left(  1,\frac{M_\eta}{C_\eta}, L_\eta , \frac{ \left\lvert [u](x_0) \cdot \xi \right\rvert }{4 C_\eta} \right) > 0.
 \end{equation}
Let us show that for all $0 < \gamma < \gamma_*$, there exists $j_1(\gamma) \in \N$ such that
\begin{equation}\label{eq:C*}
\left\lvert  (v_j)^\xi_y(b_j(y)) - (v_j)_y^\xi(a_j(y)) - [u](x_0) \cdot \xi \right\rvert \leq C_\eta\gamma \quad \text{ for all } j \geq j_1(\gamma) \text{ and all }y \in Y_j \setminus Z_*.
\end{equation}

\medskip
 
Fix $0 < \gamma < \gamma_*$ and, by \eqref{eq:betterAfterSections}, let $j_0(\gamma) \in \N$ be such that
$$
\begin{cases}
\ds \int_{ (a(y) , b(y)) \setminus (a_j(y),b_j(y)) } |( (v_j)^\xi_y ) ' |^2  \, dt \leq \gamma^2, \\
\ds \int_{B^\xi_y} M_\eta \wedge | (v_j - \bar{u})^\xi_y | \, dt \leq \gamma, \\
\ds ( |a_j(y)| + |b_j(y)| )  \leq \gamma
\end{cases}
\qquad \text{for all } j \geq j_0(\gamma)\text{ and all } y \in Y_j \setminus Z_*.
$$
In particular, recalling \eqref{eq:AB(Y)}, \eqref{eq:L(eta)} and by the choice \eqref{eq;choiceCgamma} of $\gamma_*$, we get that $2 \geq |b(y)| \geq L_\eta > \gamma_* > \gamma \geq |b_j(y)|$ and $ 2 \geq |a(y)| \geq L_\eta > \gamma_* >\gamma \geq |a_j(y)|,$ hence 
$$ a(y) < a_j(y) \leq b_j(y) < b(y).$$
Writing
\begin{eqnarray*}
M_\eta \wedge \left\lvert  (v_j)^\xi_y(b_j(y)) - (v_j)_y^\xi(a_j(y))- [u](x_0) \cdot \xi \right\rvert &  \leq & \,  M_\eta \wedge \left\lvert (v_j)^\xi_y \left(  0\vee b_j(y)\right) - u^+(x_0) \cdot \xi \right\rvert   \\
  & & + M_\eta \wedge \left\lvert  (v_j)^\xi_y ( 0\vee b_j(y)) - (v_j)_y^\xi(b_j(y)) \right\rvert \\
 & & + M_\eta \wedge \left\lvert (v_j)^\xi_y \left(  0 \wedge a_j(y)  \right) - u^-(x_0) \cdot \xi \right\rvert \\
 & & + M_\eta \wedge \left\lvert  (v_j)^\xi_y (a_j(y)) - (v_j)_y^\xi( 0 \wedge a_j(y) ) \right\rvert \\
&  =: &  \, J_1 + J_2 + J_3 + J_4,
\end{eqnarray*}
it remains to control each of the last four terms. 

\medskip

Let us first estimate the terms $J_2$ and $J_4$. If $b_j(y) \geq 0$, $J_2 = 0$. Otherwise, by the Cauchy-Schwarz inequality,
$$ 
J_2 = M_\eta\wedge \left\lvert \int_{b_j(y)}^0 ( (v_j)^\xi_y ) ' \, dt \right\rvert \leq \sqrt{\left\lvert b_j(y) \right\rvert } \left( \int_{b_j(y)}^0 \lvert \left( (v_j)^\xi_y \right) ' \rvert ^2 \, dt \right)^\frac12   \leq \gamma^{3/2} \leq \gamma.
$$
Similarly, we have that $J_4 \leq \gamma$.

\medskip

Let us now estimate the term $J_1$. We consider the function 
$$z_j :=M_\eta \wedge |  (v_j)^\xi_y - u^+(x_0)\cdot\xi| \in H^1(B^\xi_y)$$
with 
$$z_j ' = ( (v_j)^\xi_y ) ' \mathds{1}_{\lbrace 0  \leq  (v_j)^\xi_y-u^+(x_0)\cdot\xi  \leq M_\eta \rbrace} - ( (v_j)^\xi_y ) ' \mathds{1}_{\lbrace 0 \leq u^+(x_0)\cdot\xi- (v_j)^\xi_y  \leq M_\eta \rbrace},$$
and the nonempty open interval $I^+ := ( 0 \vee b_j(y)  , b(y) ) $. By the Sobolev embedding and \eqref{eq;choiceCgamma}, we have  that for all $t \in I^+$,
\begin{eqnarray*}
| z_j(t)| & \leq  & \quad \sqrt{ b(y) - 0 \vee b_j(y) } \, \norme{z_j '}_{L^2(I^+)}   + \frac{1}{b(y) - 0 \vee b_j(y)} \, \norme{z_j }_{L^1(I^+)} \\
			& \leq &\sqrt 2 \norme{( (v_j)^\xi_y ) '}_{L^2 ( I^+) } + \frac{2}{L_\eta} \norme{M_\eta \wedge |  (v_j)^\xi_y - u^+(x_0)\cdot\xi|  }_{L^1(I^+)} \\
			& \leq & \left( \sqrt 2 + \frac{2}{L_\eta} \right) \gamma.
\end{eqnarray*}
By continuity of $(v_j)^\xi_y$ in $B^\xi_y$, the above inequality remains true up to the end point $0 \vee  b_j(y)$ of $I^+$, so that $J_1 \leq ( \sqrt 2 + \frac{2}{L_\eta} ) \gamma$. A similar argument shows that $J_3 \leq ( \sqrt 2 + \frac{2}{L_\eta} ) \gamma$, and thus $J_1 + J_2 + J_3 + J_4 \leq 8 ( 1 + \frac{1}{L_\eta} ) \gamma = C_\eta \gamma$, which shows that
$$M_\eta \wedge \left\lvert  (v_j)^\xi_y(b_j(y)) - (v_j)_y^\xi(a_j(y)) - [u](x_0) \cdot \xi \right\rvert  \le C_\eta\gamma.$$
Eventually, as $C_\eta\gamma < M_\eta$ for all $0 < \gamma < \gamma^*$ by \eqref{eq;choiceCgamma}, we conclude the validity of \eqref{eq:C*}. 

\medskip

\textbf{Step 3.} We now show that it is possible to include $Y_j \setminus Z_*$ inside a finite union of arbitrarily small segments contained in $J_{\bar u}\cap B$ (see Figure \ref{fig:F}). 

\medskip

Let $0< \gamma < \gamma_*$ and $j_1(\gamma) \in \N$ be given by \eqref{eq:C*}. For all $j \geq j_1(\gamma)$, we define
$$\widehat{\mathbf T}_j:=\{T \in \mathbf{T}^{x_0, j}_{b,int} : \text{ there exists $y \in Y_j \setminus Z_*$ such that $(\mathring T \cap B)_y^\xi\neq \emptyset$}\},$$
and, for all $T \in \widehat{\mathbf T}_j$, we introduce both following quantities :
\begin{equation}\label{eq:Lref Lmax}
 \begin{cases}
 L^{\rm ref}(T) := \frac{ \left\lvert [u](x_0) \cdot \xi \right\rvert - 4 C_\eta \gamma }{\left\lvert e(v_j)_{|T} : (\xi \otimes \xi) \right\rvert} \text{ the reference length of } T, \\
 L^{\rm max}(T) := \underset{ z \in p_\xi (T) }{\max} \, \LL^1(T^\xi_z) \text{ the maximal section's length of } T \text{ along the direction } \xi.
\end{cases}
\end{equation}
Note that because $T \in \mathbf{T}^{x_0, j}_b$, see \eqref{eq:new-triang}, then $ |e(v_j)_{|T} \xi \cdot \xi |^2 \geq (1-\delta)\kappa\varrho_j^2 /(\alpha A\e_{k_j})> 0,$ so that $L^{\rm ref}(T)$ is well defined, and positive by \eqref{eq;choiceCgamma} since $\gamma < \gamma_*$. The quantity $L^{\rm ref}(T)$ stands for the required length of the section $T^\xi_y$ in order for the (affine) function $(v_j)_y^\xi$ to pass exactly from the values $u^-(x_0)\cdot\xi$ to $u^+(x_0)\cdot\xi$ across $T$, up to the error $4 C_\eta \gamma $. Note that $\LL^1(T_y^\xi)=L^{\rm ref}(T)$ for at most two values of $y$, say $z_1$ and $z_2$, only depending on $j$ and $T$.
If $y \in Y_j \setminus Z_*$ is such that $(\mathring T \cap B)_y^\xi \neq \emptyset$, we know from Step 2 that the variation of $(v_j)_y^\xi$ across $T$ is close to $[u](x_0) \cdot \xi$, up to a small error of order $O(\gamma)$ which is uniform with respect to $y$. Therefore, we will show that if $y$ is far away from $z_1$ and $z_2$, then the variation of $(v_j)_y^\xi$ across $T$ is not sufficient to recover the full jump $[u](x_0)\cdot\xi$.

Let $x_1$, $x_2$ and $x_3 \in T$ be the three vertices of $T$ and $X_i := p_\xi(x_i) \in B^\xi$. We easily see that there exists $i_0 \in \{1,2,3\}$ such that $X_{i_0} = \underset{z \in p_\xi(T) }{\rm arg\, max} \, \LL^1(T^\xi_z)$. Up to a permutation of $\{x_1,x_2,x_3\}$, there is no loss of generality to assume that $i_0=3$ and $X_3 \cdot \xi^\perp \leq X_1 \cdot \xi^\perp,$ with $\xi^\perp \in \mathbb{S}^1$ being one of the two orthogonal vectors to $\xi$.

\medskip
Let $h_T \geq \frac{ \e_{k_j} \sin(\theta_0)}{\varrho_j} > 0$ be the smallest height of $T$. We claim that, for all $ z$, $z' \in p_\xi(T)$ be such that $\mathring{T}^\xi_{z} \neq \emptyset $, $\mathring{T}^\xi_{z'} \neq \emptyset $ and either $ z \cdot \xi^\perp , z' \cdot \xi^\perp \geq X_3 \cdot \xi^\perp $ or $ z \cdot \xi^\perp , z' \cdot \xi^\perp \leq X_3 \cdot \xi^\perp$, then,
\begin{equation}\label{eq:Horizontal tube's width}
\left\lvert z - z' \right\rvert \leq \frac{2 \LL^2(T)}{h_T} \, \frac{ \lvert \LL^1(T^\xi_{z}) - \LL^1(T^\xi_{z'}) \rvert}{ \max (  \LL^1(T^\xi_{z}), \LL^1(T^\xi_{z'}) ) } . 
\end{equation}
Indeed, consider for instance the case where 
$$X_1 \cdot \xi^\perp \geq z \cdot \xi^\perp > z' \cdot \xi^\perp \geq X_3 \cdot \xi^\perp$$ (see Figure \ref{fig:D}). Let 
$$L := \LL^1(T^\xi_{z}), \quad L' := \LL^1(T^\xi_{z'}) ,  \quad d := \left\lvert X_1 - z \right\rvert, \quad d':= \left\lvert X_1 - z' \right\rvert.$$

\begin{figure}[hbtp]
\begin{tikzpicture}[x=3.0cm,y=3.0cm]
\clip(-1.5,-0.5) rectangle (2.,1.5);
\fill[color=lightgray] (-0.25620804235572403,0.8964843406142776) -- (-0.41800145386041876,0.03775262377974413) -- (0.548637428767752,-0.06254758727748241) -- cycle;
\fill[color=lightgray] (0.21928450048786532,1.0454538174535781) -- (0.6581641652249057,0.36611225306443396) -- (1.3636992487793027,0.5813851017549257) -- cycle;

\draw [->, style= thick] (-1.2384796489614813,0.796342222294513) -- (-1.,0.8);
\draw [->,style= thick] (-1.2384796489614813,0.796342222294513) -- (-1.0103546855113323,0.8659477059678724);
\draw [->,style= thick] (-1.2384796489614813,0.796342222294513) -- (-1.242137426666968,1.0348218712559945);
\draw [->,style= thick] (-1.2384796489614813,0.796342222294513) -- (-1.3080851326348408,1.0244671857446621);

\draw [domain=-1.5:2.] plot(\x,{(--0.13322786327940744--0.2281249634501491*\x)/-0.0696054836733595});

\draw (-0.25620804235572403,0.8964843406142776)-- (-0.41800145386041876,0.03775262377974413);
\draw  (-0.41800145386041876,0.03775262377974413)-- (0.548637428767752,-0.06254758727748241);
\draw  (0.548637428767752,-0.06254758727748241)-- (-0.25620804235572403,0.8964843406142776);
\draw  (0.21928450048786532,1.0454538174535781)-- (0.6581641652249057,0.36611225306443396);
\draw  (1.3636992487793027,0.5813851017549257)-- (0.21928450048786532,1.0454538174535781);

\draw [<->] (-1.2842382819147675,0.41703420607839753) -- (-1.3305123026077985,0.5686926490945678);
\draw [<->] (-1.0004389562748202,0.21683303845987598) -- (-1.1267667015230471,0.6308595022759934);

\draw [dashed]  (-1.3305123026077985,0.5686926490945678)-- (1.4614382908488106,1.4205725508403377);
\draw [dashed] (-1.2842382819147675,0.41703420607839753)-- (1.6293541242491147,1.3060294549582139);
\draw [dashed] (-1.2041845573595715,0.15466618527845044)-- (1.7355083618723737,1.0516252221135574);
\draw [dashed] (-1.1045915881077635,-0.1717397424079074)-- (1.8438791049520855,0.7278975685784635);

\draw [style=very thick,color=ffqqqq] (-0.28934724204590195,0.7205953375961096)-- (-0.14544544652637403,0.764502650180909);
\draw [style=very thick,color=ffqqqq] (-0.3466778202892718,0.41630862144254716)-- (0.046173051385492964,0.5361752704969298);
\draw [style=very thick,color=qqqqff] (-0.41800145386041876,0.03775262377974413)-- (0.284561149836255,0.25211850948347203);
\draw [style=very thick,color=ffqqqq] (0.3108457530584493,0.9037262106481941)-- (0.45803798175612165,0.9486375001296721);
\draw [style=very thick,color=ffqqqq] (0.46561694261561254,0.6641559810748338)-- (0.8616165618800201,0.7849833759223453);
\draw [style=very thick,color=qqqqff] (0.6581641652249057,0.36611225306443396)-- (1.3636992487793027,0.5813851017549257);

\draw (-1.45,1.22) node[anchor=north west] {$\xi^\perp$};
\draw (-1.29,1.25) node[anchor=north west] {$\nu^\perp$};
\draw (-1.03,0.99) node[anchor=north west] {$\xi$};
\draw (-1,0.85) node[anchor=north west] {$\nu$};
\draw (-0.32,1.05) node[anchor=north west] {$x_1$};
\draw (0.1516245610619467,1.196710128318959) node[anchor=north west] {$x_1$};
\draw (1.3929887389589737,0.5928032309636494) node[anchor=north west] {$x_3$};
\draw (-0.44389474049676225,0.025242582060279923) node[anchor=north west] {$x_3$};
\draw (-0.8,0.12) node[anchor=north west] {$X_3$};
\draw (-0.83,0.29) node[anchor=north west] {$z'$};
\draw (-0.9,0.55) node[anchor=north west] {$z$};
\draw (-0.83,0.935) node[anchor=north west] {$X_1$};
\draw [color=ffqqqq](-0.2537759024404608,0.7437799553024768) node[anchor=north west] {$L$};
\draw [color=ffqqqq](-0.1978585971297839,0.4809686203422958) node[anchor=north west] {$L'$};
\draw [color=qqqqff](-0.15,0.15) node[anchor=north west] {$L^{\rm max}$};
\draw (0.3,0.1) node[anchor=north west] {$T$};
\draw (1.05,0.67) node[anchor=north west] {$T$};
\draw (-1.5,0.5816197699015141) node[anchor=north west] {$d$};
\draw (-1.25,0.4) node[anchor=north west] {$d'$};
\draw [color=ffqqqq](0.380885512835722,0.9366946586243118) node[anchor=north west] {$L$};
\draw [color=ffqqqq](0.6409009825303695,0.7186171679126723) node[anchor=north west] {$L'$};
\draw [color=qqqqff](0.9148957785526863,0.4502141024214236) node[anchor=north west] {$L^{\rm max}$};
\draw (-0.65,-0.35) node[anchor=north west] {$\Pi_\xi$};

\begin{scriptsize}
\draw [fill=uuuuuu] (-0.806332344035182,0.7286304261642903) circle (1pt);
\draw [fill=uuuuuu] (-0.7600583233421508,0.5769719831481199) circle (1pt);
\draw [fill=uuuuuu] (-0.680004598786955,0.3146039623481729) circle (1pt);
\draw [fill=uuuuuu] (-0.5804116295351469,-0.011801965338185005) circle (1pt);
\draw [fill=uuuuuu] (-0.25620804235572403,0.8964843406142776) circle (1pt);
\draw [fill=uuuuuu] (-0.41800145386041876,0.03775262377974413) circle (1pt);
\draw [fill=uuuuuu] (0.21928450048786532,1.0454538174535781) circle (1pt);
\draw [fill=uuuuuu] (1.3636992487793027,0.5813851017549257) circle (1pt);
\end{scriptsize}
\end{tikzpicture}
\caption{}
\label{fig:D}
\end{figure}
\noindent
Then, $L'>L>0$, $d'>d >0$ and using Thal\`es' Theorem, we have that 
$$ \frac{d}{d'}= \frac{d' - \left\lvert z - z' \right\rvert}{d'} = \frac{L}{L'}.$$
Since $d' \leq |X_1-X_3|=\HH^1(p_\xi([x_1,x_3])) \leq \lvert x_1 - x_3 \rvert \leq \frac{2 \LL^2(T)}{h_T}$, we obtain that 
 $$ \lvert z - z' \rvert =d' \, \frac{L' - L}{L'} \leq \frac{2 \LL^2(T)}{h_T} \,  \frac{\left\lvert L - L' \right\rvert}{L'},$$
so that \eqref{eq:Horizontal tube's width} holds in that case. The proof of the other case $X_2 \cdot \xi^\perp \leq z \cdot \xi^\perp,z' \cdot \xi^\perp \leq X_3 \cdot \xi^\perp$ is similar and we omit it.

\medskip

For all $j \geq j_1(\gamma)$ and all $T \in \widehat{\mathbf T}_j$, we have $L^{\rm max}(T) > L^{\rm ref}(T)$. 
Indeed, if such would not be the case, denoting by $y \in Y_j \setminus Z_*$ a point such that $(\mathring T \cap B)_y^\xi\neq \emptyset$, then $ \LL^1(T^\xi_{p_\xi(y)}) = \LL^1(T^\xi_y) = b_j(y) - a_j(y) \leq L^{\rm max}(T) \leq L^{\rm ref}(T) $, entailing that 
$$\lvert (v_j)^\xi_y (b_j(y)) -  (v_j)_y^\xi(a_j(y)) \rvert = \left\lvert e(v_j)_{|T} : \left( \xi \otimes \xi \right) \right\rvert \left( b_j(y) - a_j(y) \right) \leq \left\lvert [u](x_0) \cdot \xi \right\rvert - 4 C_\eta \gamma,$$ 
by definition \eqref{eq:Lref Lmax} of $L^{\rm ref}(T)$. Therefore, we would obtain that 
$$4 C_\eta \gamma \leq \left\lvert [u](x_0) \cdot \xi \right\rvert - \left\lvert (v_j)^\xi_y (b_j(y)) -  (v_j)_y^\xi(a_j(y))  \right\rvert \leq \left\lvert  (v_j)^\xi_y (b_j(y)) -  (v_j)_y^\xi(a_j(y)) - [u](x_0) \cdot \xi\right\rvert,$$
which is against \eqref{eq:C*}. Applying the Intermediate Value Theorem to the strictly monotone and continuous functions $y \in [X_1,X_3] \mapsto \LL^1(T_y^\xi) \in [0,L^{\rm max}(T)]$ and $y \in [X_2,X_3] \mapsto \LL^1(T_y^\xi)\in[0,L^{\rm max}(T)]$, there are at least one and at most two points $z^1_{\rm ref}$, $z^2_{\rm ref} \in p_\xi(T)$ (according to whether $T$ has an edge along the direction $\xi$ or not, see Figure \ref{fig:E}), only depending on $j$ and $T$, such that 
$$\LL^1(T^\xi_{z^1_{\rm ref}}) =\LL^1(T^\xi_{z^2_{\rm ref}}) = L^{\rm ref}(T).$$ (We set $z^1_{\rm ref}= z^2_{\rm ref}$ in the case where $T$ has an edge along the direction $\xi$). Without loss of generality, we can assume that $z^1_{\rm ref}\cdot \xi^\perp  \geq z^2_{\rm ref} \cdot \xi^\perp$.

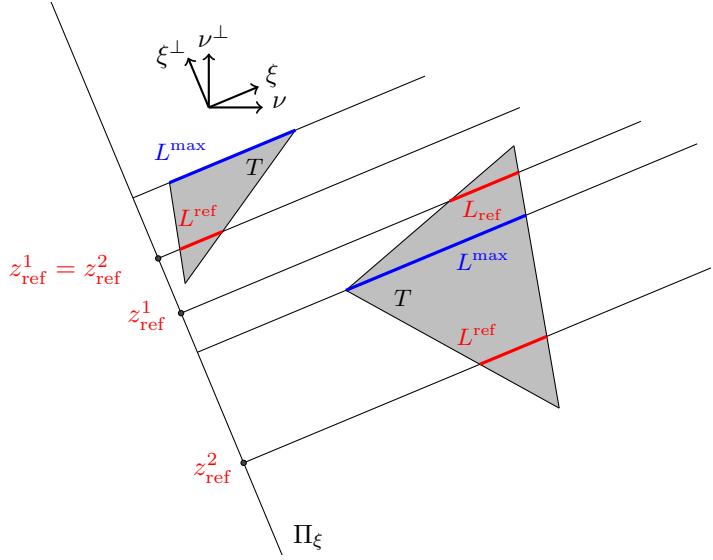
\begin{figure}[hbtp]
\begin{tikzpicture}[x=3.5cm,y=3.5cm]
\clip(-1.5,-0.9) rectangle (1.5,1.2);
\fill[color=lightgray] (-0.5545546205471268,0.5140503067130944) -- (-0.07636924132024087,0.7136780221796405) -- (-0.49509130381666416,0.1306011356644474) -- cycle;
\fill[color=lightgray] (0.11576920736101036,0.10490570114286517) -- (0.7537243377559556,0.6544687141207582) -- (0.9257010680572698,-0.34299632162686394) -- cycle;

\draw [->,style=thick] (-0.4052915917015789,0.8) -- (-0.2,0.8);
\draw [->,style=thick] (-0.4052915917015789,0.8) -- (-0.21584568941679844,0.8790878481871469);
\draw [->,style=thick] (-0.4052915917015789,0.8) -- (-0.4052915917015789,1.0052915917015792);
\draw [->,style=thick] (-0.4052915917015789,0.8) -- (-0.4843794398887259,0.9894459022847806);

\draw [domain=-2:2] plot(\x,{(--0.09529672431153875--0.18944590228478053*\x)/-0.07908784818714698});

\draw  (-0.6934262754045092,0.45607565419013574)-- (0.4160786680105971,0.9192599061028384);
\draw  (-0.5974570112698216,0.22619224918738087)-- (0.7761449435704861,0.79962889555345);
\draw  (-0.510583317571016,0.01809624373484487)-- (1.2362494486427327,0.7473452956091422);
\draw  (-0.4484850255094578,-0.1306531183935537)-- (1.4498104231126803,0.6618269534598709);
\draw  (-0.2730116789362557,-0.550979467263578)-- (1.544239515071211,0.20766711635862323);

\draw (-0.5545546205471268,0.5140503067130944)-- (-0.07636924132024087,0.7136780221796405);
\draw (-0.07636924132024087,0.7136780221796405)-- (-0.49509130381666416,0.1306011356644474);
\draw (-0.49509130381666416,0.1306011356644474)-- (-0.5545546205471268,0.5140503067130944);
\draw (0.11576920736101036,0.10490570114286517)-- (0.7537243377559556,0.6544687141207582);
\draw (0.7537243377559556,0.6544687141207582)-- (0.9257010680572698,-0.34299632162686394);
\draw (0.9257010680572698,-0.34299632162686394)-- (0.11576920736101036,0.10490570114286517);

\draw [style= very thick,color=ffqqqq] (0.5091998461483372,0.44382440709625437)-- (0.7711852745905473,0.5531952804801267);
\draw [style= very thick,color=ffqqqq] (0.624363338682626,-0.17635294695122986)-- (0.8786654316152407,-0.07018963026309614);
\draw [style= very thick,color=qqqqff] (-0.5545546205471268,0.5140503067130945)-- (-0.0763692413202408,0.7136780221796405);
\draw [style= very thick,color=ffqqqq] (-0.5152378638741342,0.2605162190852993)-- (-0.353225534409452,0.32815139283473);
\draw [style= very thick,color=qqqqff] (0.11576920736101037,0.1049057011428652)-- (0.799279260135693,0.3902501643182807);

\draw (-0.64,1.09) node[anchor=north west] {$\xi^\perp$};
\draw (-0.48186232456271727,1.15) node[anchor=north west] {$\nu^\perp$};
\draw (-0.22449854634956176,1.0001058629920712) node[anchor=north west] {$\xi$};
\draw (-0.20044585679693042,0.8774371462736513) node[anchor=north west] {$\nu$};
\draw [color=ffqqqq](-1.2,0.2761199074578674) node[anchor=north west] {$z_{\rm ref}^1 = z_{\rm ref}^2$};
\draw [color=qqqqff](-0.65,0.7) node[anchor=north west] {\small $L^{\rm max}$};
\draw [color=ffqqqq](-0.56,0.465) node[anchor=north west] {\small $L^{\rm ref}$};
\draw [color=ffqqqq](0.52,0.48) node[anchor=north west] {\small $L_{\rm ref}$};
\draw [color=qqqqff](0.49708214022937885,0.28333571432365684) node[anchor=north west] {\small $L^{\rm max}$};
\draw [color=ffqqqq](0.5,0.01394559133418566) node[anchor=north west] {\small $L^{\rm ref}$};
\draw [color=ffqqqq](-0.75,0.1) node[anchor=north west] {$z^1_{\rm ref}$};
\draw [color=ffqqqq](-0.5,-0.48) node[anchor=north west] {$z^2_{\rm ref}$};
\draw (-0.12,-0.75) node[anchor=north west] {$\Pi_\xi$};
\draw (-0.3,0.64) node[anchor=north west] {\small $T$};
\draw (0.2637710515688547,0.14142484596313185) node[anchor=north west] {\small $T$};
\begin{scriptsize}
\draw [fill=uuuuuu] (-0.5974570112698216,0.22619224918738087) circle (1pt);
\draw [fill=uuuuuu] (-0.510583317571016,0.01809624373484487) circle (1pt);
\draw [fill=uuuuuu] (-0.2730116789362557,-0.550979467263578) circle (1pt);
\end{scriptsize}
\end{tikzpicture}
\caption{Two possible configurations of $T$. }
\label{fig:E}
\end{figure}
\noindent
Let us introduce the following segments (orthogonal to $\xi$) associated to $T$ (see Figure \ref{fig:F}),
\begin{equation}\label{eq:Horizontal Tubes}
\mathfrak{T}_i(T) := \left\{ z \in \Pi_\xi: \; \left\lvert z - z^i_{\rm ref} \right\rvert \leq C'_\eta \, \frac{  \varrho_j \LL^2(T) }{\e_{k_j} } \, \gamma \right\} \quad \text{ for } i \in \{ 1,2 \},
\end{equation}
where
$$C'_\eta:= \frac{20 C_\eta }{\sin\theta_0 \left\lvert [u](x_0) \cdot \xi \right\rvert }$$
is a constant only depending on $\eta$.

For every $j \geq j_1(\gamma)$ and every $y \in Y_j \setminus Z_*$, let $T \in  \mathbf{T}^{x_0, j}_{b,int}$ be such that $(\mathring T \cap B)_y^\xi\neq \emptyset$. Note that $T \in \widehat{\mathbf T}_j$. If $p_\xi(y) \cdot \xi^\perp \geq X_3 \cdot \xi^\perp$ and $z^1_{\rm ref} \cdot \xi^\perp \geq X_3 \cdot \xi^\perp$ (the other cases being treated similarly), applying \eqref{eq:Horizontal tube's width} above, with $z = p_\xi(y)$ and $z' = z^1_{\rm ref}$, we get that
$$
\begin{aligned}
\left\lvert p_\xi(y) - z^1_{\rm ref} \right\rvert  & \leq \frac{2 \LL^2(T)}{h_T} \, \frac{  \left\lvert \left( b_j(y) - a_j(y) \right) - L^{\rm ref}(T) \right\rvert  }{\max \left( b_j(y) - a_j(y), L^{\rm ref}(T) \right) } \\ 
& \leq \frac{2 \LL^2(T)}{h_T} \, \frac{  \left\lvert   | (v_j)^\xi_y (b_j(y)) - (v_j)_y^\xi(a_j(y))| - \left\lvert [u](x_0) \cdot \xi \right\rvert + 4 C_\eta \gamma  \right\rvert  }{ \left\lvert e(v_j)_{|T} : \left( \xi \otimes \xi \right) \right\rvert \,  L^{\rm ref}(T)  } \\
& \leq \frac{2 \LL^2(T)}{h_T} \, \frac{5 C_\eta \gamma}{\left\lvert [u](x_0) \cdot \xi \right\rvert - 4 C_\eta \gamma} \\
& \leq C_\eta' \, \frac{  \varrho_j \LL^2(T) }{\e_{k_j} } \, \gamma,
\end{aligned}
$$
where we also used \eqref{eq:C*} and \eqref{eq;choiceCgamma}.

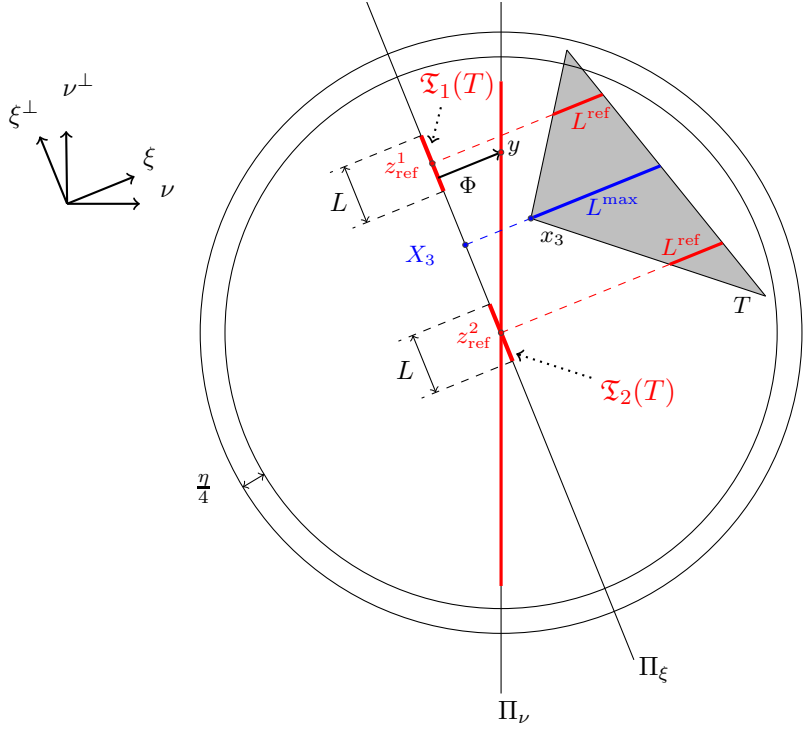
\begin{figure}[hbtp]
\begin{tikzpicture}[x=4.cm,y=4.cm]
\clip(-2,-1.3) rectangle (2.,1.1);
\fill[color=lightgray] (0.21878285399439446,0.9404773286429337) -- (0.09904806014744462,0.38042748645558916) -- (0.8792554265049887,0.12164583523798858) -- cycle;
\draw (0.,0.) circle (1);
\draw  (0.,0.) circle (2.7527556080451836/3);

\draw [->,style=thick] (-1.4422538785401653,0.4288370738523108) -- (-1.2000177737259419,0.4288370738523108);
\draw [->,style=thick] (-1.4422538785401653,0.4288370738523108) -- (-1.21779479267978,0.5199219202884091);
\draw [->,style=thick] (-1.4422538785401653,0.4288370738523108) -- (-1.4454834062146524,0.6710516494388497);
\draw [->,style=thick] (-1.4422538785401653,0.4288370738523108) -- (-1.5333387249762636,0.6532961597126955);

\draw (0.44153477372299726,-1.088067835245956)-- (-0.5010790495237435,1.2348019434692232);
\draw (0.,0.8362098787841941)-- (0.,1.2782028522424405);
\draw (0.,-0.8423695410328859)-- (0.,-1.2);

\draw [style=very thick,color=ffqqqq] (0.,-0.8423695410328859)-- (0.,0.8362098787841941);
\draw (0.21878285399439446,0.9404773286429337)-- (0.09904806014744462,0.38042748645558916);
\draw (0.09904806014744462,0.38042748645558916)-- (0.8792554265049887,0.12164583523798858);
\draw (0.8792554265049887,0.12164583523798858)-- (0.21878285399439446,0.9404773286429337);

\draw [dashed,color=qqqqff] (-0.11854503601020781,0.29212883873944023)-- (0.09904806014744462,0.38042748645558916);
\draw [style=very thick,color=qqqqff] (0.09904806014744462,0.38042748645558916)-- (0.5295954773477779,0.5551423803919565);
\draw [style=very thick,color=ffqqqq] (0.33787610061365586,0.7928295609044347)-- (0.1729043913416452,0.7258845194607201);
\draw [dashed,color=ffqqqq] (0.1729043913416452,0.7258845194607201)-- (-0.22846742088120148,0.5630090014572458);
\draw [style=very thick,color=ffqqqq] (0.7363539137251748,0.29881028383050606)-- (0.5603947357018815,0.22740655941525653);
\draw [dashed,color=ffqqqq] (0.5603947357018815,0.22740655941525653)-- (0.,0.);
\draw [style=ultra thick,color=ffqqqq] (-0.26600170342595364,0.6555041977282419)-- (-0.19093313833644948,0.47051380518624986);
\draw [style=ultra thick,color=ffqqqq] (-0.038462948898429945,0.09478369549970211)-- (0.03846294889842994,-0.09478369549970211);
\draw [dashed] (-0.26600170342595364,0.6555041977282419)-- (-0.5686370846343108,0.5326956372378937);
\draw [dashed] (-0.19093313833644948,0.47051380518624986)-- (-0.4935685195448068,0.3477052446959018);
\draw [dashed] (-0.038462948898429945,0.09478369549970211)-- (-0.3410983301067872,-0.028024864990645944);
\draw [dashed] (0.03846294889842994,-0.09478369549970211)-- (-0.2641724323099275,-0.2175922559900502);

\draw [<->] (-0.4461305965039151,0.3669554163646694) -- (-0.5211991615934193,0.5519458089066613);
\draw [<->] (-0.2167345092690358,-0.19834208432128256) -- (-0.29366040706589563,-0.008774693321878302);

\draw (-0.6,0.4972981603740395) node[anchor=north west] { $L$};
\draw (-0.38,-0.06269198966233744) node[anchor=north west] { $L$};
\draw [color=ffqqqq](-0.42,0.6292703613380264) node[anchor=north west] {\small $z^1_{\rm ref}$};
\draw [color=ffqqqq](-0.3,0.9) node[anchor=north west] { \large $\mathfrak{T}_1(T)$};
\draw [->,style=dotted, style=thick] (-0.2,0.75) -- (-0.23,0.63);

\draw [color=ffqqqq](0.3,-0.12) node[anchor=north west] {\large $\mathfrak{T}_2(T)$};
\draw [color=ffqqqq](-0.18,0.063) node[anchor=north west] {\small $z^2_{\rm ref}$};
\draw [->,style=dotted, style=thick] (0.3,-0.15) -- (0.05,-0.07);

\draw [color=qqqqff](-0.35,0.31895734826054367) node[anchor=north west] {\small $X_3$};
\draw (-0.01,0.67) node[anchor=north west] {\small $y$};

\draw [->,style=thick] (-0.20905320108205616,0.515166816952209) -- (0.,0.6);

\draw (0.09886339416920005,0.3653259594100526) node[anchor=north west] {\small $x_3$};
\draw [color=ffqqqq](0.2,0.78) node[anchor=north west] {\small $L^{\rm ref}$};
\draw [color=ffqqqq](0.5,0.36) node[anchor=north west] {\small $L^{\rm ref}$};
\draw [color=qqqqff](0.25,0.48) node[anchor=north west] {\small $L^{\rm max}$};
\draw (-0.040242439279326754,-1.193372738461901) node[anchor=north west] {$\Pi_\nu$};
\draw (0.4270104884580325,-1.0471332725288345) node[anchor=north west] {$\Pi_\xi$};
\draw (0.7408903177777854,0.15131698487385756) node[anchor=north west] {\small $T$};
\draw (-1.6667106457544094,0.8040443572092524) node[anchor=north west] {$\xi^\perp$};
\draw (-1.4883698336409135,0.8932147632660002) node[anchor=north west] {$\nu^\perp$};
\draw (-1.2208586154706695,0.6506712587916459) node[anchor=north west] {$\xi$};
\draw (-1.163789555594351,0.518699057827659) node[anchor=north west] {$\nu$};
\draw (-0.17,0.55) node[anchor=north west] {\small $\Phi$};
\draw (-1.05,-0.44) node[anchor=north west] {$\frac{\eta}{4}$};

\draw [<->] (-0.7874469224832926,-0.4710519593973247) -- (-0.858173083199147,-0.5133604574492174);

\begin{scriptsize}
\draw [color=uuuuuu,fill=qqqqff] (0.09904806014744462,0.38042748645558916) circle (1pt);
\draw [color=uuuuuu,fill=qqqqff] (-0.11854503601020781,0.29212883873944023) circle (1pt);
\draw [color=uuuuuu,fill=ffqqqq] (-0.22846742088120148,0.5630090014572458) circle (1pt);
\draw [color=uuuuuu,fill=ffqqqq] (0.,0.) circle (1pt);
\draw [color=uuuuuu,fill=ffqqqq] (0.,0.6) circle (1pt);
\end{scriptsize}
\end{tikzpicture}
\caption{The length of $\mathfrak T_i(T)$ is given by $L = \frac{2C'_\eta\varrho_j \LL^2(T)}{\e_{k_j}} \, \gamma$.}
\label{fig:F}
\end{figure}

We have just shown that for all $j \geq j_1(\gamma)$ and all $y \in Y_j \setminus Z_*$, there exists $T \in \widehat{\mathbf T}_j$ such that $p_\xi(y) \in \mathfrak{T}_1(T) \cup \mathfrak{T}_2(T)$. Since $y \in \Pi_\nu$, then $y=\Phi(p_\xi(y)) \in \Phi \left(   \mathfrak{T}_1(T) \cup \mathfrak{T}_2(T)  \right)$, with $\Phi$ introduced in \eqref{eq:Phi}. Recalling that the Lipschitz constant of $\Phi$ is less than $\sqrt{1 + 4 \eta^2} \leq 2$ for $\eta$ small enough, we deduce that
$$\HH^1( \Phi \left( \mathfrak{T}_1(T) \cup \mathfrak{T}_2(T) \right) ) \leq 2 \HH^1( \mathfrak{T}_1(T) \cup \mathfrak{T}_2(T) ) \leq 8 C'_\eta \, \frac{  \varrho_j \LL^2(T) }{\e_{k_j} } \, \gamma.$$ Together with the fact that each triangle in $\widehat{\mathbf T}_j \subset \mathbf{T}^{x_0, j}_{b,int}$ is contained in $B$, we obtain that for all $j \geq j_1(\gamma)$,
\begin{multline*}
\HH^1 ( Y_j \setminus Z_* ) \leq \sum_{ T \in \widehat{\mathbf T}_j } \HH^1( \Phi \left( \mathfrak{T}_1(T) \cup \mathfrak{T}_2(T) \right) ) \\
 \leq 8 C'_\eta \, \gamma \,  \frac{\varrho_j}{ \e_{k_j}}\sum_{ T \in \widehat{\mathbf T}_j } \LL^2(T)   \leq  \frac{8 C'_\eta\gamma}{\kappa (1-\delta)}  \frac{(1-\delta)\kappa\varrho_j}{\e_{k_j}} \int_{B} \chi_j \, dx  \leq \frac{ 8 C'_\eta \gamma}{\kappa (1-\delta)} \frac{\lambda_{k_j}(B_{\varrho_j}(x_0) ) }{\varrho_j}.
\end{multline*}
Possibly taking a larger $j_1(\gamma) \in \N$, we finally get that  for all $j \geq j_1(\gamma)$,
$$ 
\HH^1 ( Y_j \setminus Z_* ) \leq \frac{ 8 C'_\eta}{\kappa(1-\delta)}  \, \left(2 \frac{d\lambda}{d\HH^1\res J_u}(x_0) + 1\right) \gamma =: C_* \gamma,
$$
for some constant $C_*>0$ only depending on $\eta$.
\end{proof}

We are now in position to prove Lemma \ref{lemma2}.

\begin{proof}[Proof of Lemma \ref{lemma2}]
Let $j_0 = j_0(\eta) \in \N$ such that $\frac{\eta}{2^{j_0} C_*} < \gamma_*$, where $C_*$ and $\gamma_*$ are given by Lemma \ref{lemma3}. For all $j \geq j_0$, as $0 < \gamma_j := \frac{\eta}{2^{j} C_*} < \gamma_*$, Lemma \ref{lemma3} ensures the existence of an integer $i(\gamma_j) \geq j_0$ such that for all $i \geq i(\gamma_j)$, 
$$\HH^1(Y_i \setminus Z_*) \leq C_* \gamma_j = \frac{\eta}{2^j}.$$
Thereby, we define the following extraction
\begin{equation}\label{eq:RecursivePhi}
\begin{cases}
& \phi(j_0) := i(\gamma_{j_0}) \geq j_0, \\
& \phi(j+1) := \max \left( \phi(j) + 1, i(\gamma_{j+1}) \right) \quad \text{ for all } j \geq j_0.
\end{cases}
\end{equation}
Since $\phi(j) \geq i(\gamma_j)$, then  $\HH^1(Y_{\phi(j)} \setminus Z_*) \leq \frac{\eta}{2^j}$. Hence, we set 
\begin{equation}\label{eq:Z''}
 Z'' :=  Z_* \, \cup \, \bigcup_{j = j_0}^{+ \infty} Y_{\phi(j)}, 
\end{equation}
which satisfies 
$$ \HH^1(Z'') \leq  2\eta + \underset{j \geq j_0}{\sum} \frac{\eta}{2^j} \leq 3 \eta.$$
Moreover, for all $j \geq j_0$ and all $y \in J_{\bar u} \cap B_{1-\frac\eta2} \setminus Z''$, Lemma \ref{lemma1} ensures that 
$$ \# \left\{ T \in \mathbf{T}^{x_0, \phi(j)}_{b,int}: \; ( \mathring{T} \cap B )^\xi_y \neq \emptyset \right\} \geq 1,$$
and since $y \notin Y_{\phi(j)}$ for all $j \geq j_0$, it actually follows that 
$$\# \left\{ T \in \mathbf{T}^{x_0, \phi(j)}_{b,int}: \; ( \mathring{T} \cap B )^\xi_y \neq \emptyset \right\} \geq 2,$$
concluding the proof of Lemma \ref{lemma2}.
\end{proof}

Let us consider the further subsequence introduced in Lemma \ref{lemma2}. In order to derive a lower bound for the surface energy without the factor $1/2$, we now construct two disjoint subfamilies $\mathscr F_j^1$ and $\mathscr F_j^2$ from $ \mathscr{F}_j $ (see \eqref{eq:Fj}) with the property that both sets
$$\bigcup_{T \in \mathscr F_j^1} T, \qquad \bigcup_{T \in \mathscr F_j^2} T$$
project onto $B^\nu=J_{\bar u} \cap B$, thanks to the mapping $\Phi \circ p_\xi$, into two sets of almost full $\HH^1$ measure in $J_{\bar u} \cap B$. This is the object of the following technical result.

\begin{lem}\label{lemma F_1 F_2}
Let $K \subset J_{\bar u} \cap B_{1-\frac\eta2} \setminus Z''$ be a compact set. For all $j \in \N$, there exist two disjoint subfamilies $\mathscr F_j^1$ and $\mathscr F_j^2$ of $\mathscr{F}_j$ such that
$$K \subset \Phi \left(  \bigcup_{T \in \mathscr F_j^1}  p_\xi( \mathring{T}) \right) \cap  \Phi \left(  \bigcup_{T \in \mathscr F_j^2}  p_\xi( \mathring{T}) \right) .$$
\end{lem}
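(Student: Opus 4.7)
The plan is to reduce the statement to a combinatorial covering fact on $\R$. For each $T \in \mathscr F_j$, the set $I_T := \Phi(p_\xi(\mathring T))$ is a non-empty open interval of $\Pi_\nu \simeq \R$: indeed, $p_\xi(\mathring T)$ is an open interval of $\Pi_\xi$ as the image of a non-empty convex open set under an affine map, and the linear map $\Phi: \Pi_\xi \to \Pi_\nu$ introduced in \eqref{eq:Phi} is a bijection because $\xi\cdot\nu\neq 0$ by \eqref{eq:xi}. By construction of $\Phi$, a point $y\in\Pi_\nu$ lies in $I_T$ if and only if the line $y+\R\xi$ meets $\mathring T$; since every $T\in\mathscr F_j$ is contained in $B$ as soon as $j$ is large enough, this is in turn equivalent to $(\mathring T\cap B)^\xi_y\neq\emptyset$.

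Under this identification, Lemma \ref{lemma2} says that each $y\in K$ belongs to at least two intervals of the family $\{I_T\}_{T\in\mathscr F_j}$. The statement to be proved thus follows from the combinatorial fact: \emph{given a finite family $\mathcal I$ of open intervals of $\R$ and a compact set $K\subset\R$ such that every point of $K$ lies in at least two intervals of $\mathcal I$, there exist disjoint subfamilies $\mathcal I^{(1)},\mathcal I^{(2)}\subset\mathcal I$ with $K\subset\bigcup_{I\in\mathcal I^{(i)}} I$ for $i=1,2$.} Once this is granted, setting $\mathscr F_j^i := \{T\in\mathscr F_j: I_T\in\mathcal I^{(i)}\}$ for $i=1,2$ concludes the proof.

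I expect the main difficulty to lie in this combinatorial claim, which is delicate since a naive left-to-right greedy assignment can fail: committing an interval to the first subfamily may leave the second with no interval available to cover some later point. I would prove it by induction on $|\mathcal I|$. If some $I\in\mathcal I$ can be removed while preserving multiplicity at least two throughout $K$, apply the induction hypothesis to $\mathcal I\setminus\{I\}$ and freely insert $I$ into either subfamily. Otherwise every interval is \emph{essential}, meaning it belongs to some pair $\{I,I'\}$ that together cover a point of $K$ at multiplicity exactly two, forcing $I$ and $I'$ into opposite subfamilies. In that critical regime, Helly's theorem in one dimension prevents three mutually essential intervals from sharing a common point of $K$, and one then verifies that the ``forced-separation'' relation defines a bipartite graph on $\mathcal I$; a $2$-coloring of this graph produces the required partition. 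A more streamlined alternative is to invoke the total unimodularity of the consecutive-ones interval-incidence matrix, which turns the feasible fractional assignment $x_I = 1/2$ into an integer $\{0,1\}$-solution realizing the $2$-partition.
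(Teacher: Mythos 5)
Your reduction is correct and neatly packaged: for $T\subset B$, the set $\Phi(p_\xi(\mathring T))$ is a nonempty open sub-interval of $\Pi_\nu$, and $y\in\Phi(p_\xi(\mathring T))$ if and only if $(\mathring T\cap B)_y^\xi\neq\emptyset$ (note $\Phi\circ p_\xi=\Phi$ since $\Phi$ only sees the $\xi$-line through a point). With Lemma \ref{lemma2} this does reduce the statement to the abstract claim about double-covers of a compact set by open intervals, and that claim is true. However, of the two routes you sketch for it, only the second is airtight. The inductive/bipartite route has a real gap: after deleting all removable intervals, a proper $2$-coloring of the ``forced-separation'' graph $G$ (edges witnessed by points of $K$ covered by \emph{exactly} two intervals) only guarantees that such multiplicity-two points see both colors; a point of $K$ covered by three or more intervals contributes no edge to $G$, so nothing prevents all of its covering intervals from receiving the same color, leaving that point uncovered by one family. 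You would need an additional argument (as you half-suspect) to rule this out, and invoking one-dimensional Helly by itself does not do it. The total-unimodularity route, on the other hand, is complete as stated: order the finitely many atoms of the partition of $\R$ induced by the intervals, form the atom-by-interval incidence matrix $A$ (consecutive ones in columns, hence TU), impose $\mathds 1\leq Ax\leq d-\mathds 1$ and $0\leq x\leq\mathds 1$ where $d$ records multiplicities, observe that $x\equiv\tfrac12$ is feasible since $d\geq 2$ on atoms meeting $K$, and conclude by Hoffman--Kruskal that an integral point exists; it yields the $2$-partition.

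The paper takes a genuinely different, hands-on route: for each $y$ it selects a pair of triangles minimizing the overlap of their $\Phi\circ p_\xi$-shadows, proves the structural ``nesting'' property \eqref{eq:Optimality I(j,y)} that this minimal overlap $I(y)$ is contained in the shadow of \emph{every} triangle crossing $B_y^\xi$, extracts a finite ordered subcover of $K$ by the $I(y_i)$, and then builds the two subfamilies by an explicit induction on the ordered points, repairing conflicts using the nesting property. Your approach buys modularity and generality (no geometric nesting lemma needed, pure combinatorics of interval covers); the paper's approach buys elementariness (no TU/Helly machinery) at the cost of a longer ad hoc case analysis. If you keep your version, I would recommend dropping the bipartite sketch and presenting the TU argument as the actual proof, or else fully justifying why a $2$-coloring of $G$ suffices.
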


\begin{proof} 
For the sake of clarity, we omit to write the explicit dependance on $j$ for the different objects considered herafter (triangles, intervals, and so forth).

For all $y \in J_{\bar u} \cap B_{1-\frac\eta2} \setminus Z''$, we consider a pair of distinct triangles of $\mathscr{F}_j$ satisfying
\begin{eqnarray}\label{eq:T1 T2}
\left\{ T^1(y),T^2(y) \right\} & \in & {\rm arg\; min } \Big\{ \HH^1 \left( \Phi ( p_\xi( \mathring{T^1} )  \cap  p_\xi( \mathring{T^2} ) ) \right) :\nonumber\\ 
&&  T^1, \,T^2 \in \mathbf{T}^{x_0, j}_{b,int} ,\,  \mathring{T^1} \cap \mathring{T^2} =\emptyset, \, ( \mathring{T^1} \cap B ) ^\xi_y \neq \emptyset, \, ( \mathring{T^2} \cap B ) ^\xi_y \neq \emptyset  \Big\}.
\end{eqnarray}
Note that Lemma \ref{lemma2} ensures that the set
 $$\left\{ \lbrace T^1, T^2 \rbrace \subset \mathbf{T}^{x_0, j}_{b,int} :\, ( \mathring{T^i} \cap B ) ^\xi_y \neq \emptyset \text{ for all } i \in \{ 1,2 \}  \right\}$$
 is nonempty and finite, hence the minimum in \eqref{eq:T1 T2} is achieved and we have at our disposal such a pair of distinct triangles $\left\{ T^1(y),T^2(y) \right\}$. Then, we introduce the following open segment in $B^\nu= J_{\bar u } \cap B$
\begin{equation}\label{eq:I(j,y)}
I(y) := \Phi \left( p_\xi( \mathring{T^1}(y) ) \cap p_\xi( \mathring{T^2}(y) ) \right) \subset B^\nu = J_{\bar u } \cap B.
\end{equation}
Since $y \in I(y)$, it follows that 
$$K \subset J_{\bar u} \cap B_{1-\frac\eta2} \setminus Z'' \subset \bigcup_{y \in J_{\bar u} \cap B_{1-\frac\eta2} \setminus Z''} I(y).$$
Furthermore, $I(y)$ is optimal in the sense that any other triangle $T \in \mathbf{T}^{x_0, j}_{b,int}$ satisfying $( \mathring{T} \cap B ) ^\xi_y \neq \emptyset$ is such that
\begin{equation}\label{eq:Optimality I(j,y)}
 I(y) \subset \Phi ( p_\xi (\mathring{T}) ) . 
\end{equation}

\begin{figure}[hbtp]
\begin{tikzpicture}[x=4.cm,y=4.cm]
\fill[color=lightgray] (0.07306512448896575,0.569172626755291) -- (0.4293154217009433,0.6095837250233429) -- (0.6245117390498757,-0.964380230742652) -- cycle;
\fill[color=lightgray] (1.2875595471875192,-0.29513571411774076) -- (0.3425615028791955,0.9720911715285031) -- (0.8351998276169774,0.9720911715285031) -- cycle;
\fill[color=qqqqff,fill opacity=0.10000000149011612] (1.1977072741221373,0.8605504187576847) -- (0.6864788239225523,0.0085030017583764) -- (1.5788048460891009,-0.33541431928498083) -- cycle;
\fill[color=ffqqqq,fill opacity=0.10000000149011612] (0.271299355275617,-0.4066764668885593) -- (0.5873314881262692,-0.10613610525607603) -- (0.10708658036302307,0.8977306696812908) -- cycle;

\draw  (0.,1.)-- (0.,-1.);

\draw [->,style=thick] (-0.6,0.4) -- (-0.40595443396530506,0.4);
\draw [->,style=thick] (-0.6,0.4) -- (-0.4234563427807303,0.4805358230439732);
\draw [->,style=thick] (-0.6,0.4) -- (-0.6,0.594045566034695);
\draw [->,style=thick] (-0.6,0.4) -- (-0.6805358230439732,0.5765436572192698);

\draw  (-0.001355818164161618,0.5352232231761306)-- (1.0030123163356806,0.9933966609826284);
\draw  (0.,-0.8838230204081721)-- (1.7749465189441744,-0.07412653505292466);
\draw  (0.,0.09215856633649022)-- (1.6981856895341896,0.8668382367495103);
\draw [color=qqqqff] (0.,0.3431252600708319)-- (1.3209516791443512,0.945718025831192);
\draw [color=ffqqqq] (0.,-0.6297579724301965)-- (1.7677020968683588,0.17663374682048838);

\draw (-0.11007988525894939,0.15666066831759476) node[anchor=north west] {$y$};
\draw [color=qqqqff](-0.10162982823085709,0.41297906483639496) node[anchor=north west] {$z$};
\draw [color=ffqqqq](-0.11007988525894939,-0.5559608077181903) node[anchor=north west] {$z$};
\draw (0.74,0.658030718651072) node[anchor=north west] { ${T_{i_a}}$};
\draw (0.25,0.15947735399362553) node[anchor=north west] { ${ T_{i_b}}$};
\draw (-0.08472971417467245,0.6101470621585489) node[anchor=north west] {$b$};
\draw (-0.11571325661101094,-0.8179125755890521) node[anchor=north west] {$a$};

\draw [color=qqqqff] (1.1977072741221373,0.8605504187576847)-- (0.6864788239225523,0.0085030017583764);
\draw [color=qqqqff] (0.6864788239225523,0.0085030017583764)-- (1.5788048460891009,-0.33541431928498083);
\draw [color=qqqqff] (1.5788048460891009,-0.33541431928498083)-- (1.1977072741221373,0.8605504187576847);
\draw [color=qqqqff](1.3,-0.04332401468058998) node[anchor=north west] { $ T$};

\draw [color=ffqqqq] (0.271299355275617,-0.4066764668885593)-- (0.5873314881262692,-0.10613610525607603);
\draw [color=ffqqqq] (0.5873314881262692,-0.10613610525607603)-- (0.10708658036302307,0.8977306696812908);
\draw [color=ffqqqq] (0.10708658036302307,0.8977306696812908)-- (0.271299355275617,-0.4066764668885593);
\draw [color=ffqqqq](0.24,-0.20387509821434394) node[anchor=north west] { $ T$};

\draw (-0.031212686330087818,-0.9869137161508983) node[anchor=north west] {$\Pi_\nu$};
\draw (-0.7438341623658727,0.7143644321716874) node[anchor=north west] {$\xi^\perp$};
\draw (-0.631166735324642,0.7368979175799336) node[anchor=north west] {$\nu^\perp$};
\draw (-0.39456513853805725,0.46931277835701035) node[anchor=north west] {$\nu$};
\draw (-0.3917484528620265,0.5876135767503028) node[anchor=north west] {$\xi$};
\begin{scriptsize}
\draw [fill=uuuuuu] (0.,0.09215856633649022) circle (1pt);
\draw [color =qqqqff ,fill=qqqqff] (0.,0.3431252600708319) circle (1pt);
\draw [color=ffqqqq ,fill=ffqqqq] (0.,-0.6297579724301965) circle (1pt);
\end{scriptsize}
\end{tikzpicture}

\caption{}
\label{fig:G}
\end{figure}

\noindent
Indeed, setting $J := \Phi ( p_\xi (\mathring{T}) ) $, there exist points $a$, $b$, $\tilde{a}$ and $\tilde{b}$ in $B^\nu= J_{\bar u } \cap B$ such that $I(y) = (a,b)$, $J = (\tilde{a},\tilde{b})$ with $a \cdot \nu^\perp < b \cdot \nu^\perp$ and $\tilde{a} \cdot \nu^\perp < \tilde{b} \cdot \nu^\perp$. By construction, there exist $i_a$ (resp. $ i_b $) $\in \{ 1,2 \}$ such that $a$ (resp. $b$) is the image by $\Phi \circ p_\xi$ of a vertex of $T^{i_a}(y)$ (resp. $T^{i_b}(y)$), see Figure \ref{fig:G}. Assume by contradiction that there exists a point $z \in I(y) \setminus J$. If $y \cdot \nu^\perp < z \cdot \nu^\perp$, then $J \subset (\tilde{a},z)$ since $J$ is a segment containing $y$. In particular, $\Phi ( p_\xi(\mathring{T}^{i_a}(y)) ) \cap J  \subset (a,z)$. Together with \eqref{eq:T1 T2} and recalling that $J := \Phi ( p_\xi (\mathring{T}) ) $, it ensures that 
$$\HH^1((a,z)) \geq \HH^1 \left(   \Phi ( p_\xi(\mathring{T}^{i_a}(y)) ) \cap  \Phi ( p_\xi (\mathring{T}) )  \right) \geq \HH^1(I(y)) = \HH^1((a,z)) + \HH^1((z,b)) > \HH^1((a,z)),$$
which is impossible. A similar argument shows that the other situation $y \cdot \nu^\perp > z \cdot \nu^\perp$ is also impossible. This shows the validity of \eqref{eq:Optimality I(j,y)}.

\medskip

By compactness of $K$, there exist an integer $N = N(j,K) \geq 1$ and points $y_1, \ldots, y_N \in J_{\bar u} \cap B_{1-\frac\eta2} \setminus Z''$ such that 
\begin{equation}\label{eq:K's cover}
K \subset \bigcup_{i=1}^{N} I(y_i).
\end{equation}
Up to relabeling the points $y_i$, we can assume that $y_1 \cdot \nu ^\perp < \cdots < y_N \cdot \nu^\perp$ (See Figure \ref{fig:H1}). Let us now construct two disjoint subfamilies $\mathscr{F}_j^1$ and $\mathscr{F}_j^2$ of $\mathscr{F}_j$ by induction in $N$ iterations.

\medskip 

\textbf{Iteration 1.} Set $\mathscr F^1(1) := \lbrace T^1(y_1) \rbrace$ and $\mathscr F^2(1) := \lbrace T^2(y_1) \rbrace$. Clearly $\mathscr F^1(1) \cap \mathscr F^2(1)=\emptyset$ and, for all $k \in \{ 1,2 \}$, there is $T \in \mathscr F^k(1)$ such that $ ( B \cap \mathring{T} ) ^\xi_{y_1} \neq \emptyset $ and $I(y_1) \subset \Phi ( p_\xi ( \mathring{T} ) )$ .

\medskip

\textbf{Iteration 2.}  We distinguish two cases:
\begin{enumerate}
\item[i)] If $ \lbrace T^1(y_2), T^2(y_2) \rbrace \cap  \left( \mathscr F^1(1) \cup \mathscr F^2(1) \right) = \emptyset$, then we set $\mathscr F^1(2) := \mathscr F^1(1) \cup \lbrace T^1(y_2) \rbrace$ and $\mathscr F^2(2) := \mathscr F^2(1) \cup \lbrace T^2(y_2) \rbrace$. We have that $\mathscr F^1(2) \cap \mathscr F^2(2) = \emptyset$ and, for all $i,k \in \{ 1,2 \}$, there exists $T \in \mathscr F^k(2)$ such that $ ( B \cap \mathring{T} ) ^\xi_{y_i} \neq \emptyset $ and $I(y_i) \subset \Phi ( p_\xi ( \mathring{T} ) )$.
\item[ii)] Otherwise, there exist $i,k \in \{ 1,2 \}$ such that $T^i(y_2) \in \mathscr F^k(1)$, i.e. $T^i(y_2)=T^k(y_1)$, and $T^{3-i}(y_2) \notin \mathscr F^k(1)$. In that case, we set $\mathscr F^k(2) := \mathscr F^k(1) $ and $\mathscr F^{3-k}(2) := \mathscr F^{3-k}(1) \cup \lbrace T^{3-i}(y_2) \rbrace $. Note that, it might be the case that $T^{3-i}(y_2)  \in  \mathscr F^{3-k}(1)$. We have that $\mathscr F^1(2) \cap \mathscr F^2(2) = \emptyset$ and, for all $i,k \in \{ 1,2 \}$, there exists $ T \in \mathscr F^k(2) $ such that $ ( B \cap \mathring{T} ) ^\xi_{y_i} \neq \emptyset $ and $I(y_i) \subset \Phi (  p_\xi ( \mathring{T} ) ) $.
\end{enumerate}

\medskip
\textbf{Iteration $n+1$ for some $n \in \{ 1,\ldots,N-1 \}$.} Assume that we have constructed two disjoint subfamilies $\mathscr F^1(n)$ and $\mathscr F^2(n)$ of $\mathscr{F}_j$ with the following properties: for all $k \in \{ 1,2 \}$ and all $i \in \{ 1,\ldots,n \}$, there exists $T \in \mathscr F^k(n)$ such that $ ( B \cap \mathring{T} ) ^\xi_{y_i} \neq \emptyset $ and $I(y_i) \subset \Phi ( p_\xi ( \mathring{T} ) ) $. Let us now construct $\mathscr F^1(n+1)$ and $\mathscr F^2(n+1)$:
\begin{enumerate}
\item[i)] If $ \lbrace T^1(y_{n+1}), T^2(y_{n+1}) \rbrace \cap  \left( \mathscr F^1(n) \cup \mathscr F^2(n) \right) = \emptyset$, then we set $\mathscr F^1(n+1) := \mathscr F^1(n) \cup \lbrace T^1(y_{n+1}) \rbrace$ and $\mathscr F^2(n+1) := \mathscr F^2(n) \cup \lbrace T^2(y_{n+1}) \rbrace$. In that case, we have that $\mathscr F^1(n+1) \cap \mathscr F^2(n+1) = \emptyset$ and that for all $k \in \{ 1,2 \}$ and all $i \in \{ 1,\ldots,n+1 \}$, there exists $ T \in \mathscr F^k(n+1) $ such that $ ( B \cap \mathring{T} ) ^\xi_{y_i} \neq \emptyset $ and $I(y_i) \subset  \Phi ( p_\xi ( \mathring{T} ) ) $. For $i \in \{ 1,\ldots,n \}$, it follows from the previous iteration $n$ and because $\mathscr F^k(n) \subset \mathscr F^k(n+1)$, while for $i = n+1$, it is a consequence of the fact that $T^k(y_{n+1}) \in \mathscr F^k(n+1)$.

\item[ii)]  Otherwise, there exist $p,q \in \{ 1,2 \}$ such that $T^p(y_{n+1}) \in \mathscr F^q(n)$. Let us further distinguish two subcases:
		\begin{enumerate}
		\item If $T^{3-p}(y_{n+1}) \notin \mathscr F^q(n)$, then we set $\mathscr F^q(n+1) := \mathscr F^q(n)$ and $\mathscr F^{3-q}(n+1) := \mathscr F^{3-q}(n) \cup \lbrace T^{3-p}(y_{n+1}) \rbrace$. Then $\mathscr F^1(n+1) \cap \mathscr F^2(n+1) = \emptyset$ and, for all $k \in \{ 1,2 \}$ and all $i \in \{ 1,\ldots,n+1 \}$, there exists $ T \in \mathscr F^k(n+1) $ such that $ ( B \cap \mathring{T} ) ^\xi_{y_i} \neq \emptyset $ and $I(y_i) \subset \Phi ( p_\xi ( \mathring{T} ) ) $. Indeed, for $i \in \{ 1,\ldots,n \}$ this is a consequence of the previous iteration $n$ and of the fact that $\mathscr F^k(n) \subset \mathscr F^k(n+1)$, while, for $i = n+1$, it results from $T^p(y_{n+1}) \in \mathscr F^q(n+1)$ and $T^{3-p}(y_{n+1}) \in \mathscr F^{3-q}(n+1)$.
		
		\item If both $T^1:=T^1(y_{n+1}) \in \mathscr F^q(n)$ and $T^2:=T^2(y_{n+1})  \in \mathscr F^q(n)$, we introduce the indexes 
		$$i_1 := {\rm arg\, min} \left\{ i \in \{ 1,\ldots,n + 1 \}: \; ( B \cap \mathring{T^1} )^\xi_{y_i} \neq \emptyset \right\}$$
		and 
		$$i_2 := {\rm arg\, min} \left\{ i \in \{ 1,\ldots,n + 1 \}: \; ( B \cap \mathring{T^2} )^\xi_{y_i} \neq \emptyset \right\}.$$
		Up to interchanging $T_1$ and $T_2$, there is no loss of generality to assume that $i_1 \geq i_2$ (see Figure \ref{fig:H1}). We set $\mathscr F^q(n+1) := \mathscr F^q(n) \setminus \{ T^1 \}$ and $\mathscr F^{3-q}(n+1) := \mathscr F^{3-q}(n) \cup \{ T^1 \}$. Once more, we have $\mathscr F^1(n+1) \cap \mathscr F^2(n+1) = \emptyset$ and, for all $k \in \{ 1,2 \}$ and all $i \in \{ 1,\ldots,n+1 \}$, there exists $T \in \mathscr F^k(n+1)$ such that $( B \cap \mathring{T} ) ^\xi_{y_i} \neq \emptyset$ and $I(y_i) \subset \Phi ( p_\xi (\mathring{T}) ) $. This is immediate for $i = n+1$ because $T^2 = T^2(y_{n+1}) \in \mathscr F^q (n+1)$ and $T^1 = T^1(y_{n+1}) \in \mathscr F^{3-q}(n+1)$. For $i \in \{ 1, \ldots,n \}$, there are two possibilities:
				\begin{itemize}
		\item for $k=3-q$, it follows from $\mathscr F^{3-q}(n) \subset \mathscr F^{3-q}(n+1)$.
		
		\item for $k = q$ and $i \in \{ 1,\ldots,i_1 - 1 \}$, by the previous iteration $n$ there exists $T \in \mathscr F^q(n)$ such that $( B \cap \mathring{T} ) ^\xi_{y_i} \neq \emptyset$ and $I(y_i) \subset \Phi ( p_\xi (\mathring{T}) ) $. As $ ( B \cap \mathring{T^1} ) ^\xi_{y_i} = \emptyset$, by definition of $i_1$, this implies that $T \neq T^1$, so that actually $T \in \mathscr F^q(n+1)$ satisfies the above requirements. Assuming next that $i \in \{ i_1, \ldots,n \}$, we deduce that $n + 1 > i \geq  i_2$. By definition of $i_2$, we have $ ( B \cap \mathring{T^2} ) ^\xi_{y_{i_2}} \neq \emptyset$ while $ ( B \cap \mathring{T^2} ) ^\xi_{y_{n+1}} \neq \emptyset$ so that the convexity of $\mathring{T^2}$ together with the ordering of the points $y_i$ lead to $ ( B \cap \mathring{T^2} )^\xi_{y_i} \neq \emptyset$. Hence, owing to \eqref{eq:Optimality I(j,y)}, we infer that $I(y_i) \subset \Phi ( p_\xi(\mathring{T^2}) ) $ and $T^2 \in \mathscr F^q(n+1)$ satisfies the above requirements.
		\end{itemize}
				\end{enumerate}
\end{enumerate}

We proceed this construction up to the $N^{\text{th}}$ iteration, and finally define
\begin{equation}\label{eq:F1 F2}
\mathscr F_j^k := \mathscr F^k(N) \text{ for } k \in \{ 1,2 \},
\end{equation}
which define two disjoint subfamilies of $\mathscr{F}_j$ satisfying in particular, thanks to \eqref{eq:K's cover},
\begin{eqnarray*}
K & \subset &  \bigcup_{i=1}^{N} I(y_i) \subset \left(\bigcup_{T \in \mathscr F_j^1} \Phi(p_\xi( \mathring{T}) ) \right) \cap \left(\bigcup_{T \in \mathscr F_j^2} \Phi(p_\xi( \mathring{T}) )\right)\\
& = & \Phi \left(  \bigcup_{T \in \mathscr F_j^1}  p_\xi( \mathring{T}) \right) \cap  \Phi \left(  \bigcup_{T \in \mathscr F_j^2}  p_\xi( \mathring{T}) \right).
\end{eqnarray*}
The proof of Lemma \ref{lemma F_1 F_2} is now complete.
\end{proof}

		\begin{figure}[hbtp]
		\begin{tikzpicture}[x=4.5cm,y=4.5cm]
		\clip(-1.,-0.7) rectangle (1.3,1.05);
		\fill[color=lightgray] (-0.22821299524667416,0.3434825407804412) -- (0.2650878907808107,0.80095710547632) -- (0.3202053082140492,0.3214355738071458) -- cycle;
		\fill[color=lightgray] (0.4276842722088643,0.9277271655727684) -- (0.35878750041731616,0.015533907052672626) -- (1.0560228309477835,0.803712976347982) -- cycle;
		\draw [->,style=thick] (-0.8,0.4) -- (-0.6,0.4);
		\draw [->,style=thick] (-0.8,0.4) -- (-0.6202623213555118,0.4877175402954889);
		\draw [->,style=thick] (-0.8,0.4) -- (-0.8,0.6);
		\draw [->,style=thick] (-0.8,0.4) -- (-0.8877175402954889,0.5797376786444882);
		\draw  (0.2573854274200193,0.9982217888171208)-- (-0.5434150231799666,0.6074064276535364);
		\draw (-0.5479538233334597,0.3325816048098055)-- (0.803774133181341,0.9922666101372176);
		\draw  (-0.5473321258268862,-0.031111410813596085)-- (1.2056611997720112,0.8244034919914525);
		\draw  (-0.44376428445646454,-0.15143109846945946)-- (1.2371899271834532,0.6689264910265251);
		\draw  (-0.29513453844827764,-0.3440347731436621)-- (1.0405438166228842,0.3078175308159775);
		\draw [style=thick,color=ffqqqq] (0.10214923339314715,0.6498519818654145)-- (0.2728791167266823,0.7331734397472381);
		\draw [style=thick,color=ffqqqq] (0.18610116521878667,0.3268266951335886)-- (0.3124954858211417,0.38851102862544107);
		\draw [style=thick,color=ffqqqq] (0.41835206428342087,0.8041687326398954)-- (0.6013115639728692,0.8934586211456622);
		\draw [style=thick,color=ffqqqq] (0.3898077335815111,0.42624179414661345)-- (0.9745512947603623,0.7116147180491582);
		\draw [style=thick,color=ffqqqq] (0.3764087053494639,0.24883866035430807)-- (0.708575329299268,0.41094623535400865);
		\draw (-0.94,0.7194870369806564) node[anchor=north west] {$\xi^\perp$};
		\draw (-0.8293808812098884,0.7451230450891394) node[anchor=north west] {$\nu^\perp$};
		\draw (-0.5939957158501804,0.5819848116715205) node[anchor=north west] {$\xi$};
		\draw (-0.6009873544252213,0.47244914066254773) node[anchor=north west] {$\nu$};
		\draw (-0.14,0.9548722023403639) node[anchor=north west] {$y_N$};
		\draw (-0.21,0.7101648522139353) node[anchor=north west] {$y_{n+1}$};
		\draw (-0.14,0.3349469153534117) node[anchor=north west] {$y_{i_1}$};
		\draw (-0.14,0.1554948585940308) node[anchor=north west] {$y_{i_2}$};
		\draw (-0.21,-0.11717904583256092) node[anchor=north west] {$y_{i_2 -1}$};
		\draw (-0.14,-0.32692820308378534) node[anchor=north west] {$y_1$};
		\draw (-0.1,-0.573966099401894) node[anchor=north west] {$\Pi_\nu$};
		\draw (0.15177906548750814,0.5680015345214389) node[anchor=north west] {$T_1$};
		\draw (0.5992439342901213,0.7707590531976224) node[anchor=north west] {$T_2$};
		\draw (-0.09758937702228147,0.85) node[anchor=north west] {$\vdots$};
		\draw (-0.09059773844724064,-0.18) node[anchor=north west] {$\vdots$};
		\draw  (-0.17488074962146666,-0.4926792555709062)-- (0.9000274514743517,0.03190915657873158);
		\draw (0.,1.)-- (0.,-0.576529700212742);
		\begin{scriptsize}
		\draw [fill=uuuuuu] (0.,0.8726097481395301) circle (1pt);
		\draw [fill=uuuuuu] (0.,0.6) circle (1pt);
		\draw [fill=uuuuuu] (0.,0.23600357678562633) circle (1pt);
		\draw [fill=uuuuuu] (0.,0.06513958274258713) circle (1pt);
		\draw [fill=uuuuuu] (0.,-0.2) circle (1pt);
		\draw [fill=uuuuuu] (0.,-0.4073320466967544) circle (1pt);
		\end{scriptsize}
		\end{tikzpicture}
		
		\caption{}
		\label{fig:H1}
		\end{figure}		
		
We are now in position to complete the proof of Proposition \ref{lower bound}.

\begin{proof}[Proof of Proposition \ref{lower bound}]
Since all triangles $T$ in $\mathscr{F}_j $ are contained in $B$, we get from \eqref{seq:2h(x_0)} and \eqref{eq:lambda_k greater than},
$$\frac{d\lambda}{d\HH^1\res J_u}(x_0)=\lim_{j \to \infty}\frac{\lambda_{k_j}(B_{\varrho_j}(x_0))}{2\varrho_j} \geq \liminf_{j \to \infty}\frac{(1-\delta) \kappa\varrho_j}{2 \e_{k_j}} \int_{B} \chi_j \, dx
 \geq \liminf_{j \to \infty} \frac{(1 - \delta) \kappa\varrho_j}{2\e_{k_j}} \sum_{T \in \mathscr{F}_j} \LL^2(T).$$
Using next the inequality $\LL^2(T) \geq \HH^1(p_\xi(T)) (\e_{k_j}/\varrho_j)\sin\theta_0/2$, we deduce that
 $$2\frac{d\lambda}{d\HH^1\res J_u}(x_0) \geq  \frac{(1 - \delta) \kappa\sin\theta_0}{2} \liminf_{j \to \infty}\sum_{T \in \mathscr{F}_j} \HH^1(p_\xi(T)).$$
Let $K \subset J_{\bar u} \cap B_{1-\frac\eta2} \setminus Z''$ be a compact set and $\mathscr F_j^1$ and $\mathscr F_j^2$ be two disjoint subfamilies of $\mathscr{F}_j$ given by Lemma \ref{lemma F_1 F_2}. Thus
  $$2\frac{d\lambda}{d\HH^1\res J_u}(x_0)  \geq  \frac{(1 - \delta) \kappa\sin\theta_0}{2}\liminf_{j \to \infty}\left\{\sum_{T \in \mathscr{F}^1_j} \HH^1(p_\xi(T))+ \sum_{T \in \mathscr{F}^2_j} \HH^1(p_\xi(T))\right\},$$
and remembering that $\Phi$ has a Lipschitz constant bounded by $\sqrt{1+4\eta^2}$,
\begin{eqnarray*}
2\frac{d\lambda}{d\HH^1\res J_u}(x_0)  & \geq & \frac{(1 - \delta) \kappa\sin\theta_0}{2\sqrt{1+4\eta^2}} \liminf_{j \to \infty}\left\{\sum_{T \in \mathscr{F}^1_j} \HH^1(\Phi(p_\xi(T)))+\sum_{T \in \mathscr{F}^2_j} \HH^1(\Phi(p_\xi(T)))\right\}\\
& \geq & \frac{(1 - \delta) \kappa\sin\theta_0}{2\sqrt{1+4\eta^2}}\liminf_{j \to \infty} \left\{ \HH^1 \left( \bigcup_{T \in \mathscr F_j^1} \Phi \left(  p_\xi(T) \right) \right) + \HH^1 \left( \bigcup_{T \in \mathscr F_j^2} \Phi \left( p_\xi(T) \right) \right) \right\}\\
& \geq & \frac{(1 - \delta) \kappa\sin\theta_0}{\sqrt{1+4\eta^2}}\HH^1(K).
\end{eqnarray*}
By inner regularity of the Radon measure $\HH^1 \res (J_{\bar u} \cap B_{1-\frac\eta2} \setminus Z'')$, passing to the supremum with respect to all compact sets $K \subset J_{\bar u} \cap B_{1-\frac\eta2} \setminus Z''$, we get that
 $$2\frac{d\lambda}{d\HH^1 \res J_u}(x_0)  \geq  \frac{(1 - \delta) \kappa\sin\theta_0}{\sqrt{1+4\eta^2}}\HH^1(J_{\bar u} \cap B_{1-\frac\eta2} \setminus Z'').$$
 Remembering that $J_{\bar u} \cap B=B^\nu$, we have
 $$2=\HH^1(J_{\bar u} \cap B)=\HH^1(J_{\bar u} \cap B_{1-\frac\eta2})+\eta \leq \HH^1(J_{\bar u} \cap B_{1-\frac\eta2} \setminus Z'')+4\eta$$
 because $\HH^1(Z'')\leq 3\eta$. Hence
$$2\frac{d\lambda}{d\HH^1 \res J_u}(x_0) \geq  \frac{(1 - \delta) \kappa\sin\theta_0}{\sqrt{1+4\eta^2}} (2-4\eta).$$
Finally passing to the limit as $\eta \to 0$ and $\delta \to 0$, we deduce that
$$\frac{d\lambda}{d\HH^1 \res J_u}(x_0) \geq \kappa\sin\theta_0,$$
which corresponds to the desired lower bound with the correct multiplicative constant.
\end{proof}

\subsection{The upper bound}

The proof of the $\Gamma$-$\limsup$ inequality relies on suitable approximation results in $GSBD$ (see \cite{C,I2,CCdensity,CT}) which allow us to reduce to the case where the jump set of $u$ is a finite union of pairwise disjoint closed line segments, and $u$ is smooth outside the jump set. Then, an explicit mesh construction introduced in \cite{CDM}, adapted to this simple geometrical situation, provides the desired upper bound.

\begin{prop}\label{upper bound}
For all $u \in L^0(\O;\R^2) $, 
$$ \F''(u) \leq \F(u) .$$
\end{prop}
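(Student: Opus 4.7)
The plan is to follow the adaptative finite element construction of \cite{CDM}, extending it to the vectorial setting through a preliminary density reduction.

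\textbf{Reduction via density.} We may assume $\F(u) < \infty$, so $u \in GSBD^2(\O)$. By the $GSBD^2$-density results of \cite{CCdensity,CT}, there exists a sequence $\{u_n\} \subset GSBD^2(\O)$ with $u_n \to u$ in measure in $\O$, $e(u_n) \to e(u)$ in $L^2(\O;\mathbb M^{2\times 2}_{\rm sym})$ and $\HH^1(J_{u_n}) \to \HH^1(J_u)$, such that each $u_n$ is $C^\infty$ on $\O \setminus \overline{J_{u_n}}$ and $\overline{J_{u_n}}$ is contained in a finite union of pairwise disjoint closed line segments $\Sigma_n \Subset \O$. Since $\F''$ is $L^0$-lower semicontinuous (as a $\Gamma$-limsup) and $\F(u_n) \to \F(u)$, a standard diagonal argument reduces the proof of the upper bound to the case where $u$ itself has this ``polyhedral jump'' structure.

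\textbf{Mesh construction and recovery sequence.} Write $J_u = \bigsqcup_{i=1}^{N} S_i$. Following \cite[Section~3]{CDM}, for each segment $S_i$ with unit tangent $\tau_i$ and unit normal $\nu_i$, we tile a thin strip $R_i^\e$ centered on $S_i$ by a periodic pattern of congruent triangles with minimum angle exactly $\theta_0$ and edge lengths in $[\e,\omega(\e)]$, arranged so that no vertex ever lies on $S_i$. The key geometric property of this design is that $\LL^2(R_i^\e) = \e \sin\theta_0\, \HH^1(S_i) + o(\e)$. Outside $\bigcup_i R_i^\e$ we complete $\mathbf T^\e \in \mathcal T_\e(\O)$ by an admissible triangulation matching the strip meshes along their boundary, which is possible for $\e$ small enough thanks to the constraint $\theta_0 \leq 45^\circ - \arctan(1/2)$ in Definition \ref{def:triangulation}. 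We then define $u_\e \in V_\e(\O)$ as the continuous piecewise affine Lagrange interpolant with vertex values $u_\e(x) := u(x)$ for every vertex $x \in \O \setminus \bigcup_i R_i^\e$, and, for every vertex $x$ inside $R_i^\e$ on the $\pm\nu_i$-side of $S_i$, $u_\e(x)$ equal to the trace of $u$ on that side evaluated at the orthogonal projection of $x$ onto $S_i$ (with a local cut-off near $\partial S_i$). The smoothness of $u$ outside $J_u$ together with the vanishing width of the strips guarantees $u_\e \to u$ in measure in $\O$.

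\textbf{Energy estimate.} We split $\F_\e(u_\e)$ into the contribution of ``good'' triangles in $\O \setminus \bigcup_i R_i^\e$ and ``bad'' triangles in $\bigcup_i R_i^\e$. On good triangles, the regularity of $u$ combined with standard interpolation estimates yields $e(u_\e) \to e(u)$ in $L^2$ and $\e \mathbf A e(u_\e):e(u_\e) \to 0$ uniformly; using the property $f(t) \leq (1+o(1))\, t$ near $0$ from \eqref{eq:f} and dominated convergence one obtains
$$
\limsup_{\e \to 0}\, \frac{1}{\e} \int_{\O \setminus \bigcup_i R_i^\e} f\bigl(\e \mathbf A e(u_\e):e(u_\e)\bigr)\, dx \leq \int_{\O} \mathbf A e(u):e(u)\, dx.
$$
On bad triangles, the uniform upper bound $f \leq \kappa$ combined with the design of the strips gives
$$
\frac{1}{\e} \int_{\bigcup_i R_i^\e} f\bigl(\e \mathbf A e(u_\e):e(u_\e)\bigr)\, dx \leq \frac{\kappa}{\e} \sum_{i=1}^{N} \LL^2(R_i^\e) = \kappa \sin\theta_0\, \HH^1(J_u) + o(1).
$$
Summing the two estimates yields $\limsup_{\e \to 0} \F_\e(u_\e) \leq \F(u)$, which is the required upper bound.

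\textbf{Main obstacle.} The only technical point of substance is the explicit geometric design of the strips $R_i^\e$, which must simultaneously respect the angle and edge-length constraints of Definition \ref{def:triangulation}, admit a clean gluing to the bulk triangulation near the endpoints of the $S_i$, and produce exactly the constant $\kappa \sin\theta_0$ in the surface energy. This is carried out in the scalar setting by \cite{CDM}; the passage to the vectorial case is essentially cosmetic, since away from $J_u$ the recovery procedure reduces to Lagrange interpolation of a smooth $\R^2$-valued function, and the transition across each $S_i$ is controlled component-wise by the traces $u^\pm$.
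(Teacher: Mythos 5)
Your overall strategy matches the paper's: reduce by density to functions whose jump set is a finite union of disjoint closed segments, construct the mesh of \cite[Appendix~A]{CDM} whose ``bad'' triangles tile strips of area $\e\sin\theta_0\,\HH^1(J_u)+o(\e)$, take the Lagrange interpolant, and split the energy into bulk (using $f(t)\sim t$ near $0$) and surface ($f\leq\kappa$) contributions. That much is correct and is precisely what the paper does (via Lemma~\ref{LemmaUpperBound}).

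However, your density reduction contains a genuine gap. You assert that the approximating functions $u_n$ can be chosen with $\overline{J_{u_n}}\Subset\O$, but neither \cite{CCdensity} nor \cite{CT} gives this. The Cortesani--Toader result \cite[Theorem~3.1]{CT} produces segments contained only in $\overline\O$ (and the paper explicitly records $L_1,\ldots,L_N\subset\overline{\O'}$), so jump segments may accumulate on $\partial\O$. This is not a cosmetic inconvenience: the admissible triangulations of Definition~\ref{def:triangulation} contain triangles that spill over $\partial\O$, so the Lagrange interpolant needs vertex values of $u$ \emph{outside} $\O$, which are not defined. Moreover, once you estimate the energy over the full mesh you naturally obtain a bound on $\O'$ rather than $\O$, and peeling off the contribution on $\O'\setminus\overline\O$ via $\limsup(a_\e+b_\e)\geq\limsup a_\e+\liminf b_\e$ requires a matching \emph{lower} bound for $b_\e$, which is not free. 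The paper addresses all of this by first extending $u$ to a larger Lipschitz set $\O'\supset\supset\O$ with $\HH^1(J_v\cap\partial\O)=0$ (\cite[Lemma~4.2]{CDM}), working with a cut-off $\overline v=\phi_\delta v$, and then invoking the already-proved lower bound (Proposition~\ref{lower bound}) on $\O'\setminus\overline\O$ to subtract the spurious contribution before letting $\delta\to 0$. Without that extension-and-peeling argument, your reduction is not justified as stated. (Your prescription of vertex values by nearest-point trace projection inside the strips is also an unnecessary complication: the mesh is built so that no vertex lies on any $L_i$, so the direct pointwise values $\overline v(x_i)$ are well defined and suffice, which is what the paper uses; this part is harmless but not what is needed.)
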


\begin{proof}
We can assume that $\F(u)<\infty$, and thus that $u \in GSBD^2(\O)$. Using the density result for $GSBD$ functions (see \cite[Theorem 1.1]{CCdensity}) as well as the lower semicontinuity of $\F''$ with respect to the convergence in measure (see \cite[Proposition 6.8]{DM}), we can further assume without loss of generality that $u \in SBV^2(\O;\R^2) \cap L^\infty(\O;\R^2)$. 

Writing $u = (u_1,u_2)$, we can apply \cite[Lemma 4.2]{CDM} to both components $u_1$ and $u_2 \in SBV^2(\O) \cap L^\infty(\O)$ of $u$. For $\O' := (a,b) \times (c,d) \subset \R^2 $ with $\O \subset\subset \O'$, we can find an extension $v \in SBV^2(\O';\R^2) \cap L^\infty(\O';\R^2)$ such that
\begin{equation}\label{eq:extension}  
{v}_{|\O} = u , \quad \norme{v}_{L^\infty(\O';\R^2)} \leq \sqrt{2} \norme{u}_{L^\infty(\O;\R^2)} \quad \text{and} \quad \HH^1(\partial \O \cap J_{v} ) = 0.
\end{equation}
Next owing to the density result in $SBV$ (see \cite[Theorem 3.1]{CT}), there exists a sequence $\{v_k\}_{k \in \N}$ in $SBV^2(\O';\R^2) \cap L^\infty(\O';\R^2)$ as well as $N_k$ disjoint closed segments $L^k_1,\ldots,L_{N_k}^k  \subset \bar{\O'}$ with the following properties:
$$\bar{J_{v_k}} = \bigcup_{i=1}^{N_k} L^k_i  , \quad \HH^1( \bar{J_{v_k}} \setminus J_{v_k}) = 0, \quad v_k \in W^{2,\infty}(\O' \setminus \bar{J_{v_k}};\R^2)$$
and 
\begin{equation}\label{eq:densitySBV}
\begin{cases}
v_k \to v \quad \text{ strongly in } L^1(\O';\R^2), \\
\nabla v_k \to  \nabla v \quad \text{ strongly in } L^2(\O'; \mathbb M^{2 \times 2}), \\
\limsup_k \HH^1(\bar{A} \cap J_{v_k} ) \leq \HH^1(\bar{A} \cap J_v) \text{ for all open subset } A \subset \subset \O'.
\end{cases}
\end{equation}
Using \eqref{eq:densitySBV} and the lower semicontinuity of $\F ''$ in $L^0(\O;\R^2)$ with respect to the convergence in measure, we obtain that 
$$ \F ''(u) \leq \liminf_{k \to \infty} \F ''({v_k}_{|\O}).$$
The proof is complete once we know that $\liminf_k \F''({v_k}_{|\O}) \leq \F(u)$. This follows from Lemma \ref{LemmaUpperBound} below, applied to each function $v_k$. Indeed, using that result, we get that
$$
\liminf_{k \to \infty} \F''({v_k}_{|\O}) \leq \liminf_{k \to \infty} \left\{ \int_\O \mathbf A e(v_k): e(v_k) \, dx +  \kappa \sin \theta_0 \HH^1(J_{v_k} \cap \bar \O) \right\}.
$$
Recalling the convergences \eqref{eq:densitySBV}, we conclude that
$$
\F''(u) \leq  \int_\O \mathbf A e(v): e(v) \, dx +  \kappa \sin \theta_0 \HH^1(J_{v} \cap \bar \O) = \int_\O \mathbf A e(u): e(u) \, dx + \kappa \sin \theta_0 \HH^1(J_u)=\F(u),
$$
where we used $\HH^1(J_{v} \cap \partial \O) = 0$ and that $v = u$ in $\O$.
\end{proof}

We are back to establishing the following result.

\begin{lem}\label{LemmaUpperBound}
Let $v \in SBV^2(\O';\R^2) \cap L^\infty(\O';\R^2)$ be such that
$$ \bar{J_v} =  \bigcup_{i=1}^{N} L_i  , \quad \HH^1( \bar{J_{v}} \setminus J_{v}) = 0, \quad v \in W^{2,\infty}(\O' \setminus \bar{J_{v}};\R^2),$$
for some pairwise disjoint closed segments $L_1,\ldots, L_N \subset \bar{\O'} $. Then, 
$$ \F''(v_{|\O}) \leq  \int_\O \mathbf A e(v) : e(v) \, dx +  \kappa \sin\theta_0 \HH^1(J_v \cap \bar{\O}).$$
\end{lem}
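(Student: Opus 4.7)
My strategy is to construct an explicit recovery sequence by designing a mesh adapted to the finite family of segments $\{L_i\}_{i=1}^N$, following the spirit of the scalar construction in \cite{CDM} but paying attention to the fact that the energy now involves only the symmetric part of the gradient.

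\textbf{Outline of the construction.} Fix a small parameter $\eta>0$. Around each segment $L_i$, I build a closed rectangular strip $S_i^\e$ of length $\HH^1(L_i)$ and width $\e\sin\theta_0$ having $L_i$ as an axis of symmetry. For $\e$ small enough the strips $S_i^\e$ are pairwise disjoint. Inside each $S_i^\e$, I triangulate with a one-layer strip of congruent thin triangles of base length $\e$ (along the direction of $L_i$) and height $\e\sin\theta_0$, so that one family of edges lies on $L_i$ and the opposite vertices alternate on the two boundary sides of $S_i^\e$; all angles can be arranged to equal $\theta_0$, $\theta_0$ and $\pi-2\theta_0$ (this is where the restriction $\theta_0\leq 45^\circ-\arctan(1/2)$ enters, ensuring compatibility with the surrounding mesh). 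Outside $\bigcup_i S_i^\e$, I use an essentially quasi-uniform triangulation of $\O\setminus\bigcup_i S_i^\e$ by triangles of diameter comparable to $\omega(\e)$ with all angles $\geq\theta_0$; near the boundary of each $S_i^\e$ and near endpoints of segments I insert a bounded number of ``transition'' triangles to match the two meshes while respecting the constraints of Definition \ref{def:triangulation}. The resulting global triangulation lies in $\mathcal T_\e(\O)$.

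\textbf{Definition of $u_\e$.} On the bulk triangles (those disjoint from $\bigcup_i S_i^\e$), set $u_\e$ to be the standard Lagrange interpolant of $v$ at the vertices; since $v\in W^{2,\infty}$ on $\O'\setminus\bar J_v$ and the diameter is $O(\omega(\e))$, one has $e(u_\e)\to e(v)$ strongly in $L^2$ on that region. Inside each strip $S_i^\e$, I need $u_\e$ to realize the jump of $v$ across $L_i$ continuously: on the two rows of outer vertices of $S_i^\e$ (lying on the boundary $\partial S_i^\e$), I prescribe values equal to a smooth affine approximation of $v^+$ on one side and of $v^-$ on the other. This makes $u_\e$ piecewise affine and continuous globally. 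On each thin triangle $T\subset S_i^\e$, $|e(u_\e)_{|T}|$ is of order $|[v]|/\e$, so $\e\,\mathbf A e(u_\e):e(u_\e)\to\infty$ and therefore $f(\e\,\mathbf A e(u_\e):e(u_\e))\to\kappa$ pointwise in $S_i^\e$.

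\textbf{Energy estimate.} Split $\F_\e(u_\e)$ into the contribution of $\O\setminus\bigcup_i S_i^\e$ and that of the strips. On the bulk,
\[
\frac1\e\int_{\O\setminus\bigcup_i S_i^\e} f\!\left(\e\,\mathbf A e(u_\e):e(u_\e)\right)dx\;\longrightarrow\;\int_\O \mathbf A e(v):e(v)\,dx,
\]
using $f(t)/t\to 1$ at $0$, the uniform $L^\infty$-bound on $\e\,\mathbf A e(u_\e):e(u_\e)$ on the bulk, and dominated convergence. On each strip, bounding $f$ by $\kappa$ and using $\LL^2(S_i^\e)=\HH^1(L_i)\,\e\sin\theta_0$,
\[
\frac1\e\int_{S_i^\e} f\!\left(\e\,\mathbf A e(u_\e):e(u_\e)\right)dx \;\leq\; \frac{\kappa}{\e}\,\LL^2(S_i^\e) \;=\; \kappa\sin\theta_0\,\HH^1(L_i).
\]
Summing over $i$ and accounting for the finitely many transition triangles (which occupy an area of order $\e\,\omega(\e)$, hence whose contribution is $O(\omega(\e))\to 0$), I obtain $\limsup_\e \F_\e(u_\e)\leq \int_\O \mathbf A e(v):e(v)\,dx+\kappa\sin\theta_0\,\HH^1(\bar J_v)$, and $\HH^1(\bar J_v)=\HH^1(J_v\cap\bar\O)$ by the assumption $\HH^1(\bar J_v\setminus J_v)=0$. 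Convergence $u_\e\to v_{|\O}$ in measure on $\O$ is immediate since $u_\e$ agrees with a pointwise-converging interpolant outside a set of measure $O(\e)$.

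\textbf{Main obstacle.} The delicate step is the combinatorial design of the triangulation across the three regimes (interior of strips, transition zones near $\partial S_i^\e$ and endpoints of $L_i$, and bulk), ensuring simultaneously the edge-length bounds in $[\e,\omega(\e)]$, the minimum angle $\theta_0$, and proper conformity of the mesh. Endpoints of segments intersecting $\partial\O$ also require a local adjustment, but since there are only finitely many such points this produces only an $o(1)$ error. The vectorial nature of $v$ plays no role in this construction, since $u_\e$ is built componentwise from a continuous piecewise-affine scalar construction — a contrast with the lower-bound argument where the scalar analogue fails.
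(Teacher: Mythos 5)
Your overall strategy (build a mesh adapted to the segments $L_i$, use a Lagrange-type interpolant, split the energy into bulk and strip contributions) is the same one the paper takes; it too imports a segment-adapted mesh from \cite[Appendix A]{CDM} and interpolates. But there is a genuine gap in your final step, and a design choice in your mesh that creates an avoidable ambiguity.

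The gap is the last equality. You assert that $\HH^1(\bar{J_v}) = \HH^1(J_v \cap \bar\O)$ follows from $\HH^1(\bar{J_v}\setminus J_v)=0$. That hypothesis gives only $\HH^1(\bar{J_v}) = \HH^1(J_v)$. The segments $L_i$ are merely assumed to lie in $\bar{\O'}$, so $J_v$ may extend well outside $\bar\O$; then $\HH^1(J_v) > \HH^1(J_v \cap \bar\O)$, and your strip estimate $\frac{\kappa}{\e}\LL^2(S_i^\e) = \kappa\sin\theta_0\,\HH^1(L_i)$ yields a surface term strictly larger than the claimed one. To repair this within your construction you would have to replace $\LL^2(S_i^\e)$ by $\LL^2(S_i^\e\cap\O)$ and then prove $\frac1\e \LL^2(S_i^\e\cap\O) \to \sin\theta_0 \,\HH^1(L_i\cap\bar\O)$, which you neither state nor justify. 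The paper handles the boundary by a different mechanism: it multiplies $v$ by a cut-off $\phi_\delta$ vanishing near $\partial\O'$, estimates the interpolant's energy over the whole of $\O'$, and then subtracts the $\Gamma$-$\liminf$ lower bound (Proposition \ref{lower bound}) applied on the Lipschitz set $\O'\setminus\bar\O$, which isolates the contribution over $\O$; sending $\delta\searrow 0$ afterwards removes the spurious surface term $\HH^1(J_v\setminus\O_\delta)$. This subtraction step is precisely what your outline is missing.

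Second, you place vertices on $L_i$ (edges lying on $L_i$), where $v$ is discontinuous, and then say you "prescribe values equal to a smooth affine approximation of $v^\pm$" on the two sides of the strip. You never specify what values the vertices on $L_i$ itself should carry, nor how the prescribed strip values patch continuously with the bulk Lagrange interpolant. The paper sidesteps this entirely: the mesh of \cite[Appendix A]{CDM} is engineered so that no vertex ever lies on any $L_i$, so one can simply Lagrange-interpolate $v$ at all vertices of the triangulation; global continuity is automatic and the sharp transition is realized inside the thin triangles straddling $L_i$. The key quantitative input is then \cite[Formula (4.9)]{CDM}, namely $\sum_{T\cap\bigcup_i L_i \neq \emptyset} \LL^2(T)/\e \to \sin\theta_0\,\HH^1(J_v)$, which you reproduce heuristically via the strip area but without the care needed at $\partial\O$.
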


\begin{proof} Since $\O\subset \subset \O'$, then $d:={\rm dist}(\O,\R^2 \setminus \O')>0$. For all $\delta \in (0,d)$, let us consider the open sets
$$\O_\delta:=\{x \in \O' : \; {\rm dist}(x,\R^2\setminus \O')>\delta\}$$
which satisfy $\O \subset\subset \O_\delta \subset\subset \O'$. We introduce a cut-off function $\phi_\delta \in C^\infty_c(\R^2;[0,1])$ which is supported in $\O'$ and such that $\phi_\delta=1$ in $\O_{\delta}$, $\phi_\delta=0$ in $\R^2 \setminus \O_{\frac\delta2}$. We next introduce the function $ \bar{v} := \phi_\delta v \in SBV^2(\R^2;\R^2) \cap L^\infty(\R^2;\R^2)$. We remark that 
\begin{equation}\label{eq:Sv}
 \bar{v} \in W^{2,\infty} \left(\R^2 \setminus  \bigcup_{i=1}^N \bar{L_i} ;\R^2 \right), \quad J_{\bar{v}} \subset J_v, \quad \text{and} \quad  J_v \setminus J_{\bar{v}} \subset J_v \setminus \O_\delta.
\end{equation}
Since $J_v \subset \bar{J_v} $ and $\HH^1(\bar{J_v}) < \infty$, the disjoint closed segments $L_i \subset \bar{\O'}$  satisfy 
$$\HH^1(J_v) = \HH^1(\bar{J_v}) = \sum_{i=1}^{N} \HH^1(L_i).$$
Then according to \cite[Appendix A]{CDM}, since $\theta_0 $ is smaller than or equal to $\Theta_0 := 45^\circ - \arctan (1/2) $, for all $\e >0$ there exists an admissible triangulation $\mathbf{T}_\e \in  \mathcal T_{\e}(\R^2,\omega,\theta_0)$ such that, setting $\mathbf T'_\e := \lbrace T \in \mathbf T_\e: \; T \cap  \bigcup_i L_i  \neq \emptyset \rbrace$,
\begin{itemize}
\item The vertices of $\mathbf T_\e$ are never situated on any $L_i$ :
$$ \text{ for all } i \in \{ 1,\ldots,N \}, \quad L_i \cap {\rm Vertices}(\mathbf T_\e) = \emptyset,$$
\item Using \cite[Formula (4.9)]{CDM},
\begin{equation}\label{eq:CV_LL2(Deps)}
\sum_{ T \in \mathbf T'_\e }\frac{\LL^2(T)}{\e}\to\sin \theta_0 \HH^1(J_v).
\end{equation}
\end{itemize}

We define the set $D_\e := {\bigcup}_{T \in \mathbf T'_\e} T$ and $\chi_\e := \mathds{1}_{D_\e} \in L^\infty(\R^2;\lbrace 0,1 \rbrace)$, while $\bar v_\e$ is the Lagrange interpolation of the values of $\bar v$ at the vertices of the triangulation $\mathbf T_\e$. Note that, if  $x_1$, $x_2$ and $x_3$ are the vertices of $T \in \mathbf T_\e$, the values $\bar v (x_i)$ are well defined since, by construction of the triangulation $\mathbf T_\e$, the points $x_1$, $x_2$ and $x_3$ do not belong to $\bigcup_i L_i$. 
\bigskip
In particular, $\bar v_\e \in V_{\e} (\O',\omega,\theta_0)$ and
\begin{equation}\label{eq:App_v_eps}
\begin{cases}
\chi_\e\to 0 & \text{ strongly in } L^1(\O'), \\
\bar v_\e\to \bar v & \text{ strongly in } L^2(\O';\R^2), \\
e(\bar v_\e) \mathds{1}_{\O' \setminus D_\e}\to e(\bar v)&  \text{ strongly in } L^2(\O';\mathbb M^{2 \times 2}_{\rm sym}).
\end{cases}
\end{equation}
Indeed, the first convergence is a consequence of \eqref{eq:CV_LL2(Deps)} since 
$$\norme{\chi_\e}_{L^1(\O')} \leq \LL^2(D_\e) =\sum_{ T \in \mathbf T'_\e }\LL^2(T) \to 0.$$
Next, noticing that every $T \in \mathbf T_\e \setminus \mathbf T'_\e$ is contained in $\R^2 \setminus \bigcup_i  L_i$ and $\bar v \in W^{2,\infty} ( \R^2 \setminus \bigcup_i  L_i; \R^2 )$, we infer that for all $\e >0$ and $T \in \mathbf T_\e \setminus \mathbf T'_\e$, 
\begin{equation}\label{eq:LagrangeEstimate}
\norme{ \bar v_\e - \bar v }_{H^1(T;\R^2)} \leq C \e  \norme{D^2 \bar v}_{L^2(T)},
\end{equation}
for some constant $C = C(\theta_0) >0$ depending only on $\theta_0$ (see e.g. \cite[Theorem 3.1.5]{Ciarlet}). On the one hand, since $\norme{\bar v_\e }_{L^\infty(T;\R^2)} \leq \norme{\bar v}_{L^\infty(T;\R^2)}$ for all $T \in \mathbf T_\e$, we get that 
$$\norme{ \bar v_\e - \bar v }^2_{L^2(\O';\R^2)} \leq 2 \norme{\bar v}^2_{L^\infty(\R^2;\R^2)} \sum_{T \in \mathbf T'_\e} \LL^2(T) +\sum_ { T \in \mathbf T_\e \setminus \mathbf T'_\e, \, T \cap \O' \neq \emptyset } \int_T \lvert \bar v_\e - \bar v \rvert^2 \, dx.$$
Then, using \eqref{eq:LagrangeEstimate} yields
$$
\begin{aligned}
\norme{ \bar v_\e - \bar v }^2_{L^2(\O';\R^2)} & \leq 2 \norme{\bar v}^2_{L^\infty(\R^2;\R^2)}  \LL^2(D_\e) + C ^2\e^2 \underset{ T \in \mathbf T_\e \setminus \mathbf T'_\e, \, T \cap \O' \neq \emptyset }{\sum}  \norme{D^2 \bar v}^2_{L^2(T)} \\
& \leq 2 \norme{\bar v}^2_{L^\infty(\R^2;\R^2)}  \LL^2(D_\e) + C^2\e^2   \norme{D^2 \bar v}^2_{L^2 ( \R^2 \setminus \bigcup_i  L_i) }  \to 0,
\end{aligned}
$$
leading to the second convergence in \eqref{eq:App_v_eps}. Note in particular that $\bar v_\e$ converges in measure to $\bar v$ in $\O'$. On the other hand, writing
$$\norme{ e( \bar v_\e) \mathds{1}_{\O' \setminus D_\e} - e(\bar v) }_{L^2(\O';\mathbb M^{2 \times 2}_{\rm sym})}^2 = \int_{\O' \cap D_\e} \lvert e(\bar v) \rvert ^2 \, dx + \int_{\O' \setminus D_\e} \lvert e( \bar v_\e) - e (\bar v) \rvert ^2 \, dx,$$
and using that $\LL^2(D_\e) \to 0$, that $\nabla \bar v \in L^2(\R^2;\mathbb M^{2 \times 2})$ (because $\bar v \in SBV^2(\R^2;\R^2)$) as well as \eqref{eq:LagrangeEstimate}, we get that
$$
 \norme{ e( \bar v_\e) \mathds{1}_{\O' \setminus D_\e} - e(\bar v) }_{L^2(\O';\mathbb M^{2 \times 2}_{\rm sym})}^2  \leq \int_{\O' \cap D_\e} \lvert e(\bar v) \rvert ^2 \, dx + C^2\e^2\norme{D^2 \bar v}_{L^2 ( \R^2 \setminus \bigcup_i  L_i) }^2  \to 0,
$$
which implies the third convergence in \eqref{eq:App_v_eps}.

\medskip

We next show  that 
\begin{equation}\label{eq:BadUpperBound}
\F''(v_{|\O}) \leq  \int_\O \mathbf A e(v):e(v) \, dx +  \kappa \sin \theta_0 \left( \HH^1(J_v \cap \bar \O) +  \HH^1(J_v  \setminus \O_\delta)  \right).
\end{equation}
Indeed, as $f \leq \kappa$ thanks to the growth properties \eqref{eq:f}, we get
$$
 \int_{\O'} \frac1\e f \left(  \e  \mathbf A e (\bar v_\e) : e (\bar v_\e) \right) \, dx \leq \sum_{T \in \mathbf T_\e \setminus \mathbf T'_\e, \, T \cap \O' \neq \emptyset }  \LL^2(T \cap \O')  \frac1\e f \left( \e  \mathbf A {e (\bar v_\e)}_{|T} : {e(\bar v_\e)}_{|T} \right) +  \frac{\kappa}{\e} \sum_{T \in \mathbf T'_\e}  \LL^2(T).
$$
On the one hand, \eqref{eq:CV_LL2(Deps)} implies that 
\begin{equation}\label{eq:bornesupsur}
\frac\kappa\e \sum_{T \in \mathbf T'_\e}  \LL^2(T) \to  \kappa \sin \theta_0 \HH^1(J_v).
\end{equation}
On the other hand, since every triangle $T \in \mathbf T_\e \setminus \mathbf T'_\e$ is contained in $\R^2 \setminus \bigcup_{i=1}^{N} L_i$, then
$$ \left\lvert \nabla {\bar v_\e}_{|T} \frac{x_i-x_j}{|x_i - x_j|} \right\rvert =  \frac{ \lvert\bar v(x_i) - \bar v(x_j)\rvert}{|x_i - x_j|}  \leq \norme{\nabla \bar v}_{L^\infty(\R^2 \setminus \bigcup_i L_i;\mathbb M^{2 \times 2})},$$ where $x_1$, $x_2$ and $x_3$ are the vertices of $T$. Hence, applying \cite[Remark 3.5]{CDM}, it results that 
\begin{equation}\label{eq:UniformBound_e(v_eps)}
\norme{ e (\bar v_\e) }_{L^\infty(\R^2 \setminus D_\e; \mathbb M^{2 \times 2}_{\rm sym})} \leq \frac{\sqrt{5}}{\sin \theta_0} \norme{ \nabla \bar v}_{L^\infty(\R^2 \setminus \bigcup_i L_i; \mathbb M^{2 \times 2})} =: K < \infty.
\end{equation}
Therefore, setting 
$$
\delta_\e := \sup_{0 < t < \e \beta K^2} \, \frac{f(t)}{t},
$$
we deduce, using $f(0)=0$ and the property \eqref{eq:A} of $\mathbf A$, that
$$\frac1\e f \left( \e \mathbf A e(\bar v_\e) : e(\bar v_\e) (1 - \chi_\e) \right) \leq  \delta_\e   \mathbf A e(\bar v_\e) : e(\bar v_\e) (1 - \chi_\e) \quad  \text{ in }\O' $$
From the properties \eqref{eq:f} of $f$, we infer that $\delta_\e \to 1$ as $\e\to 0$. Hence, using the third convergence in \eqref{eq:App_v_eps}, it ensures that 
\begin{eqnarray}\label{eq:bornesupvol}
\sum_{T \in \mathbf T_\e \setminus \mathbf T'_\e, \, T \cap \O' \neq \emptyset }  \LL^2(T \cap \O')  \frac1\e f \left( \e  \mathbf A {e (\bar v_\e)}_{|T} : {e(\bar v_\e)}_{|T} \right) & = & \int_{\O'}\frac1\e f \left( \e \mathbf A e(\bar v_\e) : e(\bar v_\e) (1 - \chi_\e) \right) \, dx \nonumber\\
& \leq & \delta_\e \int_{\O'}  \mathbf A e(\bar v_\e) : e(\bar v_\e) \mathds{1}_{\O' \setminus D_\e} \, dx \nonumber\\
& \to & \int_{\O'} \mathbf A e(\bar v) : e(\bar v) \, dx.
\end{eqnarray}
Gathering \eqref{eq:bornesupsur} and \eqref{eq:bornesupvol}, we obtain that 
\begin{equation}\label{eq:AlmostUpperBound}
\limsup_{\e \to 0^+} \int_{\O'} \frac1\e f \left( \e \mathbf A e(\bar v_\e) : e(\bar v_\e) \right) \, dx \leq  \int_{\O'} \mathbf A e (\bar v) : e( \bar v) \, dx +  \kappa \sin \theta_0 \HH^1(J_v).
\end{equation}
Besides, after decomposing the above integral over $\O' \setminus \bar \O$ and $\bar \O$, we can apply the lower bound estimate of Propostion \ref{lower bound} to the open bounded set with Lipschitz boundary $\O' \setminus \bar \O$ (for which $\mathbf T_\e$ is also an admissible triangulation, ${\bar v_\e}_{|\O' \setminus \bar \O} \in V_{\e}(\O' \setminus \bar \O)$ and $\bar v_\e$ converges in measure to $\bar v$ in $\O' \setminus \bar \O$), which leads to
\begin{multline*}
\limsup_{\e \to 0^+} \int_{\O'}\frac1\e f ( \e \mathbf A e(\bar v_\e) : e(\bar v_\e) ) \, dx \\
 \geq \limsup_{\e \to 0} \int_\O \frac1\e f ( \e \mathbf A e(\bar v_\e) : e(\bar v_\e) ) \, dx + \liminf_{\e \to 0} \int_{\O' \setminus \bar \O}\frac1\e f ( \e \mathbf A e(\bar v_\e) : e(\bar v_\e) ) \, dx \\
 \geq \F ''({\bar v}_{|\O}) +  \int_{\O' \setminus \bar \O} \mathbf A e(\bar v) : e(\bar v) \, dx +  \kappa \sin \theta_0 \HH^1 (J_{\bar v} \cap \O' \setminus \bar \O ).
\end{multline*}
Gathering \eqref{eq:AlmostUpperBound} and \eqref{eq:Sv}, as by construction $\bar v _{|\O} = v _{|\O}$, we deduce that
\begin{eqnarray*}
\F ''(v_{|\O}) 
&  \leq &  \int_{\O} \mathbf A e( v) :e( v) \, dx +  \kappa \sin \theta_0 \HH^1 ( J_v \setminus J_{\bar v}) + \kappa \sin \theta_0 \HH^1(J_v \cap  \bar \O  ) \\
& \leq  & \int_{\O} \mathbf A e( v) :e( v) \, dx +  \kappa \sin \theta_0  \HH^1 ( J_v  \setminus \O_\delta)  +  \kappa \sin \theta_0 \HH^1 ( J_v \cap  \bar \O  ),
\end{eqnarray*}
which settles \eqref{eq:BadUpperBound}. Passing to the limit as $\delta \searrow 0^+$ thanks to the monotone convergence Theorem, we obtain that $\HH^1 ( J_v \setminus \O_\delta ) \to \HH^1 ( J_v \setminus \O' )=0$, hence
$$ \F ''(v_{|\O}) \leq  \int_{\O} \mathbf A e( v) :e( v) \, dx  +  \kappa \sin \theta_0 \HH^1 ( J_v \cap  \bar \O  ),$$
which completes the proof of Lemma \ref{LemmaUpperBound}.
\end{proof}

\section{Convergence of minimizers}\label{sec:4}

In order to investigate the approximation of minimizers for the Griffith energy, it is natural to impose boundary conditions to avoid trivial minimizers such as rigid displacements. This section is devoted to an approximation of the Griffith functional under a Dirichlet boundary condition by means of brittle damage energies.
 
\subsection{Griffith energy with Dirichlet boundary condition}

In order to formulate a Dirichlet boundary condition, we need to consider a larger bounded Lipschitz open set $\O'$ such that $\overline \O \subset \O'$. Let $w \in W^{2,\infty}(\R^2; \R^2)$ be a prescribed boundary displacement. Given an admissible triangulation $\mathbf T_\e \in \mathcal T_\e(\O')$ of $\O'$, we define $w_{\mathbf T_\e}$ as the piecewise affine Lagrange interpolation of $w$ on $\mathbf T_\e$. Note that by standard finite element estimates (see \cite[Theorem 3.1.5]{Ciarlet}), 
\begin{equation}\label{eq:w_eps}
w_{\mathbf T_\e} \in V_\e(\O'), \quad
w_{\mathbf T_\e} \to w \text{ strongly in } H^1(\O';\R^2) \quad \text{and}\quad 
\sup_{\e > 0} \F_\e(w_{\mathbf T_\e}) < + \infty.
\end{equation}
We define $V^{\rm Dir}_\e(\O')$ to be the set of all continuous functions $u : \O' \to \R^2$ for which there exists a triangulation $\mathbf T_\e \in \mathcal T_\e(\O')$ so that $u$ is affine on each triangle $T \in \mathbf T_\e$ and $u=w_{\mathbf T_\e}$ on each triangle $T \in \mathbf T_\e$ such that $T \cap ( \O'\setminus \bar \O) \neq \emptyset$. We consider the following discrete functionals
$$\G_\e : u \in L^0(\O';\R^2) \mapsto \begin{cases} 
\ds  \frac{1}{\e} \int_\O  f \big(\e \mathbf A e(u):e(u) \big)  \, dx   & \text{ if } u \in V^{\rm Dir}_\e(\O'), \\
+ \infty & \text{ otherwise.}
\end{cases}$$
The Griffith energy with Dirichlet boundary condition $w$ is defined, for $u \in L^0(\O';\R^2)$, by 
$$\G (u) := 
\begin{cases}
\begin{array}{l}
\ds \int_\O \mathbf A e(u):e(u)\, dx \\
\quad + \kappa\sin\theta_0\left[\HH^1(J_u  \cap \O) + \HH^1(\partial\O \cap \{u \neq w \})\right]
\end{array} & \text{ if } 
\begin{cases}
u \in GSBD^2 (\O'),\\
u=w \; \LL^2\text{-a.e. in }\O' \setminus \overline \O,
\end{cases}\\
 + \infty & \text{ otherwise.}
\end{cases}
$$
Note that the additional boundary term accounts for possible jumps at the boundary, where the boundary condition fails to be satisfied. In the previous expression and in the sequel, we still denote by $u$ the trace of $u_{|\O} \in GSBD^2(\O)$ on $\partial \O$ (see \cite[Theorem 5.5]{DM2}).

\medskip

We will first prove the following result, generalizing Theorem \ref{thm:GSBD} to the case of Dirichlet boundary conditions.

\begin{thm}[{\bf $\Gamma$-convergence under Dirichlet boundary conditions}] \label{prop:Gamma_CV_Dirichlet}
The family $\{\G_\e\}_{\e>0}$ $\Gamma$-converges, with respect to the $L^0(\O';\R^2)$-topology, to the Griffith functional $\G$.
\end{thm}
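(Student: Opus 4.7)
The plan is to lift the problem to the larger domain $\O'$ and reduce it to the unconstrained $\Gamma$-convergence result of Theorem \ref{thm:GSBD}. The central observation is that any $u_\e \in V^{\rm Dir}_\e(\O')$ automatically belongs to $V_\e(\O')$, so writing $\F^{\O'}_\e$ for the functional \eqref{eq:F_e} with integration domain $\O'$, we have the decomposition
$$
\F^{\O'}_\e(u_\e) = \G_\e(u_\e) + \int_{\O'_\e \setminus \O} \frac{\kappa}{\e} \wedge \mathbf A e(w_{\mathbf T_\e}):e(w_{\mathbf T_\e}) \, dx,
$$
where $\O'_\e \setminus \O$ denotes the union of the triangles of $\mathbf T_\e$ touching $\O' \setminus \bar \O$ (modulo a vanishing straddling contribution). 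Together with \eqref{eq:w_eps}, this shows that $\F^{\O'}_\e(u_\e) - \G_\e(u_\e) \to \int_{\O' \setminus \bar \O} \mathbf A e(w):e(w)\, dx$, so all the analysis transfers from $\F^{\O'}_\e$ to $\G_\e$ up to an explicit bounded correction.

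For the lower bound, given $u_\e \to u$ in measure in $\O'$ with $\sup_\e \G_\e(u_\e) < +\infty$, the above identity gives $\sup_\e \F^{\O'}_\e(u_\e) < +\infty$. Proposition \ref{prop:comp} applied to $\O'$ ensures $u \in GSBD^2(\O')$, and the rigid identity $u_\e = w_{\mathbf T_\e}$ on all triangles intersecting $\O' \setminus \bar \O$ combined with $w_{\mathbf T_\e} \to w$ in $H^1(\O';\R^2)$ forces $u = w$ a.e.\ in $\O' \setminus \bar \O$. Applying Proposition \ref{lower bound} on $\O'$ and using that $J_u \cap (\O' \setminus \bar \O) = \emptyset$ (since $u = w \in W^{2,\infty}$ there), so that $\HH^1(J_u \cap \O') = \HH^1(J_u \cap \O) + \HH^1(\partial \O \cap \{u \neq w\})$, one obtains $\G(u) \leq \liminf_{\e\to 0}\G_\e(u_\e)$ by subtracting the converging bulk contribution over $\O' \setminus \bar \O$.

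For the upper bound, density results in $GSBD^2(\O')$ (see \cite{CCdensity,CT}) combined with a cut-off procedure near $\partial \O$ reduce the problem to the case where $u \in SBV^2(\O';\R^2) \cap L^\infty$, the closed jump set $\bar{J_u}$ is a finite union of pairwise disjoint closed segments contained in $\bar \O$, $u \in W^{2,\infty}(\O' \setminus \bar{J_u};\R^2)$, and $u = w$ in a neighborhood of $\partial \O'$. One then constructs, following \cite[Appendix A]{CDM}, an admissible triangulation $\mathbf T_\e \in \mathcal T_\e(\O')$ adapted simultaneously to $\bar{J_u}$ and to $\partial \O$: its vertices avoid both these sets, the triangles crossing $\partial \O$ are all contained in the region where $u$ coincides with $w$ (up to a vanishing volume), and the analog of \eqref{eq:CV_LL2(Deps)} holds with the $\sin \theta_0$ constant applied to the enlarged one-dimensional set $\bar{J_u} \cup (\partial \O \cap \{u \neq w\})$. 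The recovery sequence $u_\e$ is then defined as the Lagrange interpolation of $u$ on interior triangles and $w_{\mathbf T_\e}$ on triangles touching $\O' \setminus \bar \O$, producing an element of $V^{\rm Dir}_\e(\O')$ to which the energy estimate of Proposition \ref{upper bound} applies verbatim.

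The main obstacle lies in step~3, namely the simultaneous adaptation of the mesh to the interior jump set, to $\partial \O$, and to the hard Dirichlet constraint. It must be verified that a density approximation can be chosen so that the portion of $J_u$ landing on $\partial \O$ is itself a finite union of closed segments on $\partial \O$, which requires a variant of the Cortesani–Toader density result that is compatible with the boundary trace; and the mesh construction of \cite{CDM} must be carried out along $\partial \O$ without losing the multiplicative factor $\sin \theta_0$ for the boundary term. Once this construction is in place, the explicit form of the correction $\int_{\O' \setminus \bar \O} \mathbf A e(w):e(w)\, dx$ (which is independent of $u$) and the standard $\Gamma$-convergence machinery conclude the proof.
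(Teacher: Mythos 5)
Your lower bound argument is correct and is in fact a cleaner route than the one taken in the paper. Instead of lifting everything to $\O'$ and applying Proposition~\ref{prop:comp} and Proposition~\ref{lower bound} on the larger domain, the paper invokes the general compactness statement Proposition~\ref{prop:Comp} (which produces a Caccioppoli partition $\mathcal P$ and piecewise rigid motions, because it does not assume convergence of the sequence a priori) and then, using the assumed convergence $u_\e \to u$, identifies $u = v + r$ for a piecewise rigid body motion $r$ and absorbs $\partial^*\mathcal P$ into $J_u$. Your decomposition $\F^{\O'}_\e(u_\e) = \G_\e(u_\e) + \int_{\O'\setminus\O}\frac\kappa\e\wedge\mathbf A e(w_{\mathbf T_\e}):e(w_{\mathbf T_\e})\,dx$, combined with \eqref{eq:w_eps} and the observation that $J_u\cap(\O'\setminus\bar\O)=\emptyset$ after passing to the limit, achieves the same inequality without ever introducing the partition; the partition machinery is really only needed in the paper for Corollary~\ref{thm:CV_minimizers}, where no convergent subsequence is available a priori.

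The upper bound, however, has a genuine gap, and you have half-spotted it yourself. Your plan is to reduce to $u\in SBV^2$ whose jump set is a finite union of closed segments in $\bar\O$, and then to build a mesh adapted \emph{simultaneously} to $\bar{J_u}$ and to the ``boundary jump'' $\partial\O\cap\{u\neq w\}$, aiming for an analogue of \eqref{eq:CV_LL2(Deps)} on the enlarged set $\bar{J_u}\cup(\partial\O\cap\{u\neq w\})$. This would not go through: $\partial\O$ is merely Lipschitz, so $\partial\O\cap\{u\neq w\}$ is a piece of a Lipschitz curve rather than a finite union of segments, and the mesh construction of \cite[Appendix A]{CDM} — which is precisely calibrated to yield the constant $\sin\theta_0$ per unit length of a \emph{segment} — does not extend to it. Moreover, requiring ``$u=w$ in a neighbourhood of $\partial\O'$'' (as you write) is the wrong condition; it leaves the trace mismatch on $\partial\O$ untouched, which is exactly what creates the problem. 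The paper avoids this entirely by invoking the $GSBD$ density result with boundary datum, \cite[Formula (5.11)]{CCdensity}: it produces approximants $u_n$ with $u_n=w$ in a full open neighbourhood of $\partial\O$, so that $\partial\O\cap\{u_n\neq w\}=\emptyset$, while still satisfying $\limsup_n\G(u_n)\le\G(u)$ (the would-be boundary jump of $u$ is pushed slightly into the interior of $\O$ as a bona fide segment of $J_{u_n}$). Once the approximant coincides with $w$ near $\partial\O$, the construction of Lemma~\ref{LemmaUpperBound} applies verbatim: the Lagrange interpolation on a CDM mesh adapted only to the interior jump segments automatically satisfies the discrete Dirichlet constraint (all triangles touching $\O'\setminus\bar\O$ lie in the region where the approximant equals $w$), yielding a recovery sequence in $V^{\rm Dir}_\e(\O')$ with no boundary term to produce by the mesh. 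In short: the obstacle you flagged is real, but the correct fix is not to mesh along $\partial\O\cap\{u\neq w\}$ — it is to choose the density step so that this set is empty.
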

Next, we will show a compactness result for sequences of dispacements $u_\e $ with uniformly bounded energy, with respect to the $L^0(\O';\R^2)$-topology of convergence in measure, under the simplifying assumption that $f : [0,+ \infty) \to [0,+\infty)$ reduces to 
$$f(t) = \kappa \wedge t$$
for $t \in \R^+$. Considering eventually a sequence of minimizers of $\G_\e$ (see Lemma \ref{rem:existence_Minimizers}), we will show that, up to a subsequence and up to subtracting a sequence of piecewise rigid body motions, it converges in measure in $\O'$ to a minimizer of $\G$ and the minimal value of $\G_\e$ converges to the minimal value of $\mathcal G$. In other words, we obtain the fundamental theorem of $\Gamma$-convergence in our specific context.

\begin{cor}[{\bf Convergence of minimizers}] \label{thm:CV_minimizers}
Assume further that $\O$ and $\O'$ are connected. For each $\e>0$ small, let $u_\e \in V_\e^{\rm Dir}(\O')$ be a minimizer of $\G_\e$. Then, there exist a subsequence (not relabeled), a sequence of piecewise rigid body  motions $\{r_\e\}_{\e>0}$ and a function $u \in GSBD^2(\O')$ with $u=w$ $\LL^2$-a.e. in $\O' \setminus \overline \O$, such that $u_\e - r_\e \to u$ in measure in $\O $,  $\G_\e (u_\e) \to \G(u)$ and $u$ is a minimizer of $\G$.
\end{cor}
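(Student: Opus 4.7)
The plan is to combine Theorem \ref{prop:Gamma_CV_Dirichlet} with the compactness result of Proposition \ref{prop:Comp} (the strengthening of Proposition \ref{prop:comp} announced in the remark following its proof), in the spirit of the fundamental theorem of $\Gamma$-convergence.

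First, a uniform energy bound follows from minimality of $u_\e$ tested against $w_{\mathbf{T}_\e} \in V_\e^{\rm Dir}(\O')$: by \eqref{eq:w_eps}, $\G_\e(u_\e) \leq \G_\e(w_{\mathbf{T}_\e}) \leq C$ uniformly in $\e$. Next, I apply Proposition \ref{prop:Comp} to extract a subsequence, a piecewise rigid body motion $r_\e$ (subordinated to a Caccioppoli partition whose reduced boundary is contained in $\partial D_\e$, where $D_\e := \{\chi_\e = 1\}$ is the damaged set built in Proposition \ref{prop:comp}), and a function $u \in GSBD^2(\O')$ such that $u_\e - r_\e \to u$ in measure on $\O'$. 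To identify $u = w$ on $\O' \setminus \overline \O$, I observe that on any partition piece intersecting $\O' \setminus \overline \O$ the function $u_\e$ already equals $w_{\mathbf{T}_\e} \to w$; since $w \in W^{2,\infty}$ is generically not a rigid motion, the corresponding $r_\e^i$ must converge to zero, and after redefinition one may assume $r_\e \equiv 0$ on all such pieces. In particular $u = w$ a.e.\ on $\O' \setminus \overline \O$, and $u_\e - r_\e \to u$ in measure on $\O$ as required.

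For optimality, I would argue in two directions. On one side, for any admissible $v \in GSBD^2(\O')$ with $v = w$ a.e.\ on $\O' \setminus \overline \O$, Theorem \ref{prop:Gamma_CV_Dirichlet} provides a recovery sequence $v_\e \in V_\e^{\rm Dir}(\O')$ with $v_\e \to v$ in measure and $\G_\e(v_\e) \to \G(v)$, so minimality gives $\limsup_\e \G_\e(u_\e) \leq \limsup_\e \G_\e(v_\e) = \G(v)$. On the other side, applying the $\Gamma$-liminf part of Theorem \ref{prop:Gamma_CV_Dirichlet} to the modified sequence $\tilde u_\e := u_\e - r_\e \to u$ yields $\G(u) \leq \liminf_\e \G_\e(\tilde u_\e) \leq \liminf_\e \G_\e(u_\e)$, where the last inequality exploits the invariance of $e(u_\e)$ under the subtraction of $r_\e$ together with the fact that any extra jumps of $\tilde u_\e$ lie within $\partial D_\e$, already absorbed into the $\frac{\kappa}{\e}\LL^2(D_\e)$ term of the discrete energy. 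Combining, $\G(u) \leq \G(v)$ for every competitor $v$, so $u$ minimizes $\G$; taking $v = u$ collapses the chain into the convergence $\G_\e(u_\e) \to \G(u)$.

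The main obstacle is the compatibility between the piecewise rigid body motion produced by the compactness result and the discrete functional $\G_\e$: subtracting $r_\e$ typically destroys continuity, so $\tilde u_\e$ is not an admissible competitor for $\G_\e$, and the inequality $\G_\e(\tilde u_\e) \leq \G_\e(u_\e) + o(1)$ must be obtained directly from the integral representation rather than through the discrete variational framework. This requires arranging the Caccioppoli partition underlying $r_\e$ so that its reduced boundary is aligned with edges of the ``bad'' triangles of $\mathbf{T}^{k_\e}$---exactly the construction carried out in the proof of Proposition \ref{prop:Comp}, following \cite[Theorem 1.1]{CCminimization}. A secondary subtle point is that the identification of $u = w$ on $\O' \setminus \overline \O$ requires not only the connectedness of $\O$ and $\O'$ but also a careful tracking of which partition pieces intersect the outer region, using the generic non-rigid character of $w$.
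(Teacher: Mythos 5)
Your overall structure mirrors the paper's, and the upper-bound half of your optimality argument is fine: take any admissible $v$, build a recovery sequence $v_\e$ via Theorem \ref{prop:Gamma_CV_Dirichlet}, and use $\G_\e(u_\e)\le\G_\e(v_\e)\to\G(v)$. The problem is the lower-bound half, which as written does not close.

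You attempt to establish $\G(u)\le\liminf_\e\G_\e(u_\e)$ by applying the $\Gamma$-liminf of Theorem \ref{prop:Gamma_CV_Dirichlet} to $\tilde u_\e:=u_\e-r_\e$, and then passing from $\G_\e(\tilde u_\e)$ to $\G_\e(u_\e)$ ``by invariance of $e(u_\e)$.'' This step is not valid: $\G_\e$ is defined to be $+\infty$ outside $V_\e^{\rm Dir}(\O')$, and $\tilde u_\e$ is generically discontinuous across the partition boundary and hence not in $V_\e^{\rm Dir}(\O')$. So $\G_\e(\tilde u_\e)=+\infty$, and the chain $\G(u)\le\liminf_\e\G_\e(\tilde u_\e)\le\liminf_\e\G_\e(u_\e)$ reads $\G(u)\le+\infty\le(\text{finite})$, which is false at the last inequality. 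Replacing $\G_\e$ with the ``bare'' integral $\int_\O\frac{\kappa}{\e}\wedge\mathbf A e(\cdot):e(\cdot)\,dx$ would restore equality of the two integrals, but then the $\Gamma$-liminf of Theorem \ref{prop:Gamma_CV_Dirichlet} no longer applies, as that theorem concerns the restricted functional $\G_\e$ and not the bare integral. You flag this very difficulty in your closing paragraph, but the proposed fix (alignment of $\partial^*\mathcal P$ with triangle edges) does not resolve it within your framework, since the theorem's $\Gamma$-liminf cannot be invoked on a non-admissible sequence regardless of how the partition is built.

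The paper sidesteps this entirely: Proposition \ref{prop:Comp} is not merely the compactness statement you use, but also directly delivers the lower bound
\[
\liminf_{k\to\infty}\int_{\O}\frac{\kappa}{\e_k}\wedge\mathbf A e(u_k):e(u_k)\,dx \ge \int_\O \mathbf A e(u):e(u)\,dx+\kappa\sin\theta_0\,\HH^1\big(J_u\cup\partial^*\mathcal P\big),
\]
proved by blow-up of the measures $\lambda_k$ and without ever requiring $u_k-r_k$ to be an admissible competitor for $\G_\e$. Since $J_u\subset J_u\cup\partial^*\mathcal P$ and $u=w$ on $\O'\setminus\overline\O$ gives $\HH^1(J_u)=\HH^1(J_u\cap\O)+\HH^1(\partial\O\cap\{u\ne w\})$, this immediately yields $\liminf_\e\G_\e(u_\e)\ge\G(u)$. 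You should invoke that conclusion of Proposition \ref{prop:Comp} rather than the $\Gamma$-liminf of the theorem. (Your side argument about partition pieces in $\O'\setminus\overline\O$ and the ``generic non-rigidity'' of $w$ is also unnecessary: Proposition \ref{prop:Comp} already produces $u=w$ a.e.\ there as part of its statement, via a correction of the rigid motions on those pieces, not via a claim that the $r_\e^i$ vanish.)
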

 
\begin{rem}
Let us clarify that the improved lower bound \eqref{eq:CaccPart} in Proposition \ref{prop:Comp} is crucial only for the compactness and convergence of minimizing sequences, to ensure that after the removal of piecewise rigid body motions, minimizers of the approximating functionals converge to a minimizer of the Griffith functional and their energies converge as well. Instead, the $\Gamma$-convergence result under Dirichlet boundary conditions directly follows from Theorem \ref{thm:GSBD} and Proposition \ref{prop:comp}. Indeed, the proof of the lower bound in Theorem \ref{prop:Gamma_CV_Dirichlet} is the consequence of the lower bound in Theorem \ref{thm:GSBD} applied in $\O'$ together with the identification of the volume terms in $\O' \setminus \bar \O$. 
\end{rem}

\subsection{$\Gamma$-limit under Dirichlet boundary conditions}

Let us introduce the $\Gamma$-lower and upper limits $\G'$ and $\G''$ defined, for all $u \in L^0(\O';\R^2)$, by
$$\G'(u):=\inf\left\{\liminf_{\e \to 0}\G_\e(u_\e) : \; u_\e \to u \text{ in measure  in } \O' \right\},$$
and
$$\G''(u):=\inf\left\{\limsup_{\e \to 0}\G_\e(u_\e) : \; u_\e \to u \text{ in measure in } \O' \right\}.$$
\begin{proof}[Proof of Theorem \ref{prop:Gamma_CV_Dirichlet}]

\medskip

\textbf{Lower bound.} Let $u \in L^0(\O';\R^2)$. Without loss of generality, we can assume that $\G'(u) < + \infty$. For any $\zeta > 0$, there exists a sequence $\{u_\e\}_{\e >0}$ such that $u_\e \to u $ in measure in $\O'$ and 
$$ \underset{\e \to 0}{\liminf} \, \G_{\e}(u_\e) \leq \G' (u) + \zeta  < + \infty .$$
Let us extract a subsequence $\{u_k\} _{k \in \N} := \{u_{\e_k}\}_{k\in \N}$ such that $u_k \to u$ $\LL^2$-a.e. in $\O'$ and
$$\underset{k \to\infty}{\lim} \, \G_{\e_k} (u_k) = \underset{\e \to 0}{\liminf} \, \G_\e (u_\e) < + \infty .$$
This implies that, for $k$ large enough, $u_k \in V^{\rm Dir}_{\e_k} (\O')$ and $\sup_k \G_{\e_k} (u_k)< + \infty$. Especially, we get that $u_k \in V_{\e_k}(\O')$. Hence, according to Proposition \ref{prop:comp} and Theorem \ref{thm:GSBD} applied in $\O'$, we infer that 
$$u \in GSBD^2(\O')$$
and 
$$\liminf_{k \to \infty}  \int_{\O'}  \frac{1}{\e_k} f \big( \e_k \mathbf A e(u_k):e(u_k) \big)  \, dx \geq \int_{\O'} \mathbf A e(u):e(u) \, dx + \kappa \sin \theta_0 \HH^1(J_u).$$
Setting $w_k := w_{\mathbf T_{\e_k}}$ and using \eqref{eq:w_eps} together with the convergence in measure of $u_k = w_k$ to $u$ in $\O' \setminus \bar \O$, we get that $u = w $ $\LL^2$-a.e. in $\O' \setminus \bar \O$. 
Setting now $\delta_k := \sup \, \left\{ f(t)/t \, : \, 0 < t < \e_k \beta K^2 \right\} $ where
$$
K = \frac{\sqrt{5}}{\sin \theta_0} \norme{ \nabla w}_{L^\infty(\R^2;\R^2)} < \infty,
$$
one can check that $\delta_k  $ converges to $ 1$ as $k \to \infty$ according to \eqref{eq:f} and 
$$ \int_{\O'} \frac{1}{\e_k} f \big( \e_k \mathbf A e(u_k):e(u_k) \big)  \, dx \leq \G_{\e_k}(u_k) + \int_{\O' \setminus \bar \O}  \delta_k \, \mathbf A e(w_k) : e(w_k)   \, dx. $$
Hence, the Dominated Convergence Theorem ensures that 
$$
\zeta + \G'(u) +\int_{\O' \setminus \bar \O}  \mathbf A e(w) : e(w) \, dx  \geq \int_{\O'} \mathbf A e(u):e(u) \, dx + \kappa \sin \theta_0 \HH^1(J_u).
$$
Recalling that $J_u \cap \partial \O=\{u \neq w\} \cap \partial \O$ and $J_u \cap (\O' \setminus \overline\O)=J_w   \cap (\O' \setminus \overline\O)=\emptyset$, it entails that $\zeta+\G'(u) \geq \G(u)$, and the conclusion follows letting $\zeta \searrow 0$.

\medskip

\textbf{Upper bound.} Let $u \in L^0(\O';\R^2)$. We can assume that $\G(u)< + \infty$ so that $u \in GSBD^2(\O')$ and $u=w$ $\LL^2$-a.e. in $\O' \setminus \overline \O$. According to the density results for $GSBD$ functions (see \cite[Theorem 1.1]{CCdensity} and \cite[Formula (5.11)]{CCdensity}), there exists a sequence of functions $u_n \in SBV^2(\O;\R^2)\cap L^\infty(\O;\R^2)$ such that 
\begin{equation}\label{eq:density GSBD with Boundary Datum}
\begin{cases}
u_n \to u \text{ in measure in } \O, \\
u_n = w \text{ in an open bounded neighborhood of } \partial \O, \\
\limsup_n \G(u_n) \leq \G(u).
\end{cases}
\end{equation}
Extending (continuously) $u_n$ by $w$ on $\O' \setminus \overline \O$, we get that $u_n \to u$ in measure in $\O'$. The proof is thus complete once we know that $\G''(u_n) \leq \G(u_n)$, for all $n \in \N$, as it would imply $\mathcal G''(u) \leq \mathcal G(u)$, using the lower semicontinuity of $\G ''$ in $L^0(\O';\R^2)$ with respect to the convergence in measure together with the last point of \eqref{eq:density GSBD with Boundary Datum}.

\medskip

Therefore, we can assume without loss of generality that $u \in SBV^2(\O';\R^2) \cap L^\infty(\O'; \R^2)$ and $u = w$ on $V \cup (\O' \setminus \overline \O)$ with $V$ an open bounded neighborhood of $\partial \O$. Next according to Proposition \ref{prop:Cortesani-Toader Dirichlet} (see Appendix) there exist a sequence $\{u_k\}_{k \in \N}$ in $SBV^2(\O;\R^2) \cap L^\infty(\O;\R^2)$ as well as $N_k$ disjoint closed segments $L^k_1,\ldots,L_{N_k}^k  \subset \O$ satisfying :
$$\bar{J_{u_k}} = \bigcup_{i=1}^{ N_k} L^k_i  , \quad \HH^1( \bar{J_{u_k}} \setminus J_{u_k}) = 0, \quad u_k \in W^{2,\infty}(\O \setminus \bar{J_{u_k}};\R^2),$$
\begin{equation}\label{eq1548}
\begin{cases}
u_k = w \text{ in an open neighborhood of } \partial \O, \\
u_k \to u \quad \text{ strongly in } L^1(\O;\R^2), \\
\nabla u_k \to  \nabla u \quad \text{ strongly in } L^2(\O; \mathbb M^{2 \times 2}), \\
\limsup_{k\to \infty} \HH^1(J_{ u_k} ) \leq \HH^1(J_u).
\end{cases}
\end{equation}
Thus, extending continuously $ u_k$ to $\O'$ by setting $ u_k=w$ on $\O' \setminus \O$ and using again the lower semicontinuity of $\G ''$ in $L^0(\O';\R^2)$ with respect to the convergence in measure together with the convergences in \eqref{eq1548}, we are back to establishing that $\G''( u_k) \leq \G( u_k)$ as the conclusion follows letting $k \to +\infty$. Arguing almost word for word as in the proof of Lemma \ref{LemmaUpperBound}, we show the following Lemma \ref{lem:v=w near the boundary}, which leads to the desired inequality.  
\end{proof}

\begin{lem}\label{lem:v=w near the boundary}
Let $v \in SBV^2(\O';\R^2) \cap L^\infty(\O';\R^2)$ and $W$ be a bounded open neighborhood of $\partial\O$ be such that $ v = w$ on $W \cup (\O' \setminus \O)$ and
$$ \O \setminus W \supset \bar{J_v} =  \bigcup_{i=1}^{N} L_i  , \quad \HH^1( \bar{J_{v}} \setminus J_{v}) = 0, \quad v \in W^{2,\infty}(\O' \setminus \bar{J_{v}};\R^2),$$
for some pairwise disjoint closed segments $L_1,\ldots, L_N \subset \O \setminus W$. Then, 
$$ \G''(v) \leq  \int_\O \mathbf A e(v) : e(v) \, dx +  \kappa \sin\theta_0 \HH^1(J_v ) = \G(v).$$
\end{lem}

We do not detail the proof of Lemma \ref{lem:v=w near the boundary}. We only stress that, following Lemma \ref{LemmaUpperBound}, for $\e>0$ small enough, if $\mathbf T_\e$ is the admissible triangulation given by \cite[Appendix A]{CDM} and $v_\e$ is the Lagrange interpolation of the values of $v$ at the vertices of $\mathbf T_\e$, each triangle $T \in \mathbf T_\e$ such that $T \cap (\O' \setminus \bar \O) \neq \emptyset$ is contained in $W \cup \R^2\setminus \O$, so that $v_\e = w_{\mathbf T_\e}$ on $T$. In particular, it ensures that $ v_\e \in V^{\rm Dir}_{\e} (\O')$.

\subsection{Compactness for sequences with uniformly bounded energy and convergence of minimizers}

In this paragraph, the density $f$ reduces to $f(t) = \kappa \wedge t$ for $t \in \R$, so that the energy $\mathcal G_\e$ corresponds to 
$$
\mathcal G_\e(u)=\int_\O \frac{\kappa}{\e} \wedge \mathbf A e(u) : e(u) \, dx  \quad \text{ for } u \in V^{\rm Dir}_\e(\O').
$$
The reason of this simplifying assumption on $f$ comes from the difficulty to obtain compactness for sequences with uniformly bounded energies and from the difficulty to prove the existence of minimizers, as it will be detailed below. This is however a meaningful case since it corresponds to a brittle damage type energy from the mechanical point of view.`

\medskip

The following result shows a compactness and lower bound estimate for any sequence with uniformly bounded energy.

\begin{prop}\label{prop:Comp}
Let $\{\e_k\}_{k \in \N}$ satisfying $\e_k \to 0$ and let $\{ u_k \}_{k \in \N} \subset L^0(\O';\R^2)$ be such that $M := \sup_k \G_{\e_k}(u_k) < \infty$. Then there exist a subsequence (not relabeled), a Caccioppoli partition $\mathcal P = \big\{  P_j \big\}_{j\in \N}$ of $\O'$, a sequence of piecewise rigid motions $\{ r_k \}_{k\in \N}$ with
$$r_k := \sum_{j \in \N} r^j_k \, \mathds{1}_{P_j} ,$$
and a function $u \in GSBD^2(\O')$ such that $u=w$ $\LL^2$-a.e. in $\O' \setminus \overline \O$,
\begin{equation}\label{eq:diverging r_j-r_i}
\lvert r^i_k (x) - r^j_k(x) \rvert \to + \infty \quad \text{ for } \LL^2 \text{-a.e. } x \in \O', \quad \text{ for all } i \neq j,
\end{equation}
\begin{equation}\label{eq:CV measure}
u_k - r_k \to u \quad \text{in measure in } \O',
\end{equation}
and
\begin{equation}\label{eq:CaccPart}
\liminf_{k\to \infty}  \int_{\O} \frac{\kappa}{\e_k} \wedge \mathbf A e(u_k) : e(u_k)  \, dx  \geq \int_{\O} \mathbf A e(u):e(u)\, dx + \kappa\sin\theta_0\HH^1(J_u \cup \partial^*\mathcal P).
\end{equation}
\end{prop}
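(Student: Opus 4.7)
The strategy combines the truncation argument of Proposition~\ref{prop:comp} with the quantitative $GSBD$-compactness theorem of Chambolle--Crismale featuring piecewise rigid motions (cf.\ \cite[Theorem 1.1]{CCminimization}), which is needed here because the sequence $\{u_k\}$ is not assumed to converge in measure. The lower bound \eqref{eq:CaccPart} is then obtained by combining the Lebesgue-part analysis of Proposition~\ref{prop:leb} with a blow-up argument at $\HH^1$-a.e.\ point of $J_u\cup\partial^*\mathcal P$, reducing to the slicing machinery of Lemmas~\ref{lemma afterSections}--\ref{lemma F_1 F_2} to recover the geometric constant $\kappa\sin\theta_0$.

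\emph{Truncation and compactness.} Fix $\delta\in(0,1)$, set $K=(1-\delta)\kappa$ and define $\chi_k:=\mathds{1}_{\{(1-\delta)\mathbf A e(u_k):e(u_k)\geq K/\e_k\}}$, which is constant on each triangle of the triangulation associated with $u_k$. Setting $v_k:=(1-\chi_k)u_k\in SBV^2(\O';\R^2)$, the argument of Proposition~\ref{prop:comp} yields $\int_\O |e(v_k)|^2\,dx\leq M/((1-\delta)\alpha)$, $\int_\O \chi_k\,dx\leq M\e_k/K\to 0$, and $\HH^1(J_{v_k}\cap\O)\leq 6M/(K\sin\theta_0)$. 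Moreover, by \eqref{eq:w_eps} one has $\sup_k\|e(w_{\mathbf T_{\e_k}})\|_{L^\infty(\O')}<\infty$, so for $k$ large $\chi_k$ vanishes on every triangle meeting $\O'\setminus\overline\O$, on which $v_k=w_{\mathbf T_{\e_k}}\to w$ in $H^1$; the above bounds therefore extend to the whole of $\O'$. Applying \cite[Theorem 1.1]{CCminimization} to $\{v_k\}$, up to a subsequence one obtains a Caccioppoli partition $\mathcal P=\{P_j\}_{j\in\N}$ of $\O'$, piecewise rigid motions $r_k=\sum_j r_k^j\mathds{1}_{P_j}$ satisfying \eqref{eq:diverging r_j-r_i}, and $u\in GSBD^2(\O')$ with $v_k-r_k\to u$ in measure, $e(v_k)\wto e(u)$ weakly in $L^2(\O';\Ms)$, and $\HH^1(J_u\cup\partial^*\mathcal P)\leq\liminf_k\HH^1(J_{v_k})$. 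Since $\LL^2(\{u_k\neq v_k\})\to 0$, the convergence in measure also holds for $u_k-r_k$, which is \eqref{eq:CV measure}.

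\emph{Dirichlet condition and lower bound.} For each partition element $P_j$ with $\LL^2(P_j\cap(\O'\setminus\overline\O))>0$, $v_k\to w$ strongly on $P_j\cap(\O'\setminus\overline\O)$; combined with $v_k-r_k^j\to u$ in measure there, the affine maps $r_k^j$ converge uniformly on compact subsets of $\O'\setminus\overline\O$ to an affine map $r^j_\infty$. Absorbing $r^j_\infty$ into the limit $u$ on each such $P_j$ (equivalently, replacing $r_k^j$ by $r_k^j-r^j_\infty$ and $u$ by $u+r^j_\infty\mathds{1}_{P_j}$) enforces $u=w$ $\LL^2$-a.e.\ in $\O'\setminus\overline\O$ without affecting \eqref{eq:diverging r_j-r_i} or \eqref{eq:CV measure}. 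The Lebesgue part of \eqref{eq:CaccPart} then follows from Proposition~\ref{prop:leb} verbatim, since $e(r_k)\equiv 0$ inside each $P_j$ gives $e(u_k-r_k)=e(u_k)$ a.e.\ on $\O$. The surface part is the main difficulty: at $\HH^1$-a.e.\ $x_0\in J_u\cup\partial^*\mathcal P$, a blow-up yields a limit step function taking two distinct values on the two half-balls determined by a direction $\nu\in\mathbb S^1$. For $x_0\in J_u\setminus\partial^*\mathcal P$ this is standard, whereas for $x_0\in\partial^*\mathcal P$ the two limiting values arise because the rigid motions $r_k^i$ and $r_k^j$ on the two adjacent pieces drift apart according to \eqref{eq:diverging r_j-r_i}: after subtracting an appropriate rigid reference on each half-ball, the rescaled sequence $u_{k_j}(x_0+\varrho_j\,\cdot)$ still converges in measure to a nontrivial two-valued step function. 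Once this blow-up profile is available, the slicing arguments of Lemmas~\ref{lemma1}--\ref{lemma F_1 F_2} transfer verbatim to yield the density $\kappa\sin\theta_0$ at $x_0$, and summing over $J_u\cup\partial^*\mathcal P$ via the Besicovitch derivation theorem delivers \eqref{eq:CaccPart}. The genuinely new ingredient compared with Proposition~\ref{prop Jump part} is therefore the identification of the blow-up on $\partial^*\mathcal P$ and the careful bookkeeping of the two a priori different rigid corrections on the two sides of the interface $\{y\cdot\nu=0\}$; this is where I expect the main technical difficulty to lie.
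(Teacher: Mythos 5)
Your plan follows the paper's structure faithfully: the truncation $v_k=(1-\chi_k)u_k$, the Chambolle--Crismale $GSBD^2$-compactness theorem with Caccioppoli partitions and piecewise rigid motions, the adjustment of the rigid corrections on the components $P_j$ touching $\O'\setminus\overline\O$ to enforce $u=w$ there, the weak$^*$ Radon-measure argument, and a blow-up-plus-slicing estimate at $\HH^1$-a.e.\ point of $J_u\cup\partial^*\mathcal P$. The compactness, Dirichlet-enforcement and Lebesgue-part steps all match the paper (the paper works with $f(t)=t\wedge\kappa$ in Section~\ref{sec:4}, so there is no $\delta$, but that is cosmetic).

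The gap is in your treatment of $\partial^*\mathcal P$. You assert that after subtracting rigid references the rescaled sequence converges to ``a nontrivial two-valued step function.'' That is false in general: as the paper stresses before Lemma~\ref{lemma afterSections AGAIN}, at $x_0\in\partial^*\mathcal P$ one may have $u^+(x_0)=u^-(x_0)$, so the blow-up limit $\bar u$ can be \emph{constant}; what generates the surface contribution is the divergence $\lvert r_k^{+}-r_k^{-}\rvert\to+\infty$ of the two adjacent rigid motions. As a consequence the claim that ``the slicing arguments of Lemmas~\ref{lemma1}--\ref{lemma F_1 F_2} transfer verbatim'' is not correct. The paper must (i) replace the one-dimensional target $\bar u_y^\xi$ by the requirement $\arctan\lvert(r_j^{+}-r_j^{-})_y^\xi\rvert\to\pi/2$; (ii) rewrite the reference length $L^{\rm ref}(T)$ in Lemma~\ref{lemma3 AGAIN} in terms of the divergent quantity $\mu_j(\alpha\cdot y_T+\beta)$, where $r_j^{+}-r_j^{-}=M_j\,\mathrm{id}+m_j$, $\alpha_j:=M_j^T\xi$, $\beta_j:=m_j\cdot\xi$, $\mu_j:=|\alpha_j|+|\beta_j|\to\infty$, and $\alpha,\beta$ are the normalized limits; and (iii) enlarge the exceptional set $Z_*$ by a neighborhood of the unique point $z_*$ where the line $\{\alpha\cdot z+\beta=0\}$ meets $B^\nu$, because near $z_*$ the divergence $\lvert(r_j^{+}-r_j^{-})_y^\xi\rvert$ is not uniformly fast and the jump criterion degenerates. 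Your proposal does not explain how to avoid this degeneracy, which is exactly what the set $Z_*^2$ and the bound $m_{\alpha,\beta}>0$ in the paper are designed to handle; without that device the quantitative bound $\HH^1(Y_j\setminus Z_*)\lesssim\gamma$ cannot be recovered on $\partial^*\mathcal P$.
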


\begin{rem}
The lower bound inequality \eqref{eq:CaccPart} strongly relies on the simplifying assumption 
$$ f (t) = \kappa \wedge t \quad \text{ for } t \in \R.$$
Indeed, when working with a more general density $f :[0,+\infty) \to [0,+ \infty)$ satisfying \eqref{eq:f}, the main issue arises when one needs to fix some $\delta >0$ to use \eqref{eq:dependance in delta}, in order to exhibit an extraction, a Caccioppoli partition, rigid motions and a limit displacement which satisfy \eqref{eq:diverging r_j-r_i} and \eqref{eq:CV measure}. As all of them depend on $\delta >0$, it becomes difficult to derive the lower bound, even for the Lebesgue part \eqref{eq:Lebesgue AGAIN} below, since one simultaneously needs $\delta$ to be fixed (so that $\mathcal P$, $\{r_k \}_{k\in \N}$ and $u$ are well defined) and to converge to $0$ (in order to recover \eqref{eq:Lebesgue AGAIN} as in the proof of Proposition \ref{prop:leb}).
\end{rem}

\begin{proof}
By definition of $V^{\rm Dir}_{\e_k}(\O')$, there exists an admissible triangulation $\mathbf T_k \in \mathcal T_{\e_k} (\O')$ such that $u_k$ is affine on each triangle $T \in \mathbf T_k$ and $u_k = w_{\mathbf T_k}$ on each triangle $T \in \mathbf T_k$ intersecting  $\O' \setminus \bar \O$. We introduce the characteristic functions 
$$
\chi_k := \mathds{1}_{\{ \mathbf A e(u_k):e(u_k) \geq \frac{\kappa}{\e_k} \}} \in L^\infty( \O';\{ 0,1 \})
$$
which are constant on each triangle $T \in \mathbf T_k$. Since $u_k=w_{\mathbf T_k}$ on each triangle $T \in \mathbf T_k$ intersecting $\O' \setminus \bar \O$ and $w \in W^{2,\infty}(\R^2;\R^2)$, we deduce that, for $k$ large enough, $\chi_k=0$ in $\O' \setminus \bar \O$. Thus 
$$D_k := \{ \chi_k = 1 \} = \bigcup_{i=1}^{N_k}  T^k_i \subset \bar \O$$
for some triangles $T^k_i \in \mathbf T_k$, and $\LL^2(D_k) = \int_{\O} \chi_k \, dx \to 0$. 

\medskip

Let $v_k:= (1-\chi_k)u_k \in SBV^2(\O';\R^2)$ with $\nabla v_k= (1-\chi_k)\nabla u_k$ and $J_{v_k} \subset \bigcup_{i=1}^{N_k} \partial T^k_i  \subset \bar \O$. Arguing as in the proof of Proposition \ref{prop:comp}, we infer that
$$\sup_{k \in \N} \left\{ \int_{\O'} |e(v_k)|^2\, dx+ \HH^1(J_{v_k})\right\}<\infty.$$
In view of the $GSBD^2$-compactness Theorem (\cite[Theorem 1.1]{CCminimization}), there exist a subsequence (not relabeled), a Caccioppoli partition $ \mathcal P = \big\{  P_j \big\}_{j\in \N}$ of $\O'$, a sequence of piecewise rigid motions $\{ \tilde r_k \}_{k\in \N}$ with
$$\tilde r_k := \sum_{j \in \N} \tilde r^j_k \, \mathds{1}_{P_j} ,$$
and a function $\tilde u \in GSBD^2(\O')$ such that
$$\begin{cases}
\lvert \tilde r^i_k (x) - \tilde r^j_k(x) \rvert \to + \infty \quad \text{ for } \LL^2 \text{-a.e. } x \in \O', \text{ for all } i \neq j,\\
v_k - \tilde r_k \to \tilde u \quad \text{in measure in } \O',\\
e(v_k) \rightharpoonup e(\tilde u) \quad \text{weakly in } L^2(\O';\mathbb M^{2 \times 2}_{\rm sym}).
\end{cases}$$
Since $\LL^2(D_k) \to 0$, we deduce that $u_k - \tilde r_k \to \tilde u$ in measure in $\O'$. 

\medskip

For all $j \in \N$ such that $\LL^2(P_j \cap \O' \setminus \bar \O) >0$, the convergence in measure of $u_k - \tilde r^j_k$ to $\tilde u$ together with the convergence in measure of $u_k$ to $w$ in $P_j \cap \O' \setminus \bar \O =: V_j$ ensure that $\tilde r^j_k \to w - \tilde u$ in measure in $V_j$. Since the space of rigid body motions is a closed finite dimensional subspace of $L^0(\O';\R^2)$, we can find a rigid body motion $r^j$ such that $r^j_{|V_j}=w-\tilde u$ $\LL^2$-a.e. in $V_j$. Therefore, with 
$$ 
r:=\sum_{j \in \N, \, \LL^2( P_j \cap \O' \setminus \bar \O) >0} r^j \, \mathds{1}_{P_j} ,
$$
the piecewise rigid body motion $r_k := \tilde r_k-r $ and the function $u=\tilde u +r \in GSBD^2(\O')$ are such that
\begin{equation}\label{eq:0854}
\begin{cases}
 u_k - r_k  \to u \text{ in measure in } \O',\\
u = w \quad \LL^2 \text{-a.e. in } \O' \setminus \bar \O,\\
e(v_k) \rightharpoonup e(u) \quad \text{ weakly in } L^2(\O';\mathbb M^{2 \times 2}_{\rm sym}).
\end{cases}
\end{equation}

\medskip

We are now back to prove \eqref{eq:CaccPart}. As in the proof of Proposition \ref{lower bound}, we define the following Radon measures on $\O'$
$$ \lambda_k := \frac{\kappa}{\e_k} \wedge \mathbf A e(u_k):e(u_k)  \,  \LL^2 \res \O' \in \M(\O').$$ 
Using \eqref{eq:w_eps} and the energy bound assumption on $u_k$, we obtain that the sequence $\{\lambda_k\}_{k \in \N}$ is uniformly bounded in $\M(\O')$. Thus, up to a subsequence (not relabeled), we have $\lambda_k \overset{*}{\wto} \lambda$ weakly* in $\M(\O')$ for some nonnegative measure $\lambda \in \M(\O')$. Thanks to the lower semicontinuity of weak* convergence in $\M(\O')$ along open sets, we have that
\begin{equation}\label{eq:lambda AGAIN}
\liminf_{k\to \infty} \int_{\O'} \frac{\kappa}{\e_k} \wedge \mathbf A e(u_k) : e(u_k)  \, dx = \liminf_{k\to \infty} \lambda_k(\O') \geq \lambda(\O').
\end{equation}
Recalling that $ \mathcal{P}^{(1)} \cup \partial^* \mathcal P $ contains $\HH^1$-almost all of $\O'$, and using that the measures $ \LL^2 \res \O' $, $\HH^1 \res (\mathcal P^{(1)} \cap J_u)$ and $\HH^1 \res \partial^* \mathcal P $ are mutually singular, it is enough to show that
\begin{equation}\label{eq:Lebesgue AGAIN}
\frac{d\lambda}{d\LL^2 \res \O' } \geq \mathbf A e(u):e(u)\quad \LL^2\text{-a.e. in } \O'  ,
\end{equation}
\begin{equation}\label{eq:Hausdorff P1}
\frac{d\lambda}{d\HH^1\res  (\mathcal P^{(1)} \cap J_u)} \geq \kappa \sin\theta_0 \quad \HH^1\text{-a.e. in } \mathcal P^{(1)} \cap J_u,
\end{equation}
and
\begin{equation}\label{eq:Hausdorff essential boundary P}
\frac{d\lambda}{d\HH^1\res  \partial^* \mathcal P } \geq \kappa \sin\theta_0 \quad \HH^1\text{-a.e. in } \partial^* \mathcal P.
\end{equation}
Indeed, once \eqref{eq:Lebesgue AGAIN}, \eqref{eq:Hausdorff P1} and \eqref{eq:Hausdorff essential boundary P} are satisfied, it follows from the Radon-Nikod\'ym decomposition and the Besicovitch differentiation Theorems that
$$\lambda = \frac{d\lambda}{d\LL^2 \res \O'}  \LL^2 \res \O'  \, + \, \frac{d\lambda}{d\HH^1 \res (\mathcal P^{(1)} \cap J_u)}  \HH^1 \res (\mathcal P^{(1)} \cap J_u) \, + \, \frac{d\lambda}{d\HH^1 \res \partial^* \mathcal P}  \HH^1 \res \partial^* \mathcal P \, + \, \lambda^s,$$
for some nonnegative measure $\lambda^s$ which is singular with respect to  $\LL^2 \res \O' $, $\HH^1 \res (\mathcal P^{(1)} \cap J_u)$ and $\HH^1\res \partial^* \mathcal P$. Thus, after integration over $\O'$ and recalling \eqref{eq:lambda AGAIN}, we would get that
$$
\liminf_{k\to \infty} \int_{\O'} \frac{\kappa}{\e_k} \wedge \mathbf A e(u_k) : e(u_k)  \, dx \geq \int_{\O' } \mathbf A e(u):e(u)\, dx + \kappa \sin\theta_0 \HH^1 \big((J_u \cap \mathcal P^{(1)} ) \cup \partial^* \mathcal P   \big).
$$
On the one hand, the convergence in $H^1(\O';\R^2)$ of $w_{\mathbf T_k}$ to $w$ (see \eqref{eq:w_eps}) ensures that 
\begin{eqnarray}\label{eq:0939}
\liminf_{k\to \infty}  \int_{\O'} \frac{\kappa}{\e_k} \wedge \mathbf A e(u_k) : e(u_k)  \, dx & \leq & \limsup_{k\to \infty}  \int_{\O' \setminus \O} \mathbf A e(w_{\mathbf T_k}):e(w_{\mathbf T_k}) \, dx  + \liminf_{k\to \infty} \G_{\e_k}(u_k) \nonumber\\
& \leq & \int_{\O' \setminus \O} \mathbf A e(w):e(w) \, dx + \liminf_{k\to \infty} \G_{\e_k}(u_k).
\end{eqnarray}
On the other hand, using that $u=w$ in $\O' \setminus \overline \O$ and that $\mathcal P^{(1)} \cup \partial^* \mathcal P$ covers $\HH^1$ almost every $\O'$, we obtain that 
\begin{multline}\label{eq:0940}
\int_{\O' } \mathbf A e(u):e(u)\, dx + \kappa \sin\theta_0 \HH^1 \big((J_u \cap \mathcal P^{(1)} ) \cup \partial^* \mathcal P   \big)\\
= \int_{\O' \setminus \O} \mathbf A e(w):e(w) \, dx+\int_{\O} \mathbf A e(u):e(u)\, dx + \kappa \sin\theta_0 \HH^1 (J_u  \cup \partial^* \mathcal P).
\end{multline}
Gathering \eqref{eq:0939} and \eqref{eq:0940} leads to \eqref{eq:CaccPart}, which completes the proof of Proposition \ref{prop:Comp}.
\end{proof}

Using the last convergence in \eqref{eq:0854}, we easily get inequality \eqref{eq:Lebesgue AGAIN} arguing in an identical manner than in the proof of Proposition \ref{prop:leb}. We do not reproduce the argument. The rest of this section is devoted to the establishment of \eqref{eq:Hausdorff P1} and \eqref{eq:Hausdorff essential boundary P}. We start with the lower bound inequality for the jump part of the energy in the measure theoretic interior of $\mathcal P$.

\begin{prop}[{\bf Lower bound for the jump part in $ \mathcal P ^{(1)}$}]\label{prop Jump part in P1}
For $\HH^1$-a.e. $x_0 \in \mathcal P ^{(1)} \cap J_u$, 
$$\frac{d\lambda}{d\HH^1 \res (\mathcal P ^{(1)} \cap J_u)}(x_0) \geq \kappa \sin\theta_0.$$
\end{prop}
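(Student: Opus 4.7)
The plan is to reduce this estimate to Proposition \ref{prop Jump part} by observing that, near a point $x_0$ of density $1$ for a single cell of the Caccioppoli partition $\mathcal P$, the piecewise rigid body motion $r_k$ coincides $\LL^2$-almost everywhere with a single global rigid motion $r_k^{j_0}$ on a set whose density at $x_0$ is $1$, and that subtracting a global rigid motion leaves the symmetric gradient, and hence the energy, unchanged.

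First, I would select a point $x_0 \in \mathcal P^{(1)} \cap J_u$ at which the Besicovitch differentiation of $\lambda$ with respect to $\HH^1 \res (\mathcal P^{(1)} \cap J_u)$ applies, at which $J_u$ has $\HH^1$-density $1$, and for which there exists a unique index $j_0$ with $x_0 \in P_{j_0}^{(1)}$. These properties hold simultaneously $\HH^1$-almost everywhere on $\mathcal P^{(1)} \cap J_u$. I would then set $\tilde u_k := u_k - r_k^{j_0}$, which remains continuous, piecewise affine on the admissible triangulation on which $u_k$ is piecewise affine, and which satisfies $e(\tilde u_k) = e(u_k)$ $\LL^2$-a.e.\ in $\O'$. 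In particular, the associated measures $\lambda_k$ are unaffected.

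The key technical step is to establish the convergence of the blown-up sequence
$$v_j := \tilde u_{k_j}(x_0 + \varrho_j \cdot) \to \bar u \quad \text{in measure in } B,$$
along a suitable diagonal subsequence with $\varrho_j \searrow 0$. To this end, I would use the metric $d_{M_\eta}$ of \eqref{eq:d_M} and write
$$d_{M_\eta}(v_j, \bar u) \leq d_{M_\eta}\bigl(\tilde u_{k_j}(x_0 + \varrho_j \cdot), u(x_0 + \varrho_j \cdot)\bigr) + d_{M_\eta}\bigl(u(x_0 + \varrho_j \cdot), \bar u\bigr).$$
The second term tends to $0$ as $\varrho_j \to 0$ by the very definition of the jump point $x_0$. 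For the first, I would split the integration over $\{y \in B : x_0 + \varrho_j y \in P_{j_0}\}$, where $\tilde u_{k_j}$ equals $u_{k_j} - r_{k_j}$ and therefore converges in measure to $u$ thanks to \eqref{eq:CV measure}, and over its complement, whose $\LL^2$-measure is controlled by $\LL^2(B_{\varrho_j}(x_0) \setminus P_{j_0})/\varrho_j^2$, which tends to $0$ since $x_0 \in P_{j_0}^{(1)}$. A standard diagonal extraction then yields the claimed convergence.

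Once this convergence is secured, the entire slicing argument of Proposition \ref{prop Jump part} (Lemmas \ref{lemma afterSections}--\ref{lemma F_1 F_2} and the concluding estimate) applies verbatim with $u_{k_j}$ replaced by $\tilde u_{k_j}$, since only the symmetric gradients $e(\tilde u_{k_j}) = e(u_{k_j})$ and the convergence $v_j \to \bar u$ in measure enter the argument. The main obstacle is thus confined to the diagonal extraction combining the density-$1$ property of $P_{j_0}$ at $x_0$ with the convergence in measure of $u_k - r_k$ on $\O'$; the remainder is a direct transposition of arguments already developed in the constraint-free setting.
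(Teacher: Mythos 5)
Your proposal is correct and is essentially the paper's argument: the paper also subtracts the single rigid motion $r_{k_j}^{i_0}$ (your $r_{k}^{j_0}$), defines $v_j := (u_{k_j} - r_{k_j}^{i_0})(x_0 + \varrho_j\,\cdot)$, shows convergence in measure to $\bar u$ by precisely the same density-$1$ splitting over $(P_{i_0}-x_0)/\varrho_j$ and its complement, and then invokes the slicing machinery of Proposition~\ref{prop Jump part} verbatim since $e$ is unaffected by subtracting a rigid motion. The only cosmetic difference is that you phrase the convergence via the metric $d_{M_\eta}$ while the paper estimates $\LL^2(\{|v_j-\bar u|>\eta\})$ directly; these are equivalent.
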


\begin{proof} 
The proof is very similar to that of Proposition \ref{prop Jump part}. We just sketch it, underlying the main differences.

Let $x_0 \in \mathcal P ^{(1)} \cap J_u$ be such that 
$$ \frac{d \lambda}{d \HH^1 \res (\mathcal P ^{(1)} \cap J_u)}(x_0)=\lim_{\varrho \searrow 0} \, \frac{\lambda\big( B_\varrho (x_0) \big)}{\HH^1\big( \mathcal P ^{(1)} \cap J_u \cap  B_\varrho (x_0) \big)}$$
exists and is finite, and
$$\lim_{\varrho \searrow 0} \, \frac{\HH^1(\mathcal P ^{(1)} \cap J_u \cap B_\varrho (x_0))}{2 \varrho}=1.$$
According to the Besicovitch differentiation Theorem and the  countably $(\HH^1,1)$-rectifiability of $\mathcal P ^{(1)} \cap J_u$, it follows that $\HH^1$-almost every point $x_0$ in $\mathcal P ^{(1)} \cap J_u$ fulfills these conditions.

\medskip

By definition of the jump set $J_u$, there exist $\nu :=\nu_u(x_0) \in \mathbb S^1$ and $u^\pm(x_0) \in \R^2$ with $u^+(x_0) \neq u^-(x_0)$ such that the function 
$$u_{x_0,\varrho} := u( x_0 + \varrho \, \cdot )$$
converges in measure in $B := B_1(0)$ to the jump function
$$\bar{u} : y \in B \mapsto 
\begin{cases}
u^+(x_0) & \text{ if } y \cdot \nu > 0, \\
u^-(x_0) & \text{ if } y \cdot \nu < 0,
\end{cases} $$
as $\varrho \searrow 0$. As before, we consider a sequence of radii $\{\varrho_j\}_{j \in \N}$ such that $\varrho_j \searrow 0$ and $\lambda(\partial B_{\varrho_j}(x_0))=0 = \HH^1(\mathcal P ^{(1)} \cap J_u \cap \partial B_{\varrho_j}(x_0))$ for all $j \in \N$. Arguing as in Proposition \ref{prop Jump part}, there exists an increasing sequence $\{k_j\}_{j \in \N}$ such that $k_j \nearrow  \infty$ as $j \to \infty$ and
$$ \begin{cases}
	\ds \big( u_{k_j} - r_{k_j} \big) (x_0 + \varrho_j \, \cdot) \to \bar{u} \quad\text{ in measure in } B, \\
	\ds  \frac{\lambda_{k_j}( B_{\varrho_j}(x_0)) }{2\varrho_j} \to \frac{d \lambda}{d \HH^1 \res (\mathcal P ^{(1)} \cap J_u)}(x_0),  \\
	\ds \e_{k_j} /\varrho_j \to 0, \quad \omega(\e_{k_j}) / \varrho_j \to 0. 
\end{cases}
$$
By definition of $\mathcal P ^{(1)}$, there exists $i_0 \in \N$ such that $x_0 \in {P_{i_0}}^{(1)}$. We thus infer that the function $v_j := \big( u_{k_j} - r^{i_0}_{k_j} \big) (x_0 + \varrho_j \, \cdot) \in H^1(B;\R^2)$ converges in measure to $\bar u$ in $B$. Indeed, for all $\eta > 0$, 
\begin{multline*}
\LL^2 \big( B \cap \{ \lvert v_j - \bar u \rvert > \eta \} \big) \\
 \leq  \LL^2 \big( B \cap  \{ \lvert \big( u_{k_j} -r_{k_j} \big) (x_0 + \varrho_j \, \cdot) - \bar u \rvert > \eta \} \big) + \LL^2 \left( B \setminus \left( \frac{ P_{i_0} - x_0}{\varrho_j} \right) \right)  \rightarrow  0,
 \end{multline*}
where we used that $x_0$ is a point of density $1$ for $P_{i_0}$. We are now back to an analogous situation than \eqref{eq:beforeSections}, since $v_j$ is continuous on $B$ and piecewise affine on each triangle $T \in \big( \mathbf T_{k_j} - x_0 \big) / \varrho_j $. Therefore, from here the conclusion of Proposition \ref{prop Jump part in P1} results from the proof of Proposition \ref{prop Jump part}.
\end{proof}

We next pass to the lower bound inequality for the energy on the reduced boundary of $\mathcal{P}$, which presents some non trivial  adaptations of the proof of Proposition \ref{prop Jump part}. 

\begin{prop}[{\bf Lower bound on the reduced boundary $\partial^* \mathcal{P}$}]\label{prop Jump part in essential bdry P}
For $\HH^1$-a.e. $x_0 \in \partial^* \mathcal{P}$, 
$$\frac{d\lambda}{d\HH^1 \res \partial^* \mathcal{P}}(x_0) \geq \kappa \sin \theta_0.$$
\end{prop}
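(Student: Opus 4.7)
The argument closely parallels the proof of Proposition \ref{prop Jump part in P1}, with the main novelty being the need to handle the diverging rigid motions from \eqref{eq:diverging r_j-r_i} at the blow-up scale.

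By the structure theorem for Caccioppoli partitions (\cite[Theorem 4.17]{AFP}), for $\HH^1$-a.e. $x_0 \in \partial^* \mathcal P$ there exist unique indices $i \neq j$ such that $x_0 \in \partial^* P_i \cap \partial^* P_j$, both with density $1/2$ at $x_0$, and with opposite measure-theoretic normals $\pm \nu$. We further require $x_0$ to be a Besicovitch point for $\lambda$ and of density $1$ for $\HH^1 \res \partial^* \mathcal P$. Since the symmetric gradient is unaffected by rigid motions, we subtract one globally, setting $w_k := u_k - r_k^i$: this preserves the affine structure on $\mathbf T_k$ and keeps $e(w_k) = e(u_k)$, so the energy is unchanged. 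Following the extraction procedure of \eqref{eq:beforeSections}, we define the blow-up $v_j(y) := w_{k_j}(x_0 + \varrho_j y)$ on $B$, piecewise affine on the rescaled triangulations $\mathbf T^{x_0, j}$.

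On the $P_i$-side of the rescaled ball, $v_j \to c_i$ in measure, where $c_i$ denotes the trace of $u$ at $x_0$ from $P_i$. On the $P_j$-side, we decompose
$$v_j = (u_{k_j} - r_{k_j}^j)(x_0 + \varrho_j\, \cdot) + q_{k_j}(x_0 + \varrho_j\, \cdot)$$
with $q_{k_j} := r_{k_j}^j - r_{k_j}^i$, a rigid motion of the form $q_{k_j}(x) = \alpha_{k_j} + \omega_{k_j}\, J(x - x_0)$, where $J$ is the rotation by $\pi/2$. The first term converges to $c_j$ in measure, while \eqref{eq:diverging r_j-r_i} forces $|\alpha_{k_j}| + |\omega_{k_j}| \to +\infty$. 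Two algebraic facts are crucial: (a) $e(q_{k_j}) \equiv 0$, so the symmetric gradient of $v_j$ is identical to that of the unshifted blown-up sequence, and all Fubini-type one-dimensional energy bounds transfer unchanged; (b) $J\xi \cdot \xi = 0$ for every $\xi \in \mathbb S^1$, so the longitudinal derivative $((v_j)_y^\xi)'$ along $\xi$ is also undisturbed by the subtraction of $q_{k_j}$. Choosing $\xi$ close to $\nu$ as in \eqref{eq:xi} and extracting a further subsequence if necessary, we arrange that the ``effective jump''
$$
\Delta_j(y) := (c_j - c_i)\cdot \xi + \alpha_{k_j} \cdot \xi + \omega_{k_j}\varrho_j\, (Jy)\cdot \xi, \qquad y \in B^\nu,
$$
satisfies $|\Delta_j(y)| \to +\infty$ for $\HH^1$-a.e. $y \in B^\nu$: indeed, $y \mapsto (Jy)\cdot \xi$ is a nontrivial linear functional on $B^\nu$, and whichever of $|\alpha_{k_j}\cdot \xi|$ or $|\omega_{k_j}|\varrho_j$ diverges propagates divergence off at most a single line.

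With $|\Delta_j(y)|$ replacing the constant $|[u](x_0)\cdot \xi|$ of the original proof, we reprove the analogues of Lemmas \ref{lemma afterSections}--\ref{lemma F_1 F_2}. The convergence-in-measure bound \eqref{seq:boundCVmeasure} is replaced by the Egoroff-type statement: for every fixed $M > 0$, for $j$ large and $\HH^1$-a.e. $y \in B^\nu$, $|(v_j)_y^\xi - c_i\cdot \xi|$ is small on most of $B_y^\xi \cap \R^-$ while $|(v_j)_y^\xi| \geq M$ on most of $B_y^\xi \cap \R^+$. The contradiction driving the analogue of Lemma \ref{lemma1} (a section crossing no bad triangle forces $(v_j)_y^\xi$ to be bounded in $H^1$ and hence to converge weakly to a constant) applies a fortiori, and the analogue of Lemma \ref{lemma3} only becomes easier since the reference length $L^{\rm ref}(T,y) \sim |\Delta_j(y)|/|e(v_j)_{|T}\xi\cdot\xi|$, though now $y$-dependent, still induces only finitely many admissible values of $y$ per triangle (the defining equation being linear in $y$). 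The lower bound $\frac{d\lambda}{d\HH^1 \res \partial^* \mathcal P}(x_0) \geq \kappa \sin \theta_0$ then follows as in the proof of Proposition \ref{lower bound}. The main obstacle is organizing the divergence of $q_{k_j}$ uniformly in the blow-up variable $y$: the interplay of $\omega_{k_j}$ with the scale $\varrho_j$ requires a careful extraction, after which the algebraic observations $e(q_{k_j}) \equiv 0$ and $J\xi\cdot\xi = 0$ let the original slicing machinery proceed essentially verbatim.
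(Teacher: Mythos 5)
Your plan correctly identifies the two algebraic facts that make the machinery transferable, namely $e(q_{k_j})\equiv 0$ and $\nabla q_{k_j}\,\xi\cdot\xi = 0$ (equivalently $J\xi\cdot\xi=0$), and the overall architecture (blow-up, effective jump $\Delta_j(y)$ diverging $\HH^1$-a.e. on $B^\nu$, reprove the slicing lemmas) is the paper's. But there are two places where the plan's optimism glosses over real work. First, the choice of $\xi$: you take $\xi$ close to $\nu$ ``as in \eqref{eq:xi}'', but that condition includes $[u](x_0)\cdot\xi\neq 0$, which can fail here since the traces may coincide across $\partial^*\mathcal P$ (the paper explicitly notes $\bar u$ may be constant). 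More substantially, your claim that ``whichever of $|\alpha_{k_j}\cdot\xi|$ or $|\omega_{k_j}|\varrho_j$ diverges'' is not automatic: the divergence of $|q_{k_j}|$ can live entirely in the $\xi^\perp$-direction (e.g.\ $\alpha_{k_j}=c_k\xi^\perp$ with $|c_k|\to\infty$ and $\omega_{k_j}$ bounded), in which case $q_{k_j}\cdot\xi$ need not diverge. The paper handles this by invoking \cite[Lemma 2.8]{CCminimization}, which produces a countable dense set $D\subset\mathbb S^1$ along which $|(r^+_j-r^-_j)(y)\cdot\xi|\to\infty$ for $\HH^1$-a.e.\ $y$, and $\xi$ is then chosen in $D$ near $\nu$.

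Second, and more seriously, the analogue of Lemma \ref{lemma3}. You assert it ``becomes easier'' because the defining equation for admissible $y$ is linear, so it has finitely many solutions. But the crux of that lemma is not the finiteness of exact solutions; it is the tube-width estimate \eqref{eq:Horizontal tube's width}, which bounds the measure of $y$'s whose variation across $T$ is $\gamma$-close to the target. This estimate compares $\LL^1(T_y^\xi)$ to a \emph{fixed} reference length per triangle. With a $y$-dependent $L^{\rm ref}(T,y)$, both sides are (piecewise) affine in $y$, and if their slopes are close the tube is wide, so you would need an additional lower bound on the slope gap that you do not provide. The paper sidesteps this by: normalizing the divergence rate $\mu_j := |M_j^T\xi| + |m_j\cdot\xi|$ and extracting a limit direction $(\alpha,\beta)$ with $|\alpha|+|\beta|=1$; proving the limit line $\Delta:=\{\alpha\cdot z+\beta=0\}$ cannot coincide with $\Pi_\nu$ (via $\alpha\cdot\xi=\lim M_j\xi\cdot\xi/\mu_j=0\neq\pm\nu\cdot\xi$, using skew-symmetry); excising a small neighbourhood $Z_*^2$ of $\Delta\cap\Pi_\nu$; and then freezing one reference point $y_T\in\Phi\circ p_\xi(\mathring T)\setminus Z_*^2$ per triangle so $L^{\rm ref}(T)$ is constant again, the error $|\alpha\cdot(y-y_T)|$ being controlled by the shrinking mesh $\omega(\e_{k_j})/\varrho_j$ via \eqref{seq:y near y_T}. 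These normalization and exceptional-set steps, together with the lower bound $\lvert\alpha\cdot y_T+\beta\rvert\geq m_{\alpha,\beta}>0$ off $Z_*^2$, are precisely what make the tube argument go through; your plan does not supply them.
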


\medskip

The rest of this subsection is devoted to prove Proposition \ref{prop Jump part in essential bdry P}, with essentially the same structure than the proof of  Proposition \ref{prop Jump part}. 

\medskip

{\bf Blow-up.} Let $x_0 \in \partial^* \mathcal P$ be such that
$$x_0 \in \partial^* P_{i_0} \cap \partial^* P_{j_0} \quad \text{ for some }i_0\neq j_0,$$
$$\nu:=\nu_{P_{i_0}}(x_0) = - \nu_{P_{j_0}}(x_0)  \quad \text{where } \nu_{P_{k}}(x_0) := \lim_{\varrho \searrow 0} \frac{ D \mathds{1}_{P_{k}} \big( B_\varrho(x_0) \big) }{\lvert D \mathds{1}_{P_{k}} \rvert \big( B_\varrho(x_0) \big)} \quad \text{for } k \in \{i_0,j_0\},$$ 
$$ \frac{d \lambda}{d \HH^1 \res \partial^* \mathcal{P}}(x_0)=\lim_{\varrho \searrow 0} \, \frac{\lambda\big( B_\varrho (x_0) \big)}{\HH^1\big(\partial^* \mathcal{P}  \cap  B_\varrho (x_0)}\big) \quad \text{ exists and is finite},$$
$$\lim_{\varrho \searrow 0} \, \frac{\HH^1(\partial^* \mathcal{P} \cap B_\varrho (x_0))}{2 \varrho}=1,$$
and there exist traces $u^\pm(x_0) \in \R^2$ such that the function 
$$u_{x_0,\varrho} := u( x_0 + \varrho \, \cdot )$$
converges in measure in $B := B_1(0)$ to
$$ y \in B \mapsto \bar{u}(y):=
\begin{cases}
u^+(x_0) & \text{ if } y \cdot \nu> 0, \\
u^-(x_0) & \text{ if } y \cdot \nu < 0,
\end{cases} \qquad \text{as }\varrho \searrow 0.$$
The previous properties turn out to be satisfied for $\HH^1$-a.e. $x_0 \in \partial^*\mathcal P$. This is a consequence of the  countably $(\HH^1,1)$-rectifiability of that set, the Besicovitch differentiation Theorem, the fact that $\mathcal P^{(1)} \cup \bigcup_{i \neq j} (\partial^* P_i \cap \partial^* P_j)$ covers $\HH^1$ almost all of $\O'$ (\cite[Theorem 4.17]{AFP}), and the existence of traces on $(\HH^1,1)$-rectifiable sets  (see \cite[Theorem 5.2]{DM2} in the case of $1$-dimensional $C^1$ submanifolds which may be extended to countably $(\HH^1,1)$-rectifiable sets arguing as in \cite[Proposition 4.1]{Bab}).

\medskip

To simplify notation, let us denote by $P^+:=P_{i_0}$ and $P^-:=P_{j_0}$. According to De Giorgi's Theorem (see \cite[Theorem 3.59]{AFP}) we infer that 
\begin{equation}\label{eq:DeGiorgi}
 \mathds{1}_{ \frac{P^\pm - x_0 }{ \varrho} } \to \mathds{1}_{ H^\pm}  \text{ strongly in } L^1(B) \text{ as } \varrho \searrow 0,
\end{equation}
where $H^\pm \subset \R^2$ denote the halfspaces orthogonal to $\nu$ and containing $\pm \nu$. With these notation, we have that 
$$ \bar u = u^+(x_0) \mathds{1}_{H^+ \cap B} + u^-(x_0) \mathds{1}_{H^- \cap B}.$$
Note also that contrary to Proposition  \ref{prop Jump part} where jump points were considered, it might be the case that $u^+(x_0)=u^-(x_0)$, i.e. that $\bar u$ is constant.

\medskip

{\bf Extraction of diagonal subsequences.}  As before, we consider a sequence of radii $\{\varrho_j\}_{j \in \N}$ such that $\varrho_j \searrow 0$ and $\lambda(\partial B_{\varrho_j}(x_0))=0  = \HH^1\big(\partial^* \mathcal{P}  \cap \partial B_{\varrho_j} (x_0) \big)$ for all $j \in \N$. By our choice of $x_0$, \eqref{eq:diverging r_j-r_i} and \eqref{eq:CV measure}, with $r^\pm_k := {r_k}_{|P^\pm}$, we have :   
$$  \begin{cases}
\ds \lim_{j \to \infty}\lim_{k \to \infty} \big( u_k - r_k \big) (x_0 + \varrho_j \, \cdot)  =\lim_{j \to \infty} u_{x_0, \varrho_j} =   \bar u \quad \text{ in measure in } B, \\
\ds  \lim_{j \to \infty}\lim_{k \to \infty} \arctan \lvert r^+_k - r^-_k \rvert (x_0 + \varrho_j \, \cdot) =  \frac\pi2 \quad \text{ in measure in } B, \\
\ds\lim_{j \to \infty}\lim_{k \to \infty}  \frac{\lambda_k (B_{\varrho_j}(x_0))}{2\varrho_j} =\lim_{j \to \infty}\frac{ \lambda (B_{\varrho_j}(x_0))}{2\varrho_j}  = \frac{d \lambda}{d \HH^1 \res \partial^* \mathcal{P}}(x_0), \\
\ds\lim_{j \to \infty}\lim_{k \to \infty} \frac{\e_k}{\varrho_j} = \lim_{j \to \infty}\lim_{k \to \infty} \frac{\omega(\e_k)}{\varrho_j}= 0. 
\end{cases}   $$
We can thus find an increasing sequence $\{ k_j \}_{j \in \N}$ such that $k_j \nearrow \infty$ as $j \to \infty$ and 
\begin{subequations}\label{eq:beforeSections partial P PART 1}
	\begin{empheq}[left=\empheqlbrace]{align}
	& \big( u_{k_j} - r_{k_j} \big) (x_0 + \varrho_j \, \cdot ) \to \bar u \quad\text{ in measure in } B, \\
	& \arctan \lvert r^+_{k_j} - r^-_{k_j}   \rvert ( x_0 + \varrho_j \, \cdot ) \to \frac\pi2 \quad \text{ in measure in } B,  \\
	& \frac{ \lambda_{k_j}( B_{\varrho_j}(x_0))}{2\varrho_j} \to \frac{d \lambda}{d \HH^1 \res \partial^* \mathcal{P}}(x_0),  \\
	& \frac{\e_{k_j}}{\varrho_j} \to 0, \quad \frac{\omega(\e_{k_j})}{\varrho_j}\to 0. \label{seq:eps' AGAIN}	
	\end{empheq}
\end{subequations}
Let  $ v_j :=  u_{k_j}(x_0 + \varrho_j \, \cdot)$, $r_j^\pm:=r^\pm_{k_j}(x_0+\varrho_j \cdot)$ and $r_j := r^+_j \mathds{1}_{H^+ \cap B} + r^-_j \mathds{1}_{H^- \cap B}$. By \eqref{eq:DeGiorgi} and \eqref{eq:beforeSections partial P PART 1}, we have for all $\eta>0$,
\begin{multline*}
 \LL^2 \big( \{ \lvert v_j - r^\pm_j - u^\pm(x_0) \rvert > \eta \} \cap B \cap H^\pm \big) \\
\leq \LL^2 \left( B \cap H^\pm \setminus ( P^\pm - x_0) / \varrho_j  \right)  + \LL^2 \left( \{ \lvert v_j - r_{k_j}(x_0 + \varrho_j \, \cdot) - \bar u \rvert > \eta \} \cap B  \right)    \to 0.
\end{multline*}
Thus, up to a subsequence
\begin{equation}\label{eq:beforeSections partial P CV measure}
\begin{cases}
v_j - r^\pm_j \to  u^\pm(x_0) \quad \text{ in measure in } B \cap H^\pm,\\
\lvert r^+_j - r^-_j \rvert \to + \infty \quad \LL^2 \text{-a.e. in } B.
\end{cases}
\end{equation}

\medskip

{\bf Selection of a slicing direction.} According to \cite[Lemma 2.8]{CCminimization}, there exist an $\HH^1$-negligible set $N \subset B^\nu$ and a countable dense subset $D$ of $\mathbb{S}^1$ such that for all $\xi \in D$ and all $ y \in B^\nu \setminus N$,
$$\lvert ( r^+_j - r^-_j )(y) \cdot \xi \rvert \to + \infty$$
as $j \to + \infty$.  Note that $\nabla r^\pm_j \xi \cdot \xi = 0$, so that the quantity $t \mapsto ( r^+_j - r^-_j )^\xi_y(t)  =( r^+_j - r^-_j )(y) \cdot \xi$ is independent of $t \in B^\xi_y$. Thus, for all $y \in B^\nu \setminus N$, we have
\begin{equation}\label{eq:diverging section r_j - r_i}
\arctan\lvert ( r^+_j - r^-_j )^\xi_y \rvert \to \frac{\pi}{2} \quad \text{uniformly in } t \in B^\xi_y.
\end{equation}
For any $\eta>0$, let $\xi \in \mathbb{S}^1 \cap D$ be such that 
\begin{equation}\label{eq:xi AGAIN}
\left\lvert \nu - \xi \right\rvert \leq \eta, \quad \nu \cdot \xi \geq \frac12, \quad \left\lvert \nu \cdot \xi^\perp \right\rvert \leq \eta.
\end{equation}

As in the proof of Proposition \ref{prop Jump part}, using a change of variables and the ellipticity property \eqref{eq:A} of $\mathbf A$ and \eqref{seq:eps' AGAIN}, we get
$$
2\frac{d \lambda}{d \HH^1 \res \partial^* \mathcal{P}}(x_0) \geq  \limsup_{j \to \infty} \varrho_j \int_{B} \frac{\kappa}{\e_{k_j}} \wedge \frac{\alpha}{\varrho_j^2} |e(v_j)\xi\cdot\xi|^2 \, dy
$$
so that, introducing the following characteristic functions,
$$  \chi_j := \mathds{1}_{  \left\{ \frac{\alpha }{\varrho_j^2}|e(v_j)\xi \cdot\xi|^2 \geq \frac{ \kappa}{\e_{ k_j}} \right\}} \in L^\infty(B; \lbrace0,1 \rbrace),$$ 
we obtain that
\begin{equation}\label{eq:lambda_k greater than AGAIN}
2\frac{d \lambda}{d \HH^1 \res \partial^* \mathcal P}(x_0) \geq \limsup_{j \to \infty}\left\{\frac{\alpha }{\varrho_j} \int_{B}  (1-\chi_j) |e(v_j)\xi\cdot\xi|^2 \, dy + \frac{\kappa\varrho_j}{\e_{k_j}} \int_{B} \chi_j \, dy\right\}.
\end{equation}
We define the translated and rescaled triangulations:
$$ \mathbf{T}^{x_0,j} := \frac{1}{\varrho_j} \left( \mathbf{T}^{k_j} - x_0 \right), \quad \mathbf{T}^{x_0,j}_b := \left\{ T \in \mathbf{T}^{x_0,j}: \, \frac{\alpha }{\varrho_j} |e(v_j)_{|T} \xi \cdot \xi |^2 \geq\frac{  \kappa\varrho_j}{\e_{k_j}} \right\},$$
and the family of triangles which intersect $\bar{B_{1-\frac\eta4}}$:
$$\mathbf{T}^{x_0,j}_{b,int} := \left\{ T \in \mathbf{T}^{x_0,j}_b:\;  T \cap \bar{B_{1-\frac\eta4}} \neq \emptyset \right\}.$$
Note that $v_j - r^\pm_j$ is affine and $\chi_j$ is constant on each $T \in \mathbf{T}^{x_0,j}$, and that \eqref{seq:eps' AGAIN} ensures that for $j \in \N$ large enough (depending on $\eta$), each $T \in \mathbf{T}^{x_0,j}_{b,int}$ is contained in $B$. As in \eqref{eq:AB(Y)}, for all $y \in (B_{1-\frac\eta4})^\nu$, we denote by $a(y)$ and $b(y)$ the end points of the section passing through $y$ in the direction $\xi$ (see the Figure \ref{fig:A}) in such a way that $(B_{1- \frac\eta4})^\xi_y = (a(y),b(y) )$. We also recall that $L_\eta$ is defined as in \eqref{eq:L(eta)} and satisfies $ 0 < L_\eta  \leq |a(y)|,\, |b(y)| \leq 2$.

\medskip

Using \eqref{eq:beforeSections partial P CV measure}, \eqref{eq:diverging section r_j - r_i}, \eqref{eq:lambda_k greater than AGAIN}, Fubini's and Egoroff's Theorem, we can adapt the proof of Lemma \ref{lemma afterSections} to show the following result.

\begin{lem}\label{lemma afterSections AGAIN}
For all $\eta > 0$, there exist a subset $Z \subset B^\nu$  containing $N$ with $\HH^1(Z) \leq \eta$, and a subsequence (not relabeled) such that the following property holds : for all $\gamma >0$, there exists $j_0 = j_0(\gamma) \in \N$ such that for all $y \in B^\nu \setminus Z$ and all $j \geq j_0$, 
$$
\int_{B^\xi_y}   (\chi_j) ^\xi_y  \, dt \leq \gamma , \quad \int_{B^\xi_y}  \big( 1 - ( \chi_j ) ^\xi_y  \big)  |( (v_j)^\xi_y ) ' |^2  \, dt \leq \gamma^2,
$$
and 
$$ \int_{B^\xi_y \cap \R^\pm} 1\wedge | (v_j - r^\pm_j)^\xi_y - u^\pm(x_0) \cdot \xi| \, dt \leq \frac{\gamma}{2}, \quad |(r_j^+-r_j^-)_y^\xi | \geq  |[u](x_0)\cdot\xi|+1.$$
\end{lem}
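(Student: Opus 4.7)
The proof will follow the template of Lemma \ref{lemma afterSections}, with the main novelty being the presence of the rigid body motions $r^\pm_j$ and the separate treatment of the half-balls $B \cap H^\pm$. First I would establish three integrated convergences. From the energy lower bound \eqref{eq:lambda_k greater than AGAIN} together with \eqref{seq:eps' AGAIN}, we get $\int_B \chi_j \, dx \leq C\e_{k_j}/\varrho_j \to 0$ and $\int_B (1 - \chi_j)|e(v_j)\xi\cdot\xi|^2\, dx \leq C\varrho_j \to 0$. From the convergence in measure \eqref{eq:beforeSections partial P CV measure} of $v_j - r^\pm_j$ to $u^\pm(x_0)$ on $B \cap H^\pm$, and the fact that $1 \wedge |\cdot|$ is a bounded continuous function vanishing at $0$, we also obtain $\int_{B \cap H^\pm} 1 \wedge |v_j - r^\pm_j - u^\pm(x_0)\cdot\xi| \, dx \to 0$.

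Next I would use Fubini's Theorem to slice these integrals in the direction $\xi$. Parametrizing $\R^2$ by $(y,s) \in \Pi_\nu \times \R \mapsto y + s\xi$, whose Jacobian equals $\xi\cdot\nu \geq 1/2$, and observing that for $y \in \Pi_\nu$ one has $y \cdot \nu = 0$ so that $y + s\xi \in H^\pm \Leftrightarrow s \in \R^\pm$, Fubini's Theorem combines the above convergences into
$$\int_{B^\nu} \Psi_j(y) \, d\HH^1(y) \to 0,$$
where $\Psi_j(y)$ is the sum of the four nonnegative integrals appearing in the statement of the lemma (the rigid motion contributions being naturally split between the $s>0$ and $s<0$ sides of the section).

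I would then extract a subsequence (not relabeled) along which $\Psi_j \to 0$ pointwise $\HH^1$-a.e. on $B^\nu$, giving an $\HH^1$-negligible exceptional set $N'$. Set $Z_0 := N \cup N'$, which remains $\HH^1$-negligible. By \eqref{eq:diverging section r_j - r_i}, we also have $\arctan|(r_j^+ - r_j^-)^\xi_y| \to \pi/2$ (equivalently, $|(r_j^+ - r_j^-)^\xi_y| \to +\infty$) for every $y \in B^\nu \setminus Z_0$. Applying Egoroff's Theorem simultaneously to the four nonnegative integrands converging to $0$ and to the bounded quantity $\arctan|(r_j^+ - r_j^-)^\xi_y|$ converging to $\pi/2$ (e.g., by intersecting the Egoroff sets obtained for each separately), we find for any $\eta > 0$ a subset $Z \supset Z_0$ with $\HH^1(Z) \leq \eta$ on which both convergences are uniform. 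Given $\gamma > 0$, the uniform convergences provide a rank $j_0(\gamma)$ such that, for every $j \geq j_0(\gamma)$ and every $y \in B^\nu \setminus Z$, each of the four estimates in the statement holds; the coefficient $\gamma/2$ for the rigid motion terms is absorbed into the choice of threshold.

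The main conceptual obstacle is really an accounting exercise, namely that Fubini in the direction $\xi$ must correctly split the half-ball integrals into $\R^\pm$-sections. This is automatic once one parametrizes through $\Pi_\nu$, since $y \cdot \nu = 0$ forces the sign of $(y+s\xi)\cdot\nu$ to coincide with the sign of $s$. All remaining steps mirror the proof of Lemma \ref{lemma afterSections}, and no serious additional technical difficulty is expected beyond keeping track of the two halves $H^\pm$ and the two rigid motion pieces $r^\pm_j$.
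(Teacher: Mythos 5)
Your proposal is correct and follows essentially the same route the paper indicates (the authors simply say to adapt the proof of Lemma~\ref{lemma afterSections} by combining \eqref{eq:beforeSections partial P CV measure}, \eqref{eq:diverging section r_j - r_i}, \eqref{eq:lambda_k greater than AGAIN}, Fubini, and Egoroff). The one point of genuine difference is cosmetic but worth noting: the paper's proof of Lemma~\ref{lemma afterSections} performs Fubini over $\Pi_\xi$ (the orthogonal hyperplane to the slicing direction, which has Jacobian $1$) and then transfers the negligible set and the sections to $\Pi_\nu$ through the skew linear projection $\Phi$ of \eqref{eq:Phi}; you instead parametrize $\R^2$ directly by $(y,s)\in\Pi_\nu\times\R\mapsto y+s\xi$ with Jacobian $\nu\cdot\xi\geq\tfrac12$, which gives the slicing over $\Pi_\nu$ in a single step. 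Both are valid; the direct parametrization is slightly cleaner here and also makes the identification $\{s:\,y+s\xi\in H^\pm\}=\R^\pm$ for $y\in\Pi_\nu$ immediate, which is exactly what is needed to split the two rigid-motion terms between $\R^+$ and $\R^-$. Your handling of the last, $\gamma$-independent estimate on $|(r^+_j-r^-_j)^\xi_y|$ via Egoroff applied to $\arctan|(r^+_j-r^-_j)^\xi_y|\to\pi/2$ is also the intended argument (the threshold it produces is independent of $\gamma$, and one just takes $j_0(\gamma)$ to be the maximum of the two ranks).
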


As in the proof of Proposition \ref{prop Jump part}, we next show that, for some subset $Z' \subset (B_{1-\frac\eta2})^\nu$ of arbitrarily small $\HH^1$ measure, and along a subsequence (only depending on $\eta$), all the sections in the direction $\xi$ passing through $(B_{1-\frac\eta2})^\nu\setminus Z'$ must cross at least one triangle $T \in \mathbf{T}^{x_0,j}_{b,int}$ contained in $B$.

\begin{lem}\label{lemma1 AGAIN}
For all $\eta>0$, there exist a subset $ Z' \subset B^\nu$ containing $Z$ with $\HH^1(Z') \leq \eta$, and a subsequence (not relabeled) such that the following property holds : for all $y \in (B_{1-\frac\eta2})^\nu \setminus Z'$ and all $j \in \N$, there exists a triangle $T=T(y,j) \in \mathbf{T}^{x_0,j}_{b,int}$ such that $( \mathring{T} \cap B )^\xi_y \neq \emptyset$.
\end{lem}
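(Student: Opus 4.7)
The plan is to adapt the argument of Lemma \ref{lemma1} to the present situation, where the contradiction is now driven not only by the possible jump of $\bar u$ at $x_0$ but, more crucially, by the divergence of $|r_j^+ - r_j^-|$ on sections through $B^\nu \setminus Z$. Indeed, contrary to the situation of Lemma \ref{lemma1}, the trace values $u^\pm(x_0)$ may coincide (since $\partial^*\mathcal P$ needs not be contained in $J_u$), and the role of the jump will be played by $(r_j^+ - r_j^-)_y^\xi$, whose absolute value diverges as $j \to \infty$ for every $y \in B^\nu \setminus N \supset B^\nu \setminus Z$ by \eqref{eq:diverging section r_j - r_i}.

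First, I would proceed as in Lemma \ref{lemma1}, with the goal of establishing the weaker statement that there exists an increasing map $\phi:\N \to \N$ such that for every $y \in (B_{1-\eta/2})^\nu \setminus Z$ and every $j \in \N$, at least one triangle $T \in \mathbf T^{x_0,\phi(j)}_{b,int}$ satisfies $(T \cap B)_y^\xi \neq \emptyset$. Choose $\gamma^* = \gamma^*(\eta) := L_\eta/(4 + 2 L_\eta) > 0$ and apply Lemma \ref{lemma afterSections AGAIN} to get a rank $j^* = j^*(\gamma^*)$ such that for every $y \in B^\nu \setminus Z$ and every $j \geq j^*$, the three integral estimates hold with constant $\gamma^*$, together with $|(r_j^+ - r_j^-)_y^\xi| \geq |[u](x_0)\cdot\xi| + 1$. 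Consider the extraction $\phi(j) := j + j^*$.

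If the claim failed, there would exist $y \in (B_{1-\eta/2})^\nu \setminus Z$ and $j \in \N$ such that $(\chi_{j+j^*})_y^\xi \equiv 0$ on $(a(y),b(y))$, so in particular
\begin{equation*}
\int_{a(y)}^{b(y)} |((v_{j+j^*})_y^\xi)'|^2 \, dt \leq (\gamma^*)^2.
\end{equation*}
Applying the mean value theorem (as in the scalar case) to the continuous truncations $1 \wedge |(v_{j+j^*} - r^\pm_{j+j^*})_y^\xi - u^\pm(x_0)\cdot\xi|$ on $[a(y),0]$ and $[0,b(y)]$, there exist $t^\pm \in [a(y),b(y)] \cap \R^\pm$ with $|t^\pm| \geq L_\eta$ such that
\begin{equation*}
1 \wedge |(v_{j+j^*})_y^\xi(t^\pm) - r^\pm_{j+j^*}(y)\cdot \xi - u^\pm(x_0)\cdot\xi| \leq \gamma^*/L_\eta,
\end{equation*}
where we used that $(r^\pm_j)_y^\xi$ is constant in $t$ (since $\nabla r_j^\pm \xi \cdot \xi = 0$). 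Combining these two estimates via the triangle inequality and using Cauchy--Schwarz for the derivative term yields
\begin{equation*}
1 \wedge \bigl| (r_{j+j^*}^+ - r_{j+j^*}^-)(y)\cdot\xi + [u](x_0)\cdot\xi \bigr| \leq 2 \gamma^*/L_\eta + 2\gamma^* \leq 1,
\end{equation*}
by our choice of $\gamma^*$. Consequently, $|(r_{j+j^*}^+ - r_{j+j^*}^-)_y^\xi| \leq 1 + |[u](x_0)\cdot\xi|$, which contradicts the last estimate of Lemma \ref{lemma afterSections AGAIN}.

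Finally, to upgrade from triangles merely satisfying $(T \cap B)^\xi_y \neq \emptyset$ to triangles with $(\mathring T \cap B)_y^\xi \neq \emptyset$, I would enlarge the exceptional set by the countable union (over $j$) of finite sets of points $y$ whose section through some $T \in \mathbf T^{x_0,j}_{b,int}$ is reduced to a vertex or contained in an edge of $T$, exactly as in the end of the proof of Lemma \ref{lemma1}. The resulting set $Z'$ has $\HH^1(Z') \leq \eta$ and satisfies the desired conclusion. The main (mild) obstacle compared to Lemma \ref{lemma1} is the need to incorporate the diverging piecewise rigid motions into the one-dimensional control: this is handled precisely by the last estimate of Lemma \ref{lemma afterSections AGAIN}, which guarantees that the variation forced across the two half-sections is arbitrarily large, even when $u^\pm(x_0)$ coincide.
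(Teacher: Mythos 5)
Your argument is correct and follows essentially the same route as the paper: choose $\gamma^*$ small in terms of $L_\eta$, apply Lemma \ref{lemma afterSections AGAIN} to get a rank $j^*$, pass to the shifted subsequence $\phi(j)=j+j^*$, and derive a contradiction on a putative section with no bad triangle by comparing the affine increment of $(v_{j+j^*})^\xi_y$ between the two half-sections (where the truncated distance to $u^\pm(x_0)\cdot\xi+(r^\pm_{j+j^*})^\xi_y$ is small by the $L^1$-average argument and Cauchy--Schwarz on the $H^1$ bound) against the divergence $|(r^+_{j+j^*}-r^-_{j+j^*})^\xi_y|\ge |[u](x_0)\cdot\xi|+1$, then handle the passage from $T\cap B$ to $\mathring T\cap B$ by deleting a countable union of finite exceptional sets. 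One slip worth tightening: the mean-value/average argument gives no control of the form $|t^\pm|\ge L_\eta$ (and you never use it), and your bound $2\gamma^*/L_\eta+2\gamma^*$ should be recorded as strictly less than $1$ (it equals $(1+L_\eta)/(2+L_\eta)<1$ for your choice of $\gamma^*$), so that the conclusion $|(r^+_{j+j^*}-r^-_{j+j^*})^\xi_y|<1+|[u](x_0)\cdot\xi|$ genuinely contradicts the lemma's lower bound.
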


\begin{proof}
Let $Z$ be the exceptional set given by Lemma \ref{lemma afterSections AGAIN}. We first show that there exists an increasing mapping $\phi : \N \to \N$ such that : for all $y \in (B_{1-\frac\eta2})^\nu \setminus Z$ and all $j \in \N$, there exists a triangle $T=T(y,\phi(j)) \in \mathbf{T}^{x_0,\phi(j)}_{b,int}$ such that $(T \cap B )^\xi_y \neq \emptyset$. Suppose by contradiction that such is not the case, and define
$$\gamma^*_1 := L_\eta  > 0, \quad \gamma^*_2 := \frac{ L_\eta }{1 + 2 L_\eta} > 0 \quad\text{and}\quad\gamma^* = \gamma^*(\eta) := \frac{ \gamma^*_1 \wedge \gamma^*_2}{4} > 0.$$
Thanks to Lemma \ref{lemma afterSections AGAIN}, there exists $j^* = j^*(\gamma^*) \in \N$ such that for all $y \in B^\nu \setminus Z$ and all $j \geq j^*$, $\left\lvert (r^+_j - r^-_j)^\xi_y \right\rvert \geq |[u](x_0)\cdot\xi|+1 $ and
$$ 
\ds \int_{B^\xi_y}  ( 1 - ( \chi_j ) ^\xi_y  )  |( (v_j)^\xi_y ) ' |^2  \, dt \leq {\gamma^*}^2, \quad \int_{B^\xi_y \cap \R^\pm}  1\wedge | (v_j - r^\pm_j )^\xi_y  - u^\pm(x_0)\cdot \xi| \, dt \leq \frac{\gamma^*}{2}. 	
$$
As in Lemma \ref{lemma1}, we consider the extraction $ \phi : j \in \N \longmapsto j + j^* \in \N$. By assumption, there exist $y \in (B_{1-\frac\eta2})^\nu\setminus Z$ and $ j \in \N$ such that $\left( T \cap B \right)^\xi_y = \emptyset$ for all $T \in \mathbf{T}^{x_0,j + j^*}_{b,int}$, implying that $(\chi_{j+j^*})_y^\xi \equiv 0$ on $(a(y),b(y))$, $\left\lvert (r^+_{j+j^*} - r^-_{j+j^*})^\xi_y \right\rvert \geq  |[u](x_0)\cdot\xi|+1$ and
$$ 
\ds  \int_{a(y)}^{b(y)}  |( (v_{j + j^*})^\xi_y ) '|^2  \, dt \leq {\gamma^*}^2,  \quad  \int_{[a(y),b(y)] \cap \R^\pm} 1 \wedge | (v_{j+j^*} - r^\pm_{j+j^*} )^\xi_y  - u^\pm(x_0)\cdot \xi| \, dt \leq \frac{\gamma^*}{2}, 	
$$
since $\phi(j) = j + j^* \geq j^*$. By continuity of $( v_{j+j^*} - r^\pm_{j+j^*} )^\xi_y$ on the compact sets $[a(y),b(y)] \cap \R^\pm$, there exist two points 
$$t^\pm \in  \underset{ [a(y),b(y)] \cap \R^\pm}{\rm arg \min} \left( 1 \wedge | \left( v_{j+j^*} - r^\pm_{j+j^*}\right)^\xi_y - u^\pm (x_0) \cdot \xi | \right) .$$
Hence, recalling \eqref{eq:L(eta)}
\begin{eqnarray*}
\frac{\gamma^*}{L_\eta}	& \geq &    1 \wedge | \left( v_{j+j^*} - r^-_{j+j^*} \right)^\xi_y (t^-) - u^- (x_0) \cdot \xi | +  1 \wedge | \left( v_{j+j^*} - r^+_{j+j^*} \right)^\xi_y (t^+) - u^+ (x_0) \cdot \xi | \\
			& \geq & 1 \wedge \left( \left\lvert [u](x_0)\cdot \xi + (r^+_{j+j^*} - r^-_{j+j^*})^\xi_y  \right\rvert \, - \, \left\lvert \int_{t^-}^{t^+} \left( (v_{j + j^*})^\xi_y \right) ' (t) \, dt  \right\rvert \right) \geq   1 - 2 \gamma^* ,
\end{eqnarray*}
which is impossible thanks to of our choice of $\gamma^*$. We conclude the proof of Lemma \ref{lemma1 AGAIN} in the same way as for Lemma \ref{lemma1}.
\end{proof}
As a consequence of Lemma \ref{lemma1 AGAIN}, introducing the family of triangles 
$$\mathscr{F}_j := \left\{ T \in \mathbf{T}^{x_0, j}_{b,int}: \; \text{there exists } y \in (B_{1-\frac\eta2})^\nu \text{ such that } ( \mathring{T} \cap B )^\xi_y \neq \emptyset \right\}$$
for all $j \in \N$, it is possible to obtain a too low lower bound, roughly speaking because Lemma \ref{lemma1 AGAIN} does not exhibit enough triangles in $\mathbf{T}^{x_0,j}_{b,int}$, as explained after \eqref{eq:Fj}. Therefore, we need to establish that many lines $B_y^\xi$ parallel to $\xi$ and passing through $B^\nu$ must actually intersect at least two triangles of the collection $\mathbf{T}^{x_0,j}_{b,int}$. To this aim, we show that the set of points $y \in B^\nu$ such that $B_y^\xi$ intersects exactly one triangle $T$ in the collection $\mathbf{T}^{x_0,j}_{b,int}$, has arbitrarily small $\HH^1$ measure. 

\begin{lem}\label{lemma3 AGAIN}
For all $\eta>0$, there exist constants $C_* = C_*(\eta) > 0$, $\gamma_* = \gamma_*(\eta) > 0$ and a subset $Z_* = Z_*(\eta) \subset B^\nu$ containing $Z'$ and satisfying $\HH^1(Z_*) \leq 3 \eta$ such that the following property holds: for all $0<\gamma<\gamma_*$, there exists $ j(\gamma) \in \N $ such that for all $ j \geq j(\gamma)$, the set 
$$Y_j := \left\{ y \in (B_{1-\frac\eta2})^\nu\setminus Z': \; \text{there exists a unique } T \in \mathbf{T}^{x_0, j}_{b,int} \text{ such that } ( \mathring{T} \cap B )^\xi_y \neq \emptyset \right\}$$
satisfies 
$$ \HH^1(Y_j \setminus Z_*) \leq C_* \gamma. $$
\end{lem}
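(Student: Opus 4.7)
The proof follows the three-step structure of the proof of Lemma \ref{lemma3}, with adaptations driven by the replacement of the step limit $\bar u^\xi_y$ by $y$-dependent reference values. Denoting $c_j^\pm(y) := (r_j^\pm)^\xi_y$, which is constant in $t$ because $r_j^\pm$ are rigid motions (so $\nabla r_j^\pm \xi \cdot \xi = 0$), the natural substitute for the jump quantity $[u](x_0)\cdot\xi$ is
$$
M_j(y) := [u](x_0)\cdot\xi + (r_j^+ - r_j^-)^\xi_y,
$$
which is affine in $y$ along $\Pi_\nu$, since $r_j^+ - r_j^-$ is a rigid motion, and which, by \eqref{eq:diverging section r_j - r_i}, satisfies $|M_j(y)| \to +\infty$ as $j \to \infty$ for $\HH^1$-a.e. $y \in B^\nu$.

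Step 1 of Lemma \ref{lemma3} adapts with the reference values $u^\pm(x_0)\cdot\xi$ replaced by $c_j^\pm(y) + u^\pm(x_0)\cdot\xi$: arguing by contradiction that $a_j(y), b_j(y) \to m \in [a(y), b(y)]$ along a subsequence, and invoking Lemma \ref{lemma afterSections AGAIN} together with the Sobolev--Poincar\'e argument of Step 1 of Lemma \ref{lemma3}, the truncated sections converge to constants, forcing $(v_j)_y^\xi$ to approach $c_j^-(y) + u^-(x_0)\cdot\xi$ on $(a(y), m - \tau)$ and $c_j^+(y) + u^+(x_0)\cdot\xi$ on $(m + \tau, b(y))$ for every $\tau > 0$. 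Continuity of $(v_j)_y^\xi$ at $m$ would then keep $|M_j(y)|$ bounded, contradicting its divergence; hence $m = 0$. Egoroff's theorem thus produces a set $Z^{(1)} \supset Z'$ with $\HH^1(Z^{(1)}) \leq 2 \eta$ off which $(|a_j| + |b_j|) \mathds{1}_{Y_j}$ converges to zero uniformly, along with the uniform controls of Lemma \ref{lemma afterSections AGAIN}. Step 2 of Lemma \ref{lemma3} then carries over with the same four-term decomposition, simply replacing $u^\pm(x_0) \cdot \xi$ by $c_j^\pm(y) + u^\pm(x_0)\cdot\xi$ at each occurrence, and yields
$$
\bigl| (v_j)_y^\xi(b_j(y)) - (v_j)_y^\xi(a_j(y)) - M_j(y) \bigr| \leq C_\eta \gamma
$$
for suitable constants $C_\eta > 0$ and $\gamma_* = \gamma_*(\eta) > 0$, every $0 < \gamma < \gamma_*$, $y \in Y_j \setminus Z^{(1)}$ and $j \geq j_1(\gamma)$.

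The main obstacle lies in Step 3, the geometric packing. Writing $\sigma_T := e(v_j)_{|T}\xi \cdot \xi$ for $T \in \widehat{\mathbf T}_j$, the estimate above rewrites as $|\sigma_T \LL^1(T_y^\xi) - M_j(y)| \leq C_\eta \gamma$, an approximate agreement between two piecewise affine functions of $y$: as $y$ varies in $p_\xi(T)$, the map $y \mapsto \sigma_T \LL^1(T_y^\xi)$ is continuous with at most two affine pieces, while $y \mapsto M_j(y)$ is affine on all of $\Pi_\nu$. On any piece where the two affine functions have distinct slopes, the Thal\`es-type bound \eqref{eq:Horizontal tube's width} shows that the admissible set of $y$ is an interval of length controlled by $C'_\eta \varrho_j \LL^2(T) \gamma / \e_{k_j}$, producing at most four tubes $\mathfrak T_i(T)$ per triangle in the spirit of \eqref{eq:Horizontal Tubes}. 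The new difficulty is the \emph{tangency} case, in which $\sigma_T \LL^1(T_y^\xi)$ and $M_j(y)$ coincide as affine functions on an entire piece of $p_\xi(T)$: I handle it by isolating the corresponding triangles on an additional exceptional set of $\HH^1$-measure at most $\eta$, exploiting that $|M_j(y)|$ diverges uniformly off an $\HH^1$-negligible set while the slope of $\sigma_T \LL^1(T_y^\xi)$ with respect to $y$ is bounded by $|\sigma_T|$, which is itself constrained by the total energy $\sum_{T \in \mathbf T^{x_0,j}_b} |\sigma_T|^2 \LL^2(T) \lesssim \varrho_j/\e_{k_j}$ coming from \eqref{eq:lambda_k greater than AGAIN}. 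Setting $Z_* := Z^{(1)} \cup (\text{tangency set})$ with $\HH^1(Z_*) \leq 3\eta$, summing the tube measures over $\widehat{\mathbf T}_j$, invoking the Lipschitz bound on $\Phi$ and \eqref{eq:lambda_k greater than AGAIN}--\eqref{seq:eps' AGAIN}, one obtains $\HH^1(Y_j \setminus Z_*) \leq C_* \gamma$ for some $C_* = C_*(\eta) > 0$, as required.
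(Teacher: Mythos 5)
Your Steps 1 and 2 adaptations are essentially correct: replacing $u^\pm(x_0)\cdot\xi$ by the rigid-motion-shifted constants and working with $M_j(y) := [u](x_0)\cdot\xi + (r_j^+-r_j^-)_y^\xi$ is exactly what the paper does, and the four-term decomposition of Step 2 carries over verbatim. However, there is a genuine gap in your Step 3, and it concerns the single new idea that makes the whole lemma work.

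You assert that ``$|M_j(y)|$ diverges uniformly off an $\HH^1$-negligible set''. This is not granted: what Lemma \ref{lemma afterSections AGAIN} provides, following \cite[Lemma 2.8]{CCminimization}, is only pointwise a.e.\ divergence, hence a \emph{uniform lower bound} $|M_j(y)|\geq 1$ off a small Egoroff set --- nothing about the rate. To get the quantitative divergence rate $|M_j(y)|\gtrsim \mu_j$ that is needed so that the error $O(\mu_j\gamma)$ from Step~2 and from the rigid motion normalisation is dominated, the paper writes $r_j^+-r_j^-=M_j\,{\rm id}+m_j$, sets $\mu_j:=|M_j^T\xi|+|m_j\cdot\xi|\to\infty$, and passes to a limit $\alpha\cdot y+\beta$ of $M_j(y)/\mu_j$. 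The crucial point is then a \emph{skew-symmetry argument}: because $M_j$ is skew, $\alpha\cdot\xi=0$, which together with $\nu\cdot\xi\geq\frac12$ prevents the affine zero set $\Delta=\{\alpha\cdot z+\beta=0\}$ from coinciding with $\Pi_\nu$. Hence $\Delta$ meets $B^\nu$ in at most one point $z_*$, and removing a small ball $B_{\eta/2}(z_*)$ gives the needed uniform bound $|\alpha\cdot y+\beta|\geq m_{\alpha,\beta}(\eta)>0$, i.e.\ $|M_j(y)|\gtrsim\mu_j m_{\alpha,\beta}$ on $B^\nu\setminus Z_*$. Without this argument, the reference length $L^{\rm ref}(T)$ may fail to be comparable to $\mu_j$, and the Thal\`es ratio is not of order $\gamma$.

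Your ``tangency case'' concern is a consequence of recasting the estimate as a comparison between two affine functions of $y$; the paper avoids this by comparing the actual section length $\LL^1(T^\xi_y)$ to a \emph{fixed} reference $L^{\rm ref}(T)$ computed at $y_T\in\Phi\circ p_\xi(T)$, and the Thal\`es bound \eqref{eq:Horizontal tube's width} holds unconditionally --- there is no case distinction. The variability of $M_j(y)$ across $p_\xi(T)$ is controlled because $\HH^1(\Phi\circ p_\xi(T))\leq 2\omega(\e_{k_j})/\varrho_j$ can be made $\leq\frac{m_{\alpha,\beta}}{8}\gamma$. Even within your framing, the resolution you sketch is incomplete: the tangency set is not a priori bounded in measure by $\eta$, since many triangles could contribute, and the claimed bound on the $y$-slope of $\sigma_T\LL^1(T^\xi_y)$ by $|\sigma_T|$ is off by a triangle-geometry factor; neither the energy bound nor the divergence of $|M_j(y)|$ obviously forces this set small without going through the $\mu_j$-normalisation and the one-point structure of $\Delta\cap\Pi_\nu$.
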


\begin{proof}[Proof of Lemma \ref{lemma3 AGAIN}] We follow the same three steps structuring the proof of Lemma \ref{lemma3}.

\medskip

{\bf Step 1.} We start by showing that for $j$ large enough and for many points $y \in Y_j$, the only triangle $T$ in $\mathbf{T}^{x_0,j}_{b,int}$ crossing $B_y^\xi$ is getting closer to the diameter $B^\nu$. 

\medskip

For all $j \in \N$ and all $y \in Y_j$, let $T_j(y)\in \mathbf{T}^{x_0, j}_{b,int}$ be the unique triangle such that $( \mathring{T}_j(y) \cap B)^\xi_y \neq \emptyset$. We keep the notation \eqref{eq:aj and bj} for the end points $a_j(y)$ and $ b_j(y)$ of the section in the direction $\xi$ passing through $y$ inside $T_j(y)$ (see the Figure \ref{fig:C}). Let us show that 
$$ \ds f_j(y) := \left( |a_j(y)| + |b_j(y)| \right) \mathds{1} _{Y_j}(y)\to  0 \quad \text{ for all }y \in (B_{1-\frac\eta2})^\nu \setminus Z'.$$

Let $y \in (B_{1-\frac\eta2})^\nu \setminus Z'$. Assume by contradiction that $\ell:= \limsup_j f_j(y)>0$ and extract a subsequence (depending on $y$, not relabeled) such that $f_j(y) \to \ell$. Then, there exists $j_0 \in \N$ such that $y \in Y_j$ for all $j \geq j_0$. Moreover, according to Lemma \ref{lemma afterSections AGAIN} and setting $I_j(y) := \left( a(y),b(y) \right) \setminus \left( a_j(y),b_j(y) \right) \subset B^\xi_y$, we infer that
\begin{equation}\label{eq:convwj AGAIN}
\ds \lvert b_j(y) - a_j(y) \rvert  \, + \,  \int_{I_j(y)} \lvert ( (v_j)^\xi_y )'  \rvert^2 \, dt \, + \, \int_{[a(y),b(y)] \cap \R^\pm} 1 \wedge | (v_j - r^\pm_j)^\xi_y - u^\pm(x_0) \cdot \xi | \, dt \to 0. 
\end{equation}
Up to another subsequence (still not relabeled), it ensures that $a_j(y) , b_j(y) \to m$ for some $m \in [a(y),b(y)]$. Thus, for all $\tau >0$, $ I_\tau := \left( a(y), m - \tau \right) \cup \left( m + \tau, b(y) \right) \subset I_j(y)$ for $j \in \N$ sufficiently large. In particular, we deduce that $m-\tau \leq 0$. Indeed, assuming that $m-\tau >0$, by continuity of $(v_j - r^+_j)^\xi_y$ on $\big( 0, m-\tau \big)$ and of $(v_j - r^-_j)^\xi_y$ on $\big( a(y) , 0 \big)$, there exist 
$$ t^+_j \in \underset{( 0, m-\tau )}{ \rm arg \min} \, 1 \wedge \lvert (v_j - r^+_j)^\xi_y - u^+(x_0) \cdot \xi \rvert \quad \text{and} \quad t^-_j \in \underset{(a(y),0 )}{ \rm arg \min} \, 1 \wedge \lvert (v_j - r^-_j)^\xi_y - u^-(x_0) \cdot \xi \rvert,$$
which satisfy 
\begin{multline*}
1 \wedge \lvert (v_j - r^+_j)^\xi_y(t^+_j) - u^+(x_0) \cdot \xi \rvert + 1 \wedge \lvert (v_j - r^-_j)^\xi_y(t^-_j) - u^-(x_0) \cdot \xi \rvert \\
\geq 1 \wedge \left( \left\lvert [u](x_0) \cdot \xi + (r^+_j - r^-_j)^\xi_y \right\rvert - 2 \sqrt{ \int_{I_j(y)} \lvert ( (v_j)^\xi_y )'  \rvert^2 \, dt}   \right) \to 1
\end{multline*}
according to \eqref{eq:convwj AGAIN} and \eqref{eq:diverging section r_j - r_i}. However, 
\begin{multline*}
1 \wedge \lvert (v_j - r^+_j)^\xi_y(t^+_j) - u^+(x_0) \cdot \xi \rvert + 1 \wedge \lvert (v_j - r^-_j)^\xi_y(t^-_j) - u^-(x_0) \cdot \xi \rvert \\
 \leq \frac{1}{L_\eta} \int_{a(y)}^{0} 1 \wedge | (v_j - r^-_j)^\xi_y - u^-(x_0) \cdot \xi | \, dt + \frac{1}{m-\tau} \int_{0}^{m-\tau} 1 \wedge | (v_j - r^+_j)^\xi_y - u^+(x_0) \cdot \xi | \, dt \to 0
\end{multline*}
according again to \eqref{eq:convwj AGAIN}, which leads to a contradiction. We similarly show that $m+ \tau \geq 0$, leading to $|m| \leq \tau$. Taking the limit as $\tau \to 0^+$, we obtain that $m=0$ which is against $\ell>0$.

\medskip

Therefore, owing to Lemma \ref{lemma afterSections AGAIN} and Egoroff's Theorem, we can find a set $Z_*^1 \subset B^\nu$ containing $Z'$ with  $\HH^1(Z_*^1) \leq 2 \eta$ such that for all $\gamma > 0$, there exists $j_0(\gamma) \in \N$ satisfying 
\begin{equation}\label{eq:betterAfterSections AGAIN}
\begin{cases}
\ds \int_{B^\xi_y}  ( 1 - ( \chi_j ) ^\xi_y  )  |( (v_j)^\xi_y ) ' |^2  \, dt \leq \gamma^2, \\
\ds \int_{B^\xi_y \cap \R^\pm} 1 \wedge | (v_j - r^\pm_j)^\xi_y - u^\pm(x_0) \cdot \xi | \, dt \leq \frac{\gamma}{2}, \\
\ds ( |a_j(y)| + |b_j(y)| ) \, \mathds{1}_{Y_j}(y) \leq \gamma
\end{cases}
\end{equation}
for all $y \in (B_{1-\frac\eta2})^\nu \setminus Z_*^1$ and all $j \geq j_0(\gamma)$.
\medskip

\textbf{Step 2.} Arguing in the same manner as for \eqref{eq:C*}, one can show that for many points $y \in Y_j$, the variation of $(v_j - r_j)_y^\xi$ inside the only triangle $T$ in $\mathbf{T}^{x_0,j}_{b,int}$ which is crossed by $B_y^\xi$, is close to that of $\bar u_y^\xi$. Precisely, setting the constants
$$C_\eta := 8 \left( 1  + \frac{1}{L_\eta} \right) > 0, \quad   \gamma_*=\gamma_*(\eta) := \frac12 \min \left(  1,\frac{1}{C_\eta}, L_\eta \right) > 0,$$
we get that for all $0 < \gamma < \gamma_*$, there exists $j_1(\gamma) \in \N$ such that for all $ j \geq j_1(\gamma) $ and all $y \in Y_j \setminus Z_*^1$,
\begin{equation}\label{eq:C* AGAIN}
\left\lvert  (v_j )^\xi_y(b_j(y)) - (v_j)_y^\xi(a_j(y)) - (r^+_j - r^-_j)^\xi_y - [u](x_0) \cdot \xi \right\rvert \leq C_\eta\gamma . 
\end{equation}

\medskip

\textbf{Step 3.} We now show that, after enlarging slightly the set $Z_*^1$ into a set $Z_* \subset B^\nu$ with $\HH^1(Z_*) \leq 3 \eta$, it is possible to include $Y_j \setminus Z_*$ inside a finite union of arbitrarily small segments contained in $B^\nu$ (see Figure \ref{fig:F}).

\medskip

By definition of rigid body motions, there exist skew symmetric matrices $M_j \in \mathbb M^{2 \times 2}_{\rm skew}$ and vectors $m_j \in \R^2$ such that 
$$ r^+_j - r^-_j = M_j \, {\rm id}_{\R^2} + m_j$$
for all $j \in \N$. Recalling \eqref{eq:diverging section r_j - r_i}, we get that for all $y \in B^\nu \setminus Z_*^1 \subset B^\nu \setminus N$,
$$ \left\lvert (r^+_j - r^-_j)^\xi_y \right\rvert = \left\lvert \big(M^T_j \xi \big) \cdot y + m_j \cdot \xi \right\rvert \to + \infty$$
as $j \to + \infty$. In particular, setting $\alpha_j := M^T_j \xi \in \R^2$ and $\beta_j := m_j \cdot \xi \in \R$, we get that $\mu_j := \lvert \alpha_j \rvert + \lvert \beta_j \rvert \to + \infty$ as $j \to +\infty$. Hence, up to a subsequence (depending only on $\xi$, not relabeled), there exist $\alpha \in \R^2$ and $\beta \in \R$ such that
$$ \frac{\alpha_j}{\mu_j} \to \alpha, \quad \frac{\beta_j}{\mu_j} \to \beta, \quad \text{and} \quad \lvert \alpha \rvert + \lvert \beta \rvert = 1.$$
In particular,
\begin{equation}\label{eq:CVU}
\frac{1}{\mu_j} \big( \alpha_j \cdot \rm id_{\R^2} + \beta_j \big) \to \alpha \cdot \rm id_{\R^2} + \beta \text{ uniformly on $B$}.
\end{equation}
Notice that the affine line $\Delta := \{ z \in \R^2 : \, \alpha \cdot z + \beta = 0 \}$ cannot coincide with $\Pi_\nu$.  Indeed, if such would be the case, it would entail that $\beta = 0$ and $\alpha = \pm \nu \in \mathbb{S}^1$. Yet, $M_j \in \mathbb M^{2 \times 2}_{\rm skew}$ being skew symmetric, we would obtain that 
$$0=\frac{M_j \xi \cdot \xi}{\mu_j}=\frac{\alpha_j}{\mu_j} \cdot \xi\to \alpha \cdot \xi = \pm \nu \cdot \xi$$
which is against our choice \eqref{eq:xi AGAIN} of $\xi \in \mathbb S^1 \cap D$. As a consequence $\Delta$ intersects $B^\nu$ in at most one point $z_*$.

\medskip

If $\Delta \cap \Pi_\nu = \{ z_* \}$, we define $Z_*^2 := B^\nu \cap B_{\frac{\eta}{2}}(z_*) $ while if $\Delta \cap \Pi_\nu = \emptyset$, we define $Z_*^2 = \emptyset$. The continuity of $\alpha \cdot \rm id_{\R^2} + \beta$ on the compact set $B^\nu \setminus Z_*^2 $ entails that 
$$ 0 < m_{\alpha,\beta}(\eta) := \min \{ \left\lvert \alpha \cdot y + \beta \right\rvert : \, y \in B^\nu \setminus Z_*^2 \}.$$
Set $Z_* := Z_*^1 \cup Z_*^2 \subset B^\nu$, which satisfies $\HH^1(Z_*) \leq 3 \eta$, and for all $j \in \N$, we define
$$\widehat{\mathbf T}_j:=\{T \in \mathbf{T}^{x_0, j}_{b,int} \, : \, \text{ there exists $y \in Y_j \setminus Z_*$ such that $(\mathring T \cap B)_y^\xi\neq \emptyset$}\}.
$$
Thus, for all $j \in \N$ and for each triangle $T \in \widehat{\mathbf T}_j$, there exists a point $y_T \in \Phi \circ p_\xi (\mathring T) \setminus Z_*^2  \subset B^\nu \setminus Z_*^2$ which satisfies
$$\left\lvert \alpha \cdot y_T + \beta \right\rvert \geq m_{\alpha,\beta}(\eta) > 0,$$
with $\Phi$ introduced in \eqref{eq:Phi}.

\medskip

Remembering that $ \omega(\e_{k_j}) / \varrho_j \to 0$ and that the Lipschitz constant of $\Phi$ is less than $\sqrt{1 + 4 \eta^2} \leq 2$ for $\eta$ small enough, together with the uniform convergence \eqref{eq:CVU} and \eqref{eq:C* AGAIN}, it follows that for all $ \gamma > 0$, there exists $ j_2(\gamma)  \geq j_1(\gamma)$ such that for all $ j \geq j_2(\gamma) $, 
\begin{subequations}
	\begin{empheq}[left=\empheqlbrace]{align}
	& \left\lvert \frac{1}{\mu_j} (r^+_j - r^-_j)^\xi_y - \big( \alpha \cdot y + \beta \big) \right\rvert \leq \frac{m_{\alpha,\beta}}{8} \gamma \quad \text{ for all }  y \in B^\nu \setminus Z_*, \label{seq:CVU to infty} \\
	& \HH^1(\Phi \circ p_\xi(T) ) \leq 2 \omega(\e_{k_j}) / \varrho_j \leq \frac{m_{\alpha,\beta}}{8} \gamma \quad \text{ for all } T \in \mathbf T^{x_0,j}.\label{seq:y near y_T}
	\end{empheq}
\end{subequations}
Therefore, for all $j \geq j_2(\gamma)$ and all $T \in \widehat{\mathbf T}_j$, we introduce the following quantities :
\begin{equation}\label{eq:Lref Lmax AGAIN}
 \begin{cases}
\ds L^{\rm ref}(T) := \frac{ \left\lvert [u](x_0) \cdot \xi + \mu_j \big( \alpha \cdot y_T + \beta \big) \right\rvert - \big( C_\eta + \frac{m_{\alpha,\beta}}{2} \mu_j  \big)  \gamma }{\left\lvert e(v_j)_{|T} : (\xi \otimes \xi) \right\rvert} \text{ the reference length of } T, \\
\ds L^{\rm max}(T) := \underset{ z \in p_\xi (T) }{\max} \, \LL^1(T^\xi_z) \text{ the maximal section's length of } T \text{ along the direction } \xi.
\end{cases}
\end{equation}
Note that $L^{\rm ref}(T)$ is well defined (since  $ |e(v_j)_{|T} \xi \cdot \xi |^2 \geq \kappa\varrho_j^2 / (\alpha \e_{k_j})> 0$ as $T \in \mathbf{T}^{x_0, j}_b$) and positive for $j$ large enough since $\mu_j \to + \infty$ and $ \lvert \alpha \cdot y_T + \beta \rvert >  m_{\alpha,\beta} \, \gamma / 2 > 0$. Moreover, we have $L^{\rm max}(T) > L^{\rm ref}(T)$. Indeed, if such would not be the case, denoting by $y \in Y_j \setminus Z_*$ a point such that $(\mathring T \cap B)_y^\xi\neq \emptyset$, then $ \LL^1(T^\xi_{p_\xi(y)}) = \LL^1(T^\xi_y) = b_j(y) - a_j(y) \leq L^{\rm max}(T) \leq L^{\rm ref}(T) $, entailing that 
\begin{multline*}
\lvert (v_j)^\xi_y (b_j(y)) -  (v_j)_y^\xi(a_j(y)) \rvert = \left\lvert e(v_j)_{|T} : \left( \xi \otimes \xi \right) \right\rvert \left( b_j(y) - a_j(y) \right) \\
\leq \left\lvert [u](x_0) \cdot \xi + \mu_j \big( \alpha \cdot y_T + \beta \big) \right\rvert - \big( C_\eta + \frac{m_{\alpha,\beta}}{2}  \mu_j \big)  \gamma,
\end{multline*}
by definition \eqref{eq:Lref Lmax AGAIN} of $L^{\rm ref}(T)$. Therefore, we would obtain that 
$$ 
\begin{aligned}
C_\eta \gamma + \frac{m_{\alpha,\beta}}{2} \mu_j \gamma  \leq & \left\lvert [u](x_0) \cdot \xi + \mu_j \big( \alpha \cdot y_T + \beta \big)  \right\rvert - \left\lvert (v_j)^\xi_y (b_j(y)) -  (v_j)_y^\xi(a_j(y))  \right\rvert  \\
 \leq & \left\lvert  (v_j)^\xi_y (b_j(y)) -  (v_j)_y^\xi(a_j(y)) - [u](x_0)\cdot \xi - (r^+_j - r^-_j)^\xi_y  \right\rvert \\
& \,  + \, \left\lvert (r^+_j - r^-_j)^\xi_y - \mu_j \big( \alpha \cdot y + \beta \big)  \right\rvert \, + \, \left\lvert  \mu_j  \alpha \cdot ( y - y_T ) \right\rvert \\
 \leq & C_\eta \gamma + \frac{m_{\alpha,\beta}}{8} \mu_j \gamma + \frac{m_{\alpha,\beta}}{8} \mu_j \gamma,
\end{aligned}
$$
where we used \eqref{eq:C* AGAIN}, \eqref{seq:CVU to infty} and \eqref{seq:y near y_T} (since $y,y_T \in \Phi \circ p_\xi (T)$), leading to a contradiction.

\medskip

Therefore, arguing as in the proof of Lemma \ref{lemma3}, there are exactly one or two points $z^1_{\rm ref}$, $z^2_{\rm ref} \in p_\xi(T)$, only depending on $j$ and $T$, such that $\LL^1(T^\xi_{z^1_{\rm ref}}) =\LL^1(T^\xi_{z^2_{\rm ref}}) = L^{\rm ref}(T).$ Then, as in \eqref{eq:Horizontal Tubes}, we introduce the following segments (orthogonal to $\xi$) associated to $T$ (see Figure \ref{fig:F}),
\begin{equation*}
\mathfrak{T}_i(T) := \left\{ z \in \Pi_\xi: \; \left\lvert z - z^i_{\rm ref} \right\rvert \leq C'_\eta \, \frac{  \varrho_j \LL^2(T) }{\e_{k_j} } \, \gamma \right\} \quad \text{ for } i \in \{ 1,2 \},
\end{equation*}
where the constant $C'_\eta$, only depending on $\eta$, now changes into
$$C'_\eta:= \frac{8}{\sin \theta_0  } \left( \frac{2 C_\eta}{m_{\alpha,\beta}} + 1 \right).$$
For every $j \geq j_2(\gamma)$ and every $y \in Y_j \setminus Z_*$, let $T \in  \mathbf{T}^{x_0, j}_{b,int}$ be such that $(\mathring T \cap B)_y^\xi\neq \emptyset$. In particular, note that $T \in \widehat{\mathbf T}_j$. Arguing in the same way as in the proof of Lemma \ref{lemma3}, we get that there exists $i \in \{1,2\}$ such that
\begin{eqnarray*}
&&\hspace{-0.5cm}\lvert p_\xi(y) - z^i_{\rm ref} \rvert\\
&  & \leq \frac{2 \LL^2(T)}{h_T} \, \frac{  \left\lvert \left( b_j(y) - a_j(y) \right) - L^{\rm ref}(T) \right\rvert  }{L^{\rm ref}(T) } \\ 
& &= \frac{2 \LL^2(T)}{h_T} \, \frac{  \left\lvert   | (v_j)^\xi_y (b_j(y)) - (v_j)_y^\xi(a_j(y))| - \left\lvert  [u](x_0) \cdot \xi + \mu_j (\alpha \cdot y_T + \beta) \right\rvert + \big( C_\eta + \frac{m_{\alpha,\beta}}{2} \mu_j  \big)  \gamma  \right\rvert  }{ \left\lvert [u](x_0) \cdot \xi + \mu_j \big( \alpha \cdot y_T + \beta \big) \right\rvert - \big( C_\eta + \frac{m_{\alpha,\beta}}{2} \mu_j  \big)  \gamma  } \\
&& \leq  \frac{2 \LL^2(T)}{h_T} \, \frac{  C_\eta \gamma + \left\lvert ( r^+_j - r^-_j)^\xi_y - \mu_j (\alpha \cdot y_T + \beta) \right\rvert + \big( C_\eta + \frac{m_{\alpha,\beta}}{2} \mu_j  \big)  \gamma  }{ \mu_j \, m_{\alpha,\beta} /4  } \\
&& \leq \frac{2 \LL^2(T)}{h_T} \, \frac{ \big( 2 C_\eta + m_{\alpha,\beta} \mu_j \big) \gamma}{ \mu_j \, m_{\alpha,\beta} / 4} \leq C_\eta' \, \frac{  \varrho_j \LL^2(T) }{\e_{k_j} } \, \gamma,
\end{eqnarray*}
where we used \eqref{eq:C* AGAIN}, \eqref{seq:CVU to infty}, \eqref{seq:y near y_T} and the fact that 
$$  \left\lvert [u](x_0) \cdot \xi + \mu_j \big( \alpha \cdot y_T + \beta \big) \right\rvert - \left( C_\eta + \frac{\mu_j\,  m_{\alpha,\beta}}{2}  \right)  \gamma  \geq \frac{\mu_j \, m_{\alpha,\beta}}{4}$$
 up to enlarging $j_2(\gamma) \in \N$. As in the proof of Lemma \ref{lemma3}, we deduce that for all $j \geq j_2(\gamma)$,
$$
\HH^1 ( Y_j \setminus Z_* )  \leq \sum_{ T \in \widehat{\mathbf T}_j } \HH^1( \Phi \left( \mathfrak{T}_1(T) \cup \mathfrak{T}_2(T) \right) ) \leq 8 C'_\eta \, \gamma \,  \frac{\varrho_j}{ \e_{k_j}}\sum_{ T \in \widehat{\mathbf T}_j } \LL^2(T)  \leq  \frac{8 C'_\eta\gamma}{\kappa }  \frac{\kappa\varrho_j}{\e_{k_j}} \int_{B} \chi_j \, dx.$$
Recalling \eqref{eq:lambda_k greater than AGAIN} and possibly taking a larger $j_2(\gamma) \in \N$, we finally get that for all $j \geq j_2(\gamma)$,
$$ 
\HH^1 ( Y_j \setminus Z_* ) \leq \frac{ 8 C'_\eta}{\kappa}  \, \left(2 \frac{d\lambda}{d\HH^1\res \partial^* \mathcal{P}}(x_0) + 1\right) \gamma =: C_* \gamma,
$$
for some constant $C_*>0$ only depending on $\eta$, which settles Lemma \ref{lemma3 AGAIN}.
\end{proof}

Arguing exactly as in the proof of Lemma \ref{lemma2}, having Lemma \ref{lemma3 AGAIN}  at hand, we deduce the following result.

\begin{lem}\label{lemma2 AGAIN}
For all $\eta>0$, there exist $ Z'' \subset B^\nu$ containing $Z'$ with $\HH^1(Z'') \leq 4 \eta$, and a (not relabeled) subsequence such that for all $j \in \N$ and for all $y \in(B_{1-\frac\eta2})^\nu\setminus Z''$,
$$ \# \left\{ T \in \mathbf{T}^{x_0, j}_{b,int}: \; ( \mathring{T} \cap B )^\xi_y \neq \emptyset \right\} \geq 2.$$
\end{lem}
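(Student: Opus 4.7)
My plan is to mirror the argument that deduced Lemma \ref{lemma2} from Lemma \ref{lemma3}, using Lemma \ref{lemma3 AGAIN} in place of Lemma \ref{lemma3} and Lemma \ref{lemma1 AGAIN} in place of Lemma \ref{lemma1}. The whole point of Lemma \ref{lemma3 AGAIN} is that for each admissible $\gamma>0$ one can control, along a subsequence, the $\HH^1$-measure of the set $Y_j$ of ``bad'' points $y$ whose section $B_y^\xi$ meets exactly one triangle of $\mathbf{T}^{x_0,j}_{b,int}$. To upgrade this into a statement valid for all $j$ simultaneously, I will perform a diagonal extraction where the tolerance $\gamma$ shrinks geometrically with $j$, so that the total contribution of all the exceptional sets $Y_{\phi(j)}$ summed over $j$ remains of order $\eta$.

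More concretely, let $C_*=C_*(\eta)$ and $\gamma_*=\gamma_*(\eta)$ be the constants furnished by Lemma \ref{lemma3 AGAIN}, and pick $j_0=j_0(\eta)\in\N$, $j_0\geq 1$, so that $\gamma_j:=\eta/(2^j C_*)<\gamma_*$ for all $j\geq j_0$. For each such $j$, Lemma \ref{lemma3 AGAIN} provides a rank $i(\gamma_j)\in\N$ with $\HH^1(Y_i\setminus Z_*)\leq C_*\gamma_j=\eta/2^j$ for all $i\geq i(\gamma_j)$. Define the strictly increasing extraction recursively, in the spirit of \eqref{eq:RecursivePhi}, by $\phi(j_0):=i(\gamma_{j_0})$ and $\phi(j+1):=\max(\phi(j)+1,i(\gamma_{j+1}))$, and set
$$ Z'' \; := \; Z_* \,\cup\, \bigcup_{j=j_0}^{+\infty} Y_{\phi(j)}.$$
Since $\HH^1(Z_*)\leq 3\eta$ by Lemma \ref{lemma3 AGAIN} and $\sum_{j\geq j_0}\eta/2^j\leq \eta$ (as $j_0\geq 1$), this gives $\HH^1(Z'')\leq 3\eta+\eta=4\eta$, which is the required measure bound and contains $Z'$ since $Z_*\supset Z'$.

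It then remains to verify the counting property along the subsequence $\{\phi(j)\}_{j\geq j_0}$. For any $y\in (B_{1-\eta/2})^\nu\setminus Z''$ and any $j\geq j_0$, Lemma \ref{lemma1 AGAIN} (applied, as in the original argument, to the further subsequence that was used to extract $\phi$) guarantees the existence of at least one triangle $T\in\mathbf{T}^{x_0,\phi(j)}_{b,int}$ with $(\mathring T\cap B)_y^\xi\neq\emptyset$. By construction of $Z''$, the point $y$ does not belong to $Y_{\phi(j)}$, so the single-triangle alternative is excluded; hence at least two distinct triangles of $\mathbf{T}^{x_0,\phi(j)}_{b,int}$ must meet $B_y^\xi$ on their interior, which is exactly the conclusion sought. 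The only real obstacle is bookkeeping: one has to make sure that the subsequences produced by Lemma \ref{lemma3 AGAIN} (depending on $\gamma_j$) are compatible with the subsequence already selected in Lemma \ref{lemma1 AGAIN}, but this is handled, as in the proof of Lemma \ref{lemma2}, by a single diagonal extraction, after which the subsequence is not relabeled.
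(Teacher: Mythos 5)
Your proposal is correct and matches the paper's own treatment: the paper explicitly states that Lemma \ref{lemma2 AGAIN} follows ``arguing exactly as in the proof of Lemma \ref{lemma2}, having Lemma \ref{lemma3 AGAIN} at hand,'' which is precisely the diagonal extraction with $\gamma_j=\eta/(2^jC_*)$ that you carry out, with the bookkeeping $\HH^1(Z'')\leq 3\eta+\eta=4\eta$ coming from the larger bound $\HH^1(Z_*)\leq 3\eta$ in Lemma \ref{lemma3 AGAIN}.
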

\noindent
Finally, owing to Lemma \ref{lemma2 AGAIN}, the proof of Proposition \ref{prop Jump part in essential bdry P} is identical to that of Proposition \ref{prop Jump part}.

\bigskip

In the following result, we prove the existence of minimizers of the discrete brittle damage energy $\mathcal G_\e$ on $V^{\rm Dir}_\e(\O')$.

\begin{lem}\label{rem:existence_Minimizers}
Assume that $\O$ and $\O'$ are connected. For $\e >0$ sufficiently small, there exists a minimizer $u_\e  \in V^{\rm Dir}_{\e}(\O')$ of $\G_\e$.
\end{lem}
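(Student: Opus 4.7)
The plan is to apply the direct method of the calculus of variations, exploiting that at fixed scale $\e$ the minimization is essentially finite-dimensional, modulo a subtle compactness issue for the vertex values. First I would verify that the admissible class is non-empty with finite infimum: for $\e>0$ sufficiently small, \cite[Appendix A]{CDM} produces an admissible triangulation $\mathbf T_\e^0\in\mathcal T_\e(\O')$, and $u_0:=w_{\mathbf T_\e^0}\in V_\e^{\rm Dir}(\O')$ satisfies $\G_\e(u_0)<\infty$ thanks to \eqref{eq:w_eps}. Set $m_\e:=\inf_{V_\e^{\rm Dir}(\O')}\G_\e\in[0,\G_\e(u_0)]$, and pick a minimizing sequence $\{u_n\}\subset V_\e^{\rm Dir}(\O')$ with associated triangulations $\mathbf T_n$.

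Since $\O'$ is bounded and each triangle $T\in\mathbf T_n$ has area at least $\e^2\sin\theta_0/2$, the number of triangles of $\mathbf T_n$ is uniformly bounded by a constant $N_\e$ depending only on $\e$, $\omega$, $\theta_0$ and $\O'$. Up to a subsequence, the combinatorial type of $\mathbf T_n$ may be assumed constant and each vertex position $x_k^n$ converges in $\overline{\O'}$ to some $x_k$; by closedness of the admissibility conditions on edge lengths and angles, the limit triangulation $\mathbf T_\infty$ still belongs to $\mathcal T_\e(\O')$. At each ``boundary vertex'' (one belonging to a triangle of $\mathbf T_n$ that meets $\O'\setminus\overline{\O}$), the Dirichlet constraint forces $u_n(x_k^n)=w(x_k^n)\to w(x_k)$ by continuity of $w$.

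The main obstacle is to control the values at the remaining interior vertices. The truncated integrand $(\kappa/\e)\wedge(\mathbf A e(u):e(u))$ saturates on a triangle $T$ as soon as $|e(u_n)_{|T}|\geq\sqrt{\kappa/(\alpha\e)}$, so inflating the vertex values of $u_n$ on such triangles does not affect the energy. To rule out a minimizing sequence that escapes to infinity in this way I would run a saturation argument: if $|u_n(x_k^n)|\to\infty$ for some interior vertex $x_k$, pass to a subsequence along which $u_n(x_k^n)=R_n\hat v_k+r_n^k$ with $R_n\to\infty$, $\hat v_k\in\mathbb S^1$ fixed and $r_n^k$ bounded, and define $\tilde u_n:=u_n-R_n\phi_k$ where $\phi_k$ is the continuous piecewise affine function on $\mathbf T_n$ taking the value $\hat v_k$ at $x_k$ (and at every other interior vertex whose value diverges in the same direction at the same rate) and $0$ at all remaining vertices. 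On every triangle $T$ whose gradient was driven to saturation by the eliminated component, the modified gradient is still above threshold for $n$ large, so the contribution stays at its saturation level $\kappa\LL^2(T)/\e$ or decreases to something not exceeding it; on all other triangles the gradient is altered only by a bounded amount, so the contribution changes by $o(1)$. Hence $\G_\e(\tilde u_n)\leq\G_\e(u_n)+o(1)$, and since the number of interior vertices is finite an iterative extraction handling successively all diverging directions and rates produces a new minimizing sequence whose interior vertex values are uniformly bounded.

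With bounded interior vertex values, a further extraction gives limits $v_k^*\in\R^2$; the continuous piecewise affine function $u^*$ on $\mathbf T_\infty$ assigning the values $w(x_k)$ at boundary vertices and $v_k^*$ at interior ones belongs to $V_\e^{\rm Dir}(\O')$. The map sending a pair (vertex positions, vertex values) to $\G_\e$ of the associated piecewise affine function is jointly continuous—the truncated integrand $t\mapsto(\kappa/\e)\wedge t$ being continuous and bounded, and the polygonal subdomains having Lebesgue measure depending continuously on the vertex positions—so $\G_\e(u^*)=\lim_n\G_\e(u_n)=m_\e$, which shows that $u^*$ is the desired minimizer. The main technical difficulty lies precisely in the saturation step of the third paragraph: making it fully rigorous when several interior vertices diverge simultaneously along possibly linearly dependent directions and at different rates, which is exactly the role of the iterative extraction described there.
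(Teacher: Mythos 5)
Your proposal starts along the same lines as the paper (finitely many combinatorial types, Hausdorff convergence of the triangulation, convergence of the saturation pattern), but the ``saturation argument'' in your third paragraph has a genuine gap, and it is precisely the point where the linearized-elasticity structure of the problem bites.

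The hidden assumption in your construction is that whenever an interior vertex value $u_n(x_k^n)$ diverges, every triangle incident to $x_k^n$ is eventually saturated, so that subtracting $R_n\phi_k$ can only lower the energy on those triangles. That would be true if the energy controlled the full gradient $\nabla u_n$ (as in the scalar Mumford--Shah case of \cite{CDM}), because a diverging vertex value with bounded neighbours forces $|\nabla u_n|$ to blow up. Here the integrand only sees $e(u_n)$. Consider a cluster of mutually adjacent \emph{unsaturated} triangles, disjoint from $\O'\setminus\overline\O$, on which $u_n$ drifts by an infinitesimal rotation: $u_n(x)\approx c_n\, M(x-x_0)+a_n$ with $M$ skew-symmetric and $c_n\to\infty$. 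Then $e(u_n)\equiv 0$ there, so none of those triangles is saturated, yet every vertex value in the cluster diverges --- in different directions $\hat v_k\propto M(x_k-x_0)$ and at different rates $c_n|x_k-x_0|$. Your $\phi_k$, being constant ($=\hat v_k$) on a group of vertices and piecewise affine in between, is not a rigid body motion; subtracting $R_n\phi_k$ therefore injects a nonzero symmetric gradient of size $\sim R_n/\e\to\infty$ into triangles that were previously unsaturated, and $\G_\e(\tilde u_n)$ blows up rather than staying within $o(1)$ of $\G_\e(u_n)$. The iterative extraction cannot fix this: at each step you only peel off a piecewise-constant translational piece, never a rotation, so the rotational drift survives and keeps destroying the estimate.

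The paper avoids this issue by a different decomposition. After passing to a limiting triangulation $\mathbf T$ and a limiting saturation pattern $\chi$, it identifies the connected component $\omega$ of the unsaturated region containing $\O'\setminus\overline\O$; on $\omega$ the Dirichlet anchor plus the Poincar\'e--Korn inequality give strong $H^1_{\rm loc}$ compactness of $u_n$ itself (no rigid-motion ambiguity). On all other triangles --- saturated ones, and unsaturated components disconnected from the boundary --- the minimiser is simply defined to be $0$ (interpolated continuously), which is legitimate because the energy there is either fixed at the saturation level $\kappa\LL^2(T)/\e$ independently of the displacement, or is made $0$ by the choice $e(u)=0$. This ``forget the divergent part'' device is exactly what is needed to neutralise the rigid body motions that your construction cannot subtract off. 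If you wish to keep the spirit of your approach, the fix would be to subtract a piecewise \emph{rigid} motion (affine with skew gradient) adapted to each connected unsaturated component away from $\O'\setminus\overline\O$, rather than a piecewise affine function built from fixed unit vectors at vertices, and to use the boundary-connected component together with Poincar\'e--Korn to anchor the rest; at that point you are essentially reproducing the argument of the paper.
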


\begin{proof}
Let $\e_0:= \kappa  / ( \beta \norme{\nabla w}^2_{L^\infty(\R^2;\mathbb M^{2\times 2})} )$ and fix $\e<\e_0$. Since $\G_\e(w_{\mathbf T_\e}) < + \infty$, we can consider a minimizing sequence $\{ u_n \}_{n\in \N} \subset V^{\rm Dir}_\e (\O')$ satisfying 
\begin{equation}\label{eq:minseq}
\lim_{n\to \infty} \G_\e(u_n) = \inf_{L^0(\O;\R^2)} \G_\e \in [0,+\infty).
\end{equation}
By definition of the finite element space $V^{\rm Dir}_{\e}(\O')$, there exists a triangulation $\mathbf T^n \in \mathcal T_{\e}(\O')$ such that $u_n$ is affine on each $T \in \mathbf T^n$ and $u_n = w_{\mathbf T^n}$ on every triangle $T \in \mathbf T^n$ such that $T \cap ( \O' \setminus \bar \O) \neq \emptyset$.

\medskip

Let $\O''$ be a bounded open set such that $\O' \subset \subset \O''$ and $\bigcup_{ T \in \mathbf T_n} T \subset\O''$ for all $n \in \N$. Since, for all $T \in \mathbf T^n$, $\LL^2(T) \geq \e^2\sin\theta_0/2$, it is easily seen that
$$\#\mathbf T^n \leq \frac{2 \LL^2(\O'')}{\e^2\sin\theta_0}.$$
As a consequence, the sequence of integers $\{ \# \mathbf T^n  \}_{n \in \N}$ admits a subsequence converging as $n \to +\infty$ to an integer $N \in \N$. We can thus assume, without loss of generality, that 
$$\# \mathbf T^n =N \quad \text{ for all }n \in \N.$$
We write $\mathbf T^n=\{ T^n_1,\ldots, T^n_{N} \} $ for all $n \in \N$. Up to a subsequence, we can check that for all $i \in \{ 1,\ldots, N \}$, the closed triangle $T^n_i$ converges to a closed limit triangle $T_i$ in the sense of Hausdorff, with the property that the limit triangulation $\mathbf T:=\{ T_1,\ldots,T_{N} \} \in \mathcal T_\e(\O')$ remains an admissible triangulation of $\O'$. 

\medskip

Introducing the characteristic functions $ \chi_n := \mathds{1}_{\left\{\e \mathbf A  e(u_n):e(u_n) \geq \kappa \right\}} \in L^\infty(\O'; \lbrace0,1 \rbrace),$ we can write the energy as
\begin{equation}\label{energychi_n}
\G_\e(u_n) = \int_\O  (1-\chi_n) \mathbf A e(u_n):e(u_n) \, dx + \frac{\kappa}{ \e} \int_\O \chi_n \, dx.
\end{equation}

First, by definition of $\e_0$ and since $\e<\e_0$, we have that $\chi_n=0$ in $\O' \setminus \bar \O$ for all $n \in \N$, since
$$\mathbf A e(u_n):e(u_n)=\mathbf A e(w_{\mathbf T_n}):e(w_{\mathbf T_n}) \leq \beta |e(w_{\mathbf T_n})|^2 \leq \beta |\nabla w_{\mathbf T_n}|^2 \leq \beta \|\nabla w\|^2_{L^\infty(\R^2;\mathbb M^{2 \times 2})} < \frac{\kappa}{\e}$$
on that set. Being constant equal to $1$ or $0$ on each triangle of $\mathbf T^n$, $\chi_n$ can be identified with a vector $V_n \in \{ 0,1 \} ^{N}$. Hence, up to a subsequence, there exists $V \in \{ 0,1 \}^{N}$ such that $V_n \to V$ in $\R^{N}.$ In particular,  there exists $n_0 \in \N$ such that $V_n = V$ for all $n \geq n_0$. Up to reordering the triangles, we can thus find a integer $0 \leq M < N$ such that
$$\{\chi_n=1\}=\bigcup_{i=1}^{M} T_i^n, \quad \{\chi_n=0\}=\bigcup_{i=M+1}^{N}T_i^n \quad \text{ for all }n \geq n_0.$$
By the Hausdorff convergence property, we infer that 
\begin{equation}\label{eq:1540}
\chi_n \to \chi:=\mathds{1}_{\bigcup_{i=1}^{M} T_i} \quad \text{strongly in }L^1(\O').
\end{equation}

We next show some compactness on the sequence of displacements $\{u_n\}_{n \in \N}$, carefully overcoming the lack of control on $\{ \chi_n e(u_n) \}_n$ in $L^2(\O';\mathbb M^{2 \times 2}_{\rm sym})$.
Remembering that $(1-\chi_n)| e(u_n)|^2 \leq \kappa / (\alpha \e)$ for all $n \geq n_0$ and that the sequence $\{(1-\chi_n) e(u_n) \}_{n \in \N}$ lives in the finite dimensional space $(\mathbb M^{2 \times 2})^N$, up to a new subsequence (not relabeled), there exists a function $\xi \in L^\infty(\O'; \mathbb M^{2 \times 2}_{\rm sym})$ which is constant on each triangle $T \in \mathbf T$ such that
\begin{equation}\label{eq:CV linearized strain}
 (1 - \chi_n) e(u_n) \to \xi \text{ strongly in } L^2(\O';\mathbb M^{2 \times 2}_{\rm sym}),
\end{equation}
and $\xi=0$ on $\bigcup_{i=1}^{M} T_i$. Let us define the set
$$ \omega_0 := \bigcup_{i=M+1}^{N}T_i.$$
Note that $\O' \setminus \bar \O \subset  \omega_0$. Indeed, if $x \in \O' \setminus \bar \O$, then for all $n \geq n_0$, there exists  $M+1\leq i_n \leq N$ such that $x \in T_{i_n}^n$. At the expense of extracting a further subsequence, there is no loss of generality to assume that $i_n=i$ is independent of $n$. By the Hausdorff convergence of $T_i^n$ to $T_i$, we infer that $x \in T_i \subset \omega_0$. By connectedness of $\O' \setminus \bar \O \subset \omega_0$, we can consider $\omega$ the connected component of $\omega_0$ containing $\O' \setminus \bar \O$. Let $M \leq K< N$ be such that $\omega=\bigcup_{i=K+1}^N T_i$, up to reordering the triangles again.

Observe that for all $T \in \mathbf T$ such that $T \cap (\O' \setminus \bar \O) \neq \emptyset$, then $T \in \{T_{K+1},\ldots, T_N \}$. Thus, for all $T \in \{ T_1,\ldots, T_{K} \}$, $T \cap (\O' \setminus \bar \O) = \emptyset$ so that $T \subset \bar \O $. Therefore, for all open set $ W \subset \subset \O'$ with $  \bigcup_{i=1}^K  T_i \subset W  $, having that
$$\bigcup_{i=1}^K  T_i^n \to \bigcup_{i=1}^K  T_i \quad \text{ in the sense of Hausdorff,}$$
there exists $n_1 \geq n_0$ such that $ \bigcup_{i=1}^K  T_i^n \subset W$ for all $n \geq n_1$. Since 
$$\O' \setminus \overline W \subset \bigcup_{i=K+1}^N  T_i^n \subset \bigcup_{i=M+1}^N  T_i^n = \{\chi_n=0\},$$
owing to \eqref{eq:minseq}, \eqref{energychi_n} and that $u_n=w_{\mathbf T^n}$ in $\O' \setminus \bar \O$, we infer that
$$\int_{\O' \setminus \overline W} |e(u_n)|^2\, dx \leq C_*,$$
for some constant $C_*>0$ independent of $n$ and $W$. Using that $u_n - w_{\mathbf T^n} \in H^1(\O' \setminus \overline W;\R^2)$ is equal to $0$ on the open set $(\O' \setminus \overline W) \cap (\O' \setminus \bar \O) \neq \emptyset$, the Poincar\'e--Korn inequality ensures that (up to a subsequence) there exists $u \in H^1(\O' \setminus \overline W;\R^2)$ such that
$$u_n \rightharpoonup u \text{ weakly in } H^1(\O' \setminus \overline W;\R^2),$$
$u = w_{\mathbf T} $ on $(\O' \setminus \overline W ) \cap (\O' \setminus \bar \O)$ since $w_{\mathbf T^n} \to w_{\mathbf T}$ strongly in $H^1(\O';\R^2)$ and, thanks to \eqref{eq:CV linearized strain}, $e(u)=\xi$ in $\O' \setminus \overline W$. In addition, by weak lower semicontinuity of the norm, we get that
$$\int_{\O' \setminus \overline W} |e(u)|^2\, dx \leq \liminf_{n \to \infty} \int_{\O' \setminus \overline W} |e(u_n)|^2\, dx\leq C_*.$$

Considering a decreasing sequence of open sets $\{W_j\}_{j \in \N}$ such that  $\bigcup_{i=1}^K  T_i \subset W_j \subset\subset \O'$ for each $j \in \N$, and $\bigcap_j W_j=\bigcup_{i=1}^K T_i $, we deduce through a diagonalisation argument that there exists $u \in H^1(\mathring \omega\cap \O';\R^2)$ such that $u = w_{\mathbf T}$ on $\mathring \omega \cap (\O' \setminus \bar \O)= \O' \setminus \bar \O $ and $e(u)=\xi$ in $\mathring\omega\cap \O'$. In particular, since $\xi$ is constant in each triangle of $\omega$, we infer that $u$ is affine in the interior of each triangle of $\omega$. Being in $H^1(\mathring \omega\cap \O';\R^2)$, we get that $u$ is continuous at the interfaces of each triangle in $\omega$. Moreover, since $u = w_{\mathbf T}$ on $ \O' \setminus \bar \O$, we deduce that $u_{|T} = w_{\mathbf T}$ on each triangle $T \in \mathbf T$ such that $ T \cap ( \O' \setminus \bar \O) \neq \emptyset$. Note that $u$ is defined on such triangles $T$, as they are included in $\omega$.

\medskip 

In order to extend $u$ outside $\omega$, we introduce the family of triangles which are at a distance of at least one triangle from $\omega$, i.e.
$$\mathbf T^{\rm far} := \{ T \in \mathbf T : \;  T \cap \omega = \emptyset \} \subset \{T_1,\ldots,T_K\},$$
so that every remaining triangle $T \not\in \mathbf T^{\rm far}$ and such that $T \not\subset\omega$, has its three vertices in ${\rm Vertices}(\omega) \cup {\rm Vertices}\left( \mathbf T^{\rm far} \right)$.  Note that $\{T_{M+1},\ldots T_K \} \subset \mathbf T^{\rm far}$ since, by construction of the connected component $\omega$ of $\omega_0$, each triangle $T \in \mathbf T$ included in $\omega_0 \setminus \mathring\omega$ is at a distance of at least one triangle from $\omega$. We extend the function $u$ to all triangles by setting $u \equiv 0$ on every triangle $T \in \mathbf T^{\rm far}$, and by interpolating on each remaining triangle which happens to have its three vertices' values imposed. It defines a function $u \in V^{\rm Dir}_\e(\O')$ which satisfies $e(u) = \xi$ on $\omega$ and $e(u) \equiv 0$ on each triangle $T \in \{T_{M+1},\ldots T_K \} \subset \mathbf T^{\rm far}$. 

On the one hand, $\xi=0$ in $\{ \chi = 1 \}$, hence $\xi=(1-\chi)\xi$. On the other hand, $e(u) = \xi$ in $\omega$ and $e(u) = 0$ in $ \omega_0 \setminus \omega$, so that $(1-\chi) \mathbf A \xi : \xi \geq (1-\chi) \mathbf A e(u):e(u)$ by positivity of $\mathbf A$. Thus, by \eqref{eq:1540} together with \eqref{eq:CV linearized strain},
\begin{multline*}
\inf_{L^0(\O;\R^2)} \G_\e = \lim_{n\to \infty}\left\{ \int_\O (1-\chi_n) \mathbf A e(u_n):e(u_n) \, dx + \frac{\kappa}{\e} \int_\O \chi_n\, dx\right\} \\
 =  \int_\O (1-\chi)  \mathbf A \xi : \xi \, dx + \frac{\kappa}{\e} \int_\O \chi \, dx \geq \int_\O (1- \chi) \mathbf A e(u) : e(u) \, dx + \frac{\kappa}{\e} \int_\O \chi \, dx = \G_\e (u),
\end{multline*}
which settles that $u$ is a minimizer of $\G_\e$.
\end{proof}

\begin{rem}
The above proof strongly relies on the choice of the density $f(t) = \kappa \wedge t$, mainly because of the identification \eqref{energychi_n}, which would unfortunately result into a too low lower bound on the energy for a general $f$. Indeed for a generic $f$ satisfying \eqref{eq:f} one only gets, for all $\delta >0$, the existence of a constant $0 < K^\delta < \kappa$ such that
$$
\G_\e(u_n) \geq (1-\delta) \int_\O  (1-\chi^\delta_n) \mathbf A e(u_n):e(u_n) \, dx + \frac{K^\delta}{ \e} \int_\O \chi^\delta_n \, dx
$$
where the characteristic function $ \chi^\delta_n := \mathds{1}_{\left\{\e (1 - \delta) \mathbf A  e(u_n):e(u_n) \geq K^\delta \right\}} \in L^\infty(\O'; \lbrace0,1 \rbrace)$ depends on $\delta$. Even in the case where the above proof could be adapted to show the existence of a displacement $u^\delta \in V^{\rm Dir}_\e(\O')$ and a characteristic function $\chi^\delta$ such that (up to a subsequence, not relabeled)
\begin{multline*}
 \lim_{n\to \infty}\left\{ (1-\delta) \int_\O (1-\chi^\delta_n) \mathbf A e(u_n):e(u_n) \, dx + \frac{K^\delta}{\e} \int_\O \chi^\delta_n\, dx\right\} \\
 \geq (1 - \delta) \int_\O (1- \chi^\delta) \mathbf A e(u^\delta) : e(u^\delta) \, dx + \frac{K^\delta}{\e} \int_\O \chi^\delta \, dx ,
\end{multline*}
the above lower bound might be too low since $f(t) > \sup_{\delta >0} \, \left\{ K^\delta \wedge (1-\delta) t \right\}$ a priori.
\end{rem}

\medskip

We are now in position to prove the fundamental property of $\Gamma$-convergence.
\begin{proof}[Proof of Corollary \ref{thm:CV_minimizers}]
On the one hand, for all $\e >0$, we remark that $\G_\e (u_\e) \leq \G_\e(w_{\mathbf T_\e })$ is uniformly bounded due to \eqref{eq:w_eps}. Therefore, Proposition \ref{prop:Comp} implies that, up to a subsequence, there exist a sequence of piecewise rigid motions $\{r_\e\}_{\e>0}$ and a function $u \in GSBD^2(\O')$ with $u=w$ $\LL^2$-a.e. in $\O' \setminus \overline \O$, such that $u_\e - r_\e \to u$ in measure in $\O'$ and $\liminf_\e \G_\e (u_\e) \geq \G(u)$.

On the other hand, the $\Gamma$-convergence of $\G_\e$ to $\G$ ensures that, for all $v \in GSBD^2(\O')$ with $v=w$ $\LL^2$-a.e. in $\O' \setminus \overline \O$, there exists a recovery sequence $v_\e \in L^0(\O;\R^2)$ such that $v_\e \to v$ in measure in $\O'$ and $\G_\e(v_\e) \to \G(v)$. Hence 
$$
\G(v) = \lim_{\e\to 0} \G_\e(v_\e) \geq \limsup_{\e\to 0} \G_\e(u_\e) \geq \liminf_{\e\to 0} \G_\e (u_\e) \geq \G(u),
$$
implying both that $u \in {\rm arg \, min} \, \G$ and $\G_\e(u_\e) \to \G(u)$.
\end{proof}

\section{Appendix}\label{appendix:Cortesani-Toader Dirichlet}

\begin{prop}\label{prop:Cortesani-Toader Dirichlet}
Let $N,m\in \N \setminus \{ 0 \}$, $p \in (1,2]$, $k \in \N \setminus \{0,1\}$, $w \in W^{k,\infty}(\R^N;\R^m)$ and $\O \subset \R^N$ be a bounded open set with Lipschitz boundary. For all $u \in SBV^p(\O;\R^m) \cap L^\infty(\O; \R^m)$ such that $u = w $ in an open bounded neighborhood of $ \partial \O$, there exist a sequence $\{u_h\}_{h \in \N}$ in $SBV^p(\O;\R^m) \cap L^\infty(\O;\R^m)$ as well as $N_h$ disjoint closed $(N-1)$-dimensional simplexes $\Sigma^h_1,\ldots,\Sigma_{N_h}^h \subset \O$ satisfying:
$$\bar{J_{u_h}} = \bigcup_{i=1}^{ N_h} \Sigma^h_i  , \quad \HH^{N-1}( \bar{J_{u_h}} \setminus J_{u_h}) = 0, \quad u_h \in W^{k,\infty}(\O \setminus \bar{J_{u_h}};\R^m),$$
\begin{equation}\label{eq1549}
\begin{cases}
u_h = w \text{ in an open bounded neighborhood of } \partial \O, \\
u_h \to u \quad \text{ strongly in } L^1(\O;\R^m), \\
\nabla u_h \to  \nabla u \quad \text{ strongly in } L^p(\O; \mathbb M^{m \times N}), \\
\limsup_{h \to \infty} \HH^{N-1}(J_{ u_h} ) \leq \HH^{N-1}(J_u).
\end{cases}
\end{equation}
\end{prop}

We do not detail the proof of this result which follows the steps of the constructive proofs of \cite[Lemma 5.2]{BrChiad}, \cite[Theorem 3.1]{CT} and \cite[Theorem 3.9, Corollary 3.11]{Cort} with minor adaptations to the Dirichlet setting. The key point here is that, due to our definition of $V_\e^{\rm Dir}(\O')$, we need the approximating sequence to coincide with $w$ in an open neighborhood of the boundary and not only on $\partial\O$ (as in Theorem 4.2, Remark 4.3 and formula (4.1) in \cite{CC5}). 

\section*{Acknowledgements}

The authors wish to thank Antonin Chambolle for many useful discussions on the topic of this work, and for sharing his ideas.

\end{document}